\documentclass[a4paper,11pt]{article}
\usepackage{stmaryrd}
\usepackage{graphicx}
\usepackage{amsfonts}
\usepackage{amsmath,amsthm}
\usepackage{amssymb}
\usepackage{mathrsfs}
\usepackage{indentfirst}
\usepackage{titlesec}
\usepackage{caption2}
\usepackage{color}
\usepackage[normalem]{ulem}
\usepackage{algorithm}
\usepackage{algorithmic}
\usepackage{lineno}

\usepackage[english]{babel}
\usepackage{cite}
\usepackage{bbm}

\usepackage[colorlinks, linkcolor=blue]{hyperref}

\topmargin 0cm \oddsidemargin 0.66cm \evensidemargin 0.66cm
\textwidth 14.66cm \textheight 22.23cm

\parindent 5ex

\newtheorem{theorem}{Theorem}
\newtheorem{definition}{Definition}

\newtheorem{assumption}{Assumption}
\newtheorem{proposition}{Proposition}[section]
\newtheorem{remark}{Remark}[section]
\newtheorem{lemma}{Lemma}[section]
\newtheorem{corollary}{Corollary}[section]

\numberwithin{equation}{section}

\usepackage{fancyhdr}
\pagestyle{fancy}
\headheight 13.59999pt
\lhead{} 
\chead{} 
\rhead{} 
\lfoot{} 
\cfoot{\thepage}
\rfoot{} 
\fancyhead[C]{%
  \ifodd\value{page}\relax
    Wai-Tong (Louis) Fan, Wenqing Hu, Grigory Terlov
  \else
    Wave Propagation on Infinite Random Trees
  \fi}

\begin{document}


\title{Wave propagation for reaction-diffusion equations on infinite random trees}

\author{
Wai-Tong (Louis) Fan
\thanks{Mathematics Department, Indiana University. Email: \texttt{waifan@iu.edu}} \ ,
Wenqing Hu
\thanks{Department of Mathematics and Statistics, Missouri University of Science and Technology
(formerly University of Missouri, Rolla). Email: \texttt{huwen@mst.edu} Email Corresponence to: \texttt{huwenqing.pku@gmail.com}} \ ,
Grigory Terlov
\thanks{Department of Mathematics, University of Illinois Urbana-Champaign. Email: \texttt{gterlov2@illinois.edu}}
}

\date{}

\maketitle

\begin{abstract}
The asymptotic wave speed for FKPP type reaction-diffusion equations on a class of
infinite random metric trees  are considered.
We show that a travelling wavefront emerges, provided that the reaction rate is large enough.
The wavefront travels at a speed that can be quantified via a variational formula
involving the random branching degrees $\vec{d}$ and the random branch lengths $\vec{\ell}$ of
the tree $\mathbb{T}_{\vec{d}, \vec{\ell}}$. This speed is
\textit{slower than} that of the same equation on the real line $\mathbb{R}$, and we
estimate this slow down in terms of $\vec{d}$ and $\vec{\ell}$. 
The key idea is to project the Brownian motion on the tree onto a one-dimensional
axis along the direction of the wave propagation. The projected process is
a multi-skewed Brownian motion, introduced by Ramirez \cite{Ram11},
with skewness and interface sets that encode the metric structure
$(\vec{d}, \vec{\ell})$ of the tree.
Combined with analytic arguments based on the Feynman-Kac formula, this idea
connects our analysis of the wavefront propagation to the large deviations principle (LDP)
of the multi-skewed Brownian motion with random skewness and random interface set.
Our LDP analysis  involves delicate estimates
 for an infinite product of $2\times 2$ random matrices
parametrized by $\vec{d}$ and $\vec{\ell}$ and
for hitting times of a random walk in random environment. 
\end{abstract}

\textit{Keywords}: reaction-diffusion equations, wavefront propagation, multi-skewed Brownian motion,
large deviations principle, product of random matrices, random walk in random environment.

\textit{2010 Mathematics Subject Classification Numbers}: 35K57, 35A18, 60J60, 60K37, 60F10.

\newpage

\tableofcontents\newpage

\section{Introduction}\label{Sec:Intro}

The FKPP equation, named after Fisher \cite{Fisher} and Kolmogorov, Petrovski, and Piskunov \cite{KPP}, is one of the simplest reaction-diffusion equation which can exhibit traveling wave solutions. This equation arises  in ecology, population biology, chemical reactions, plasma physics and other disciplines. It describes the dynamics of a certain quantity $u(t,x)$ at time $t$ and location $x$, written as
\begin{equation}\label{Eq:FisherKPP}
\left\{\begin{array}{l}
\dfrac{\partial u}{\partial t}(t,x)=\dfrac{1}{2}\dfrac{\partial^2 u}{\partial x^2}(t,x)+f\big(u(t,x)\big) \ ,
\\
u(0,x)=u_0(x) \ ,
\end{array}\right.
\end{equation}
where the reaction function $f(u)=\beta u(1-u)$
for some constant $\beta>0$ which will be called the
\textit{reaction rate} throughout this paper.

The \textit{asymptotic speed} of the wavefront formed by \eqref{Eq:FisherKPP} can be defined as
a positive real number $\alpha^*>0$ such that for any $h>0$,
\begin{equation}\label{Eq:Intro:WaveSpeedRealLine}
\lim_{t \to \infty}\sup_{x>(\alpha^*+h)t}u(t,x)=0 \text{ and } \lim_{t \to \infty}\inf_{x<(\alpha^*-h)t}u(t,x)=1 \ .
\end{equation}
It is well known from \cite{Fisher} and \cite{KPP} that on the real line  $\mathbb{R}$, for step-like initial data including the Heaviside function $u_0(x)=\mathbf{1}_{x\leq 0}$, the solution to \eqref{Eq:FisherKPP} forms a wavefront that propagates through the real line $\mathbb{R}$ with asymptotic speed $\sqrt{2\beta}$ \footnote{If the diffusion term $\dfrac{1}{2}\dfrac{\partial^2 u}{\partial x^2}$
in the equation \eqref{Eq:FisherKPP} becomes
$\dfrac{D}{2}
\dfrac{\partial^2 u}{\partial x^2}$ for a general diffusion constant $D>0$, then it is easy to see from
a spacial rescaling $x\rightarrow \dfrac{x}{\sqrt{D}}$ that the wave speed is $\sqrt{2D\beta}$. Thus throughout
the paper we stick to the case $D=1$.}.
Freidlin in \cite{FreidlinGreenBook} presents an elegant argument to prove this statement that uses the Feynman-Kac formula to connect the asymptotic speed with the large deviations principle (LDP) of the Brownian motion on the real line.

For simplicity and to make the arguments more intuitive, we focus on this classical case  with diffusion coefficient $D=1$ throughout this paper, but on trees rather than on $\mathbb{R}$. By the same arguments with simple modifications, our results can readily be extended to  the general FKPP-type case, in which
$f$ is a continuous function on $[0,1]$,
$f(0)=f(1)=0$, $f'(0)=\sup_{u\in (0,1)}f(u)/u$ and  $f(u)>0$ for $u\in (0,1)$.
We expect the results will be the same when  $\beta$ is replaced by $f'(0)$, and the asymptotic speed will be multiplied by $\sqrt{D}$.

While asymptotic speed of FKPP wavefront on the real line  is well studied,  much less is known about the formation and the speed of wave propagations in different environments such as a network. These are challenging problems because the topological and metric structure of the underlying space interacts with the diffusion-reaction mechanism.

Nonetheless, it is of both practical and theoretical interests to consider equation \eqref{Eq:FisherKPP}
on geometric structures other than a line. Such equations arise as
scaling limits of reaction diffusion-equations in two-dimensional domains
(see \cite{FreidlinHu13}, \cite{FreidlinHu13Motor}, \cite{CF17}, \cite{CF18})
and of interacting particle systems (see \cite{MR3582808}, \cite{Fan17}), which provide descriptions of the
effective dynamics of much more complex systems. For example,
in \cite{FreidlinHu13} the authors considered a reaction-diffusion equation
on a narrow random channel. The channel consists of
a main track and random ``wings" added to it.
As the channel width becomes thin, it converges to a tree-like structure with many short branching
edges added to the real line. Such a tree is in a sense a ``noisy" real line because
it is the real line randomly adding short edges to it. Making use of LDP for diffusion processes in random environment, the authors of
\cite{FreidlinHu13} derived a formula for the wave speed in this case.

\medskip

\noindent
{\bf Our results. }
In this work, we consider the propagation of waves given by the FKPP  equation
\eqref{Eq:FisherKPP} on an infinite random tree $\mathbb{T}_{\vec{d}, \vec{\ell}}$
that is called \emph{symmetric $\vec{d}$ regular with branch lengths $\vec{\ell}$}
(the precise definitions are given
in Section \ref{Sec:TreeStructure:Subsection:Tree}).
Here the random branching degrees $\vec{d}=(d_i)_{i\in \mathbb{Z}_+}$ (we set $\mathbb{Z}_+=\{i\in \mathbb{Z}, i\geq 0\}$) is such that \footnote{For simplicity of presentation we assume $d_0=2$.
However, our arguments work for all cases when $d_0\geq 2$ is an arbitrary fixed integer without affecting the wave speed.}
$d_0=2$  and $(d_i)_{i\geq 1}$
is an i.i.d sequence
of bounded positive integers greater or equal than $2$, such that
all vertices of $\mathbb{T}_{\vec{d}, \vec{\ell}}$ at generation $i$ have degrees equal to $d_i$ (the root $\rho$
is the node at generation $0$). 
The random branch lengths $\vec{\ell}=(\ell_i)_{i\in \mathbb{Z}_+}$ is an i.i.d.
sequence of positive real numbers that are positively bounded from above and from below, such that the edges of $\mathbb{T}_{\vec{d}, \vec{\ell}}$ between generations $i$ and $i+1$ are all of length equal to $\ell_i$.
A typical example of $\mathbb{T}_{\vec{d}, \vec{\ell}}$ is shown in the upper part of
Figure \ref{Fig:Trees}.

\begin{figure}
\centering
\includegraphics[scale=0.5]{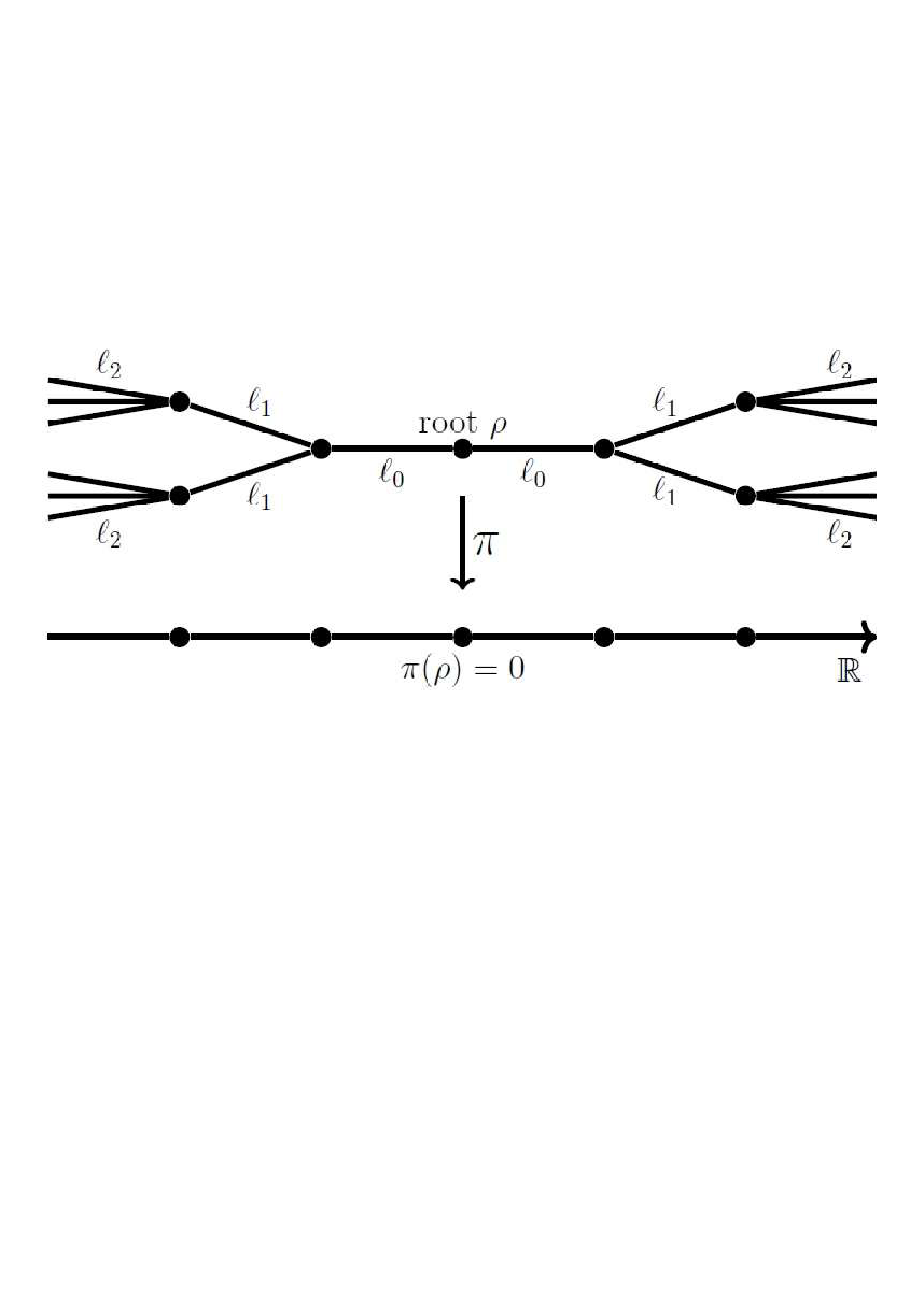}
\caption{Symmetric $\vec{d}$-regular tree with branch lengths $\vec{\ell}$, denoted by $\mathbb{T}_{\vec{d},\vec{\ell}}$, with $d_0=2$, $d_1=3$ and $d_2=4$. The projection  $\pi:\mathbb{T}_{\vec{d},\vec{\ell}}\to \mathbb{R}$ maps $x$ to the signed distance on $\mathbb{T}$ from $x$ to the root $\rho$.
Let $(B_t)_{t\geq 0}$ be a Brownian motion on $\mathbb{T}_{\vec{d},\vec{\ell}}$ and define $Y_t=\pi(B_t)$. Then the projected process $Y=(Y_t)_{t\geq 0}$ is a multi-skewed Brownian motion with skewness (see Definition \ref{Def:GSBM}) $2/3$ at $\ell_0$ and skewness $1/3$ at $-\ell_0$.}
\label{Fig:Trees}
\end{figure}

This class of random trees includes many random trees of interest, and in particular
the deterministic $d$-regular tree for $d>2$ with branch length
$\ell\in(0,\infty)$. The latter, called the \textit{constant-$(d,\ell)$ tree} in this paper, is an illuminating special case in which
all $\{\ell_i\}_{i\geq 0}$ are the same constant $\ell$ and all
$\{d_i\}_{i\geq 1}$ are equal to the same constant $d>2$. For this particular case, if $d=2$ we further obtain the \textit{degenerate case} $\mathbb{T}=\mathbb{R}$. 

Unlike the real line $\mathbb{R}$, the random tree $\mathbb{T}_{\vec{d}, \vec{\ell}}$ is in general a one-dimensional metric space
with singularities at its vertices (nodes at different generations).
Thus equation \eqref{Eq:FisherKPP}, when
considered on the tree $\mathbb{T}_{\vec{d}, \vec{\ell}}$\ , should also
be equipped with boundary conditions  at the vertices.
Here we put symmetric gluing conditions at each of the
vertices  of the tree $\mathbb{T}_{\vec{d}, \vec{\ell}}$\ , so that
the sum of the outward derivatives of the solution $u$ at each vertex of the tree is equal to $0$.
This specifies that the flow-in equals flow-out of mass at each vertex.

We also impose a step-like initial condition $u(0,x)=u_0(x)=\mathbf{1}_{U_0}(x)$ such that
$U_0$ is a symmetric subset of the set of all of the $d_0$ edges attached to
the root $\rho$.
Intuitively, these symmetric initial and boundary conditions will guarantee
that as time $t$ evolves, the solution $u(t,x)$ will also be symmetric
with respect to all edges of the tree $x\in \mathbb{T}_{\vec{d}, \vec{\ell}}$
that are between the same two consecutive generations.
In this way, following \eqref{Eq:Intro:WaveSpeedRealLine}, we say that a quantity $c^*>0$ is the asymptotic speed of the wavefront
formed by \eqref{Eq:FisherKPP} on the tree $\mathbb{T}_{\vec{d}, \vec{\ell}}$
if for any $h_1>0$ and $c^*>h_2>0$,
\begin{equation}\label{Eq:Intro:WaveSpeedTree}
\lim_{t \to \infty}\sup_{d_{\mathbb{T}}(x,\rho)>(c^*+h_1)t}u(t,x)=0
\text{ and }
\lim_{t \to \infty}\inf_{d_{\mathbb{T}}(x,\rho)<(c^*-h_2)t}u(t,x)=1 \ .
\end{equation}
Here $d_{\mathbb{T}}(x,\rho)$ denotes the geodesic distance  of the point $x\in \mathbb{T}_{\vec{d}, \vec{\ell}}$
to the root $\rho$, i.e., it is the length of the shortest path from $x$ to $\rho$ along the tree
$\mathbb{T}_{\vec{d}, \vec{\ell}}$. 

The main result in this paper can be stated roughly as below; the full statement is encapsulated in Theorems \ref{Theorem:WaveSpeed} and \ref{Theorem:VariationalFormulaWaveSpeed}.

\

\noindent
\textbf{Main Result.}
\textit{Let $\mathbb{T}_{\vec{d}, \vec{\ell}}$ be the random tree equipped with the aforementioned initial and boundary conditions.
There exists $\beta_c\in (0,\infty)$ such that for all $\beta\in(\beta_c,\infty)$,
as $t\rightarrow \infty$, the solution $\{u(t,x):\;t\in[0,\infty), x\in \mathbb{T}_{\vec{d}, \vec{\ell}}\}$ of equation
\eqref{Eq:FisherKPP} on $\mathbb{T}_{\vec{d}, \vec{\ell}}$
forms a wavefront on the tree.
The wavefront travels with an asymptotic speed that is less than or equal to $\sqrt{2\beta}$, with equality holds if and only if the tree degenerates to the real line $\mathbb{R}$.}

\

The above result is a direct consequence of Lemmas \ref{Lemma:proj}, \ref{Lemma:WaveSpeedProjectedEqualTree} and Theorems \ref{Theorem:WaveSpeed},
\ref{Theorem:VariationalFormulaWaveSpeed}.

It is not clear a-priori whether a wavefront exists for all $\beta>0$,
because intuitively branchings of the tree can destroy pattern formation
by spreading things out. This is in contrast with FKPP on $\mathbb{R}$. Our result guarantees that the wavefront sustains, provided that $\beta$ is large enough relative to
the topological and the metric structure of the tree. 
The quantity
$\beta_c$ will be given by the right hand side of
\eqref{Assumption:ReactionRateBetaLarge:Eq:BetaLarge} in Section \ref{Sec:WavePropagation}.
For the constant-$(d,\ell)$ tree mentioned above, $\beta_c=\dfrac{d-2}{\ell\,d}\ln (d-1)$ increases to infinity at the asymptotic order $\sim\mathcal{O}\left(\dfrac{\ln d}{\ell}\right)$ as $d$ increases to infinity. 
See Corollary \ref{Cor:ConstantCase} and Figure \ref{Fig:ConstantCase}. 
Note that this $\beta_c$ vanishes  when $d=2$ (i.e. the tree is $\mathbb{R}$) or when $\ell\to\infty$.
Technically speaking, the lower bound of $\beta$ is due to two reasons:
to ensure that we can use the LDP 
and that there is a unique wavefront; see Remark
\ref{Remark:PurposesOfConditions-BetaGreaterEtac-BetaGreaterMuStuff}.

The slow down of the wave speed due to branching can also be heuristically explained:
the density of the mass concentration described by $u$ spreads out to $d_i-1$
many edges as it go pass a vertex of degree $d_i$ (Figure \ref{Fig:Trees}).
In Remark \ref{Remark:HeuristicReasonSlowDownWave}, we provided a further intuitive explanation of this slow down effect, which, roughly speaking, can be attributed to the interaction between the ``drift effect" caused by branching and the large deviations principle.
For the constant-$(d,\ell)$ tree, the   asymptotic speed is given by
\begin{equation}\label{SpeedConstantCase}
    c^*
    =\inf\limits_{\lambda\geq 0}\,\dfrac{\lambda+\beta}{\sqrt{2\lambda}+\dfrac{1}{ \ell}\ln\left(\dfrac{4p}{1+\gamma^2-\sqrt{(\gamma^2-1)^2+4(2p-1)^2\gamma^2}}\right)}\,\in(0,\sqrt{2\beta}\,]\ ,
\end{equation}
where $p=(d-1)/d$ and $\gamma:=e^{\ell\sqrt{2\lambda}}$. The upper bound $\sqrt{2\beta}$ is attained if and only if $d=2$ (i.e. the tree degenerates to $\mathbb{R}$). See Corollary \ref{Cor:ConstantCase} and  Figure \ref{Fig:ConstantCase}.

\medskip

To prove our main result, we start from the classical idea (like the one presented in Freidlin \cite{FreidlinGreenBook}) which
connects the solution $u(t,x)$ of the FKPP equation \eqref{Eq:FisherKPP} with the functional integration over the
trajectories of an underlying stochastic process, and we then make use of the large deviations principle (LDP) of that process.
Indeed, for the classical FKPP
case when $x\in \mathbb{R}$, the solution $u(t,x)$ to \eqref{Eq:FisherKPP} can be represented
via the well-known Feynman-Kac formula as the solution of an integral equation
over the trajectories of a standard Brownian motion on $\mathbb{R}$. The result that the asymptotic wave speed
is given by $\alpha^*=\sqrt{2\beta}$ then follows from LDP for the Brownian motion on $\mathbb{R}$.
Similarly in our case, when
$u(t,x)$ to \eqref{Eq:FisherKPP} is considered
on the tree (i.e. $x\in \mathbb{T}_{\vec{d}, \vec{\ell}}$), the underlying stochastic process in the Feynman-Kac
formula is replaced by a Brownian motion $B_t$
on the tree $\mathbb{T}_{\vec{d}, \vec{\ell}}$. The Brownian motion $B_t$ on the tree behaves
as a standard $1$-dimensional Brownian motion in the interior of the edges, and at each vertex of the tree,
it chooses randomly and with equal probability to enter one of the edges adjacent to that vertex.

Since $d_0=2$, we can associate any point $x\in\mathbb{T}_{\vec{d},\vec{\ell}}$ with a unique
\textit{horizontal coordinate} $y\in \mathbb{R}$ which is the signed distance on $\mathbb{T}$ from $x$ to the root $\rho$ (i.e., $y=\pm d_{\mathbb{T}}(x,\rho)$ with $+$ sign when $x$ belongs to the right branch and $-$ sign when $x$ belongs to the left branch), as illustrated in Figure \ref{Fig:Trees}.
We denote by $\pi:\mathbb{T}_{\vec{d},\vec{\ell}}\to \mathbb{R}$ to be the
projection map sending $x$ to its horizontal coordinate $y$.
Due to the symmetric behavior of the Brownian motion $B_t$ at each vertex of the tree, one can show
(see Section \ref{Sec:TreeStructure:Subsection:ProjectionIdea} and in particular Lemma \ref{Lemma:proj}) that
the solution $u(t,x)=v(t, \pi(x))$, $x\in \mathbb{T}_{\vec{d}, \vec{\ell}}$\ . Here
$v(t,y)$, $y\in \mathbb{R}$ is the solution of an integral equation, given by the Feyman-Kac formula, to which the
underlying stochastic process is given by the projection $Y_t$ of $B_t$ onto $\mathbb{R}$: $Y_t=\pi(B_t)$.
Notice that, when a Brownian motion on $\mathbb{T}_{\vec{d},\vec{\ell}}$ is at a vertex with degree
$d_i$ that is on the right of the root,
the probability that it moves further away from the root (i.e. move to the right) in the next instance is
$\dfrac{d_i-1}{d_i}$. Thus the process $Y_t$ behaves like a Brownian motion except at its interface points (barriers),
i.e., those points on $\mathbb{R}$ that
are the projections under $\pi$ of the vertices of the tree $\mathbb{T}_{\vec{d}, \vec{\ell}}$\ .
At these interface points, it moves
to the right or left with respective probabilities $p_i=\dfrac{d_i-1}{d_i}$ and $1-p_i$ (see the lower part of Figure \ref{Fig:Trees}).
Such a real-valued process $Y_t$, introduced in \cite{Ram11}, is called a \emph{multi--skewed Brownian motion}; precise definitions are in
Section \ref{Sec:MultiSkewBMAssumptions:Subsection:Assumptions}.


Our LDP of the multi-skewed Brownian motion $Y_t$ (see Theorems \ref{Theorem:LDPHittingTime}, \ref{Theorem:LDPSkewBM} in Section \ref{Sec:LDP}) in general
follows  the method of LDP for random processes in random environment in
\cite{CometsGantertZeitouni2000}, \cite{Taleb2001}, \cite[Chapter 7]{FreidlinFunctionalBook}, \cite{Nolen-XinCMP2007}, \cite{Nolen2009}. However, these existing results do not apply directly to $Y$ or the embedded random walk at the interface points.

In fact, such LDP analysis for $Y$ turns out to be remarkably delicate
and interesting. It first involves a calculation of the Lyapunov exponent
given by the Laplace transform of certain hitting time of the multi-skewed Brownian motion $Y_t$
(see Theorem \ref{Theorem:LyapunovExponentPositiveDirection} and Section \ref{Sec:AuxiliaryFunctions}).
Interestingly, such a quantity is calculated by making use of some existence results of an infinite
product of $2\times 2$ random matrices parameterized by $\vec{d}, \vec{\ell}$
(see Proposition \ref{Prop:ExplicitCalculationw-AuxiliaryDeterministic} and Theorem \ref{Theorem:Existence_xi}). This allows us to obtain a variational formula for the wave speed in terms of $\vec{d}, \vec{\ell}$
(see Theorem \ref{Theorem:VariationalFormulaWaveSpeed} in Section \ref{Sec:Algorithm-Examples}).
This variational formula enables us to show that the speed of the wavefront on $\mathbb{T}_{\vec{d}, \vec{\ell}}$ is
slower than the speed of the standard FKPP equation on $\mathbb{R}$, and we can
estimate this slow down in terms of $\vec{d}$ and $\vec{\ell}$.

Due to the random tree structure of $\mathbb{T}_{\vec{d}, \vec{\ell}}$,
the multi-skewed process $Y_t$ behaves as a biased random walk at its interface points.
The biasedness of $Y_t$ at the interface points are away from the root, because $d_i>2$.
When $d_0=2$, one can think of the effects of such biasedness as adding positive and negative drifts to a standard Brownian motion on $\mathbb{R}$.
These ``drift-like skewnesses" result in some interesting behaviors of the hitting time of $Y_t$ (see Section \ref{Sec:MultiSkewBMAssumptions:Subsection:HittingTimeEstimates}), and
they make the LDP of $Y_t$ substantially different from the one for the standard Brownian motion.
Intuitively, such drift effects make $Y_t$ harder to come back to a neighborhood of the origin, so that the LDP
has a lower rate function (action functional). 
Moreover, there is a non-negative finite quantity at which the Laplace transform of the hitting time of the multi-skewed Brownian motion $Y_t$ jumps to infinity (see Theorem \ref{Theorem:etac_Y-w}). 
A more careful analysis will demonstrate that the LDP will
only hold in a particular regime of the parameters (see Theorems \ref{Theorem:LDPHittingTime}, \ref{Theorem:LDPSkewBM}).
Except for these features, the exact shape of the LDP rate function may exhaust various different possibilities (see Figure \ref{Fig:I-a-Graph}). Correspondingly, the analysis of the wavefront propagation
only works in the regime when the reaction rate $\beta$ is larger than some value $\beta_c$.
To the best of the authors' knowledge, except for a short remark
in \cite[Section 7.6, Remark 4, pp.524-525]{FreidlinFunctionalBook} that mentions the case when there is a drift, this is the
first work that carefully addresses such random drift phenomenon for the wavefront propagation of
FKPP equations in random environments via probabilistic method. The particular intricacy in our work is that
we are not working with a simple random drift that can be offset by a moving frame, but a more
complicated ``drift effect" caused by the multi-skewness.

\medskip

\noindent
{\bf Discussion. }In contrast with FKPP on $\mathbb{R}$, it is not completely clear what happens to FKPP on trees 
when the reaction rate $\beta>0$
is smaller than the critical value $\beta_c$ mentioned in the main result above. 

Our approach is based on the LDP analysis for processes in random environments, which only works in a
certain regime of the parameters (see Theorems \ref{Theorem:LDPHittingTime}, \ref{Theorem:LDPSkewBM}).
The LDP analysis that works for processes in random environments can only be applied to the case when the reaction rate $\beta$ is larger than $\beta_c$. Moreover, with the LDP rate function at hand this Assumption also guarantees the uniqueness of wavespeed in equation \eqref{Eq:WaveSpeedFormula}.
However, such approach does not exclude the possibility 
that there are other methods that may work when $\beta$ is small. We leave this issue for future investigation.

On the other hand, if the tree $\mathbb{T}_{\vec{d}, \vec{\ell}}$ is not random but has constant branching lengths and branch degrees, we can employ a more straightforward method (based on the eigenfunction of an elliptic operator) to obtain the LDP (see \cite[Chapter 7, Section 7.3]{FreidlinFunctionalBook}) rather than relying on the hitting time analysis for the multi-skewed BM in a random environment (like what we have in Theorem \ref{Theorem:LDPHittingTime}), so that we may be able to analyze the behavior of \eqref{Eq:FisherKPP} on trees for small values of $\beta$. This issue will be left to the theme of another paper.

It is also worth noticing that our multi-skewed process $Y_t$ here is different from the
process $Y_t$ introduced in \cite{FreidlinHu13Motor} in that the latter process is
ergodic with respect to both positive and negative shifts. In our case,
the behavior of our multi-skewed Brownian motion $Y_t$
is symmetric with respect to the origin. This leads to the fact that the wave speed is the same along positive and
negative axes (see Theorem \ref{Theorem:WaveSpeed}) as well as a few technical differences in the proof of the LDP and the
wave propagation (see Sections \ref{Sec:LDP} and \ref{Sec:WavePropagation}).



Reaction-diffusion systems on geometric structures that have branching and singularities have long been attracting interest in the scientific community. For example, a lot of physics literature discuss reaction-diffusion equation on fractals such as the Sierpinski gasket and the Koch curve. In \cite{campos2004description}, \cite{mendez2004dynamical}, \cite{campos2004propagation}, approximate expressions for the wavespeeds on fractal media have been obtained by physical intuition. See the Campos-M\'endez-Fort formula mentioned in the numerical work \cite[equation (4)]{suwannasen2016speed}. The wave equation is also considered on fractal tree in the simulation work \cite{joly2019wave} as a model of sound propagation in the human lung. However, besides all these efforts, there have been very few works that discuss these problems at the level of absolute mathematically rigor (except \cite{FreidlinHu13} that discusses reaction-diffusion equation on a particular type of infinite tree). Our work puts forward one more step in this direction and our wavespeed formulas are new.

\medskip

\noindent
{\bf Paper outline. }
Section \ref{Sec:TreeStructure} is dedicated to preliminaries, including
the definitions and assumptions of the random tree  and the precise statement of the FKPP equation and wavefront
speed on the tree, as well as the basic idea
of projecting the Brownian motion on the tree to a multi--skewed process $Y$ on $\mathbb{R}$.
Section \ref{Sec:MultiSkewBMAssumptions} contains some hitting time estimates for $Y$ that will be useful in later sections.
Section \ref{Sec:AuxiliaryFunctions} provides a calculation and an analysis for the auxiliary functions used in proving the LDP,
that are based on existence and properties of the limit of
an infinite product of $2\times 2$ random matrices parametrized by $\vec{d}$ and $\vec{\ell}$.
In Sections \ref{Sec:LDP} and \ref{Sec:WavePropagation} we analyze the LDP of $Y_t$
and the corresponding wave propagation respectively. Finally in Section \ref{Sec:Algorithm-Examples} we provide
 a variational formula for computing the wave speed that shows the slow down of the wave on $\mathbb{T}_{\vec{d}, \vec{\ell}}$
 with some concrete calculations.

\begin{flushleft}
{\bf\large Acknowledgements.} The authors thank Partha Dey, David Fisher, Xiaoqin Guo, Yong Liu,  Russ  Lyons, Johnathan Peterson, Lihu Xu and  Xiaoqian Xu for enlightening discussions. Financial support from NSF grant DMS--1804492 is gratefully acknowledged.
\end{flushleft}

\section{Preliminaries}
\label{Sec:TreeStructure}

\subsection{The structure of the random tree}
\label{Sec:TreeStructure:Subsection:Tree}

The class of infinite metric trees is described in the following
and in Figure \ref{Fig:Trees}.

\begin{definition}[symmetric $\vec{d}$-regular tree]\label{Def:VecdRegularTree}
Let $\vec{d}:=(d_n)_{n\in \mathbb{Z}_+}$ be a sequence of positive integers with $d_0=2$.
A \emph{symmetric $\vec{d}$-regular tree} $\mathbb{T}_{\vec{d}}$
is a rooted tree such that all vertices at generation $n$ have the same degree $d_n$
(the root is the node at generation $0$).
\end{definition}

In the above, the assumption that $d_0=2$ is
only introduced for the sake of simplifying the proof and to visualize the geometry, and the arguments in
this paper can easily be extended to the case when $d_0>2$, without affecting
the asymptotic speed of the wavefront (see Theorem \ref{Theorem:VariationalFormulaWaveSpeed}).

As an example, suppose there is a positive integer $d$
such that $d_n=d$ for all $n\geq 1$.
Then we have two identical $d$-regular trees attaching to the root.

We put the following assumption on $\vec{d}$:

\begin{assumption}[bounded branching degrees]\label{Assumption:d}
We assume that there exist some positive integer $\overline{d}\geq 2$ such that
\begin{equation}\label{Assumption:d:Eq:UpperboundBranching}
2\leq d_n\leq \overline{d} \ ,
\end{equation}
for all $n\in \mathbb{Z}_+$, and $d_0=2$.
\end{assumption}

\begin{definition}[symmetric $\vec{d}$-regular tree with branch lengths $\vec{\ell}$]\label{Def:VecdRegularTreeBranchLengths}
Denote by $\mathbb{T}:=\mathbb{T}_{\vec{d},\vec{\ell}}$ the
\emph{symmetric $\vec{d}$-regular tree with branch lengths $\vec{\ell}$},
that is, the $\vec{d}$-regular tree whose
 edges between generations $n$ and $n+1$ are all of length equal to $\ell_n$.
 See the tree in the upper part of Figure \ref{Fig:Trees}. We denote by $V$ the vertex set of $\mathbb{T}$ and $\mathring{\mathbb{T}}:=\mathbb{T}\setminus V$ to be
its interior.
\end{definition}

We put the following assumption on $\vec{\ell}$:

\begin{assumption}[bounded branch lengths]\label{Assumption:ell}
We assume that there exist some $0<\underline{\ell}<\overline{\ell}<\infty$ such that
\begin{equation}\label{Assumption:ell:Eq:PositiveBelowell}
0<\underline{\ell}\leq \ell_n\leq \overline{\ell}<\infty \ ,
\end{equation}
for all $n\in \mathbb{Z}_+$.
\end{assumption}

\begin{definition}[distance on $\mathbb{T}$]\label{Def:DistanceTree}
The tree $\mathbb{T}=\mathbb{T}_{\vec{d},\vec{\ell}}$ is made into a metric space
equipped with the metric $d_{\mathbb{T}}$: for any two points $x_1$ and $x_2$ on $\mathbb{T}$
belonging to the same edge of $\mathbb{T}$ we define
their distance $d_\mathbb{T}(x_1, x_2)$ to be the length of the interval between them; for $x_1$ and $x_2$ belonging to
different edges of $\mathbb{T}$ it is defined as the geodesic distance
$d_\mathbb{T}(x_1, x_2)=\min(d_\mathbb{T}(x_1, O_{j_1})+d_\mathbb{T}(O_{j_1}, O_{j_2})+...+d_\mathbb{T}(O_{j_l}, x_2))$, where the minimum
is taken over all chains of vertices $O_{j_i}\in V$ connecting the points $x_1$ and $x_2$.
\end{definition}

We think of $\mathbb{T}$ as a continuous object, where each edge is a line segment. As mentioned in the introduction and illustrated in Figure \ref{Fig:Trees},
each point $x\in\mathbb{T}$ has a unique
\textit{horizontal coordinate} $\pi(x)\in \mathbb{R}$ which is the signed distance on $\mathbb{T}$ from $x$ to the root.

Our probability space $(\Omega,\,\mathfrak{S},\,\mathbf{P})$ for the randomness in the tree $\mathbb{T}_{\vec{d}, \vec{\ell}}$ is defined as follows.
The sample space $\Omega:= \mathbb{N}^{\mathbb{Z}_+}\times (0,\infty)^{\mathbb{Z}_+}$ has generic sample point $(\vec{d},\vec{\ell})$ and is equipped with its Borel $\sigma$-algebra $\mathfrak{S}$. We then assume the following

\begin{assumption}[i.i.d and mutually independent branching degrees and branch lengths sequences]\label{Assumption:treeP}
Under $\mathbf{P}$,  $\{d_i\}_{i\geq 1}$ and $\{\ell_i\}_{i\geq 0}$ are
two mutually independent sequences of i.i.d. random variables
such that almost surely Assumptions \ref{Assumption:d} and \ref{Assumption:ell} hold.
\end{assumption}

Since we will be considering Brownian motion on the tree $\mathbb{T}_{\vec{d}, \vec{\ell}}$,
so that the pair $(\vec{d}, \vec{\ell})$ determines the environment under which the Brownian motion moves,
we will also refer to the measure $\mathbf{P}$ as the one that governs the \textit{random environment}.

Assumption \ref{Assumption:treeP} includes many random trees of interest. For example, $\{d_i\}_{i\geq 1}$ can be i.i.d. uniform on a finite integer set such as $\{2, 3, 2019\}$. The deterministic $d$-regular tree  with branch length
$\ell$, called constant-$(d,\ell)$ tree in this paper, is the case when
all $\{\ell_i\}_{i\geq 0}$ are equal to a constant $\ell$ and all $\{d_i\}_{i\geq 1}$ are equal to a constant $d$.

\subsection{FKPP equation and its wavefront propagation}
\label{Sec:TreeStructure:Subsection:ProblemStatement}

Our main results are about the speed of wave propagation, as $t\rightarrow\infty$,
for the FKPP equation on the random tree $\mathbb{T}_{\vec{d},\vec{\ell}}$ under $\mathbf{P}$.
Explicitly, we consider the FKPP equation

\begin{equation}\label{Eq:FisherKPPInitialBoundaryConditionVertices}
\left\{\begin{array}{lll}
\dfrac{\partial u}{\partial t}(t,x)=\dfrac{1}{2}\dfrac{\partial^2 u}{\partial x^2}+\beta u(1-u) & , &
(t,x)\in (0,\infty)\times \mathring{\mathbb{T}}_{\vec{d},\vec{\ell}} \ , \\
\nabla u(t,v)=0 & , & (t,v)\in (0,\infty)\times V \ , \\
u(0, x)=u_0(x) & , & x\in \mathring{\mathbb{T}}_{\vec{d},\vec{\ell}} \ ,
\end{array}\right.
\end{equation}
where $V$ is the vertex set of $\mathbb{T}_{\vec{d},\vec{\ell}}$
and $\mathring{\mathbb{T}}_{\vec{d},\vec{\ell}}:=\mathbb{T}_{\vec{d},\vec{\ell}}\setminus V$ is the interior of the tree.
The condition $\nabla u(t,v)=0$ is called the symmetric \textit{gluing condition},
which specifies that the flow-in equals flow-out of mass at each vertex. Specifically,
$\nabla f(v)$ is the sum of the outward derivatives of function $f$ at
vertex $v$, i.e.,
$\nabla f(v)=\sum_i \partial_i f(v)$ in which $\partial_i$ is the outward derivative
along the $i$-th edge attached to the vertex $v$.
The initial condition $u_0(x)=\mathbf{1}_{(-\delta, \delta)}(x)$ for some small $0<\delta<\underline{\ell}$, so it is 1 on part of the two edges connecting to the root and is 0 elsewhere.

Equation \eqref{Eq:FisherKPPInitialBoundaryConditionVertices} first appeared explicitly as scaling
limits of interacting particle systems in \cite{Fan17}.
Following \cite{FreidlinFunctionalBook}, \cite{FreidlinHu13}, we define
a \textit{generalized solution} of \eqref{Eq:FisherKPPInitialBoundaryConditionVertices}
with initial condition $u_0$ to be a
measurable function $u$ that solves the integral equation
\begin{equation} \label{Eq:GeneralizedSolutionFeynmanKacTree}
u(t,x)=E^{(\vec{d}, \vec{\ell})}_x\Big[u_0(B_s)\exp\Big\{\beta\int^t_0 \Big(1-u(t-s,B_s)\Big)ds\Big\} \Big] \ ,
\end{equation}
where $(B_t)_{t\geq 0}$ is the Brownian motion on the tree $\mathbb{T}_{\vec{d},\vec{\ell}}$ \ , and
$E^{(\vec{d}, \vec{\ell})}_x$ is the mathematical expectation with respect to $(B_t)_{t\geq 0}$ starting at $x$, under a
fixed tree $\mathbb{T}_{\vec{d}, \vec{\ell}}$. Notice that since $\mathbb{T}_{\vec{d},\vec{\ell}}$ is random
under $\mathbf{P}$, the process $B_t$ is indeed moving in a random environment distributed as $\mathbf{P}$.

The process $(B_t)_{t\geq 0}$ is the Markov process on $\mathbb{T}_{\vec{d},\vec{\ell}}$
associated with an infinitesimal generator $A$, that is given by the Laplace operator with
gluing boundary conditions. Within each edge of the tree $\mathbb{T}_{\vec{d},\vec{\ell}}$
the infinitesimal generator $A$ of the process $B_t$ is given by $\dfrac{1}{2}\dfrac{d^2}{dx^2}$,
in which $\dfrac{d}{dx}$ is the derivative along that edge. The domain of
definition $D(A)$ of the operator $A$ is given by functions $f$ that are
twice continuously differentiable inside each edge of the tree
$\mathbb{T}_{\vec{d},\vec{\ell}}$, and satisfy the gluing condition
$\nabla f(v)=0$ at each vertex $v$ of the tree $\mathbb{T}_{\vec{d},\vec{\ell}}$.
This notion of solution \eqref{Eq:GeneralizedSolutionFeynmanKacTree} is motivated by the Feynman-Kac formula. The process $B_t$ considered here is a typical example of Markov processes on manifolds with singularity (such as graphs, see \cite{[FHW-Vanishing]}, \cite{[FHLL]}, \cite{[FHNearlyElastic]}, \cite{[FH-CPDE]}, \cite{[H-Metastability]},
\cite{[H-Degenerate]},
\cite{[HPhDThesis]}).

 Let us denote by $\mathcal{B}(S;\,[0,1])$
 (respectively $C(S;\,[0,1])$) the space of bounded Borel measurable
 (respectively continuous) functions on any metric space $S$ taking values
 in $[0,1]$, equipped with the uniform norm $ \|\bullet\|_{\infty}$.
 Based on the contraction mapping principle, as detailed in
 \cite[Section 3, Chapter 5]{FreidlinFunctionalBook} and
  \cite[Theorem 3]{FreidlinHu13}, one immediately obtains
  Lemma \ref{Lemma:ExistenceUniquenessGeneralizedSolution} below, which ensures the well-posedness of
  equation \eqref{Eq:FisherKPPInitialBoundaryConditionVertices}.

\begin{lemma}\label{Lemma:ExistenceUniquenessGeneralizedSolution}
Let $\vec{d}$ and $\vec{\ell}$ be deterministic sequences that satisfy  \eqref{Assumption:d:Eq:UpperboundBranching} and
\eqref{Assumption:ell:Eq:PositiveBelowell} respectively and let
$\mathbb{T}:=\mathbb{T}_{\vec{d},\vec{\ell}}$ be a fixed deterministic tree.
Suppose the initial condition $u_0\in \mathcal{B}(\mathbb{T};[0,1])$.
Then there exists a unique generalized solution $u$ of
\eqref{Eq:FisherKPPInitialBoundaryConditionVertices}
with $u(t,\bullet)\in  C(\mathbb{T};[0,1])$ for all $t>0$.
\end{lemma}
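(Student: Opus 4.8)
The plan is to establish existence and uniqueness of the generalized solution via Banach's fixed-point theorem applied to the nonlinear integral operator defined by the right-hand side of \eqref{Eq:GeneralizedSolutionFeynmanKacTree}, following the template in \cite[Section 3, Chapter 5]{FreidlinFunctionalBook} and \cite[Theorem 3]{FreidlinHu13}. First I would fix the deterministic tree $\mathbb{T}=\mathbb{T}_{\vec{d},\vec{\ell}}$ satisfying \eqref{Assumption:d:Eq:UpperboundBranching} and \eqref{Assumption:ell:Eq:PositiveBelowell}, and recall that $(B_t)_{t\ge0}$ is a well-defined conservative strong Markov process on $\mathbb{T}$ with continuous sample paths — this is the Brownian motion with the symmetric gluing condition, whose existence is classical (e.g.\ via Dirichlet form / Feller semigroup theory for metric graphs). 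For a fixed horizon $T>0$, let $\mathcal{E}_T := C\big([0,T];\,C(\mathbb{T};[0,1])\big)$ with the sup norm $\|w\| := \sup_{t\le T}\|w(t,\cdot)\|_\infty$, and define
\begin{equation*}
(\Phi w)(t,x) := E^{(\vec{d},\vec{\ell})}_x\Big[u_0(B_t)\exp\Big\{\beta\int_0^t \big(1-w(t-s,B_s)\big)\,ds\Big\}\Big].
\end{equation*}
I would first check that $\Phi$ maps a suitable closed subset of $\mathcal{E}_T$ into itself: since $0\le w\le 1$, the exponent lies in $[0,\beta t]$, and since $0\le u_0\le 1$, we get $0\le \Phi w\le e^{\beta T}$; to land back in $[0,1]$ one restricts attention to the ball-like set $\{w: 0\le w\le 1\}$ and uses the standard observation (comparison with the spatially homogeneous ODE $\dot{z}=\beta z(1-z)$, or a direct monotonicity argument) that $\Phi$ preserves $[0,1]$-valuedness because the true solution is squeezed between $0$ and $1$; more carefully, one shows $\Phi$ maps $\{0\le w\le 1\}$ into itself by noting $u_0(B_t)\le \mathbf{1}_{\{B_t\in \text{supp }u_0\}}$ and combining with the fact that the Feynman–Kac weight is exactly compensated — this is the one place I would be careful, and it is cleanest to follow \cite{FreidlinFunctionalBook} verbatim. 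Continuity in $(t,x)$ of $\Phi w$ follows from the Feller property of the semigroup of $B$ (continuity of $x\mapsto E_x[\cdot]$) together with dominated convergence and the continuity of $w$ and $u_0$ (here one may need $u_0$ merely Borel; then $\Phi w(t,\cdot)$ is already continuous for $t>0$ by the smoothing of the heat semigroup, which is why the statement only claims $u(t,\cdot)\in C(\mathbb{T};[0,1])$ for $t>0$).

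Next I would prove the contraction estimate. For $w_1,w_2\in\{0\le w\le 1\}$, using $|e^{a}-e^{b}|\le e^{\max(a,b)}|a-b|$ with $a,b\in[0,\beta t]$,
\begin{equation*}
\big|(\Phi w_1)(t,x)-(\Phi w_2)(t,x)\big| \le e^{\beta T}\,E^{(\vec{d},\vec{\ell})}_x\Big[\beta\int_0^t \big|w_1(t-s,B_s)-w_2(t-s,B_s)\big|\,ds\Big] \le \beta T e^{\beta T}\,\|w_1-w_2\|,
\end{equation*}
so $\Phi$ is a contraction on $\mathcal{E}_T$ provided $T$ is chosen small enough that $\beta T e^{\beta T}<1$. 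Banach's fixed-point theorem then yields a unique fixed point $u$ on $[0,T]$, which is the unique generalized solution there. Finally I would extend to all $t\ge0$ by iterating: having solved on $[0,T]$, the value $u(T,\cdot)\in C(\mathbb{T};[0,1])$ serves as a new ``initial condition'', and by the Markov property of $B$ the integral equation \eqref{Eq:GeneralizedSolutionFeynmanKacTree} restarted at time $T$ has a unique solution on $[T,2T]$; concatenating gives a global solution, and uniqueness on each block propagates to global uniqueness. The step size $T$ depends only on $\beta$ (not on the data or on $\mathbb{T}$ beyond the fixed bounds), so finitely many iterations cover any $[0,t]$.

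The main obstacle — really the only non-formal point — is verifying that $\Phi$ genuinely preserves the constraint $0\le u\le 1$ rather than merely $0\le u\le e^{\beta T}$; the naive bound from $0\le w\le 1$ gives the wrong upper bound, and one must instead argue that the fixed point (equivalently, the limit of the Picard iterates started from $u^{(0)}\equiv 0$ or from $u^{(0)}=u_0$) stays below $1$. The standard resolution is to observe that $u\equiv 1$ is a (super)solution of the integral equation and $u\equiv 0$ is a subsolution, and that the Picard iteration started from $u_0$ is monotone and order-bounded between them; this is exactly the argument carried out in \cite[Section 3, Chapter 5]{FreidlinFunctionalBook} and \cite[Theorem 3]{FreidlinHu13}, and since the Brownian motion on $\mathbb{T}$ shares with Brownian motion on $\mathbb{R}$ the only properties those proofs use — namely that it is a conservative Feller process with continuous paths — the argument transfers with no essential change. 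Hence the lemma follows; I would present it by citing those references for the monotone-iteration/comparison part and spelling out only the contraction estimate and the restart-by-Markov-property globalization, as above.
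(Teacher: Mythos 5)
Your proposal is essentially the paper's own argument. The paper does not write out a proof of this lemma at all; it cites Freidlin's book and Freidlin--Hu \cite[Theorem 3]{FreidlinHu13} and states that the lemma ``immediately'' follows from the contraction mapping principle, and then writes out exactly your contraction estimate $|\Phi(f)(t,y)-\Phi(g)(t,y)|\le \|u_0\|_\infty\,\beta t\,e^{\beta t}\,\|f-g\|_\infty$ and the restart-on-$[T,2T],[2T,3T],\dots$ globalization in the proof of Lemma~\ref{Lemma:proj}, for the projected process $Y$. Your choice of step size $T$ depending only on $\beta$, and the use of the Markov property to restart, are exactly what the paper uses.

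One caveat on the point you correctly flagged as the only delicate step, namely that $\Phi$ should preserve $[0,1]$-valuedness. The operator $\Phi$ is order-\emph{reversing}, not order-preserving: increasing $w$ decreases $\exp\{\beta\int_0^t(1-w(t-s,B_s))\,ds\}$ and hence decreases $\Phi(w)$. So your sentence ``the Picard iteration started from $u_0$ is monotone and order-bounded between them'' is not literally correct as stated — the iterates produce two interlaced monotone subsequences, and already $u^{(2)}=\Phi(u^{(1)})$ is not obviously $\le 1$ from $u^{(1)}\le 1$. The naive bound only gives $\Phi(w)\le e^{\beta T}$, which is why the paper's assertion that $\Phi:\mathcal{B}([0,T]\times\mathbb{R};[0,1])\to\mathcal{B}([0,T]\times\mathbb{R};[0,1])$ is silently glossing over the same point you noticed. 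The standard repair is to work with an equivalent mild formulation after adding and subtracting a linear term, i.e.\ write $\beta u(1-u)=-Mu+g(u)$ with $M\ge\beta$ so that $g(u)=Mu+\beta u(1-u)$ is nondecreasing on $[0,1]$, and replace the exponential Feynman--Kac operator by
\begin{equation*}
(\tilde\Phi w)(t,x) = e^{-Mt}E^{(\vec d,\vec\ell)}_x\bigl[u_0(B_t)\bigr] + \int_0^t e^{-M(t-s)}\,E^{(\vec d,\vec\ell)}_x\bigl[g\bigl(w(s,B_{t-s})\bigr)\bigr]\,ds \, ,
\end{equation*}
which is genuinely order-preserving, satisfies $\tilde\Phi(0)\ge 0$ and $\tilde\Phi(1)=1$, and is still a contraction for $T$ small. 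Since you explicitly defer to the references for this step and identify it as ``the one place I would be careful,'' this is a presentational remark rather than a gap in the overall proposal, which is correct and matches the paper's approach.
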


Due to our symmetric construction of the initial condition and the symmetric nature of the Brownian motion $B_t$ on $\mathbb{T}_{\vec{d}, \vec{\ell}}$\ , the solution $u$ to equation \eqref{Eq:FisherKPPInitialBoundaryConditionVertices} satisfies 
$u(t, x_1)=u(t, x_2)$ whenever $d(x_1, \rho)=d(x_2, \rho)$.
Such a fact is actually a consequence of Lemma \ref{Lemma:proj} below. Thus we can give the following definition of the speed of the wavefront:

\begin{definition}\label{Def:WaveSpeedTree}
A positive real number $c^*>0$ is called the \emph{asymptotic
speed} for the wavefont of \eqref{Eq:FisherKPPInitialBoundaryConditionVertices} if for any $h_1>0$ and $c^*>h_2>0$ we have
$$\lim_{t \to \infty}\sup_{d_{\mathbb{T}}(x,\rho)>(c^*+h_1)t}u(t,x)=0 \ , \
\lim_{t \to \infty}\inf_{d_{\mathbb{T}}(x,\rho)<(c^*-h_2)t}u(t,x)=1 \ ,$$
where $u(t,x)$ is the generalized solution to \eqref{Eq:GeneralizedSolutionFeynmanKacTree}.
\end{definition}

In a nutshell, the problem studied in this work can be formally stated as follows:

\

\noindent \textbf{Statement of the Problem.} \textit{For what values of the reaction rate $\beta>0$ does the equation \eqref{Eq:FisherKPPInitialBoundaryConditionVertices} admits a wavefront, as $t\rightarrow \infty$, that satisfies Definition \ref{Def:WaveSpeedTree}? When the wavefront exists, can we analyze its asymptotic speed?}

\

This problem is answered already in the introductory section, and the rest of the paper is dedicated to solving it.

\subsection{The basic idea of projection}
\label{Sec:TreeStructure:Subsection:ProjectionIdea}

Our key observation is as follows:
when a Brownian motion on $\mathbb{T}_{\vec{d},\vec{\ell}}$ is at a vertex with degree $d_i$ that is on the right of the root (see Figure \ref{Fig:Trees}), the probability that it moves further away from the root (i.e. move to the right) in the next instance is
$p_i=\dfrac{d_i-1}{d_i}$.

Therefore, instead of analyzing the large deviation behaviors of the Brownian motion
$B_t$ on $\mathbb{T}_{\vec{d},\vec{\ell}}$, we do so for the projection of
$B_t$ onto a one-dimensional axis along the direction of the wave propagation.
The  projected process is the multi-skewed Brownian motion $Y_t\in \mathbb{R}$ introduced in \cite{Ram11}.
LDP of $Y$ then leads to the asymptotic speed of a wave $v$ travelling on $\mathbb{R}$, via the Feynman-Kac formula.
Our setting, specifically the collection of trees and the initial condition $u(0,x)=u_0(x)=\mathbf{1}_{(-\delta, \delta)}(x)$,
guarantees that the asymptotic speed of $v$ is the same as that of the
solution $u$ of the reaction-diffusion equation \eqref{Eq:FisherKPP}.
Figure \ref{Fig:Idea} illustrates this idea.

\begin{figure}
\centering
\includegraphics[height=6cm, width=14cm]{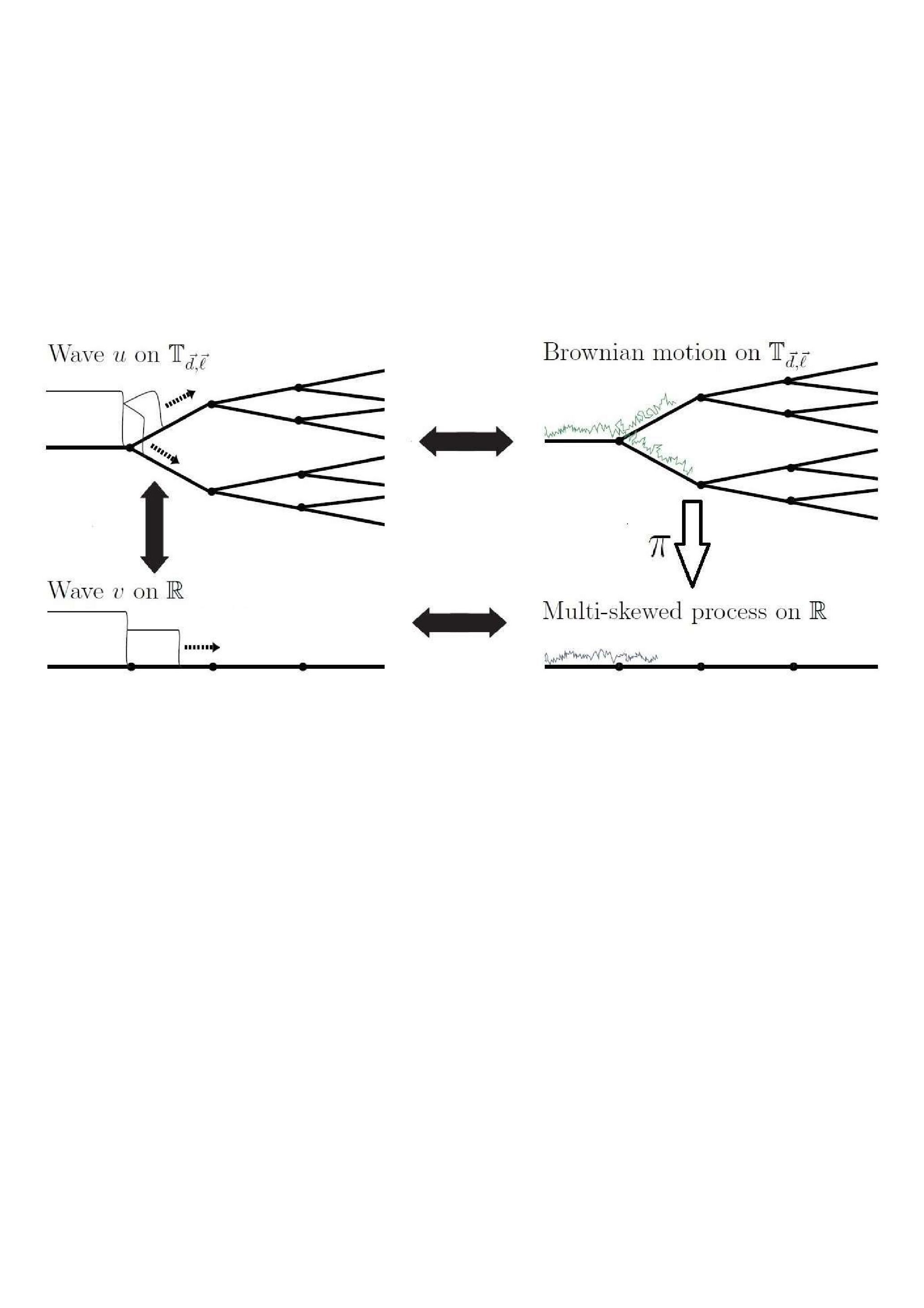}
\caption{{\bf Connection with multi-skewed Brownian motion via projection $\pi$. }
[Vertical double-arrow]
The top-left part shows the graph of a solution $u$ of \eqref{Eq:FisherKPP} (the wavefront) on half of a tree, the other half of the tree is symmetric. The wave will propagate
to the right (dashed arrows) along all edges of the tree.
The bottom left shows an arbitrary semi-infinite branch. Knowing the wave $v$ on it determines the wave $u$ on the entire tree, and vice versa, as described in Lemma \ref{Lemma:proj}. This is because the travelling waves on any two semi-infinite branches are the same.
This equivalence relation is symbolised by the vertical double-arrow.
[Horizontal double-arrows] Feynman-Kac formula allows us to write the solutions of reaction diffusion equations in terms of diffusion processes, giving  \eqref{Eq:GeneralizedSolutionFeynmanKacTree} in the upper horizontal double-arrow and \eqref{Lemma:proj:Eq:FeynmanKac-v} in the lower horizontal double-arrow.
[Vertical  one-sided arrow] On the right, $\pi$ is the projection that maps the Brownian motion on the tree $\mathbb{T}_{\vec{d},\vec{\ell}}$ to a multi-skewed process $Y$ on $\mathbb{R}$.
}
\label{Fig:Idea}
\end{figure}


The interface set $\vec{z}=(z_n)_{n\in \mathbb{Z}}$ is such that $z_0=0$, $z_{i+1}-z_i=\ell_i$ and $z_{-i}=-z_{i}$
for $i\geq 0$. Clearly  $z_{n}=\sum_{i=0}^{n-1} \ell_i=-z_{-n}$ for $n\geq 1$.
Assumption \ref{Assumption:ell} ensures that
$\vec{z}:=\{z_i\}_{i\in \mathbb{Z}}$ has no accumulation point.

The trajectories of process $Y=(Y_t)_{t\geq 0}$ behave like Brownian motion on $\mathbb{R}\setminus\vec{z}$, and
at point $z_i>0$, the probability of hitting $z_i+\varepsilon$ before hitting $z_i-\varepsilon$ is equal to
$p_i:=\dfrac{d_i-1}{d_i}$ as $\varepsilon$ is tending to zero; see \cite[Theorem 1.2]{Ram11}.
This property of $Y$ leads to the following lemma. We will formally define process $Y$ in Section \ref{Sec:MultiSkewBMAssumptions}, Definition \ref{Def:GSBM}.

\begin{lemma}\label{Lemma:piBY}
$\pi(B)=Y$ in distribution in $C(\mathbb{R}_+,\mathbb{R})$.
\end{lemma}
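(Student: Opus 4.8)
The plan is to establish the identity in law $\pi(B)=Y$ by characterizing both processes as continuous strong Markov processes on $\mathbb{R}$ with the same local behavior, and then invoking a uniqueness statement for such processes. First I would observe that $\pi(B)$ is a continuous process on $\mathbb{R}$, since $B$ has continuous paths on the metric tree $\mathbb{T}_{\vec{d},\vec{\ell}}$ and $\pi:\mathbb{T}_{\vec{d},\vec{\ell}}\to\mathbb{R}$ is continuous (indeed $1$-Lipschitz for the geodesic metric). The key point is that $\pi(B)$ is itself a Markov process: because of the symmetric branching structure of $\mathbb{T}_{\vec{d},\vec{\ell}}$ and the equal-probability rule for $B$ at each vertex, the conditional law of the future of $\pi(B)$ given the past depends only on the current horizontal coordinate — all tree-points at a given signed distance from $\rho$ are interchangeable from the point of view of the projected dynamics. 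I would make this precise by noting that the reflection/permutation symmetries of $\mathbb{T}_{\vec{d},\vec{\ell}}$ fixing each "level set" of $\pi$ preserve the law of $B$, so that $\pi(B)$ inherits the strong Markov property.

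Next I would identify the local characteristics of $\pi(B)$. On $\mathbb{R}\setminus\vec{z}$, i.e. strictly between consecutive projected vertices, $B$ runs inside the interior of an edge, where its generator is $\tfrac12\frac{d^2}{dx^2}$ along the edge; since $\pi$ is an isometry on each edge, $\pi(B)$ behaves there exactly as a standard one-dimensional Brownian motion. At an interface point $z_i>0$, which is the image of the generation-$i$ vertices, the excursion of $B$ away from such a vertex enters the unique edge heading back toward $\rho$ with probability $1/d_i$ and one of the $d_i-1$ edges heading away from $\rho$ with total probability $(d_i-1)/d_i$; projecting, this says precisely that from $z_i$ the process $\pi(B)$ moves to the left with probability $1/d_i=1-p_i$ and to the right with probability $p_i=(d_i-1)/d_i$, in the sense that the probability of hitting $z_i+\varepsilon$ before $z_i-\varepsilon$ tends to $p_i$ as $\varepsilon\downarrow 0$. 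By symmetry of the construction ($z_{-i}=-z_i$ and the left half-tree is a mirror image of the right half-tree) the analogous statement holds at $z_{-i}$ with the roles reversed, and at $z_0=0$ one gets $p_0=1/2$ since $d_0=2$. These are exactly the defining properties of the multi-skewed Brownian motion $Y$ with interface set $\vec{z}$ and skewness sequence $(p_i)$ as recalled from \cite[Theorem 1.2]{Ram11} just before the lemma (and to be formalized in Definition \ref{Def:GSBM}).

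To conclude, I would appeal to the fact that a continuous strong Markov process on $\mathbb{R}$ is uniquely determined in law (given a starting point, here $0$, the image $\pi(\rho)$) by its speed measure and scale function — equivalently, here, by the requirement that it be a Brownian motion off the locally finite set $\vec{z}$ together with the prescribed skewness coefficients at each $z_i$. Since Assumption \ref{Assumption:ell} guarantees $\vec{z}$ has no accumulation point, the multi-skewed Brownian motion $Y$ of \cite{Ram11} is well defined and is the unique such process; hence $\pi(B)$ and $Y$ have the same finite-dimensional distributions, and being both continuous, they agree in law in $C(\mathbb{R}_+,\mathbb{R})$.

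The main obstacle I anticipate is the rigorous verification that $\pi(B)$ is genuinely strong Markov and that its behavior at the interface points is captured exactly by the $\varepsilon\downarrow 0$ hitting-probability description rather than something subtler (e.g. an additional sticky or delay component at the vertices). This requires a careful excursion-theoretic or generator-domain argument: one should check that for a function $g:\mathbb{R}\to\mathbb{R}$ that is smooth off $\vec{z}$ and satisfies the skew-gluing condition $(1-p_i)g'(z_i^-)=p_i g'(z_i^+)$ at each $z_i$ (with the mirror condition at $z_{-i}$), the lift $g\circ\pi$ lies in the domain $D(A)$ of the tree-Laplacian — precisely because the symmetric gluing condition $\nabla(g\circ\pi)(v)=0$ at a generation-$i$ vertex unpacks to $(d_i-1)$ copies of $g'(z_i^+)$ minus one copy of $g'(z_i^-)$ summing to zero, i.e. $g'(z_i^-)=(d_i-1)g'(z_i^+)$, which is exactly the skew condition with $p_i=(d_i-1)/d_i$. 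Establishing this correspondence of domains, hence of generators, gives the identity in law directly and is the technical heart of the proof.
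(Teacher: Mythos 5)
The paper itself gives no explicit proof of Lemma~\ref{Lemma:piBY}; it simply states the lemma as a consequence of the preceding one-sentence remark on the hitting-probability characterization of $Y$ at the interface set (quoting~\cite[Theorem~1.2]{Ram11}) together with the analogous behavior of $B$ at the tree vertices. Your proposal fills in exactly the argument the paper leaves implicit, and it is correct: the symmetry/lumping step showing $\pi(B)$ is strong Markov is the right idea (the automorphism group of $\mathbb{T}_{\vec{d},\vec{\ell}}$ fixing $\rho$ acts transitively on each fiber $\pi^{-1}(y)$ and preserves the law of $B$), the identification of the local behavior off $\vec{z}$ and of the $\varepsilon\downarrow 0$ hitting probabilities at $z_i$ is what the paper itself points to, and the uniqueness appeal for regular one-dimensional diffusions closes the argument. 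Your generator-domain computation is the most useful part and is verified correctly: at a generation-$i$ vertex on the positive side, the tree gluing $\nabla(g\circ\pi)(v)=\sum_j\partial_j(g\circ\pi)(v)=-g'(z_i^-)+(d_i-1)g'(z_i^+)=0$ is equivalent to the skew condition $(1-p_i)g'(z_i^-)=p_i\,g'(z_i^+)$ with $p_i=(d_i-1)/d_i$ (with the mirror statement at $z_{-i}$), matching the boundary conditions for the multi-skewed semigroup given in \eqref{Eq:SemiGroupMultiSkewedBM}. A very minor caveat: you present both the hitting-probability route and the generator-domain route without fully committing to one; the domain argument is the one that actually rules out hidden sticky or delay components at the vertices and should be flagged as the one carrying the weight. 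Otherwise this is a valid and somewhat more rigorous elaboration than the paper offers.
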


Based on Lemma \ref{Lemma:piBY}, Lemma \ref{Lemma:proj} below tells us that we can recover function $u$
from its restriction $v$ on a single infinite branch, and that such a
restriction $v$ also enjoys a Feynman-Kac formula involving path integrals
for the multi-skewed Brownian motion $Y$. In the below $P^{(\vec{d}, \vec{\ell})}_y$,
$E^{(\vec{d},\vec{\ell})}_y$ are the probabilities and the mathematical
expectation with respect to $(Y_t)_{t\geq 0}$ starting at $y$, under a given tree $\mathbb{T}_{\vec{d}, \vec{\ell}}$.

\begin{lemma}\label{Lemma:proj}
Let $\vec{d}$ and $\vec{\ell}$ be deterministic sequences that satisfy  \eqref{Assumption:d:Eq:UpperboundBranching} and
\eqref{Assumption:ell:Eq:PositiveBelowell} respectively and let
$\mathbb{T}:=\mathbb{T}_{\vec{d},\vec{\ell}}$ be a fixed deterministic tree.
Suppose the initial condition
$u_0\in \mathcal{B}(\mathbb{T};[0,1])$ satisfies
$u_0(x_1)=u_0(x_2)$ whenever $\pi(x_1)=\pi(x_2)$. Then
\begin{equation}\label{Lemma:proj:Eq:uEqualv}
u(t,x)=v(t,\pi(x))\quad  \text{for all } (t,x)\in\mathbb{R}_+\times\mathbb{T},
\end{equation}
where $v$ is the unique element in $\mathcal{B}([0,\infty)\times \mathbb{R};\,[0,1])$ such that
\begin{equation} \label{Lemma:proj:Eq:FeynmanKac-v}
v(t,y)=E^{(\vec{d},\vec{\ell})}_y\Big[v_0(Y_t)\exp\Big\{\beta\int^t_0 \Big(1-v(t-s,Y_s)\Big)ds\Big\} \Big],
\end{equation}
the process $Y=(Y_t)_{t\geq 0}$ is the multi-skewed Brownian Motion in Definition
\ref{Def:GSBM} and the function $v_0\in \mathcal{B}(\mathbb{R};[0,1])$ is defined by $v_0 \circ \pi=u_0$. Furthermore,
 $v(t,\cdot)\in C(\mathbb{R};[0,1])$ for all $t>0$.
\end{lemma}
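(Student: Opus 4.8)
The plan is to establish the identity \eqref{Lemma:proj:Eq:uEqualv} by showing that the function $(t,x)\mapsto v(t,\pi(x))$, where $v$ solves \eqref{Lemma:proj:Eq:FeynmanKac-v}, is a generalized solution of \eqref{Eq:FisherKPPInitialBoundaryConditionVertices} in the sense of \eqref{Eq:GeneralizedSolutionFeynmanKacTree}, and then invoke the uniqueness part of Lemma \ref{Lemma:ExistenceUniquenessGeneralizedSolution}. First I would record that the integral equation \eqref{Lemma:proj:Eq:FeynmanKac-v} on $\mathbb{R}$ is well-posed: the multi-skewed Brownian motion $Y$ is a well-defined Feller process (Definition \ref{Def:GSBM}), $v_0\in\mathcal{B}(\mathbb{R};[0,1])$ is bounded, and the same contraction-mapping argument from \cite[Section 3, Chapter 5]{FreidlinFunctionalBook} that underlies Lemma \ref{Lemma:ExistenceUniquenessGeneralizedSolution} yields a unique $v\in\mathcal{B}([0,\infty)\times\mathbb{R};[0,1])$ with $v(t,\cdot)\in C(\mathbb{R};[0,1])$ for $t>0$; this also gives the final regularity assertion. (One should check that the interface set $\vec z$ having no accumulation point, guaranteed by Assumption \ref{Assumption:ell}, is what makes $Y$ a nice strong Markov process to which the Feynman-Kac machinery applies.)

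Next, the core step is the transfer of the path-integral representation from $Y$ on $\mathbb{R}$ to $B$ on $\mathbb{T}$. Fix $x\in\mathbb{T}$ and set $y=\pi(x)$. By Lemma \ref{Lemma:piBY}, $\pi(B)$ started at $x$ has the same law in $C(\mathbb{R}_+,\mathbb{R})$ as $Y$ started at $y$. Since $u_0=v_0\circ\pi$ and, by the hypothesis on $u_0$ together with the assumed $\pi$-invariance of $v$ in its spatial argument, the integrand
$$
u_0(B_s)\exp\Big\{\beta\int_0^t\big(1-\widetilde u(t-s,B_s)\big)\,ds\Big\}
$$
with $\widetilde u(t,x):=v(t,\pi(x))$ is a measurable functional of the path $\pi(B)$ alone, we may push the expectation $E_x^{(\vec d,\vec\ell)}$ through $\pi$ and rewrite it as an expectation over $Y$:
$$
E_x^{(\vec d,\vec\ell)}\Big[u_0(B_s)\exp\Big\{\beta\!\int_0^t\!\big(1-\widetilde u(t-s,B_s)\big)ds\Big\}\Big]
=E_y^{(\vec d,\vec\ell)}\Big[v_0(Y_s)\exp\Big\{\beta\!\int_0^t\!\big(1-v(t-s,Y_s)\big)ds\Big\}\Big]
=v(t,y),
$$
the last equality being \eqref{Lemma:proj:Eq:FeynmanKac-v}. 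Hence $\widetilde u$ satisfies \eqref{Eq:GeneralizedSolutionFeynmanKacTree} with initial datum $u_0$; one also checks $\widetilde u(t,\cdot)\in C(\mathbb{T};[0,1])$, which follows from continuity of $v(t,\cdot)$ and of $\pi$. By the uniqueness in Lemma \ref{Lemma:ExistenceUniquenessGeneralizedSolution}, $\widetilde u=u$, which is \eqref{Lemma:proj:Eq:uEqualv}. The restriction $v$ of $u$ to a single infinite branch is then exactly the function appearing in \eqref{Lemma:proj:Eq:FeynmanKac-v}.

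The main obstacle I anticipate is the rigorous justification of the "push the expectation through $\pi$" step, i.e.\ that the exponential Feynman-Kac functional based on $B$ really is $\sigma(\pi(B))$-measurable and that the additive functional $\int_0^t(1-\widetilde u(t-s,B_s))\,ds$ depends on $B$ only through $\pi(B)$. This rests on the fact that $\widetilde u(t-s,B_s)=v(t-s,\pi(B_s))$ by construction, so the integrand is literally a function of $\pi(B_s)$; the measurability is then routine once one has Lemma \ref{Lemma:piBY} in the form of equality of laws on path space. A secondary subtlety is that $\pi$ is only a pseudo-metric quotient map—it is continuous and surjective but not injective—so one must be careful that $v_0$ is genuinely well-defined by $v_0\circ\pi=u_0$, which is precisely where the hypothesis $u_0(x_1)=u_0(x_2)$ whenever $\pi(x_1)=\pi(x_2)$ is used. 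Everything else is an application of results already in place: well-posedness via contraction mapping, the strong Markov/Feynman-Kac structure of $Y$, and uniqueness of generalized solutions on $\mathbb{T}$.
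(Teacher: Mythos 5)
Your proposal is correct and runs the argument in the opposite direction from the paper's own proof, which is worth noting. The paper first invokes the symmetry of $\mathbb{T}_{\vec{d},\vec{\ell}}$ with respect to the projection $\pi$ to \emph{assert} the existence of a function $w:\mathbb{R}_+\times\mathbb{R}\to[0,1]$ with $u(t,x)=w(t,\pi(x))$; it then substitutes this into \eqref{Eq:GeneralizedSolutionFeynmanKacTree}, applies Lemma~\ref{Lemma:piBY} to pass from $\pi(B)$ to $Y$, and concludes $w=v$ by uniqueness of the fixed point of \eqref{Lemma:proj:Eq:FeynmanKac-v}. You instead \emph{construct} the candidate $\widetilde{u}:=v\circ\pi$, verify directly (again via Lemma~\ref{Lemma:piBY} and the $\pi$-invariance of the integrand) that $\widetilde{u}$ solves \eqref{Eq:GeneralizedSolutionFeynmanKacTree}, and conclude $\widetilde{u}=u$ by the uniqueness in Lemma~\ref{Lemma:ExistenceUniquenessGeneralizedSolution}. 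Both routes hinge on exactly the same two ingredients: the equality in law $\pi(B)\stackrel{d}{=}Y$, and a fixed-point uniqueness statement. Your version has the small advantage of replacing the paper's informal ``by symmetry there exists $w$'' step with an explicit construction and verification, which makes the $\pi$-invariance of $u$ a \emph{conclusion} rather than an intermediate assertion; the paper's version has the symmetric advantage of not needing to re-derive the Feynman--Kac fixed-point relation on $\mathbb{T}$ for the candidate. One point to be slightly careful about if you formalize this: when you invoke the uniqueness in Lemma~\ref{Lemma:ExistenceUniquenessGeneralizedSolution} for $\widetilde{u}=v\circ\pi$, that uniqueness is really uniqueness of the fixed point of the contraction in $\mathcal{B}(\mathbb{T};[0,1])$, which holds without an a priori continuity hypothesis on $\widetilde u$; this is needed because, as the paper itself points out, the continuity of $v(t,\cdot)$ is obtained \emph{from} the identity $u=v\circ\pi$ and the continuity of $u$, so you cannot assume it beforehand. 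With that minor reordering, your proof is complete.
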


\begin{proof}
Similar to the proof of the Theorem 3.1 \cite{FreidlinHu13}, from the contraction mapping
theorem on the Banach space $\mathfrak{B}_T:=\mathcal{B}([0,T]\times \mathbb{R};\,[0,1])$ with the
uniform norm, for the operator $\Phi:\mathfrak{B}_T\to \mathfrak{B}_T$ defined by
$$\Phi(f)(t,y):=E^{(\vec{d},\vec{\ell})}_y\Big[u_0(Y_t)\exp\Big\{\beta\int^t_0 \Big(1-f(t-s,Y_s)\Big)ds\Big\} \Big] \ , \ f\in \mathfrak{B}_T \ ,$$
where $T\in (0,\infty)$ is small enough, and then by extending to time
intervals of arbitrary length, it follows that there is a unique
$v\in \mathcal{B}([0,\infty)\times \mathbb{R};\,[0,1])$ satisfying \eqref{Lemma:proj:Eq:FeynmanKac-v}
on $[0,\infty)$.

To check the details, for all $0<t\leq T$, we have
\begin{align*}
\left|\Phi(f)(t,y)-\Phi(g)(t,y)\right|&=\left|E^{(\vec{d},\vec{\ell})}_y\Big[u_0(Y_t)\exp\Big\{\beta\int^t_0 \Big(1-f(t-s,Y_s)\Big)ds\Big\} \Big]\right.\\
& \qquad \qquad \left.-E^{(\vec{d},\vec{\ell})}_y\Big[u_0(Y_t)\exp\Big\{\beta\int^t_0 \Big(1-g(t-s,Y_s)\Big)ds\Big\} \Big]\right|\\
&\leq\|u_0\|_{\infty}\Big|E^{(\vec{d},\vec{\ell})}_y\Big[\exp\Big\{\beta\int^t_0 \Big(1-f(t-s,Y_s)\Big)ds\Big\}\\
& \qquad \qquad \qquad \qquad \qquad -\exp\Big\{\beta\int^t_0 \Big(1-g(t-s,Y_s)\Big)ds\Big\} \Big]\Big|\\
&\leq \|u_0\|_{\infty}\beta\exp(\beta t)t\|f-g\|_{\infty} \qquad \text{(by Mean Value Theorem)} \ ,
\end{align*}
which is strictly less than $\|f-g\|_{\infty}$ for $T$ small enough. Now we
can extend the solution to intervals $[T,2T],\dots,$ $[(n-1)T,nT]$ for $n\in \mathbb{Z}$.
The continuity of $v$ will then follow from Lemma \ref{Lemma:ExistenceUniquenessGeneralizedSolution} and
\eqref{Lemma:proj:Eq:uEqualv}.

It remains to prove \eqref{Lemma:proj:Eq:uEqualv}. By the assumption on the initial condition,
as well as the symmetry of $\mathbb{T}_{\vec{d},\vec{\ell}}$ with respect to the
horizontal direction at each bifurcation of the tree $\mathbb{T}_{\vec{d},\vec{\ell}}$
(see Figure \ref{Fig:Trees}), there exists a function $w:\mathbb{R}\to [0,1]$ such that
$u(t,x)=w(t,\pi(x))$ for all $x\in \mathbb{T}$ and $t\geq 0$. By \eqref{Eq:GeneralizedSolutionFeynmanKacTree},
we have
$$w(t,\pi(x))=E^{(\vec{d},\vec{\ell})}\Big[w(0, \pi(B_s))\exp\Big\{\beta\int^t_0 \Big(1-w\big(t-s,\pi(B_s)\big)\Big)ds\Big\}\Big] \ ,$$
where $B_t$ is a Brownian motion on the tree $\mathbb{T}$. Since $\pi(B)=Y$ in distribution
by Lemma \ref{Lemma:piBY}, we obtain that $w$ solves equation \eqref{Lemma:proj:Eq:FeynmanKac-v}
which implies that $w=v$ by uniqueness of solution to \eqref{Lemma:proj:Eq:FeynmanKac-v}.
\end{proof}

In particular, as we assumed $u_0(x)=\mathbf{1}_{(-\delta, \delta)}(x)$, we obtain from Lemma \ref{Lemma:proj} that $v_0(y)=\mathbf{1}_{(-\delta, \delta)}(y)$. From \eqref{Lemma:proj:Eq:uEqualv} in  Lemma \ref{Lemma:proj},  we see that the wave speed for
$u(t,x)$ on $\mathbb{T}_{\vec{d}, \vec{\ell}}$, defined in Definition \ref{Def:WaveSpeedTree}, is the same as that for $v(t, y)$ on $\mathbb{R}$.

\begin{lemma}\label{Lemma:WaveSpeedProjectedEqualTree}
A positive real number $c^*>0$ is the asymptotic
speed for  \eqref{Eq:FisherKPPInitialBoundaryConditionVertices} (in the sense of Definition \ref{Def:WaveSpeedTree}) if the following holds: for any $h>0$
$$\lim_{t \to \infty}\sup_{y>(c^*+h)t}v(t,y)=\lim_{t \to \infty}\sup_{y<(-c^*-h)t}v(t,y)=0 \ , \
\lim_{t \to \infty}\inf_{(-c^*+h)t<y<(c^*-h)t}v(t,y)=1 \ ,$$
where $v(t,y)$ is the generalized solution to \eqref{Lemma:proj:Eq:FeynmanKac-v}.
\end{lemma}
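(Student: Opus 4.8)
The plan is to reduce the tree-statement (Definition \ref{Def:WaveSpeedTree}) to the real-line statement appearing in Lemma \ref{Lemma:WaveSpeedProjectedEqualTree} by invoking the identity $u(t,x)=v(t,\pi(x))$ from Lemma \ref{Lemma:proj}, and then just matching up the index sets over which the $\sup$ and $\inf$ are taken. Since $u_0=\mathbf{1}_{(-\delta,\delta)}$ is constant on $\pi$-fibers, Lemma \ref{Lemma:proj} applies and gives \eqref{Lemma:proj:Eq:uEqualv}: $u(t,x)=v(t,\pi(x))$ for every $(t,x)\in\mathbb{R}_+\times\mathbb{T}$, where $v$ solves \eqref{Lemma:proj:Eq:FeynmanKac-v}. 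The key geometric fact is that $|\pi(x)|=d_{\mathbb{T}}(x,\rho)$, i.e.\ the horizontal coordinate is the signed geodesic distance to the root, so that the condition $d_{\mathbb{T}}(x,\rho)>r$ on $x\in\mathbb{T}$ translates exactly into $|\pi(x)|>r$, and dually $d_{\mathbb{T}}(x,\rho)<r$ into $|\pi(x)|<r$.

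Concretely, I would argue as follows. First I would note that $\pi$ is surjective onto $\mathbb{R}$ (indeed both the left and right semi-infinite branches through the root are mapped homeomorphically onto the two halves of $\mathbb{R}$), so
\[
\sup_{d_{\mathbb{T}}(x,\rho)>(c^*+h_1)t}u(t,x)=\sup_{|y|>(c^*+h_1)t}v(t,y)=\max\Big\{\sup_{y>(c^*+h_1)t}v(t,y),\ \sup_{y<-(c^*+h_1)t}v(t,y)\Big\}.
\]
By the hypothesis of Lemma \ref{Lemma:WaveSpeedProjectedEqualTree} with $h=h_1$, both terms on the right tend to $0$ as $t\to\infty$, hence the left side tends to $0$; this is precisely the first limit in Definition \ref{Def:WaveSpeedTree}. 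For the second limit, fix $0<h_2<c^*$ and observe similarly that
\[
\inf_{d_{\mathbb{T}}(x,\rho)<(c^*-h_2)t}u(t,x)=\inf_{|y|<(c^*-h_2)t}v(t,y)=\inf_{-(c^*-h_2)t<y<(c^*-h_2)t}v(t,y),
\]
which is exactly the quantity appearing in the $\inf$-statement of Lemma \ref{Lemma:WaveSpeedProjectedEqualTree} (with $h=h_2$), and that converges to $1$ as $t\to\infty$. Combining the two displays gives both requirements of Definition \ref{Def:WaveSpeedTree}, so $c^*$ is the asymptotic speed for \eqref{Eq:FisherKPPInitialBoundaryConditionVertices}.

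There is essentially no hard analytic content here; the lemma is a bookkeeping statement that transfers the wave-speed characterization through the projection. The only point requiring a line of care is the claim $d_{\mathbb{T}}(x,\rho)=|\pi(x)|$ together with the surjectivity of $\pi$ onto each half-line, which I would justify by recalling from Section \ref{Sec:TreeStructure:Subsection:Tree} that $\pi(x)$ is by definition the signed distance from $x$ to $\rho$ (positive on the right branch, negative on the left) and that, since $d_0=2$, the root lies on an unbroken axis so every real value is attained. One small subtlety worth flagging: the ranges in Definition \ref{Def:WaveSpeedTree} and in Lemma \ref{Lemma:WaveSpeedProjectedEqualTree} use strict inequalities, but since $v(t,\cdot)$ is continuous (the last assertion of Lemma \ref{Lemma:proj}) and the constants $h_1,h_2,h$ are arbitrary, one may freely pass between $(c^*+h)t$ and $(c^*+h')t$ for $h'<h$, so no boundary issue arises. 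The main ``obstacle,'' if any, is simply being careful that the decomposition $\{|y|>r\}=\{y>r\}\cup\{y<-r\}$ and $\{|y|<r\}=\{-r<y<r\}$ lines up term-by-term with the two-sided statement of Lemma \ref{Lemma:WaveSpeedProjectedEqualTree}.
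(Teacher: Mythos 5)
Your proposal is correct and follows the same route the paper takes (implicitly, as the paper does not spell out a separate proof): it reduces Definition \ref{Def:WaveSpeedTree} to the real-line statement via the identity $u(t,x)=v(t,\pi(x))$ from Lemma \ref{Lemma:proj} together with $d_{\mathbb{T}}(x,\rho)=|\pi(x)|$ and surjectivity of $\pi$. The index-set bookkeeping you give ($\{|y|>r\}=\{y>r\}\cup\{y<-r\}$ and $\{|y|<r\}=\{-r<y<r\}$) is exactly what is needed, and your remark that the constants $h,h_1,h_2$ are arbitrary correctly disposes of any boundary concerns.
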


Our analysis of the wavefront propagation of the equation \eqref{Eq:FisherKPPInitialBoundaryConditionVertices} on $\mathbb{T}_{\vec{d}, \vec{\ell}}$
is reduced by Lemmas \ref{Lemma:proj}, \ref{Lemma:WaveSpeedProjectedEqualTree} to the analysis of the corresponding solution $v(t, y)$ of the integral equation \eqref{Lemma:proj:Eq:FeynmanKac-v}
given by the Feynman-Kac formula.
The rest of the paper is dedicated to the surprisingly delicate analysis of the LDP of $Y_t$ and the wave propagation of \eqref{Lemma:proj:Eq:FeynmanKac-v}, that leads to the solution to our problem.

\subsection{Notations and convention}
\label{Sec:TreeStructure:Subsection:Notations}

We collect some notations here for the reader's convenience. Let $\mathbb{N}=\mathbb{Z}_{>0}=\{n\in \mathbb{Z}, n>0\}$ be the set of positive integers and $\mathbb{Z}_{+}=\mathbb{Z}_{\geq0}=\{n\in \mathbb{Z}, n\geq 0\}$ be the set of non-negative integers, and similarly $\mathbb{Z}_{<0}=\{n\in \mathbb{Z}, n<0\}$ and $\mathbb{Z}_{\leq0}=\{n\in \mathbb{Z}, n\leq 0\}$. We let $a\vee b:=\max\{a,b\}$ and $a\wedge b:=\min\{a,b\}$.
We denote an open $\delta$-ball centered at $u\in \mathbb{R}$ to be
$B_\delta(u)$. A tree $\mathbb{T}=\mathbb{T}_{\vec{d}, \vec{\ell}}$ is equipped with two parameters: the branching
degree sequence $\vec{d}=(d_i)$
and the branch lengths sequence $\vec{\ell}=(\ell_i)$. If $\ell_i=1$ for all $i$, then
$\mathbb{T}_{\vec{d}, \vec{\ell}}=\mathbb{T}_{\vec{d}}$. We define $p_i=\dfrac{d_i-1}{d_i}$ and we set the interface points $z_i$ so that $z_0=0$ and
$z_{i+1}-z_i=\ell_i$ and $z_{-i}=-z_i$
for $i\geq 0$.  The pair $(\vec{d}, \vec{\ell})$ uniquely determines $(\vec{p}, \vec{z})$ and vise versa. Hence we use them interchangeably.

The Brownian motion on $\mathbb{T}_{\vec{d}, \vec{\ell}}$ is denoted by $B_t$ and the corresponding multi-skewed Brownian motion on $\mathbb{R}$
is denoted by $Y_t$. Notice that when the tree degenerates to $\mathbb{R}$, this also includes the case that $B_t$ stands for a standard Brownian motion on $\mathbb{R}$.
If these two processes are written with superscripts, like $B^x_t$ or $Y^y_t$, then it stands for the corresponding process
starting at the initial point denoted by the superscripts $x\in \mathbb{T}$ and $y\in\mathbb{R}$. The probabilities and expectations for the Brownian motion and multi-skewed Brownian Motion with a fixed environment are denoted by $P^{(\vec{d}, \vec{\ell})}$ ($P^{(\vec{p}, \vec{z})}$) and $E^{(\vec{d}, \vec{\ell})}$ ($E^{(\vec{p}, \vec{z})}$). The probabilities and expectations for the random environment are defined by $\mathbf{P}$ and $\mathbf{E}$. We set
 $p_{+1}^i\equiv p^i=\dfrac{\ell_{i-1}p_i}{\ell_i(1-p_i)+\ell_{i-1}p_i}$ and $p_{-1}^i\equiv q^i=1-p_{+1}^i$. The first hitting time for $Y_t$ from $Y_0=s$ to $r$ is defined by $T^s_r$.
We let $\tau_k$ be the $k$-th time the multi-skewed BM $Y$ hits the interface set $\vec{z}=(z_i)_{i\in \mathbb{Z}}$. We set $\eta=-\lambda$ to be two parameters of opposite sign, and  $\gamma_i=e^{\sqrt{2\lambda}\ell_i}$, $\zeta_i=2p_i-1$. A limiting random variable $\xi=\xi_{\lambda}=\xi_{-\eta}$ will be introduced to analyze the wave speed.

\section{Multi-skewed Brownian motion in random environment}\label{Sec:MultiSkewBMAssumptions}

\label{Sec:MultiSkewBMAssumptions:Subsection:Assumptions}

Let $\vec{z}:=\{z_i\}_{i\in \mathbb{Z}}$ be a set of real numbers with no accumulation point
(sometimes we refer to $z_i$'s as barriers or the interface points, and $\vec{z}$ the interface set)
and $\vec{p}:=\{p_i\}_{i\in \mathbb{Z}} \subset (0,1)$ (we refer to $\vec{p}$ as the skewness sequence).
It follows from \cite{LG84} that there is a unique
pathwise solution $Y=(Y_t)_{t\geq 0}$ to the stochastic differential equation
\begin{equation} \label{Eq:MultiSkewBM-SDE}
Y_t= Y_0+ B_t+ \int_{\mathbb{R}} L^{Y}(t,x)\,d\mu(x) \ ,
\end{equation}
where $B$ is the standard Brownian motion on $\mathbb{R}$, $L^Y$ is the local time of the
unknown process $Y$, and $\mu$ is the bounded measure
\begin{equation} \label{Eq:MultiSkewBM-BarrierMeasure}
\mu=\sum_{i\in \mathbb{Z}} (2p_i-1)\,\delta_{z_i} \ .
\end{equation}

\begin{definition}[multi-skewed Brownian motion \cite{Ram11}]\label{Def:GSBM}
The unique diffusion process  $Y=(Y_t)_{t\geq 0}$ on  $\mathbb{R}$ solving \eqref{Eq:MultiSkewBM-SDE}
is called a multi-skewed Brownian motion with skewness sequence $\vec{p}$ and interface set $\vec{z}$.
\end{definition}

Suppose, as in Section \ref{Sec:TreeStructure:Subsection:ProjectionIdea}, that $z_0=0$, $z_{i+1}-z_i=\ell_i$ and $z_{-i}=-z_i$
for $i\geq 0$, and
\begin{equation}\label{Eq:p-d-relation}
p_i:=\dfrac{d_i-1}{d_i} \ .
\end{equation}
The pair $(\vec{d}, \vec{\ell})$ uniquely determines $(\vec{p}, \vec{z})$ and vise versa.
The symmetric $\vec{d}$-regular tree with branch lengths $\vec{\ell}$ can
then be denoted either as $\mathbb{T}_{\vec{d}, \vec{\ell}}$ or $\mathbb{T}_{\vec{p}, \vec{z}}$.

We denote the two probability measures governing the environment $(\vec{p},\vec{z})\in (0,1)^{\mathbb{Z}}\times \mathbb{R}^{\mathbb{Z}}$ and the diffusion $Y$ with skewness sequence and interface set $(\vec{p},\vec{z})$ by, respectively, $\mathbf{P}$ and $P^{(\vec{p},\vec{z})}$.
Their mathematical expectations are denoted by
$\mathbf{E}$ and $E^{(\vec{p},\vec{z})}$ respectively.
Following literature on random walk in random environments (RWRE) we refer to $P^{(\vec{p},\vec{z})}$ as the \textit{quenched} law \footnote{The annealed measure $\mathbb{P}$ is defined by  $\mathbb{P}(A)=\mathbf{E}[ P^{(\vec{p},\vec{z})}(A)] =\int_{(0,1)^{\mathbb{Z}}\times \mathbb{R}^{\mathbb{Z}}} P^{(\vec{p},\vec{z})}(A)\,d\mathbf{P}$.}.

Assumption \ref{Assumption:treeP} directly implies the following
Lemma for the structure of the interface set $\vec{z}$ and skewness sequence $\vec{p}$.
\begin{lemma}\label{Assumption:ErgodicEnvironments}
The probability measure $\mathbf{P}$ on the space of ``environments"
$(\vec{p}, \vec{z})\in (0,1)^{\mathbb{Z}}\times \mathbb{R}^{\mathbb{Z}}$
that governs the structure of the multi--skewed Brownian Motion $Y_t$ satisfies the following:
\begin{enumerate}
\item[(1)] (i.i.d.\ skewness and branch lengths).
Under $\mathbf{P}$, $(p_i)_{i\geq 1}$ is an i.i.d sequence
of random variables in $(0,1)$, $(\ell_i=z_{i+1}-z_i)_{i\geq 0}$ is an i.i.d sequence in $(0,\infty)$, and the two sequences are independent.

\item[(2)] ($\vec{p}$ is symmetric). For $\mathbf{P}$-almost all $(\vec{p},\vec{z})$, there exists a sequence
of positive integers $\vec{d}:=(d_n)_{n\in \mathbb{Z}_+}$ with $d_0=2$
and $2\leq d_n\leq \overline{d}<\infty$, such that $p_i=\dfrac{d_i-1}{d_i} \text{ and } p_{-i}=1-p_{i} \text{ for } i\geq 0$.

\item[(3)] ($\vec{z}$ is symmetric). $z_0=0$, $z_n:=\sum_{i=0}^{n-1} \ell_i$ and $z_{-n}=-z_{n}$ for $n>0$.
For $\mathbf{P}$-almost all $(\vec{p},\vec{z})$ and all $n\geq 0$,
$0<\underline{\ell} \leq z_{n+1}-z_n\leq \overline{\ell}<\infty$.
\end{enumerate}
\end{lemma}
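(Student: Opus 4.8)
The plan is to obtain all three items as an immediate consequence of Assumption \ref{Assumption:treeP} by pushing the measure forward through the deterministic change of coordinates $(\vec d,\vec\ell)\mapsto(\vec p,\vec z)$ fixed in \eqref{Eq:p-d-relation} and the paragraph preceding it, and then invoking the fact that coordinatewise deterministic maps preserve the i.i.d.\ and mutual-independence structure. In other words, the lemma is a bookkeeping restatement of Assumption \ref{Assumption:treeP} in the $(\vec p,\vec z)$ coordinates.

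First I would make the change of coordinates explicit and verify it is a well-defined Borel map. On the sample space $\Omega=\mathbb{N}^{\mathbb{Z}_+}\times(0,\infty)^{\mathbb{Z}_+}$ define, for each realization $(\vec d,\vec\ell)$, the skewness sequence by $p_i:=(d_i-1)/d_i$ for $i\ge 0$ (so $p_0=1/2$ since $d_0=2$) and $p_{-i}:=1-p_i$ for $i\ge 1$, and the interface set by $z_0:=0$, $z_n:=\sum_{i=0}^{n-1}\ell_i$ and $z_{-n}:=-z_n$ for $n\ge 1$. Each coordinate of the resulting point $(\vec p,\vec z)\in(0,1)^{\mathbb{Z}}\times\mathbb{R}^{\mathbb{Z}}$ is a measurable (in fact continuous, since $d_n\ge 2$ never vanishes) function of finitely many coordinates of $(\vec d,\vec\ell)$, so the induced map $\Phi:\Omega\to(0,1)^{\mathbb{Z}}\times\mathbb{R}^{\mathbb{Z}}$ is Borel measurable; the probability measure denoted again by $\mathbf{P}$ on the environment space is, by definition, the law of $(\vec p,\vec z)=\Phi(\vec d,\vec\ell)$. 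Items (2) and (3) then hold identically by construction: $z_0=0$, $z_n=\sum_{i=0}^{n-1}\ell_i$, $z_{-n}=-z_n$, $p_i=(d_i-1)/d_i$ and $p_{-i}=1-p_i$ are exactly the defining relations, while $d_0=2$ and the a.s.\ bounds $2\le d_n\le\overline d$ and $0<\underline\ell\le\ell_n\le\overline\ell<\infty$ are precisely Assumptions \ref{Assumption:d} and \ref{Assumption:ell}, which hold $\mathbf{P}$-a.s.\ by Assumption \ref{Assumption:treeP}; since $z_{n+1}-z_n=\ell_n$, the displayed bound on consecutive $z_n$ follows.

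For item (1), I would use the elementary fact that if $(X_i)_{i\in I}$ is an i.i.d.\ sequence and $g$ is a fixed measurable map, then $(g(X_i))_{i\in I}$ is i.i.d., and that applying such coordinatewise maps to two independent families leaves them independent. Under $\mathbf{P}$, $(d_i)_{i\ge 1}$ is i.i.d.\ by Assumption \ref{Assumption:treeP}; taking $g(x)=(x-1)/x$ shows $(p_i)_{i\ge 1}$ is an i.i.d.\ sequence in $(0,1)$. The sequence $(z_{i+1}-z_i)_{i\ge 0}=(\ell_i)_{i\ge 0}$ is literally the original i.i.d.\ length sequence, hence i.i.d.\ in $(0,\infty)$. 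Finally $(p_i)_{i\ge 1}$ is a function of $(d_i)_{i\ge 1}$ alone and $(\ell_i)_{i\ge 0}$ is a function of $(\ell_i)_{i\ge 0}$ alone; since these two generating families are mutually independent under $\mathbf{P}$ by Assumption \ref{Assumption:treeP}, so are the two derived sequences.

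There is no substantive obstacle: the only points requiring care are confirming that the coordinate change $\Phi$ is Borel so that the pushforward is well defined, and keeping the index conventions straight — in particular that $p_0=1/2$, that the negative-index entries $p_{-i}$ and $z_{-n}$ are defined by reflection rather than being fresh independent copies, and that $\vec z$ is determined by the nonnegatively-indexed lengths $(\ell_i)_{i\ge 0}$ together with the reflection $z_{-n}=-z_n$. None of this uses more than measurability of elementary functions and the permanence of the i.i.d.\ and independence structure under deterministic coordinatewise maps.
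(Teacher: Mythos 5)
Your proposal is correct and matches the paper's treatment: the paper gives no separate proof, simply asserting that Assumption \ref{Assumption:treeP} "directly implies" the lemma via the change of coordinates $(\vec d,\vec\ell)\mapsto(\vec p,\vec z)$ fixed in \eqref{Eq:p-d-relation} and the surrounding definitions. Your write-up just makes explicit the measurability of that coordinate map and the permanence of i.i.d./independence under coordinatewise functions, which is exactly the bookkeeping the paper leaves implicit.
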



We define  $p^{i}_{+1}$ and $p^{i}_{-1}$ by
\begin{equation}\label{Eq:p^s2}
p^{i}_{+1}:= \frac{\ell_{i-1}p_i}{\ell_{i}(1-p_i)+\ell_{i-1}p_i}\quad \text{and} \quad  p^{i}_{-1}:=1- p^{i}_{+1}.
\end{equation}

\begin{remark}[embedded random walk]\rm\label{Remark:EmbeddedRandomWalk}
The embedded random walk of $Y$ on $\vec{z}:=\{z_i\}_{i\in \mathbb{Z}}$ is
a biased random walk with transition probabilities $\{p^{i}_{+1},p^{i}_{-1}\}$ given by \eqref{Eq:p^s2}.
Precisely, let $\tau_0=0$ and for $k\geq 0$ we define
\begin{equation}\label{Def:tau_i}
\tau_{k+1}:=\inf\{ t>\tau_k: Y_t \in \vec{z} \}
\end{equation}
to be the $(k+1)$-th time that $Y$ hits the set $\vec{z}=(z_i)_{i\in \mathbb{Z}}$. Then by
\cite[equations (2.9)-(2.10)]{Ram11}, the random walk $\{Y_{\tau_k}\}_{k\geq 0}$ satisfies
\begin{equation}\label{Eq:p^s}
P(Y_{\tau_{k+1}} =  z_{i+1} | Y_{\tau_k}=z_i)=p^{i}_{+1} \quad
\text{and} \quad P(Y_{\tau_{k+1}} =  z_{i-1} | Y_{\tau_k}=z_i)=p^{i}_{-1}.
\end{equation}
\end{remark}

\subsection{Hitting time estimates}
\label{Sec:MultiSkewBMAssumptions:Subsection:HittingTimeEstimates}

For the multi-skewed Brownian motion $Y_t$ on $\mathbb{R}$, and any $r,s\in \mathbb{R}$, let us introduce the first hitting time to $r$ starting at $s$
\begin{equation}\label{Eq:StoppingTimesTorMultiSkewedBM}
T^s_r=\inf\left\{t\geq 0, Y_0=s, Y_t=r\right\} \ .
\end{equation}

Let $S_t^i$ be a standard $p_i$-skewed Brownian motion (see \cite{Harrison-SheppSkewBM}). That is,
\begin{equation} \label{Eq:SSBM}
S^i_t= S^i_0+ B_t+ (2p_i-1) L^{S^i}_t \ ,
\end{equation}
where $B$ is the standard Brownian motion on $\mathbb{R}$, $L^S$ is the local time of the
unknown process $S$ at $0$.
Let $\sigma(i):=\inf\{t\geq 0:\,S^{i}_t\in \{-\ell_{i-1},\ell_{i}\}\}$ be the exit time of the standard
$p_i$-skewed Brownian motion $S^{i}_t$ on the interval $(-\ell_{i-1},\ell_{i})$, starting at 0. The probabilities
and expectations with respect to the driving Brownian motion $B_t$ in \eqref{Eq:SSBM} are denoted as $P$ and $E$, respectively.

Denote by $S^i_{+1}$ the event $\{S^{i}_{\sigma(i)}= \ell_{i}\}$
and by  $S^i_{-1}$ the event $\{S^{i}_{\sigma(i)}= -\ell_{i-1}\}$.
Then Remark \ref{Remark:EmbeddedRandomWalk} asserts that
$P(S^i_{\pm 1})=p^{i}_{\pm 1}$.
Define
\begin{align}\label{Eq:Def:J_eta}
 J^{i}_{\pm}:=J^{i}_{\eta, \,\pm 1}:=&E\left[e^{\eta \,\sigma(i)} \,{\bf 1}_{\{S^i_{\pm 1}\}}\right]=E\left[e^{\eta \,\sigma(i)} \,\big|\, S^{i}_{\pm 1}\right] p^i_{\pm 1}.
\end{align}
We write $p^i:=p^{i}_{+1}$ and $q^i:=p^{i}_{-1}=1-p^i$ to simplify notation.

For fixed $\lambda \in \mathbb{R}$, we define the auxiliary function
\begin{equation}\label{Eq:w-AuxiliaryFunction}
w(x)\equiv
w_{\lambda}(x)\equiv E^{(\vec{p},\vec{z})}\left[e^{-\lambda T_0^x}\mathbf{1}_{T_0^x<\infty}\right] \ , \ x\in \mathbb{R} \ ,
\end{equation}
which might be $+\infty$ when $\lambda<0$. Set $\eta=-\lambda$, then we have
\begin{equation}\label{Eq:Def:wforY}
w_{-\eta}(z_i)= E^{(\vec{p},\vec{z})}\left[e^{\eta T_0^{z_i}} \mathbf{1}_{T_0^{z_i}<\infty}\right].
\end{equation}

The following lemma summarizes some elementary properties of the function $w(x)$.

\begin{lemma}\label{Lemma:ElementaryPropertiesw}
Let $\vec{z}:=\{z_i\}_{i\in \mathbb{Z}}\subset \mathbb{R}^\mathbb{Z}$ and
$\vec{p}:=\{p_i\}_{i\in \mathbb{Z}} \subset (0,1)^\mathbb{Z}$ satisfy Lemma \ref{Assumption:ErgodicEnvironments}.
The function $w=w_{\lambda}:\mathbb{R} \to [0,\infty]$ defined in \eqref{Eq:w-AuxiliaryFunction}
satisfies the following  properties:
\begin{itemize}
\item[(1)] For $x>0$, $w(x)$ does not depend on the choice of $\{(z_i,p_i)\}_{i<0}$. Similarly, for $x<0$, $w(x)$ does not depend on the choice of $\{(z_i,p_i)\}_{i>0}$.

\item[(2)] $w(x)=w(-x)$ for all $x\in \mathbb{R}$.

\item[(3)] If $\lambda=0$, then $w_0(x)=P^{(\vec{p},\vec{z})}(T_0^x<\infty)$.

\item[(4)] If $\lambda>0$, then $w$ is strictly decreasing on $(0,\infty)$ and strictly increasing on $(-\infty,0)$.
\end{itemize}
\end{lemma}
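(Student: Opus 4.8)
The plan is to establish the four properties in order, each relying on the strong Markov property of the multi-skewed Brownian motion $Y$ at the interface points together with the structural symmetry of the environment recorded in Lemma \ref{Assumption:ErgodicEnvironments}. For property (1), the key observation is that for $x > 0$, the process $Y$ started at $x$ must hit $0$ before it can feel any barrier $z_i$ with $i < 0$; indeed $0 = z_0$ itself is an interface point, so any path from $x > 0$ to $0$ stays in $[0,\infty)$ up to time $T_0^x$ (more precisely, it stays in $(z_{-1}, z_N]$ for $N$ large, but in any case never crosses to the left of $z_0 = 0$ before hitting $0$). Therefore the law of $(Y_t)_{0 \le t \le T_0^x}$ under $P^{(\vec p, \vec z)}$, hence the value $E^{(\vec p,\vec z)}[e^{-\lambda T_0^x}\mathbf 1_{T_0^x < \infty}]$, depends only on $\{(z_i, p_i)\}_{i \ge 0}$. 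The case $x < 0$ is symmetric.

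For property (2), I would use the reflection symmetry of the environment: by Lemma \ref{Assumption:ErgodicEnvironments}(2)--(3), the map $z \mapsto -z$ carries the environment $(\vec p, \vec z)$ to an environment with the same law of hitting-time functionals, because $z_{-i} = -z_i$ and $p_{-i} = 1 - p_i$, and the defining SDE \eqref{Eq:MultiSkewBM-SDE} is equivariant under $Y \mapsto -Y$ (the local time at $z_i$ becomes local time at $-z_i = z_{-i}$, and the skewness coefficient $2p_i - 1$ becomes $-(2p_i-1) = 2p_{-i} - 1$). Hence $Y$ started at $-x$ in the environment $(\vec p, \vec z)$ has the same distribution as $-Y$ started at $x$ in that same environment, and in particular $T_0^{-x} \stackrel{d}{=} T_0^x$, giving $w(-x) = w(x)$. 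Property (3) is immediate: setting $\lambda = 0$ in \eqref{Eq:w-AuxiliaryFunction} gives $w_0(x) = E^{(\vec p,\vec z)}[\mathbf 1_{T_0^x < \infty}] = P^{(\vec p,\vec z)}(T_0^x < \infty)$.

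Property (4) is the one requiring a genuine argument, and I expect the monotonicity of the interface structure to be the main point to handle carefully. For $0 < x_1 < x_2$, I would condition on the first hitting time of $x_1$ by the process started at $x_2$: by the strong Markov property, $T_0^{x_2} = T_{x_1}^{x_2} + \widetilde T_0^{x_1}$ where $\widetilde T_0^{x_1}$ is an independent copy of $T_0^{x_1}$ (here I use that $Y$ started at $x_2 > x_1 > 0$ must pass through $x_1$ before reaching $0$, since paths are continuous). Hence for $\lambda > 0$,
\begin{equation}\label{Eq:proof-w-mono}
w_\lambda(x_2) = E^{(\vec p,\vec z)}\!\left[e^{-\lambda T_{x_1}^{x_2}} \mathbf 1_{T_{x_1}^{x_2} < \infty}\right] \cdot w_\lambda(x_1).
\end{equation}
Since $\lambda > 0$ and $T_{x_1}^{x_2} > 0$ almost surely on $\{T_{x_1}^{x_2} < \infty\}$ (with strictly positive probability the hitting time is bounded away from $0$, as $x_2 \neq x_1$), the prefactor $E^{(\vec p,\vec z)}[e^{-\lambda T_{x_1}^{x_2}}\mathbf 1_{T_{x_1}^{x_2}<\infty}]$ lies strictly in $(0,1)$; combined with $w_\lambda(x_1) > 0$ (the process started at $x_1 > 0$ reaches $0$ in finite time with positive probability — e.g.\ it behaves like a Brownian motion away from barriers and barriers do not prevent it from reaching $0$), \eqref{Eq:proof-w-mono} yields $0 < w_\lambda(x_2) < w_\lambda(x_1)$, i.e.\ strict decrease on $(0,\infty)$. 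Strict increase on $(-\infty,0)$ then follows from property (2). The one subtlety to nail down is that the prefactor in \eqref{Eq:proof-w-mono} is strictly less than $1$, which needs $P^{(\vec p,\vec z)}(T_{x_1}^{x_2} < \infty) \le 1$ together with $e^{-\lambda T_{x_1}^{x_2}} < 1$ on a positive-probability event where $T_{x_1}^{x_2}$ is strictly positive — both of which are clear from continuity of paths and $\lambda > 0$.
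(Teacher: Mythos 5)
Your proof is correct and follows essentially the same route as the paper's: (1) rests on path continuity, (2) on the reflection symmetry of the environment from Lemma \ref{Assumption:ErgodicEnvironments} (you add the useful explicit check that the SDE \eqref{Eq:MultiSkewBM-SDE} is equivariant under $Y\mapsto -Y$ with $z_i\mapsto z_{-i}$, $p_i\mapsto p_{-i}$), (3) is immediate, and (4) uses exactly the strong-Markov factorization $T_0^{x_2} = T_{x_1}^{x_2} + T_0^{x_1}\circ\theta_{T_{x_1}^{x_2}}$ that the paper invokes, with the additional (correct, and worth recording) observations that the prefactor $E^{(\vec p,\vec z)}[e^{-\lambda T_{x_1}^{x_2}}\mathbf 1_{T_{x_1}^{x_2}<\infty}]\in(0,1)$ and $w_\lambda(x_1)>0$ are needed for strictness.
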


\begin{proof}
\begin{itemize}
\item[(1)] This is clear from the definition: if $x>0$, then the trajectory of $Y$ stays on $(0,\infty)$ during time interval $[0,T^x_0)$.

\item[(2)] For any Borel set $A\subset (0,\infty)$, we define its reflection set
\begin{equation*}
-A\equiv \{-x: x\in A\} \ .
\end{equation*}
Then we have
\begin{equation}\label{Lemma:ElementaryPropertiesw:Eq:SymmstryProbabilityDistributionSkewBM}
P^{\vec{p}, \vec{z}}(Y_t\in A)=P^{(\vec{p}, \vec{z})}(Y_t\in -A)
\end{equation}
due to the symmetry of $(\vec{z},\vec{p})$ described in Lemma \ref{Assumption:ErgodicEnvironments}.

\item[(3)] This follows from the definition of the function $w(x)$ in \eqref{Eq:w-AuxiliaryFunction}.

\item[(4)] This can be checked by applying the strong Markov property of process $Y$ to the stopping times $\{T^x_y\}$ and the fact that, for $x>y>0$, we have
\begin{equation}\label{Lemma:ElementaryPropertiesw:Eq:w-decrease}
T^x_0=T^x_y+T^y_0\theta_{T^x_y},
\end{equation}
where for any $t\leq 0$, $\theta_t:\,\mathcal{C}_*\to \mathcal{C}_*$ is
the shift operator $\theta_t x(s)=x(s+t)$, $x\in \mathcal{C}_*$.
\end{itemize}
\end{proof}

Recall that $w_0(z_1)=P^{(\vec{p}, \vec{z})}(T^{z_1}_0<\infty)$ by Lemma \ref{Lemma:ElementaryPropertiesw} part (3).
The following auxiliary lemma about $w_0(z_1)$ will be useful in the proof of Lemma \ref{Lemma:PropertiesMu}.
It asserts that almost surely with respect to $\mathbf{P}$, this probability
is strictly positive.

\begin{lemma}\label{Lemma:EstimateHittingProbabilityBackZeroMulti-SkewedBM}
Let $\vec{z}:=\{z_i\}_{i\in \mathbb{Z}} \subset \mathbb{R}^\mathbb{Z}$
and $\vec{p}:=\{p_i\}_{i\in \mathbb{Z}} \subset (0,1)^\mathbb{Z}$ satisfy
Lemma \ref{Assumption:ErgodicEnvironments}.
Then

\begin{itemize}

\item[(a)] For $i\geq 1$,
$w_{0}(z_i)=\dfrac{\sigma_i}{1+\sigma_i} \in (0,1]$
where
\begin{equation}\label{Eq:Def:sigmaiY}
\sigma_i:=\sum_{k\geq i} \prod_{j=1}^{k}\frac{p^{j}_{-1}}{p^{j}_{+1}}=\sum_{k\geq i} \prod_{j=1}^{k}\frac{\ell_{j}(1-p_j)}{\ell_{j-1}p_j}.
\end{equation}
In particular, $w_{0}(z_1)\in (0,1)$ if and only if $\sigma_1<\infty$.

\item[(b)] There exists some positive constant $C_*=C_*(\overline{l}, \underline{l}, \overline{d})>0$
that depends only on $\overline{l}, \underline{l}, \overline{d}$ such that
$w_{0}(z_1)\geq C_*$ almost surely under $\mathbf{P}$.

\end{itemize}
\end{lemma}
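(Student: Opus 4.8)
The plan is to prove both parts by analyzing the embedded random walk $\{Y_{\tau_k}\}$ on the interface set $\vec{z}$, which by Remark~\ref{Remark:EmbeddedRandomWalk} is a birth--death chain on $\mathbb{Z}$ with transition probabilities $p^j_{+1}, p^j_{-1}$ given by \eqref{Eq:p^s2}. For part~(a), observe that $T_0^{z_i}<\infty$ for the continuous process $Y$ is equivalent to the embedded walk started at site $i$ eventually reaching site $0$. Since the walk on $\{0,1,2,\dots\}$ with $0$ absorbing is a standard one-dimensional birth--death chain, I would invoke the classical gambler's-ruin/electrical-network computation: the probability that the walk started at $i\geq 1$ ever hits $0$ equals $1$ if $\sum_{k\geq 1}\prod_{j=1}^k \frac{p^j_{-1}}{p^j_{+1}}=\infty$, and otherwise equals $\frac{\sum_{k\geq i}\prod_{j=1}^k (p^j_{-1}/p^j_{+1})}{1+\sum_{k\geq 1}\prod_{j=1}^k(p^j_{-1}/p^j_{+1})}$; writing $\sigma_i$ for the tail sum as in \eqref{Eq:Def:sigmaiY} gives $w_0(z_i)=\sigma_i/(1+\sigma_i)$, with the understanding that $\sigma_1=\infty$ yields value $1$. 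Substituting the explicit ratio $p^j_{-1}/p^j_{+1}=\ell_j(1-p_j)/(\ell_{j-1}p_j)$ from \eqref{Eq:p^s2} gives the second displayed expression for $\sigma_i$. The final "in particular" claim is then immediate since $w_0(z_1)\in(0,1)$ exactly when $0<\sigma_1<\infty$, and $\sigma_1>0$ always holds as it is a sum of positive terms.

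For part~(b), I would produce a deterministic lower bound $C_*$ for $w_0(z_1)=\sigma_1/(1+\sigma_1)$, uniform over all environments allowed by Lemma~\ref{Assumption:ErgodicEnvironments}. Since $x\mapsto x/(1+x)$ is increasing, it suffices to bound $\sigma_1$ from below by a positive constant; indeed just the $k=1$ term of $\sigma_1$ already gives $\sigma_1\geq \prod_{j=1}^1 \frac{\ell_j(1-p_j)}{\ell_{j-1}p_j}=\frac{\ell_1(1-p_1)}{\ell_0 p_1}$. Using Assumptions~\ref{Assumption:d} and~\ref{Assumption:ell}: the branch lengths satisfy $\underline\ell\leq \ell_0,\ell_1\leq\overline\ell$, and since $2\le d_1\le\overline d$ we have $p_1=(d_1-1)/d_1\in[1/2,(\overline d-1)/\overline d]$, so $1-p_1=1/d_1\geq 1/\overline d$ and $p_1<1$. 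Hence $\sigma_1\geq \frac{\underline\ell}{\overline\ell}\cdot\frac{1}{\overline d}=:c_0>0$, and therefore $w_0(z_1)=\frac{\sigma_1}{1+\sigma_1}\geq \frac{c_0}{1+c_0}=:C_*(\underline\ell,\overline\ell,\overline d)>0$ almost surely under $\mathbf{P}$.

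The only genuinely non-routine point is justifying the gambler's-ruin formula in part~(a) rigorously in the presence of possibly non-convergent series, i.e., correctly handling the dichotomy between the recurrent case ($\sigma_1=\infty$, $w_0(z_1)=1$) and the transient case. The cleanest route is the standard one: let $h(n)=P(\text{walk from }n\text{ hits }0)$; harmonicity of $h$ off $0$ together with the boundary value $h(0)=1$ forces $h(n)-h(n+1)$ to be proportional to $\prod_{j=1}^n (p^j_{-1}/p^j_{+1})$, and the probabilistic constraint $h\in[0,1]$ together with $h$ being the \emph{minimal} nonnegative solution pins down the constant, giving the stated formula (and $h\equiv 1$ when the products are non-summable). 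One must also confirm that $T_0^{z_i}<\infty$ for $Y$ is genuinely equivalent to the embedded walk hitting $0$: this is because $\vec z$ has no accumulation point (Lemma~\ref{Assumption:ErgodicEnvironments}(3)), so between consecutive interface hits $Y$ spends a finite time and cannot reach $0$ without the embedded chain visiting site $0$; this is exactly the content behind \eqref{Eq:p^s} and the construction in \cite{Ram11}. Everything else is a short computation.
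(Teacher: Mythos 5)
Your proposal is correct and follows essentially the same route as the paper: identify $w_0(z_i)$ with the hitting probability of the embedded birth--death walk, invoke the standard gambler's-ruin formula to get $\sigma_i/(1+\sigma_i)$, and then bound $\sigma_1$ uniformly from below using Assumptions~\ref{Assumption:d} and~\ref{Assumption:ell}. The only superficial difference is in part~(b): the paper lower-bounds $\sigma_1$ by the full geometric series $\sum_{k\geq 1}\big(\underline{\ell}/(\overline{\ell}(\overline{d}-1))\big)^k$, whereas you keep only the $k=1$ term; both yield a valid constant $C_*(\underline{\ell},\overline{\ell},\overline{d})>0$, yours slightly smaller but obtained with less work.
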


\begin{proof}
(a) Note that
\begin{equation}\label{Eq:tauYX_gen}
T^x_0 = \sum_{i=1}^{\tau^X}(\tau_{i}-\tau_{i-1}),
\end{equation}
where  $\{\tau_{i}\}_{i\geq 1}$ is defined in \eqref{Def:tau_i}, $\tau_0=0$ and  $\tau^X=\inf\{k\geq 0:\,Y_{\tau_k}=0\}$.
Under the bounded Assumptions \eqref{Assumption:d:Eq:UpperboundBranching} and \eqref{Assumption:ell:Eq:PositiveBelowell}, $T^x_0<\infty$ if and only if $\tau^X<\infty$.
Part (a) then follows from standard results for random walks (see, for instance, \cite[Chapter VI section 5.1]{Taylor-KarlinStochasticModelingBook}).

(b) From part (a) and part (3) in Lemma \ref{Lemma:ElementaryPropertiesw}, the hitting probability
\[
w_{0}(z_1)=P^{(\vec{p},\vec{z})}(T^{z_1}_0<\infty)=\frac{\sigma_1}{1+\sigma_1} \ ,
\]
where by \eqref{Eq:Def:sigmaiY} and \eqref{Eq:p^s2}
\begin{equation*}
\sigma_1:=\sum_{k\geq 1} \prod_{j=1}^{k}\frac{p^{j}_{-1}}{p^{j}_{+1}}=\sum_{k\geq 1} \prod_{j=1}^{k}\frac{\ell_{j}(1-p_j)}{\ell_{j-1}p_j}=\sum_{k\geq 1} \prod_{j=1}^{k}\frac{\ell_{j}}{\ell_{j-1}(d_j-1)}.
\end{equation*}
From this we see that $\mathbf{P}$-a.s. we have
\[\sigma_1\geq  \sum_{k\geq 1}\left(\frac{\underline{\ell}}{\overline{\ell}(\overline{d}-1)}\right)^k:=C >0.
\]
So part (b) holds with $C_*=\dfrac{C}{1+C}$.
\end{proof}

\medskip

Let us define the critical exponent
\begin{equation}\label{Eq:Def:etacY-w}
\eta_c^w:=\sup\{\eta\in \mathbb{R}:\,w_{-\eta}(z_1)<\infty\} \in[0,\infty].
\end{equation}
Theorem \ref{Theorem:etac_Y-w} below gives an representation of $\eta_c^w$ and yields that $\eta_c^w<\infty$.
Similar results are obtained in \cite[Lemmas 2 and 4]{CometsGantertZeitouni2000}, but we cannot directly apply them here.

\begin{theorem}[existence of $\eta_c^w\in [0,\infty)$]\label{Theorem:etac_Y-w}
Under Lemma \ref{Assumption:ErgodicEnvironments}, the critical exponent
 $\eta_c^w$ defined in \eqref{Eq:Def:etacY-w} is the unique element in $[0,\infty)$ such that
\begin{equation}\label{Theorem:etac_Y-w:Eq:etaYc_rep1}
\lim_{k\to\infty}\left(\sum_{x\in \mathbb{X}_k}\prod_{i=0}^{2k}J^{i}_{\eta_c^w,\,x_{i+1}-x_{i}}\right)^{1/k} =1 \ ,
\end{equation}
where $J^i_{\eta_c^w, x_{i+1}-x_i}$ follows \eqref{Eq:Def:J_eta}, and $\mathbb{X}_k$ is the set of nearest neighbor paths in $\mathbb{Z}_+$ with $2k+1$ steps that
start at $1$, end at $0$ and that do not visit $0$ during the first $2k$ steps, defined by
\begin{align}
 \mathbb{X}_k:=&\Big\{x=(x_i)_{i=0}^{2k+1}\in \mathbb{Z}_+^{2k+1}:\,x_0=1,\,x_{2k+1}=0,\, \notag \\
 &\qquad\qquad x_i\geq 1 \text{ and } x_{i}-x_{i-1}\in\{-1,1\} \text{ for }1\leq i\leq 2k\Big\}. \label{Theorem:etac_Y-w:Eq:Def:pathsX_k}
\end{align}
\end{theorem}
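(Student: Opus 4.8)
The plan is to identify $w_{-\eta}(z_1)$ as the sum over excursion paths of a product of the Laplace-type weights $J^i_{\eta,\pm}$, and then to recognize the growth rate of these sums as a subadditive (Lyapunov-type) quantity to which the subadditive ergodic theorem applies. First I would use the embedded random walk structure from Remark \ref{Remark:EmbeddedRandomWalk} together with the strong Markov property: decomposing $T_0^{z_1}$ along the successive hitting times $\tau_k$ of $\vec z$ and conditioning on the skeleton path $(Y_{\tau_0},\dots,Y_{\tau_N})$, each step from $z_i$ to $z_{i\pm1}$ contributes a factor $E[e^{\eta\,\sigma(i)}\mathbf 1_{\{S^i_{\pm1}\}}] = J^i_{\eta,\pm1}$ by the definition \eqref{Eq:Def:J_eta} and the fact that between consecutive interface hits $Y$ evolves as a standard skewed Brownian motion independent of the past (this uses Assumptions \ref{Assumption:d}, \ref{Assumption:ell} to guarantee $\vec z$ has no accumulation point, so the excursions are well-defined and the sum converges absolutely when $\eta<\eta_c^w$). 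By part (1) of Lemma \ref{Lemma:ElementaryPropertiesw}, paths starting at $z_1$ before hitting $0$ never visit the negative interface points, so only $\{(z_i,p_i)\}_{i\ge 1}$ enter, and we get exactly
\begin{equation*}
w_{-\eta}(z_1) = \sum_{k\geq 0}\;\sum_{x\in\mathbb{X}_k}\;\prod_{i=0}^{2k} J^i_{\eta,\,x_{i+1}-x_i},
\end{equation*}
with $\mathbb{X}_k$ as in \eqref{Theorem:etac_Y-w:Eq:Def:pathsX_k} (note $\mathbb{X}_0$ is the single one-step path $1\to 0$, whose weight is $J^1_{\eta,-1}=q^1$; the $2k+1$-step count reflects that each excursion above $0$ returning to $0$ from $1$ has odd length).

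Next I would set $a_k(\eta) := \sum_{x\in\mathbb{X}_k}\prod_{i=0}^{2k}J^i_{\eta,x_{i+1}-x_i}$ and analyze $\lim_k a_k(\eta)^{1/k}$. The key structural point is a near-supermultiplicativity: an excursion from $1$ to $0$ that does not touch $0$ must, after its first step, reach level $1$ again after an even number of steps (possibly decomposing into a first excursion above level $1$ and then a residual path), which gives an inequality of the form $a_{k+m}(\eta) \geq c(\eta)\, a_k(\eta)\, \widetilde a_m(\eta)$, where $\widetilde a_m$ counts excursions above level $1$ shifted by one environment coordinate. By stationarity and ergodicity of the i.i.d. environment (Lemma \ref{Assumption:ErgodicEnvironments}(1)), Kingman's subadditive ergodic theorem applied to $-\log a_k(\eta)$ (after the usual bookkeeping to handle the shift in the environment index and the boundary term $c(\eta)$) yields that $\Lambda(\eta) := \lim_k \tfrac1k\log a_k(\eta)$ exists $\mathbf P$-a.s. and is deterministic. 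The bounded-environment Assumptions give uniform bounds $0 < \underline J(\eta) \le J^i_{\eta,\pm1} \le \overline J(\eta) < \infty$ for $\eta$ in any compact subset of $(-\infty, \eta_c^w)$, from which one reads off that $\Lambda$ is finite there, convex in $\eta$ (as a limit of logs of sums of log-convex functions $\eta\mapsto J^i_{\eta,\pm}$), and strictly increasing. Then $\eta_c^w = \sup\{\eta : w_{-\eta}(z_1)<\infty\}$ coincides with $\sup\{\eta : \Lambda(\eta) < 0\}$ (geometric series: $w_{-\eta}(z_1) = \sum_k a_k(\eta)$ converges iff $\Lambda(\eta)<0$, with a short argument at the endpoint using monotone convergence), and continuity plus strict monotonicity of $\Lambda$ forces $\Lambda(\eta_c^w)=0$, i.e. \eqref{Theorem:etac_Y-w:Eq:etaYc_rep1}, with $\eta_c^w$ the unique such value.

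Finally, for $\eta_c^w<\infty$ I would compare with the deterministic worst case: since $d_j\ge 2$ and $\ell_j\ge\underline\ell$, the skewed exit time $\sigma(i)$ stochastically dominates the exit time of a standard Brownian motion from an interval of length $\le \underline\ell+\overline\ell$, hence $E[e^{\eta\sigma(i)}\mathbf 1_{\{S^i_{\pm1}\}}]=+\infty$ once $\eta$ exceeds the principal eigenvalue-type threshold $\pi^2/(2(\underline\ell+\overline\ell)^2)$ or so, giving $a_k(\eta)=+\infty$ for all $k\ge 1$ and thus $\eta_c^w$ bounded above by an explicit finite constant; moreover $\eta_c^w\ge 0$ is immediate since $w_0(z_1)\le 1<\infty$ by Lemma \ref{Lemma:EstimateHittingProbabilityBackZeroMulti-SkewedBM}(a). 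The main obstacle I anticipate is the subadditivity step: the naive concatenation of excursions shifts the environment index (the $m$ extra steps see environment coordinates $i+2k$, not $i$), so one cannot directly apply Kingman to $a_k$ itself but must set up an auxiliary doubly-indexed subadditive array $A_{m,n}$ counting weighted paths that go from level $1$ at "time/index $m$" to level $1$ at "index $n$" without returning below, verify its subadditivity and integrability, extract its Kingman limit, and only then relate that limit back to $\lim_k a_k^{1/k}$ via an elementary sandwich. Handling the multiplicative boundary constants $c(\eta)$ and the fact that $\mathbb{X}_k$ paths are excursions from $1$ (not from $0$) cleanly, so that the index shift is exactly by the number of steps, is the delicate part; everything else is routine given the earlier lemmas.
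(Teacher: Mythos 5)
Your overall architecture coincides with the paper's: (i) unfold $w_{-\eta}(z_1)$ into a sum over excursion paths of products of the weights $J^{x_i}_{\eta,\pm 1}$ via the embedded random walk and the strong Markov property (the paper's Lemma~\ref{Lemma:w(1)_gen}); (ii) establish existence of $\Theta_\eta=\lim_k a_k(\eta)^{1/k}$ by Kingman applied to a doubly-indexed, super-multiplicative point-to-point partition function over excursions pinned at level $1$ (the paper's $Z_{n,m}$ over $\Pi_{n,m}$ in Lemma~\ref{Lemma:Subergodic_existence} is exactly the ``auxiliary array $A_{m,n}$'' you propose, and your worry about the one-step index shift is handled there by the final return step from $1$ to $0$ contributing only the fixed factor $J^1_{\eta,-1}$); (iii) conclude via the root test together with strict monotonicity and continuity of $\Theta_\eta$ that $\eta_c^w$ is the unique root of $\Theta_\eta=1$; (iv) $\eta_c^w\ge 0$ is trivial. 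So on the representation, the ergodic-theoretic limit, and the uniqueness, you are following the paper almost verbatim.

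The one genuine divergence is the finiteness argument $\eta_c^w<\infty$. You propose to push $\eta$ past the point where each individual weight $J^i_{\eta,\pm 1}$ itself becomes $+\infty$, by comparing $\sigma(i)$ to a deterministic Brownian exit time; be careful that the correct comparison interval must be one that is \emph{contained} in every $(-\ell_{i-1},\ell_i)$, i.e.\ of length $2\underline{\ell}$, not $\underline{\ell}+\overline{\ell}$ (exit time from a larger interval dominates that from a smaller one, and $E[e^{\eta\tau}]$ for the small interval blows up only at the larger threshold $\pi^2/(8\underline{\ell}^2)$; the skewness and the restriction to $\{S^i_{\pm1}\}$ do not change the exponential tail rate). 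The paper instead uses a Catalan-number counting argument: pairing each up-step with its matching down-step via~\eqref{Eq:pairJ}, one has $w_{-\eta}(z_1)\ge J^0_{\eta,-1}\sum_k C_k\,(J^{\min}_\eta)^k$, which diverges already once $J^{\min}_\eta:=\min_i J^i_{\eta,1}J^{i+1}_{\eta,-1}>1/4$. This is a sharper and more structural bound, since it forces divergence \emph{before} any individual $J$-weight becomes infinite (for the constant-$(d,\ell)$ tree, $\eta_c=\tfrac{1}{2\ell^2}\arccos^2(2\sqrt{p(1-p)})<\tfrac{\pi^2}{8\ell^2}$, so at $\eta_c$ each $J$ is still finite), and the same Catalan machinery gives Corollary~\ref{Corollary:etac_Y-w-positive} ($\eta_c^w>0$ under~\eqref{Corollary:etac_Y-w-positive:Eq:maxpq}) for free. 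Your coarser comparison is sufficient to prove $\eta_c^w<\infty$ but would miss that companion result.
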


We first give a  representation of  $w_{-\eta}(z_j)$  which will be useful in the proof of Theorem
\ref{Theorem:etac_Y-w} and other places.

Suppose $Y_0=z_j>0$. Then the embedded random walk of $Y$ takes $j+2k$ many steps to hit zero for some $k\in\mathbb{Z}_+$.
In this event, there are exactly $k$ steps to the right and $j+k$ steps to the left, in which the last step is
to the left, and during the first $2k$ steps the path does not touch $1$. The set of such left-right paths
(left $=-1$, right $=1$) is denoted by $\mathbb{X}_{j,k}$.

\begin{lemma}\label{Lemma:w(1)_gen}
For $\eta\in \mathbb{R}$ and $j\geq 1$,
\begin{equation*}
       w_{-\eta}(z_j)
    = \sum_{k\geq 0} \sum_{x\in \mathbb{X}_{j,k}} \prod_{i=1}^{2k+j}J^{x_{i-1}}_{\eta, \,x_{i}-x_{i-1}} \in (0,+\infty],
\end{equation*}
 where $\mathbb{X}_{j,k}$ is the set of nearest neighbor paths in $\mathbb{Z}_+$ with $2k+j$ steps
 that start at $j$,  end at $0$ and that do not visit $0$ during the first $2k+(j-1)$ steps.
\end{lemma}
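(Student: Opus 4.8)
\textbf{Proof proposal for Lemma \ref{Lemma:w(1)_gen}.}

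The plan is to decompose the event $\{T_0^{z_j}<\infty\}$ according to the trajectory of the embedded random walk $\{Y_{\tau_k}\}_{k\geq 0}$ of $Y$ on the interface set $\vec{z}$, and then use the strong Markov property of $Y$ at the successive hitting times $\tau_k$ to factor the Laplace transform $E^{(\vec{p},\vec{z})}\bigl[e^{\eta T_0^{z_j}}\mathbf{1}_{T_0^{z_j}<\infty}\bigr]$ into a product over the individual inter-interface crossings. First I would observe that, since the barriers $\vec{z}$ have no accumulation point and the degrees and lengths are bounded (Lemma \ref{Assumption:ErgodicEnvironments}), the process $Y$ started at $z_j>0$ reaches $0$ if and only if the embedded walk reaches $0$, and in that case the walk takes $j+2k$ steps for some $k\in\mathbb{Z}_+$: exactly $k$ rightward and $j+k$ leftward steps, with the terminal step being leftward (from $z_1$ to $z_0=0$) and with no visit to $0$ during the first $2k+(j-1)$ steps. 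This is precisely the index set $\mathbb{X}_{j,k}$ described just before the statement, so summing over all $k\geq 0$ and all such paths $x=(x_i)_{i=0}^{2k+j}\in\mathbb{X}_{j,k}$ exhausts the event disjointly.

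Next I would condition on a fixed embedded path $x\in\mathbb{X}_{j,k}$ and write $T_0^{z_j}=\tau_{2k+j}$ on that event as a telescoping sum $\sum_{i=1}^{2k+j}(\tau_i-\tau_{i-1})$ of the inter-interface travel times, as in \eqref{Eq:tauYX_gen}. By the strong Markov property applied successively at $\tau_1,\dots,\tau_{2k+j-1}$, conditionally on the embedded path the increments $\tau_i-\tau_{i-1}$ are independent, and the $i$-th increment has the law of the exit time of $Y$ from the interval $(z_{x_{i-1}-1},z_{x_{i-1}+1})$ started at its center $z_{x_{i-1}}$, conditioned on exiting at the side dictated by $x_i-x_{i-1}$. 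By spatial homogeneity of $Y$ away from the barriers, this exit time has exactly the law of $\sigma(x_{i-1})$ for the standard $p_{x_{i-1}}$-skewed Brownian motion $S^{x_{i-1}}$ on $(-\ell_{x_{i-1}-1},\ell_{x_{i-1}})$ restricted to the event $S^{x_{i-1}}_{x_i-x_{i-1}}$. Hence $E^{(\vec{p},\vec{z})}\bigl[e^{\eta(\tau_i-\tau_{i-1})}\mathbf{1}_{\{\text{path takes step }x_i-x_{i-1}\text{ at }x_{i-1}\}}\,\big|\,Y_{\tau_{i-1}}=z_{x_{i-1}},\dots\bigr]=E\bigl[e^{\eta\sigma(x_{i-1})}\mathbf{1}_{\{S^{x_{i-1}}_{x_i-x_{i-1}}\}}\bigr]=J^{x_{i-1}}_{\eta,\,x_i-x_{i-1}}$ by \eqref{Eq:Def:J_eta}. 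Multiplying these $2k+j$ conditionally independent factors and then summing over $x\in\mathbb{X}_{j,k}$ and $k\geq 0$ by the monotone convergence theorem (all summands are nonnegative) yields the claimed identity, with the value lying in $(0,+\infty]$ because the $k=0$ term with the direct path $j\to j-1\to\cdots\to 0$ is already strictly positive.

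The main obstacle is to justify rigorously the factorization of the conditional Laplace transform along the embedded path — in particular, that conditioning on the full embedded trajectory (an event determined by countably many future barrier visits, not just finitely many) still leaves the successive excursion times independent with the stated conditional laws. I would handle this by working with the finite truncation first: condition only on $\{Y_{\tau_1}=z_{x_1},\dots,Y_{\tau_{2k+j}}=z_{x_{2k+j}}=0\}$, which is a finite-dimensional event for the embedded chain, apply the strong Markov property of $Y$ at each $\tau_i$ in turn (each application only uses the already-realized value $Y_{\tau_i}$), and note that the conditional law of $(\tau_i-\tau_{i-1})$ given $Y_{\tau_{i-1}}=z_{x_{i-1}}$ and the \emph{next} embedded position $Y_{\tau_i}=z_{x_i}$ is exactly the skew-BM exit time conditioned on the exit side, independently of everything that happened before $\tau_{i-1}$ and, by the Markov property, of the embedded path beyond step $i$ as well. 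A secondary technical point is that the path constraint ``does not visit $0$ during the first $2k+(j-1)$ steps'' must be exactly matched by the combinatorial description of $\mathbb{X}_{j,k}$ and must be consistent with $T_0^{z_j}$ being the \emph{first} hitting time of $0$; this is a bookkeeping verification that the left/right step counts force the stated no-early-return condition, which I would check by the standard ballot-type argument for nearest-neighbor paths.
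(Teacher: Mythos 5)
Your proposal is correct and follows essentially the same route as the paper: decompose $\{T_0^{z_j}<\infty\}$ over the embedded walk's nearest-neighbor paths in $\mathbb{X}_{j,k}$, telescope $T_0^{z_j}=\sum_i(\tau_i-\tau_{i-1})$ as in \eqref{Eq:tauYX_gen}, and apply the strong Markov property successively at the $\tau_i$'s to factor the Laplace transform into the product of the $J^{x_{i-1}}_{\eta,\,x_i-x_{i-1}}$ from \eqref{Eq:Def:J_eta}. The extra care you devote to the finite-truncation/conditioning point and the step-count bookkeeping is sound but not materially different from what the paper does implicitly; the only cosmetic slip is calling $z_{x_{i-1}}$ the ``center'' of $(z_{x_{i-1}-1},z_{x_{i-1}+1})$, which need not hold since $\ell_{x_{i-1}-1}\neq\ell_{x_{i-1}}$ in general, though this does not affect the argument.
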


\begin{proof}[Proof of Lemma \ref{Lemma:w(1)_gen}]
Let $j=1$. From \eqref{Eq:tauYX_gen} we have,
\begin{align}
    w_{-\eta}(z_1)=&E^{(\vec{p},\vec{z})}\left[e^{\eta T_0^{z_1}} 1_{ T_0^{z_1}<\infty}\right] = E^{(\vec{p},\vec{z})}\left[e^{\eta \sum_{i=1}^{\tau^X}(\tau_{i}-\tau_{i-1})} 1_{ \tau^X<\infty}\right]\notag \\
    =& \sum_{k\geq 0} \sum_{x\in \mathbb{X}_k}E^{(\vec{p},\vec{z})}\left[e^{\eta \sum_{i=1}^{2k+1}(\tau_{i}-\tau_{i-1})} \,1_{\{X_{i}-X_{i-1}=x_i-x_{i-1} \text{ for } 1\leq i\leq 2k+1\}} \right]. \label{Eq:w(1)_gen}
\end{align}

Recall $S^i_{+1}=\{S^{i}_{\sigma(i)}= \ell_{i}\}$ and  $S^i_{-1}=\{S^{i}_{\sigma(i)}= -\ell_{i-1}\}$,
where $\sigma(i)$ is equal in distribution to the exit time of $Y$ starting at $z_i$ from the interval $(z_{i-1},z_{i+1})$.

By conditioning at $\tau_i$ successively and the strong Markov property of $Y$, a  term on the right of \eqref{Eq:w(1)_gen} is
\begin{align*}
E^{(\vec{p},\vec{z})}\left[e^{\eta \sum_{i=1}^{2k+1}(\tau_{i}-\tau_{i-1})} \,\prod_{i=1}^{2k+1}1_{\{X_{i}-X_{i-1}=x_{i}-x_{i-1}\}} \right] =& \prod_{i=1}^{2k+1}E\left[e^{\eta \,\sigma(x_{i-1})} \,{\bf 1}_{\{S^{x_{i-1}}_{x_{i}-x_{i-1}}\}}\right]\\
=& \prod_{i=1}^{2k+1}J^{x_{i-1}}_{\eta, \,x_{i}-x_{i-1}},
\end{align*}
where $J^{i}_{\eta, \,\pm 1}=E\left[e^{\eta \,\sigma(i)} \,\big|\, S^{i}_{\pm 1}\right] p^i_{\pm 1}$
is defined in \eqref{Eq:Def:J_eta}.

Putting the last display into \eqref{Eq:w(1)_gen}, we obtain the lemma for the case $j=1$. The general case $j>1$ follows the same proof.
\end{proof}

From Lemma \ref{Lemma:w(1)_gen}, \eqref{Theorem:etac_Y-w:Eq:etaYc_rep1} follows
from the elementary root test if $\lim\limits_{k\to\infty}$ were replaced by $\limsup\limits_{k\to\infty}$.
Lemma \ref{Lemma:Subergodic_existence} below shows that the limit indeed exists.

\begin{lemma}\label{Lemma:Subergodic_existence}
The limit
\begin{equation}\label{Lemma:Subergodic_existence:Eq:Subergodic_existence}
\Theta_{\eta}:= \lim_{k\to\infty}\left(\sum_{x\in \mathbb{X}_k}\prod_{i=0}^{2k}J^{x_i}_{\eta,\,x_{i+1}-x_{i}}\right)^{1/k} \in [0,\infty]
\end{equation}
exists for all  $\eta\in\mathbb{R}$, is non-decreasing and is strictly increasing in $\eta$ when it is finite.
\end{lemma}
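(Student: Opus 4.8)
The plan is to show that the quantity
$a_k := \sum_{x\in \mathbb{X}_k}\prod_{i=0}^{2k}J^{x_i}_{\eta,\,x_{i+1}-x_{i}}$
is almost-supermultiplicative (up to a harmless factor), so that $(\log a_k)$ is a subadditive-type sequence and the limit $\lim_k a_k^{1/k}$ exists in $[0,\infty]$. The key structural observation is that a path in $\mathbb{X}_{k+m}$ can be decomposed at the first return time to level $1$: a path from $1$ to $0$ that stays $\geq 1$ for its first $2(k+m)$ steps must, at some even time $2k$ with $1\le k \le k+m$ (well, modulo endpoint bookkeeping), first come back to $1$; excising the excursion above $1$ during $[1,2k]$ leaves an element of $\mathbb{X}_k$ glued to an element of $\mathbb{X}_m$ shifted appropriately. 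Because the weights $J^{i}_{\eta,\pm1}$ are multiplicative along the path, this gives $a_{k+m} \geq a_k \cdot a_m$ (the inequality rather than equality because not every concatenation arises, and one must be slightly careful about whether the split point is the \emph{first} return; conditioning on the first return to $1$ makes it an equality, but supermultiplicativity is all we need). Hence by Fekete's lemma applied to the superadditive sequence $(\log a_k)_{k\ge1}$, the limit $\Theta_\eta = \lim_k a_k^{1/k} = \sup_k a_k^{1/k}$ exists in $[0,\infty]$.

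Next, monotonicity in $\eta$: each factor $J^{i}_{\eta,\pm1} = E[e^{\eta\sigma(i)}\mathbf 1_{\{S^i_{\pm1}\}}]$ is non-decreasing in $\eta$ since $\sigma(i)\ge 0$, so each $a_k$ is non-decreasing in $\eta$, and therefore so is $\Theta_\eta$. For strict monotonicity on the set where $\Theta_\eta$ is finite, I would argue as follows. Fix $\eta < \eta'$ with $\Theta_{\eta'}<\infty$ (hence $\Theta_\eta<\infty$ too). For each path $x\in\mathbb{X}_k$ of length $2k+1$, every crossing time $\sigma(x_{i-1})$ is bounded below in probability: under Assumptions \ref{Assumption:d} and \ref{Assumption:ell} the intervals $(z_{i-1},z_{i+1})$ have length at least $2\underline\ell$, so there is a deterministic $\varepsilon_0>0$ and $\delta_0>0$, depending only on $\underline\ell,\overline\ell,\overline d$, with $E[e^{\eta\sigma(i)}\mathbf 1_{\{S^i_{\pm1}\}}] \le e^{-\delta_0(\eta'-\eta)}\, E[e^{\eta'\sigma(i)}\mathbf 1_{\{S^i_{\pm1}\}}]$ — concretely, splitting the expectation on $\{\sigma(i)\ge\varepsilon_0\}$ and its complement and using that $p^i_{\pm1}$ is bounded below forces a uniform positive fraction of the mass onto $\{\sigma(i)\ge\varepsilon_0\}$, so that $J^i_{\eta,\pm1} \le (1-c_0 + c_0 e^{-\varepsilon_0(\eta'-\eta)})\,J^i_{\eta',\pm1}$ for a constant $c_0>0$. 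Multiplying over the $2k+1$ factors of each path gives $a_k(\eta) \le \rho^{2k+1} a_k(\eta')$ with $\rho = 1-c_0+c_0 e^{-\varepsilon_0(\eta'-\eta)}<1$, whence $\Theta_\eta \le \rho^2\,\Theta_{\eta'} < \Theta_{\eta'}$.

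I expect the main obstacle to be making the supermultiplicativity step fully rigorous, specifically the bookkeeping at the decomposition point: $\mathbb{X}_k$ as defined requires the path to reach $0$ only on the last step and to stay at level $\ge1$ through step $2k$, so when splitting an $\mathbb{X}_{k+m}$-path at its first return to $1$ one must check that the excursion piece and the residual piece land exactly in $\mathbb{X}_k$ and (a shifted copy of) $\mathbb{X}_m$, including that the weights $J^{x_{i-1}}_{\eta,\pm1}$ depend only on the increments and on the current level index, so that gluing is weight-consistent. A secondary subtlety is that the first-return decomposition naturally yields $a_{k+m}\ge \big(\sum_{j}(\text{excursion of half-length }j)\big)\cdot a_m$ with the bracket summing over all first-return excursions rather than literally $a_k$; one resolves this by noting the bracket equals exactly the contribution to $a_k$ of paths making their first return to $1$ at time $2k$, and summing the supermultiplicativity over the position of the first return recovers $a_{k+m}\ge a_k a_m$ — or, more cleanly, one just directly verifies $(a_k)$ is supermultiplicative by concatenating any $\mathbb{X}_k$-path with any $\mathbb{X}_m$-path after lifting the second by the endpoint of the first, which is injective onto a subset of $\mathbb{X}_{k+m}$. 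Everything else — existence of the Fekete limit, non-decreasingness — is routine once this combinatorial identity is in place.
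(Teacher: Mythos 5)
Your argument is correct and rests on the same core idea as the paper's proof — path concatenation gives supermultiplicativity of a partition function, from which the $k$-th root limit exists — but you reach the conclusion by a different lemma. The paper introduces the two-parameter point-to-point partition function $Z_{n,m}$ (sums over paths in $\Pi_{n,m}$, from $1$ to $1$ over $[n,m]$, staying $\geq 1$) and invokes Kingman's subadditive ergodic theorem, following the standard template for free energies of random polymers; you instead apply Fekete's lemma to a one-parameter sequence. Because the weights $J^{y_i}_{\eta,\pm 1}$ depend only on the spatial level $y_i$ and not on the time index, $Z_{n,m}$ is literally a function of $m-n$ alone once the environment is fixed, so the two-parameter Kingman setting collapses to the one-parameter Fekete setting here, and your route is the more economical one for the existence claim. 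One bookkeeping point you flag and should resolve cleanly: $a_k:=\sum_{x\in\mathbb{X}_k}\prod_{i=0}^{2k}J^{x_i}_{\eta,x_{i+1}-x_i}$ is not literally supermultiplicative, because every $\mathbb{X}_k$-path ends with the same step $1\to 0$ carrying the constant factor $J^1_{\eta,-1}$; concatenating (after dropping that final step of the first piece — not, as you phrase it, ``lifting the second by the endpoint of the first,'' since that endpoint is $0$) yields $a_{k+m}\geq a_k a_m/J^1_{\eta,-1}$, i.e.\ it is $Z_{0,2k}=a_k/J^1_{\eta,-1}$, exactly the paper's object, that is supermultiplicative. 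This is harmless for $\lim_k a_k^{1/k}$ since $\bigl(J^1_{\eta,-1}\bigr)^{1/k}\to 1$. Your quantitative strict-monotonicity argument — extracting a uniform multiplicative contraction $J^i_{\eta,\pm 1}\leq \rho\,J^i_{\eta',\pm 1}$ with $\rho<1$ from the uniform bounds on $(\vec d,\vec\ell)$ — is a more explicit version of the paper's brief appeal to $\min_{i}\bigl(J^i_{\eta_2,\pm 1}-J^i_{\eta_1,\pm 1}\bigr)>0$, and it is sound provided you also note that $\Theta_{\eta'}>0$ (which holds: a single zigzag path already gives $a_k\geq \bigl(J^1_{\eta,+1}J^2_{\eta,-1}\bigr)^k J^1_{\eta,-1}$).
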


\begin{proof}
We shall apply Kingman's subadditive ergodic theorem in the same way it is applied
to prove existence of limiting free energy in random polymer models; see for instance
\cite[Theorems 2.2 and 2.4]{Ras-Sep-14}.

For $0\leq n\leq m$ we consider the point-to-point partition function
\begin{equation}\label{Def:patition}
Z_{n,m}:= \sum_{y\in \Pi_{n,m}}\prod_{i=n}^{m}J^{y_i}_{\eta,\,y_{i+1}-y_{i}}=
\sum_{y\in \Pi_{0,m-n}}\prod_{i=0}^{m-n}J^{y_i}_{\eta,\,y_{i+1}-y_{i}} \ ,
\end{equation}
where $\Pi_{n,m}$ is the set of nearest neighbor paths in $\mathbb{N}$ that starts and ends at 1 during the time interval $[n,m]$, that is,
\begin{align*}
\Pi_{n,m}:=&\Big\{y=(y_i)_{i=n}^{m}:\;y_n=y_m=1,\, 
 y_j\geq 1 \text{ and } y_{j}-y_{j-1}\in\{-1,1\} \text{ for }n\leq j\leq m \Big\}.
\end{align*}
For any $0\leq a\leq b\leq c$ we have $Z_{a,c} \geq Z_{a,b}\,Z_{b,c}$,
because  concatenating a path in $\Pi_{a,b}$ with a path in  $\Pi_{b,c}$ gives a path in $\Pi_{a,c}$.
This gives sub-additivity $\ln Z_{a,c} \geq \ln Z_{a,b} + \ln Z_{b,c}$,
from which existence of the limiting ``point-to-point free energy"
\begin{equation}\label{Eq:R_X finite2}
\lim_{k\to\infty}\frac{1}{k} \ln Z_{0,2k}=\lim_{k\to\infty}\frac{1}{k} \ln\left(\sum_{x\in \mathbb{X}_k}\prod_{i=0}^{2k}J^{x_i}_{\eta,\,x_{i+1}-x_{i}}\right)\in [-\infty,\infty]
\end{equation}
follows from Kingman's subadditive ergodic theorem \cite[Theorem 2.6 on page 277]{liggett2012interacting}.

Monotonicity of $\Theta_{\eta}$ follows from the fact that $J^{i}_{\eta, \,\pm 1}$
is increasing in $\eta$. The strict inequality then follows from
the fact that, under Lemma \ref{Assumption:ErgodicEnvironments},
\[
\min_{i\geq 0}\left(J^{i}_{\eta_2, \,\pm 1}-J^{i}_{\eta_1, \,\pm 1}\right)>0 \ ,
\]
for all $\eta_2>\eta_1$ such that the limits $\Theta_{\eta_1}$ and $\Theta_{\eta_2}$ in
\eqref{Lemma:Subergodic_existence:Eq:Subergodic_existence} are finite.
\end{proof}

\begin{proof}[Proof of Theorem \ref{Theorem:etac_Y-w}]

From the series representation in Lemma \ref{Lemma:w(1)_gen}, \eqref{Theorem:etac_Y-w:Eq:etaYc_rep1} follows from the
elementary root test and the existence of limit in Lemma \ref{Lemma:Subergodic_existence}.
The uniqueness of $\eta_c^w$ in Theorem \ref{Theorem:etac_Y-w} then follows from strict monotonicity of the
function $\eta\mapsto \Theta_{\eta}$ stated
in Lemma \ref{Lemma:Subergodic_existence}.

Observe that, for each path $x\in \mathbb{X}_k$, we have (i) $x_{2k}=0$ and (ii) the number of steps from $j$ to $j+1$ is the same as the number of steps from $j+1$ to $j$ for all $j\geq 1$. So for each $x\in \mathbb{X}_k$, there exists an index set $\{j_i\}$ such that
\begin{equation}\label{Eq:pairJ}
\prod_{i=1}^{2k+1}J^{x_{i-1}}_{\eta, \,x_{i}-x_{i-1}}
=\left(\prod_{i=1}^{k}J^{x_{j_i}}_{\eta, \,+1}J^{x_{j_i}+1}_{\eta, \,-1}\right)\,J^{0}_{\eta, \,-1}.
\end{equation}

Putting \eqref{Eq:pairJ} into Lemma \ref{Lemma:w(1)_gen}, we obtain the representation
\begin{align}
    w_{-\eta}(\ell_0)
    =& \sum_{k\geq 0} \sum_{x\in \mathbb{X}_k} \left(\prod_{i=1}^{k}J^{x_{j_i}}_{\eta, \,+1}J^{x_{j_i}+1}_{\eta, \,-1}\right)\,J^{0}_{\eta, \,-1} \ . \label{Eq:w(1)_gen2}
\end{align}

By Lemma \ref{Assumption:ErgodicEnvironments}, there exists $\widetilde{B}\in [0,\infty)$  such that
\begin{align}
  J^{\min}_{\eta}:=\min_{i\geq 1} J^{i}_{\eta,1}\,J^{i+1}_{\eta,-1} > \frac{1}{4} \quad\text{for all }\eta\in[\widetilde{B},\,\infty).
  \label{Theorem:etac_Y-w:Proof:Eq:minpqY}
\end{align}
Then from \eqref{Eq:w(1)_gen2} we derive that for all $\eta\geq \widetilde{B}$ we have
\begin{align}\label{Theorem:etac_Y-w:Proof:Eq:LowerBound_wY}
w_{-\eta}(z_1)\geq & J^{0}_{\eta, \,-1}\sum_{k\geq 0} C_k\,\left(J^{\min}_{\eta}\right)^k=+\infty \ ,
\end{align}
where we have used the well-known fact that the number of paths $|\mathbb{X}_k|$ is the $k$-th Catalan number
$C_k=\dfrac{1}{k+1}\dfrac{(2k)!}{k! k!}$ (see, for example, Corollary 6.2.3 and page 223 of \cite{stan-II}).
By properties of the Catalan number $C_k$, the above series is equal to $+\infty$
for $\eta\in [\widetilde{B},+\infty)$. Thus $\eta_c^w< \widetilde{B}<\infty$.
Since for any $\eta<0$ we have automatically $w_{-\eta}(x)<\infty$, we also know that $\eta_c^w\geq 0$.
Hence $\eta_c^w\in \left[0,\,\widetilde{B}\right) \subset [0,\infty)$.
\end{proof}

The following corollary provides a mild condition \eqref{Corollary:etac_Y-w-positive:Eq:maxpq}
 under which $\eta_c^w\in (0,\infty)$
(see Remark \ref{Remark:etacY-w}).

\begin{corollary}\label{Corollary:etac_Y-w-positive}
If
\begin{equation}\label{Corollary:etac_Y-w-positive:Eq:maxpq}
\max_{i\geq 1}p^i_{+1} p^{i+1}_{-1} < 1/4 \ ,
\end{equation}
then $\eta_c^w\in  (0,\infty)$.
\end{corollary}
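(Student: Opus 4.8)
The plan is to deduce the corollary from Theorem~\ref{Theorem:etac_Y-w} together with a single elementary upper bound on the exponential growth rate $\Theta_0$. Recall from Theorem~\ref{Theorem:etac_Y-w} and Lemma~\ref{Lemma:Subergodic_existence} that $\eta_c^w$ is the \emph{unique} point of $[0,\infty)$ at which the non-decreasing function $\eta\mapsto\Theta_\eta$ of \eqref{Lemma:Subergodic_existence:Eq:Subergodic_existence} equals $1$; in particular $\eta_c^w<\infty$ is already known and $\eta_c^w\geq 0$. Hence it suffices to show that $\Theta_0<1$: since $\Theta$ is non-decreasing and $\Theta_{\eta_c^w}=1>\Theta_0$, the value $\eta_c^w$ cannot equal $0$, and therefore $\eta_c^w\in(0,\infty)$.

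It remains to estimate $\Theta_0$ from above. At $\eta=0$ one has $J^i_{0,\pm1}=P(S^i_{\pm1})=p^i_{\pm1}$ by \eqref{Eq:Def:J_eta}. I would then invoke the pairing identity \eqref{Eq:pairJ} from the proof of Theorem~\ref{Theorem:etac_Y-w}: for each path $x\in\mathbb{X}_k$ there is an index set $\{j_i\}_{i=1}^{k}$ with $x_{j_i}\geq 1$ such that
\[
\prod_{i=0}^{2k}J^{x_i}_{0,\,x_{i+1}-x_i}
=J^{0}_{0,-1}\prod_{i=1}^{k}J^{x_{j_i}}_{0,+1}\,J^{x_{j_i}+1}_{0,-1}
=J^{0}_{0,-1}\prod_{i=1}^{k}p^{x_{j_i}}_{+1}\,p^{x_{j_i}+1}_{-1}
\leq J^{0}_{0,-1}\,\rho^{\,k},
\]
where $\rho:=\max_{i\geq1}p^i_{+1}p^{i+1}_{-1}<\tfrac14$ by hypothesis \eqref{Corollary:etac_Y-w-positive:Eq:maxpq}, and $J^{0}_{0,-1}=p^{0}_{-1}\in(0,1)$ is a fixed constant that does not depend on $k$. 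Summing over the $|\mathbb{X}_k|=C_k$ paths (with $C_k$ the $k$-th Catalan number, as recorded in the proof of Theorem~\ref{Theorem:etac_Y-w}) and taking $k$-th roots,
\[
\Theta_0=\lim_{k\to\infty}\Big(\sum_{x\in\mathbb{X}_k}\prod_{i=0}^{2k}J^{x_i}_{0,\,x_{i+1}-x_i}\Big)^{1/k}
\leq\lim_{k\to\infty}\big(J^{0}_{0,-1}\,C_k\,\rho^{\,k}\big)^{1/k}
=\rho\lim_{k\to\infty}C_k^{1/k}=4\rho<1,
\]
using the standard asymptotics $C_k^{1/k}\to4$. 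This yields $\Theta_0<1$ and completes the argument.

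There is essentially no obstacle here: the only substantive input beyond Theorem~\ref{Theorem:etac_Y-w} is that \eqref{Corollary:etac_Y-w-positive:Eq:maxpq} supplies a \emph{strict, uniform} gap $\rho<1/4$, so that $4\rho<1$; everything else is bookkeeping with the Catalan count of $\mathbb{X}_k$ and the observation that the leftover constant factor $J^{0}_{0,-1}$ cannot affect an exponential growth rate. (An alternative, more hands-on route would bound $w_{-\eta_0}(z_1)$ directly through the series of Lemma~\ref{Lemma:w(1)_gen} for a small $\eta_0>0$, using the uniform control $\sup_i E[e^{\eta\sigma(i)}\mid S^i_{\pm1}]\to1$ as $\eta\downarrow0$ — available because the exit intervals have lengths in $[2\underline\ell,2\overline\ell]$ and the skewnesses take finitely many values, so that $p^i_{\pm1}\geq c_*(\underline\ell,\overline\ell,\overline d)>0$ — to keep the geometric-times-Catalan series convergent; but the $\Theta_0$ computation above is shorter and avoids any uniform hitting-time estimate.)
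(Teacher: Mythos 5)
Your proof is correct and rests on the same two ingredients as the paper's: the pairing identity \eqref{Eq:pairJ} and the Catalan count $|\mathbb{X}_k|=C_k$. The difference is in the packaging. You work entirely at $\eta=0$, where $J^i_{0,\pm1}$ collapses to the transition probability $p^i_{\pm1}$, obtain the one-line bound $\Theta_0\le 4\max_{i\ge 1}p^i_{+1}p^{i+1}_{-1}<1$, and then invoke the characterization of $\eta_c^w$ in Theorem~\ref{Theorem:etac_Y-w} as the unique root of $\Theta_\eta=1$ together with the monotonicity of $\Theta$ from Lemma~\ref{Lemma:Subergodic_existence}. The paper instead works at positive $\eta$: using that $J^{i}_{\eta,\pm 1}$ is increasing in $\eta$ it produces a unique $\widetilde{A}>0$ with $J^{\max}_{\widetilde{A}}=1/4$, sums the Catalan generating function $\sum_k C_k \left(J^{\max}_\eta\right)^k=\frac{1-\sqrt{1-4J^{\max}_\eta}}{2J^{\max}_\eta}$ to conclude $w_{-\eta}(z_1)<\infty$ for all $\eta\le \widetilde{A}$, and hence $\eta_c^w\ge\widetilde{A}>0$. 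Your route is slightly leaner, since it sidesteps the existence-of-$\widetilde{A}$ step and requires no Laplace-transform estimates at positive $\eta$, while the paper's yields an explicit lower bound $\widetilde{A}$ for $\eta_c^w$ and an explicit bound on $w_{-\eta}(z_1)$, not merely strict positivity. The two arguments are the two sides of the same root-test coin; the ``hands-on'' alternative you sketch at the end is essentially the paper's proof.
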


\begin{proof}
Let $J^{\max}_{\eta}:=\max\limits_{i\geq 1} J^{i}_{\eta,1}\,J^{i+1}_{\eta,-1}$.
If \eqref{Corollary:etac_Y-w-positive:Eq:maxpq} holds, then because $J^{i}_{\eta, \,\pm 1}$ is
monotonically increasing in $\eta$, there exists a unique $\widetilde{A}\in [0,\infty)$ such that
for any $\eta\leq \widetilde{A}$ we have
\begin{align}
J_\eta^{\max}\leq  J^{\max}_{\widetilde{A}}=\max_{i\geq 1}
  J^{i}_{\widetilde{A},1}\,J^{i+1}_{\widetilde{A},-1} = \frac{1}{4}.
  \label{Corollary:etac_Y-w-positive:Proof:Eq:maxpqY}
\end{align}

From \eqref{Eq:w(1)_gen2}, we have
\begin{align}\label{Eq:UpperBound_wY}
w_{-\eta}(z_1)\leq & J^{0}_{\eta, \,-1}\sum_{k\geq 0} C_k\,\left(J^{\max}_{\eta}\right)^k \notag\\
=& J^{0}_{\eta, \,-1}\frac{1-\sqrt{1-4\,J^{\max}_{\eta}}}{2\,J^{\max}_{\eta}}  \in (0,\infty), \quad\text{for }\eta\in (-\infty, \widetilde{A}],
\end{align}
where we have used again properties of the Catalan number $C_k$
(see, for example, Corollary 6.2.3 and page 223 of \cite{stan-II}).
Thus under \eqref{Corollary:etac_Y-w-positive:Eq:maxpq} we have $\eta_c^w> \widetilde{A}>0$.
\end{proof}

\begin{remark}\rm \label{Remark:etacY-w}
The condition \eqref{Corollary:etac_Y-w-positive:Eq:maxpq} holds, for instance, if $\ell_i=\ell$ are constant
for all $i\geq 0$ and $p_i\geq 2/3$ ($d_i\geq 3$) for all $i\geq 1$.
\end{remark}

\section{Construction and analysis of the auxiliary function}\label{Sec:AuxiliaryFunctions}

In this section we provide more constructions and analysis of the auxiliary function
$$w(x)\equiv
w_{\lambda}(x)\equiv E^{(\vec{p},\vec{z})}\left[e^{-\lambda T_0^x}\mathbf{1}_{T_0^x<\infty}\right] \ , \ x\in \mathbb{R}
$$
that we introduced
in \eqref{Eq:w-AuxiliaryFunction} in the particular case when $\lambda>0$.
This will be useful in the analysis of
large deviations principle for the multi-skewed Brownian motion $Y_t$.
Our analysis will be based on some properties of the limit of an infinite
product of $2\times 2$ random matrices (see \cite{Bougerol-Lacroix} for a general reference on this topic).

Recall
the hitting time $T^s_r$ introduced in \eqref{Eq:StoppingTimesTorMultiSkewedBM} and notice that when $\lambda>0$,
\begin{equation}\label{Eq:w-AuxiliaryFunction:lambda>0EqualPDESolution}
w(x)= E^{(\vec{p},\vec{z})}\left[e^{-\lambda T_0^x}\right] \ , \ x\in \mathbb{R} \ .
\end{equation}

\subsection{Results for deterministic skewness and barriers}\label{Sec:AuxiliaryFunctions:Subsection:Deterministic-p-z}

Set $\vec{z}:=\{z_i\}_{i\in \mathbb{Z}}$ to be a set of real numbers with no accumulation point
and $\vec{p}:=\{p_i\}_{i\in \mathbb{Z}} \subset (0,1)$. Within this subsection we assume there
is no randomness in either $\vec{z}$ or $\vec{p}$.


Let $\{P_t\}_{t\geq 0}$ be the semigroup of the process $Y_t$.
Generalizing the approach of \cite{dereudre}, one can check that for any
$f\in \mathcal{C}_{b}(\mathbb{R})$, the function
 $F(t,x):=P_tf(x)$ is the solution in $\mathcal{C}^{1,2}((0,\infty)\times \mathbb{R}\setminus \vec{z},\mathbb{R})\cap \mathcal{C}([0,\infty) \times \mathbb{R},\mathbb{R})$ of
\begin{equation}\label{Eq:SemiGroupMultiSkewedBM}
\left\{\begin{array}{l}
\dfrac{\partial}{\partial t}F(t,x)=\dfrac{1}{2}\dfrac{\partial^2}{\partial x^2}F(t,x) \ ,
\text{ for }t\in (0,\infty), x\in \mathbb{R}\setminus{\{z_i\}_{i\in\mathbb{Z}}}
\\
F(0,x)=f(x), \text{ for } x\in \mathbb{R}
\\
F(t,z_i^+)=F(t,z_i^-), \text{ for }t\in (0,\infty)\text{ and } \forall i\in\mathbb{Z}
\\
p_i\partial_x F(t,z_i^+)=(1-p_i)\partial_x F(t,z_i^-), \text{ for }t\in (0,\infty)\text{ and } \forall i\in\mathbb{Z}\setminus 0
\end{array}\right.
\end{equation}
Here $f(z^-)$ and $f(z^+)$ denote respectively the left sided limit and the right sided limit of a function $f$ at $z$.

We make use of some ideas in \cite{FreidlinHu13} and \cite{Nolen2009} in the following analysis.
The following Proposition gives an explicit formula for the function $w(x)=w_{\lambda}(x)$ defined in
\eqref{Eq:w-AuxiliaryFunction} in case when $\lambda>0$. We shall use the notation
$\prod_{i=k-1}^{1}M_i=M_{k-1}M_{k-2}\cdots M_1$ and the convention that it is the
identity matrix when $k=1$.

\begin{proposition}\label{Prop:ExplicitCalculationw-AuxiliaryDeterministic}
Let $\lambda\in(0,\infty)$. Let $\vec{z}:=\{z_i\}_{i\in \mathbb{Z}}$ be a set of real numbers
and $\vec{p}:=\{p_i\}_{i\in \mathbb{Z}} \subset (0,1)$.
Let $\ell_k=z_{k+1}-z_k$ and $M_k$ be the matrix
\begin{equation}\label{Prop:ExplicitCalculationw-AuxiliaryDeterministic:Eq:StructureMatrixM}
M_k:=\frac1{2p_k}
\begin{pmatrix}
e^{\sqrt{2\lambda}\ell_k} & (2p_k-1)e^{\sqrt{2\lambda}\ell_k}\\
(2p_k-1)e^{-\sqrt{2\lambda}\ell_k} & e^{-\sqrt{2\lambda}\ell_k}
\end{pmatrix}.
\end{equation}
Consider the sum of column entries of the product matrices:
\begin{equation}\label{Prop:ExplicitCalculationw-AuxiliaryDeterministic:Eq:SumColumnPositive}
\begin{pmatrix}
L_k &R_k\end{pmatrix}:=\begin{pmatrix}
1&1
\end{pmatrix}
\prod_{i=k-1}^{1}M_i \quad \text{for }k\geq 1.
\end{equation}
Suppose the following condition hold:
\begin{equation}\label{Prop:ExplicitCalculationw-AuxiliaryDeterministic:Eq:infpositive}
(\liminf_{k\to\infty}R_k) \vee (\liminf_{k\to\infty}L_k) \in(0,\infty] \ ,
\end{equation}
and there exist (possibly $+\infty$) limit
\begin{equation}\label{Prop:ExplicitCalculationw-AuxiliaryDeterministic:Eq:xi}
\xi=\xi_{\lambda}\equiv \lim\limits_{k\to\infty} \dfrac{L_k}{R_k}\in[0,\infty].
\end{equation}
Then the function $w(x)=w_{\lambda}(x)$ defined in \eqref{Eq:w-AuxiliaryFunction} is explicitly given as follows: $w(x)=w(-x)$ for $x\in \mathbb{R}$ and on $[0,\infty)$,
\begin{equation}\label{Prop:ExplicitCalculationw-AuxiliaryDeterministic:Eq:wExplicit}
w(x)=f^+_ke^{\sqrt{2\lambda}(x-z_k)}+f^-_ke^{-\sqrt{2\lambda}(x-z_k)}\ , \quad x\in [z_{k-1},z_k]\ , \quad k\geq 1 \ ,
\end{equation}
where $\vec{f}_k=(f^+_k,\,f^-_k)$ are given by
\begin{equation}\label{Prop:ExplicitCalculationw-AuxiliaryDeterministic:Eq:VecfPositive}
\vec{f}_1=
\dfrac{1}{e^{\sqrt{2\lambda}z_1}\xi-e^{-\sqrt{2\lambda}z_1}}
\begin{pmatrix}
-1\\ \xi
\end{pmatrix}
\quad\text{and}\quad
\vec{f}_k=\frac12\left(\prod_{i=k-1}^{1}M_i\right)\vec{f}_1 \quad\text{for }k\geq 2 \ .
\end{equation}
\end{proposition}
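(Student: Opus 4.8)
### Proof Proposal

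The plan is to solve the boundary value problem for $w=w_\lambda$ directly on $[0,\infty)$, matching the piecewise exponential ansatz across the interfaces $\{z_i\}$, and then to identify the free constant using the behavior at infinity encoded by $\xi$. By Lemma~\ref{Lemma:ElementaryPropertiesw}(2), $w(x)=w(-x)$, so it suffices to work on $[0,\infty)$. By the strong Markov property applied at successive interface hitting times (as in \eqref{Lemma:ElementaryPropertiesw:Eq:w-decrease}), the function $x\mapsto w(x)=E^{(\vec p,\vec z)}[e^{-\lambda T_0^x}]$ solves, on each open interval $(z_{k-1},z_k)$, the ODE $\tfrac12 w'' = \lambda w$, since away from the interfaces $T_0^x$ is governed by an ordinary Brownian motion and $w$ is the classical $\lambda$-potential kernel. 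This gives the ansatz \eqref{Prop:ExplicitCalculationw-AuxiliaryDeterministic:Eq:wExplicit} with unknown coefficient vectors $\vec f_k=(f_k^+,f_k^-)$. The boundary condition at $x=0$ is $w(0)=1$ (since $T_0^0=0$), and at each $z_i$, $i\geq 1$, we need the continuity condition $w(z_i^+)=w(z_i^-)$ together with the skew-gluing condition $p_i\,\partial_x w(z_i^+)=(1-p_i)\,\partial_x w(z_i^-)$ coming from \eqref{Eq:SemiGroupMultiSkewedBM} (the resolvent $w$ inherits these interface conditions from the semigroup).

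Next I would translate the two interface conditions at $z_k$ into a linear relation between $\vec f_{k+1}$ and $\vec f_k$. Writing $w(x)=f_k^+ e^{\sqrt{2\lambda}(x-z_k)}+f_k^- e^{-\sqrt{2\lambda}(x-z_k)}$ on $[z_{k-1},z_k]$ and $w(x)=f_{k+1}^+ e^{\sqrt{2\lambda}(x-z_{k+1})}+f_{k+1}^- e^{-\sqrt{2\lambda}(x-z_{k+1})}$ on $[z_k,z_{k+1}]$, evaluating continuity and the derivative jump at $z_k$ yields, after a short computation using $\ell_k=z_{k+1}-z_k$, the recursion $\vec f_{k+1}=M_k\vec f_k$ with $M_k$ as in \eqref{Prop:ExplicitCalculationw-AuxiliaryDeterministic:Eq:StructureMatrixM} — the prefactor $1/(2p_k)$ and the off-diagonal entries $(2p_k-1)e^{\pm\sqrt{2\lambda}\ell_k}$ are exactly what the skew condition produces. (One should double-check the bookkeeping of which side the factor of $\tfrac12$ sits on; the statement has $\vec f_k=\tfrac12(\prod_{i=k-1}^1 M_i)\vec f_1$, so the normalization of $\vec f_1$ below must absorb the remaining factor.) Iterating gives $\vec f_k$ in terms of $\vec f_1$, and the condition $w(0)=1$ reads $f_1^+ e^{-\sqrt{2\lambda}z_1}+f_1^- e^{\sqrt{2\lambda}z_1}=1$ (recall $z_1=\ell_0$). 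This is one equation for the two unknowns $f_1^\pm$; the missing equation is the boundary condition at $+\infty$.

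The decisive step is pinning down $\vec f_1$ using the asymptotics at infinity. Because $w(x)\in[0,1]$ is bounded, $w$ cannot contain a growing $e^{\sqrt{2\lambda}x}$ component as $x\to\infty$; more precisely, from the representation $w(x)=f_k^+ e^{\sqrt{2\lambda}(x-z_k)}+f_k^- e^{-\sqrt{2\lambda}(x-z_k)}$ and the fact that $(L_k,R_k)=(1\ 1)\prod_{i=k-1}^1 M_i$ tracks the growth of the two fundamental solutions, the boundedness forces the ``$+$'' direction to be suppressed in the limit, i.e. the surviving solution is the one proportional to $(-1,\xi)$ up to the $z_1$-normalization — here is where hypothesis \eqref{Prop:ExplicitCalculationw-AuxiliaryDeterministic:Eq:xi} that $L_k/R_k\to\xi$ and hypothesis \eqref{Prop:ExplicitCalculationw-AuxiliaryDeterministic:Eq:infpositive} guaranteeing nondegeneracy (so that the relevant denominators do not vanish and the decaying solution is genuinely selected) enter. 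Imposing that $\vec f_1$ be a scalar multiple of $(-1,\xi)$ and normalizing by $w(0)=1$ gives $\vec f_1 = \frac{1}{e^{\sqrt{2\lambda}z_1}\xi - e^{-\sqrt{2\lambda}z_1}}(-1,\xi)$, which is \eqref{Prop:ExplicitCalculationw-AuxiliaryDeterministic:Eq:VecfPositive}. I expect the main obstacle to be precisely this selection-at-infinity argument: one must show rigorously that the bounded solution of the second-order interface problem is unique and coincides with the contracting direction of the infinite matrix product, which requires using \eqref{Prop:ExplicitCalculationw-AuxiliaryDeterministic:Eq:infpositive} to control $L_k,R_k$ from below and rule out the unbounded solution — a delicate point since the $M_k$ need not be uniformly contracting and one is extracting a limiting one-dimensional subspace from an infinite product of $2\times2$ matrices. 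A clean way to do this is to approximate $\mathbb{R}$ by the finite interval $[-z_N,z_N]$ with a killing (or absorbing) condition at $\pm z_N$, solve the corresponding finite linear system exactly, and pass to the limit $N\to\infty$, showing the solutions converge to the claimed formula; the convergence of $\vec f_1^{(N)}$ is then exactly the statement that $L_N/R_N\to\xi$.
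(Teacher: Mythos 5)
Your overall plan---reduce to $[0,\infty)$, write $w$ as a piecewise exponential, impose continuity and the skew--gluing conditions at each $z_k$, derive the transfer recursion $\vec f_{k+1}=M_k\vec f_k$, and then identify the one remaining free constant via behaviour at infinity---is exactly the structure of the paper's proof, and the derivation of $M_k$ and of the recursion from the interface conditions is carried out the same way.

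The one place where you diverge, and where there is a genuine gap, is the \emph{selection-at-infinity} step. You argue from \emph{boundedness} of $w\in[0,1]$ that the "growing direction'' is suppressed and hence $\vec f_1\propto(-1,\xi)$. But under the hypotheses of the proposition the sequences $L_k,R_k$ are only required to satisfy $\bigl(\liminf R_k\bigr)\vee\bigl(\liminf L_k\bigr)\in(0,\infty]$; nothing prevents both $L_k$ and $R_k$ from staying bounded (e.g.\ if the $\ell_k$ are not bounded away from $0$), in which case \emph{every} choice of $\vec f_1$ gives a bounded solution of the interface problem and boundedness cannot single out the right one. The paper instead imposes the strictly stronger boundary condition $\lim_{x\to+\infty}w(x)=0$, which is automatic from the Feynman--Kac representation: for $\lambda>0$, $w(x)=E^{(\vec p,\vec z)}[e^{-\lambda T_0^x}]\to 0$ because $T_0^x\to\infty$ a.s.\ as $x\to\infty$. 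This decay translates to $w(z_k)=f_k^++f_k^-\to 0$, i.e.\ $\lim_k\bigl(L_k f_1^+ + R_k f_1^-\bigr)=0$; dividing by $R_k$ (or $L_k$, whichever has positive $\liminf$) and using \eqref{Prop:ExplicitCalculationw-AuxiliaryDeterministic:Eq:xi} gives $\xi f_1^+ + f_1^-=0$ directly, which together with $w(0)=1$ yields \eqref{Prop:ExplicitCalculationw-AuxiliaryDeterministic:Eq:VecfPositive}. You should replace the boundedness argument by this decay condition: it is free, it is what the hypothesis \eqref{Prop:ExplicitCalculationw-AuxiliaryDeterministic:Eq:infpositive} is designed to interact with, and it closes the gap. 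Your alternative suggestion (solve on a truncated interval $[-z_N,z_N]$ with absorption and pass to the limit) is a legitimate different route that the paper does not take, but you do not execute it; the missing convergence step is essentially equivalent to the decay-at-infinity argument anyway. Finally, your parenthetical worry about the factor of $\tfrac12$ in \eqref{Prop:ExplicitCalculationw-AuxiliaryDeterministic:Eq:VecfPositive} is well placed: the transfer recursion gives $\vec f_k=\bigl(\prod_{i=k-1}^1 M_i\bigr)\vec f_1$ with no extra $\tfrac12$, so there is a bookkeeping inconsistency in the stated formula for $\vec f_k$ that the proof must reconcile.
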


\begin{proof}[Proof of Proposition \ref{Prop:ExplicitCalculationw-AuxiliaryDeterministic}]
The fact that $w(x)=w(-x)$ for $x\in \mathbb{R}$ follows from Lemma \ref{Lemma:ElementaryPropertiesw}.

Fix $\lambda>0$. The restriction of $w(x)=w_{\lambda}(x)$ on $x\in \mathbb{R}_+=[0,\infty)$ is the continuous solution of
the following Sturm-Liouville problem on $\mathbb{R}_+$
with skew boundary conditions on the set  $\{z_i\}_{i\geq 1}$:
\begin{equation}\label{Prop:ExplicitCalculationw-AuxiliaryDeterministic:Proof:Eq:PDE}
\left\{\begin{array}{ll}
0=\dfrac{1}{2}\partial_{xx}^2 w(x)-\lambda w(x) \ , & \text{ for }t\in (0,\infty), \,x\in (0,\infty)\setminus{\{z_i\}_{i\geq 1}} \ ,
\\
p_i \partial_x w(z_i+)=(1-p_i)\partial_x w(z_i-) \ , & \text{ for } i\geq 1 \ ,
\\
w(z_i+)=w(z_i-), & \text{ for } i\geq 1 \ ,
\\
w(0)=1  \ , &
\\
\lim\limits_{x\rightarrow +\infty} w(x)=0 \ . &
\end{array}\right.
\end{equation}

From the first equation of \eqref{Prop:ExplicitCalculationw-AuxiliaryDeterministic:Proof:Eq:PDE},
The function $w$ satisfies the eigenvalue problem
\begin{equation}\label{Prop:ExplicitCalculationw-AuxiliaryDeterministic:Proof:Eq:eigen_k}
0=\dfrac{1}{2}\partial_{xx}^2w(x)-\lambda w(x), \quad x\in (z_{k-1},\,z_k)
\end{equation}
for all $k\geq 1$. This linear second order ODE \eqref{Prop:ExplicitCalculationw-AuxiliaryDeterministic:Proof:Eq:eigen_k}
has general solution
\begin{equation}\label{Prop:ExplicitCalculationw-AuxiliaryDeterministic:Proof:Eq:GenSol_k}
w(x):=f^+_ke^{\sqrt{2\lambda}(x-z_k)}+f^-_ke^{-\sqrt{2\lambda}(x-z_k)}\quad \text{for } x \in(z_{k-1},\,z_k) \ ,
\end{equation}
where $f^\pm_k$ are constants to be determined by the boundary conditions in
\eqref{Prop:ExplicitCalculationw-AuxiliaryDeterministic:Proof:Eq:PDE}.
The functions $w_k(x)=w(x) \text{ for } x\in (z_{k-1}, z_k)$ can be extended continuously to the end points of the interval $[z_{k-1},z_k]$.
The collection $\{w(z_k)\}_{k\geq 0}$ satisfies boundary conditions of
\eqref{Prop:ExplicitCalculationw-AuxiliaryDeterministic:Proof:Eq:PDE} that can be stated as the following:
\begin{equation}\label{Prop:ExplicitCalculationw-AuxiliaryDeterministic:Proof:Eq:patching}
\left\{\begin{array}{ll}
w(z_{k}-)=w(z_k+)\equiv w(z_k), &  \text{for } k\geq 1 \ ,
\\
(1-p_{k})\partial_x w(z_{k}-)=p_{k}\partial_x w(z_{k}+), & \text{for } k\geq 1  \ ,
\\
\lim\limits_{x\downarrow 0}w(x)=w(0)=1 \ , &
\\
\lim\limits_{k \rightarrow +\infty} w(z_k)= 0   \ . &
\end{array}\right.
\end{equation}

Putting \eqref{Prop:ExplicitCalculationw-AuxiliaryDeterministic:Proof:Eq:GenSol_k} into \eqref{Prop:ExplicitCalculationw-AuxiliaryDeterministic:Proof:Eq:patching}, we obtain
the following system of equations for the unknown $\{f^{+}_k,\,f^{-}_k\}_{k\geq 1}$.
\begin{equation}\label{Prop:ExplicitCalculationw-AuxiliaryDeterministic:Proof:Eq:eq with f_k}
\left\{\begin{array}{ll}
f^+_k+f^-_k=f^+_{k+1}e^{-\sqrt{2\lambda}(z_{k+1}-z_k)}+f^-_{k+1}e^{\sqrt{2\lambda}(z_{k+1}-z_k)}, &  \text{for } k\geq 1 \ ,
\\
(1-p_k) (f^+_k-f^-_k)=p_{k}\left(f^+_{k+1}e^{-\sqrt{2\lambda}(z_{k+1}-z_k)}-f^-_{k+1}e^{\sqrt{2\lambda}(z_{k+1}-z_k)}\right), &
\text{for } k\geq 1 \ ,
\\
f^+_1e^{-\sqrt{2\lambda}z_1}+f^-_1e^{\sqrt{2\lambda}z_1}=1 \ , &
\\
f^+_k+f^-_k \to 0 \text{ as } k\to\infty \ . &
\end{array}\right.
\end{equation}
Let $s:=w'(0+)$ then we have that
$$
\left\{\begin{array}{l}
f^+_1e^{-\sqrt{2\lambda}z_1}+f^-_1e^{\sqrt{2\lambda}z_1}=1 \ ,
\\
\sqrt{2\lambda}\left(f^+_1e^{-\sqrt{2\lambda}z_1}-f^-_1e^{\sqrt{2\lambda}z_1}\right)=s \ .
\end{array}\right.
$$
Thus,
\begin{equation}\label{Prop:ExplicitCalculationw-AuxiliaryDeterministic:Proof:Eq:f_1}
\left\{\begin{array}{l}
f^+_1=e^{\sqrt{2\lambda}z_1}\left(\frac12+\frac{s}{\sqrt{8\lambda}}\right) \ ,
\\
f^-_1=e^{-\sqrt{2\lambda}z_1}\left(\frac12-\frac{s}{\sqrt{8\lambda}}\right) \ .
\end{array}\right.
\end{equation}
From the first two equations of \eqref{Prop:ExplicitCalculationw-AuxiliaryDeterministic:Proof:Eq:eq with f_k} for $k\geq 1$, we get
\begin{equation}\label{Prop:ExplicitCalculationw-AuxiliaryDeterministic:Proof:Eq:f^k_pm}
\left\{\begin{array}{l}
f^+_{k+1}=e^{\sqrt{2\lambda}(z_{k+1}-z_k)}\dfrac{1}{2p_k}\left(f^+_k+(2p_k-1)f^-_k\right) \ ,
\\
f^-_{k+1}=e^{-\sqrt{2\lambda}(z_{k+1}-z_k)}\dfrac{1}{2p_k}\left(f^+_k(2p_k-1)+f^-_k\right) \ .
\end{array}\right.
\end{equation}
To simplify notation, we let $\vec{f}_k=(f^+_k,f^-_k)^T$ to be the transpose of $(f^+_k,f^-_k)$.
Then for all $k\in \mathbb{Z} \setminus \{0\}$, we have
$$
\vec{f}_{k+1}
= M_k\,\vec{f}_{k} \ ,
$$
where
$$
M_k:=\dfrac{1}{2p_k}
\begin{pmatrix}
e^{\sqrt{2\lambda}(z_{k+1}-z_k)} & (2p_k-1)e^{\sqrt{2\lambda}(z_{k+1}-z_k)}\\
(2p_k-1)e^{-\sqrt{2\lambda}(z_{k+1}-z_k)} & e^{-\sqrt{2\lambda}(z_{k+1}-z_k)}
\end{pmatrix} \ .
$$

Iterating this equation, we have
$\vec{f}_k=\left(M_{k-1}M_{k-2}\cdots M_1\right)\vec{f}_1$ for $k\geq 2$.
i.e.
\begin{equation}\label{Prop:ExplicitCalculationw-AuxiliaryDeterministic:Proof:Eq:f_k for k>0}
\vec{f}_k=\left(\prod_{i=k-1}^{1}M_i\right)
\begin{pmatrix}
e^{\sqrt{2\lambda}z_1}\left(\frac12+\frac{s}{\sqrt{8\lambda}}\right)\\e^{-\sqrt{2\lambda}z_1}\left(\frac12-\frac{s}{\sqrt{8\lambda}}\right)
\end{pmatrix} \quad  \ , \ \text{ for } k\geq 2  \ .
\end{equation}

Now by the fifth condition of \eqref{Prop:ExplicitCalculationw-AuxiliaryDeterministic:Proof:Eq:eq with f_k}
we have $w(z_k)=f^+_k+f^-_k \to 0 \text{ as } k\to\infty$, so
\begin{equation}\label{Prop:ExplicitCalculationw-AuxiliaryDeterministic:Proof:Eq:finding s}
0=\lim_{k\to \infty}\begin{pmatrix}
1&1
\end{pmatrix}\left(\prod_{i=k-1}^{1}M_i\right)\begin{pmatrix}
e^{\sqrt{2\lambda}z_1}\left(\frac12+\frac{s}{\sqrt{8\lambda}}\right)\\e^{-\sqrt{2\lambda}z_1}\left(\frac12-\frac{s}{\sqrt{8\lambda}}\right)
\end{pmatrix} \ .
\end{equation}
Suppose $L_k$ and $R_k$ are real numbers such that
$$
\begin{pmatrix}
1&1
\end{pmatrix}
\prod_{i=k-1}^{1}M_i=\begin{pmatrix}
L_k & R_k\\
\end{pmatrix} \ .
$$
Then from \eqref{Prop:ExplicitCalculationw-AuxiliaryDeterministic:Proof:Eq:finding s}, we have
$$
\begin{array}{ll}
0&=\lim\limits_{k\to\infty} e^{\sqrt{2\lambda}z_1}\left(\frac12+\frac{s}{\sqrt{8\lambda}}\right)L_k +e^{-\sqrt{2\lambda}z_1}\left(\frac12-\frac{s}{\sqrt{8\lambda}}\right)R_k \notag
\\
&= \lim\limits_{k\to\infty}\frac{1}{2}\left(e^{\sqrt{2\lambda}z_1}\,L_k+ e^{-\sqrt{2\lambda}z_1}\,R_k\right) \,+\,
\frac{1}{\sqrt{8\lambda}}\left(e^{\sqrt{2\lambda}z_1}\,L_k- e^{-\sqrt{2\lambda}z_1}\,R_k \right)\, s \ .
\end{array}
$$
Now by  assumptions \eqref{Prop:ExplicitCalculationw-AuxiliaryDeterministic:Eq:infpositive} and
\eqref{Prop:ExplicitCalculationw-AuxiliaryDeterministic:Eq:xi}, it follows that
\begin{equation}\label{Prop:ExplicitCalculationw-AuxiliaryDeterministic:Proof:Eq:s as a limit}
s=\sqrt{2\lambda}\frac{e^{\sqrt{2\lambda}z_1}\,\xi+ e^{-\sqrt{2\lambda}z_1}}{e^{-\sqrt{2\lambda}z_1}-e^{\sqrt{2\lambda}z_1}\,\xi}
\end{equation}
for $\xi\in [0,\infty]$ (when $\xi=+\infty$, $s=-\sqrt{2\lambda}$).

Thus combining \eqref{Prop:ExplicitCalculationw-AuxiliaryDeterministic:Proof:Eq:f_1},
\eqref{Prop:ExplicitCalculationw-AuxiliaryDeterministic:Proof:Eq:f_k for k>0} and
\eqref{Prop:ExplicitCalculationw-AuxiliaryDeterministic:Proof:Eq:s as a limit} we get

$$
\vec{f}_1=
\dfrac{1}{2}
\begin{pmatrix}
e^{\sqrt{2\lambda}z_1}\left(1+\frac{e^{\sqrt{2\lambda}z_1}\,\xi+ e^{-\sqrt{2\lambda}z_1}}{e^{-\sqrt{2\lambda}z_1}-e^{\sqrt{2\lambda}z_1}\,\xi}\right)
\\
e^{-\sqrt{2\lambda}z_1}\left(1-\frac{e^{\sqrt{2\lambda}z_1}\,\xi+ e^{-\sqrt{2\lambda}z_1}}{e^{-\sqrt{2\lambda}z_1}-e^{\sqrt{2\lambda}z_1}\,\xi}\right)
\end{pmatrix}
\quad\text{and}\quad
\vec{f}_k=\frac12\left(\prod_{i=k-1}^{1}M_i\right)\vec{f}_1 \quad\text{for }k\geq 2 \ .
$$

Notice that
$$e^{\sqrt{2\lambda}z_1}\left(1+\frac{e^{\sqrt{2\lambda}z_1}\,\xi+ e^{-\sqrt{2\lambda}z_1}}{e^{-\sqrt{2\lambda}z_1}-e^{\sqrt{2\lambda}z_1}\,\xi}\right)=
e^{\sqrt{2\lambda}z_1}\left(\dfrac{2e^{-\sqrt{2\lambda}z_1}}{e^{-\sqrt{2\lambda}z_1}-e^{\sqrt{2\lambda}z_1}\,\xi}\right)
=\dfrac{2}{e^{-\sqrt{2\lambda}z_1}-e^{\sqrt{2\lambda}z_1}\,\xi}$$
and
$$e^{-\sqrt{2\lambda}z_1}\left(1-\frac{e^{\sqrt{2\lambda}z_1}\,\xi+ e^{-\sqrt{2\lambda}z_1}}{e^{-\sqrt{2\lambda}z_1}-e^{\sqrt{2\lambda}z_1}\,\xi}\right)
=e^{-\sqrt{2\lambda}z_1}\left(\dfrac{-2e^{\sqrt{2\lambda}z_1}\xi}{e^{-\sqrt{2\lambda}z_1}-e^{\sqrt{2\lambda}z_1}\,\xi}\right)
=\dfrac{-2\xi}{e^{-\sqrt{2\lambda}z_1}-e^{\sqrt{2\lambda}z_1}\,\xi} \ ,$$
we obtain \eqref{Prop:ExplicitCalculationw-AuxiliaryDeterministic:Eq:VecfPositive}, which completes the proof of this Proposition.
\end{proof}

\subsection{Results for random skewness and barriers}\label{Sec:AuxiliaryFunctions:Subsection:Random-p-z}

In Section \ref{Sec:AuxiliaryFunctions:Subsection:Deterministic-p-z} $(\vec{p},\vec{z})$ are not random. Now we suppose these are random vectors under
$\mathbf{P}$ which satisfy Lemma \ref{Assumption:ErgodicEnvironments}.
We want to verify conditions \eqref{Prop:ExplicitCalculationw-AuxiliaryDeterministic:Eq:infpositive} and \eqref{Prop:ExplicitCalculationw-AuxiliaryDeterministic:Eq:xi}
in Proposition \ref{Prop:ExplicitCalculationw-AuxiliaryDeterministic} on
$\{M_i\}$ in order to obtain an explicit formula for $w(x)$ defined in \eqref{Eq:w-AuxiliaryFunction} in the case $\lambda>0$.

Recall that $\{p_i\}_{i\geq 1}$ are i.i.d. random variables that take values
in $[\frac{1}{2},1)$ and $\{z_{i+1}-z_{i}\}=\{\ell_i\}_{i\geq 0}$ are also i.i.d. random variables taking positive values.
Moreover, 
\begin{equation}\label{Eq:M_i iid}
\begin{array}{ll}
M_i&=\dfrac{1}{2p_i}
\begin{pmatrix}
e^{\sqrt{2\lambda}\ell_i} & (2p_i-1)e^{\sqrt{2\lambda}\ell_i}\\
(2p_i-1)e^{-\sqrt{2\lambda}\ell_i} & e^{-\sqrt{2\lambda}\ell_i}
\end{pmatrix} \\
&=\dfrac{1}{2p_i \gamma_i}
\begin{pmatrix}
\gamma_i^2 & \zeta_i \gamma_i^2\\
\zeta_i & 1
\end{pmatrix} \ ,
\end{array}
\end{equation}
in which we denote $\gamma_i:=e^{\sqrt{2\lambda}\ell_i}\in (1,\infty)$ and $\zeta_i=2p_i-1 \in [0,1)$ for simplicity.
 Recall also that
$$\begin{pmatrix}L_k&R_k\end{pmatrix}=\begin{pmatrix}1&1\end{pmatrix}\prod_{i=k-1}^{1} M_i \in \mathbb{R}_+^2\quad\text{ for }k\geq 2.$$

The ``backward" process
$\left\{\begin{pmatrix}L_k&R_k\end{pmatrix}\right\}_{k\geq 2}$ is \textit{not} a Markov chain, but the corresponding  ``forward" process is.
That is
\begin{equation}\label{Eq:tild Lk}
\begin{pmatrix}
\widetilde{L}_k&\widetilde{R}_k
\end{pmatrix}:=\begin{pmatrix}1&1\end{pmatrix}\prod_{i=1}^{k-1} M_i,\quad k\geq 2
\end{equation}
is a Markov chain, with  iterative relation $\begin{pmatrix}\widetilde{L}_{k+1}&\widetilde{R}_{k+1} \end{pmatrix}
=\begin{pmatrix}\widetilde{L}_k&\widetilde{R}_k \end{pmatrix}M_k$. Furthermore, $\begin{pmatrix}L_k&R_k\end{pmatrix}\stackrel{d}{=}\begin{pmatrix}
\widetilde{L}_k&\widetilde{R}_k
\end{pmatrix}$ in $\mathbb{R}^2_+$ for each $k\geq 2$.

The following lemma ensures that condition \eqref{Prop:ExplicitCalculationw-AuxiliaryDeterministic:Eq:infpositive}
in Proposition \ref{Prop:ExplicitCalculationw-AuxiliaryDeterministic} is verified for the i.i.d. case.

\begin{lemma}\label{Lemma:LRtildeMonotonicityOrder}
With probability one,
$\widetilde{L}_{k+1}+\widetilde{R}_{k+1}> \widetilde{L}_{k}+\widetilde{R}_{k}$ and $\widetilde{L}_{k}>\widetilde{R}_{k} > 0$ for all $k\geq 2$.
\end{lemma}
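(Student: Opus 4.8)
The plan is to prove both assertions by a single induction on $k$, driven by the forward recursion $\begin{pmatrix}\widetilde{L}_{k+1}&\widetilde{R}_{k+1}\end{pmatrix}=\begin{pmatrix}\widetilde{L}_k&\widetilde{R}_k\end{pmatrix}M_k$ together with the explicit form of $M_k$ in \eqref{Eq:M_i iid}. Writing the recursion out in coordinates gives
\begin{equation}\label{Eq:LRtilde-rec-plan}
\widetilde{L}_{k+1}=\frac{\widetilde{L}_k\gamma_k^2+\widetilde{R}_k\zeta_k}{2p_k\gamma_k},\qquad \widetilde{R}_{k+1}=\frac{\widetilde{L}_k\zeta_k\gamma_k^2+\widetilde{R}_k}{2p_k\gamma_k},
\end{equation}
where, $\mathbf{P}$-almost surely, $\gamma_k=e^{\sqrt{2\lambda}\ell_k}>1$ (because $\lambda>0$ and $\ell_k\geq\underline{\ell}>0$) and $\zeta_k=2p_k-1\in[0,1)$ (because $2\leq d_k\leq\overline{d}<\infty$, so $p_k<1$). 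Positivity of $\widetilde{L}_{k+1},\widetilde{R}_{k+1}$ propagates trivially through \eqref{Eq:LRtilde-rec-plan}, so the only substantive points are the strict inequality $\widetilde{L}_k>\widetilde{R}_k$ and the strict growth of $\widetilde{L}_k+\widetilde{R}_k$.

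First I would check the base case $k=2$: from $\begin{pmatrix}\widetilde{L}_2&\widetilde{R}_2\end{pmatrix}=\begin{pmatrix}1&1\end{pmatrix}M_1$ one gets $\widetilde{L}_2=(\gamma_1^2+\zeta_1)/(2p_1\gamma_1)>0$, $\widetilde{R}_2=(\zeta_1\gamma_1^2+1)/(2p_1\gamma_1)>0$, and $\widetilde{L}_2-\widetilde{R}_2=(\gamma_1^2-1)(1-\zeta_1)/(2p_1\gamma_1)>0$. For the inductive step, assuming $\widetilde{L}_k>\widetilde{R}_k>0$, subtracting the two identities in \eqref{Eq:LRtilde-rec-plan} yields
\begin{equation}\label{Eq:LRtilde-diff-plan}
\widetilde{L}_{k+1}-\widetilde{R}_{k+1}=\frac{(1-\zeta_k)\big(\widetilde{L}_k\gamma_k^2-\widetilde{R}_k\big)}{2p_k\gamma_k}>0,
\end{equation}
since $1-\zeta_k>0$ and $\widetilde{L}_k\gamma_k^2>\widetilde{L}_k>\widetilde{R}_k$; this closes the induction for the ordering $\widetilde{L}_k>\widetilde{R}_k>0$.

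The monotonicity of the sum I would then deduce directly from the ordering, using the key algebraic identity $2p_k=1+\zeta_k$. With it, the numerator of $\widetilde{L}_{k+1}+\widetilde{R}_{k+1}$ factors as $(1+\zeta_k)(\widetilde{L}_k\gamma_k^2+\widetilde{R}_k)$, so the sum collapses to
\begin{equation}\label{Eq:LRtilde-sum-plan}
\widetilde{L}_{k+1}+\widetilde{R}_{k+1}=\frac{\widetilde{L}_k\gamma_k^2+\widetilde{R}_k}{\gamma_k}=\widetilde{L}_k\gamma_k+\frac{\widetilde{R}_k}{\gamma_k},
\end{equation}
and hence $(\widetilde{L}_{k+1}+\widetilde{R}_{k+1})-(\widetilde{L}_k+\widetilde{R}_k)=(\gamma_k-1)\big(\widetilde{L}_k-\widetilde{R}_k/\gamma_k\big)>0$ for every $k\geq 2$, using $\gamma_k>1$ and $\widetilde{L}_k>\widetilde{R}_k>\widetilde{R}_k/\gamma_k$.

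I do not expect any serious obstacle: the argument is an elementary induction, and what makes it go through cleanly is spotting the factorizations in \eqref{Eq:LRtilde-diff-plan} and \eqref{Eq:LRtilde-sum-plan} and the identity $2p_k=1+\zeta_k$. The one place the standing assumptions are genuinely used is strictness: $\widetilde{L}_k>\widetilde{R}_k$ is strict because $\zeta_k<1$ $\mathbf{P}$-a.s.\ (bounded branching degrees), and the growth is strict because $\gamma_k>1$ $\mathbf{P}$-a.s.\ (positive branch lengths and $\lambda>0$). Finally, via the distributional identity $\begin{pmatrix}L_k&R_k\end{pmatrix}\stackrel{d}{=}\begin{pmatrix}\widetilde{L}_k&\widetilde{R}_k\end{pmatrix}$, this lemma is exactly what feeds into verifying hypothesis \eqref{Prop:ExplicitCalculationw-AuxiliaryDeterministic:Eq:infpositive} of Proposition \ref{Prop:ExplicitCalculationw-AuxiliaryDeterministic} (together with a separate argument upgrading the monotone growth of the forward sums to a strictly positive $\liminf$ of $L_k\vee R_k$).
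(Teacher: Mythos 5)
Your proposal is correct and follows essentially the same route as the paper: the same forward recursion, the same base case $k=2$, the same induction yielding $\widetilde{L}_{k+1}-\widetilde{R}_{k+1}=\frac{(1-\zeta_k)}{2p_k\gamma_k}\big(\gamma_k^2\widetilde{L}_k-\widetilde{R}_k\big)>0$, and the same reduction $\widetilde{L}_{k+1}+\widetilde{R}_{k+1}=\gamma_k\widetilde{L}_k+\gamma_k^{-1}\widetilde{R}_k$ via $2p_k=1+\zeta_k$. The only cosmetic difference is how the growth of the sum is displayed: you factor the increment as $(\gamma_k-1)\big(\widetilde{L}_k-\widetilde{R}_k/\gamma_k\big)$, whereas the paper writes it as $(\gamma_k+\gamma_k^{-1}-2)\widetilde{L}_k+(1-\gamma_k^{-1})(\widetilde{L}_k-\widetilde{R}_k)$; these are algebraically identical.
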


\begin{proof}
Define
$\begin{pmatrix}
\widetilde{L}_1&\widetilde{R}_1
\end{pmatrix}=\begin{pmatrix}
1 & 1
\end{pmatrix}$.
Then the following iterations hold for $k\geq 1$:
$$\begin{pmatrix}\widetilde{L}_{k+1}&\widetilde{R}_{k+1} \end{pmatrix}
=\begin{pmatrix}\widetilde{L}_k&\widetilde{R}_k \end{pmatrix}M_k=\frac{1}{2 p_k \gamma_k}
\begin{pmatrix}\gamma_k^2 \widetilde{L}_k+\zeta_k \widetilde{R}_k&\zeta_k \gamma_k^2 \widetilde{L}_k +\widetilde{R}_k \end{pmatrix}.$$
Adding the two entries gives
$\widetilde{L}_{k+1}+\widetilde{R}_{k+1}=\gamma_k\widetilde{L}_k+\gamma_k^{-1}\widetilde{R}_k$ for $k\geq 1$ and so
\begin{equation}\label{Lemma:LRtildeMonotonicityOrder:Eq:Diff1}
 (\widetilde{L}_{k+1}+\widetilde{R}_{k+1}) - (\widetilde{L}_{k}+\widetilde{R}_{k})
= (\gamma_k+\gamma_k^{-1}-2) L_k +(1-\gamma_k^{-1}) (L_k-R_k) \ .
\end{equation}

Since $\gamma_k>1$, the first assertion
$\widetilde{L}_{k+1}+\widetilde{R}_{k+1}> \widetilde{L}_{k}+\widetilde{R}_{k}$
follows from the second assertion $\widetilde{L}_{k}>\widetilde{R}_{k}$.

It remains to prove the latter, $\widetilde{L}_{k}>\widetilde{R}_{k}$ for $k\geq 2$. The initial case $k=2$ holds:
$R_2<L_2$ because
$$2p_1\gamma_1(R_2-L_2)=(\zeta_1\gamma_1^2+1)-(\gamma_1^2+\zeta_1)=(\zeta_1-1)(\gamma_1^2-1) <0.$$
Similarly,
\begin{equation}\label{Lemma:LRtildeMonotonicityOrder:Eq:Diff2}
\begin{array}{ll}
2p_k\gamma_k(R_{k+1}-L_{k+1})&=(\zeta_k \gamma_k^2 \widetilde{L}_k +\widetilde{R}_k)-(\gamma_k^2 \widetilde{L}_k+\zeta_k \widetilde{R}_k)
\\
& =(\zeta_k-1)(\gamma_k^2 \widetilde{L}_k - \widetilde{R}_k) \ .
\end{array}
\end{equation}
The proof is complete by induction.
\end{proof}

\begin{lemma}\label{Lemma:LtildeTendInfty}
With probability one, $\widetilde{L}_{k} \to \infty $ as $k\to\infty$.
\end{lemma}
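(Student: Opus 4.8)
The plan is to prove the stronger quantitative statement that $\widetilde{L}_k+\widetilde{R}_k\to\infty$ \emph{linearly} in $k$, and then deduce the lemma from the inequality $\widetilde L_k\ge\widetilde R_k$ of Lemma \ref{Lemma:LRtildeMonotonicityOrder}, which forces $\widetilde L_k\ge\tfrac12(\widetilde L_k+\widetilde R_k)$. Throughout I would fix an environment $(\vec p,\vec z)$ for which the deterministic bounds $\underline\ell\le\ell_k\le\overline\ell$ and $2\le d_k\le\overline d$ hold; by Lemma \ref{Assumption:ErgodicEnvironments} this is $\mathbf P$-almost every environment, so the a.s.\ conclusion follows once the argument is carried out pathwise. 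I would write $S_k:=\widetilde L_k+\widetilde R_k$, $D_k:=\widetilde L_k-\widetilde R_k$, and $\gamma_{\min}:=e^{\sqrt{2\lambda}\,\underline\ell}>1$, $\gamma_{\max}:=e^{\sqrt{2\lambda}\,\overline\ell}<\infty$, so that $\gamma_k=e^{\sqrt{2\lambda}\ell_k}\in[\gamma_{\min},\gamma_{\max}]$, and note $1-p_k=1/d_k\ge 1/\overline d$ while $p_k<1$.

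The three ingredients are all read off from the forward iteration $\begin{pmatrix}\widetilde L_{k+1}&\widetilde R_{k+1}\end{pmatrix}=\begin{pmatrix}\widetilde L_k&\widetilde R_k\end{pmatrix}M_k$ together with the two algebraic identities already established in the proof of Lemma \ref{Lemma:LRtildeMonotonicityOrder} (they hold verbatim for the tilde quantities, since that proof uses precisely this iteration). First: $S_1=2$ and $k\mapsto S_k$ is non-decreasing (by \eqref{Lemma:LRtildeMonotonicityOrder:Eq:Diff1} together with $\gamma_k+\gamma_k^{-1}\ge 2$ and $D_k\ge 0$), hence $\widetilde L_k\ge\tfrac12 S_k\ge 1$ for all $k\ge 1$. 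Second: the computation behind \eqref{Lemma:LRtildeMonotonicityOrder:Eq:Diff2} gives $D_{k+1}=\frac{1-p_k}{p_k\gamma_k}\big(\gamma_k^2\widetilde L_k-\widetilde R_k\big)$; using $\widetilde R_k\le\widetilde L_k$ one gets $D_{k+1}\ge\frac{1-p_k}{p_k\gamma_k}(\gamma_k^2-1)\widetilde L_k\ge\frac{\gamma_{\min}^2-1}{\overline d\,\gamma_{\max}}=:c_0>0$ for every $k\ge 1$. Third: identity \eqref{Lemma:LRtildeMonotonicityOrder:Eq:Diff1}, namely $S_{k+1}-S_k=(\gamma_k+\gamma_k^{-1}-2)\widetilde L_k+(1-\gamma_k^{-1})D_k$, has a non-negative first term and a second term bounded below by $(1-\gamma_{\min}^{-1})D_k$; combining with the previous bound, $S_{k+1}-S_k\ge(1-\gamma_{\min}^{-1})\,c_0=:c_1>0$ for all $k\ge 2$. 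Therefore $S_k\ge S_2+(k-2)c_1\to\infty$, and $\widetilde L_k\ge\tfrac12 S_k\to\infty$, which is the claim.

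I expect no serious obstacle here: the argument is essentially bookkeeping built on the monotonicity and positivity already proved in Lemma \ref{Lemma:LRtildeMonotonicityOrder}. The only points that need care are (i) that $\widetilde R_k\le\widetilde L_k$, hence the uniform lower bound $D_k\ge c_0$, is available only from index $k\ge 2$ onward, so the linear growth of $S_k$ is merely \emph{eventual} — harmless for the conclusion; and (ii) that the uniform constants $c_0,c_1$ depend only on $\lambda,\underline\ell,\overline\ell,\overline d$ and not on the realization, which is exactly what makes the pathwise estimate upgrade to an almost-sure statement. One should also double-check that $\gamma_{\min}>1$ strictly (true since $\lambda>0$ in this section and $\underline\ell>0$), because it is this strict inequality — and not any branching assumption — that drives the divergence; in particular the argument goes through even when some $d_k=2$.
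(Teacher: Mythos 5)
Your proof is correct, and it uses a genuinely different route from the paper's, although both rest on the same increment identity \eqref{Lemma:LRtildeMonotonicityOrder:Eq:Diff1} and the reduction $\widetilde{L}_k\ge\tfrac12 S_k$, where $S_k:=\widetilde{L}_k+\widetilde{R}_k$. The paper keeps the first term $(\gamma_k+\gamma_k^{-1}-2)\widetilde{L}_k$ in that identity, drops the $D_k$ term as non-negative, and turns this into a multiplicative bound $S_{k+1}/S_k\ge\theta_k:=1+\tfrac12\big(\sqrt{\gamma_k}-1/\sqrt{\gamma_k}\big)^2>1$, concluding $S_{k+1}>2\prod_{i=1}^k\theta_i\to\infty$ by the ergodic theorem applied to the i.i.d.\ sequence $\ln\theta_i$. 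You do the opposite: you drop the first term as non-negative, keep the $D_k$ term, and make it quantitative by feeding the $D_k$-recursion $D_{k+1}=\frac{1-p_k}{p_k\gamma_k}(\gamma_k^2\widetilde{L}_k-\widetilde{R}_k)$ from the proof of Lemma \ref{Lemma:LRtildeMonotonicityOrder}, together with $\widetilde{L}_k\ge 1$ and the a.s.\ bounds on $(\ell_k,d_k)$, into a uniform lower bound $D_k\ge c_0>0$ for $k\ge 2$, whence additive growth $S_k\ge S_2+(k-2)c_1$. Your version buys a purely deterministic pathwise argument that never invokes the ergodic theorem --- a reasonable economy here, since under Assumptions \ref{Assumption:d} and \ref{Assumption:ell} the paper's $\theta_k$ are already bounded below by a deterministic constant $>1$, so the ergodic theorem is a heavier hammer than strictly required. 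The paper's version is shorter, yields the stronger conclusion of exponential rather than linear growth of $S_k$, and would remain valid if the hypothesis $\underline\ell>0$ were relaxed to an integrability condition, since it only needs $\mathbf{E}\ln\theta_1>0$. Both proofs are sound; yours simply trades the paper's one-line ergodic appeal for an explicit extra use of the $D_k$-recursion.
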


\begin{proof}
We will show that $\widetilde{L}_{k}+\widetilde{R}_{k} \to \infty$, which
implies $\widetilde{L}_{k} \to \infty$ because $\widetilde{L}_{k}>\widetilde{R}_{k}$ by Lemma
\ref{Lemma:LRtildeMonotonicityOrder}.
From \eqref{Lemma:LRtildeMonotonicityOrder:Eq:Diff1} and the fact that
$\widetilde{L}_{k}>\widetilde{R}_{k}$, we have ratio
\begin{equation}\label{Lemma:LtildeTendInfty:Eq:ratio}
\frac{\widetilde{L}_{k+1}+\widetilde{R}_{k+1}}{\widetilde{L}_{k}+\widetilde{R}_{k}} > 1+ \frac{\gamma_k+\gamma_k^{-1}-2}{2}=1+\frac{(\sqrt{\gamma_k}-\frac{1}{\sqrt{\gamma_{k}}})^2}{2} >1 \ .
\end{equation}
Let $\theta_k:=1+\frac{\left(\sqrt{\gamma_k}-\frac{1}{\sqrt{\gamma_{k}}}\right)^2}{2}$. Then
$$\widetilde{L}_{k+1}+\widetilde{R}_{k+1}>2\prod_{i=1}^k\theta_i \to\infty$$
by the ergodic theorem.
\end{proof}

\begin{remark}\rm\label{Remark:RtildeTendInfty}
Unless the tree $\mathbb{T}_{\vec{d}, \vec{\ell}}$ degenerates to $\mathbb{R}$,
$\widetilde{R}_{k}$ also tends to infinity $\mathbf{P}$-a.s., since later we will
show by Theorem \ref{Theorem:Existence_xi} that except for the case
that the tree $\mathbb{T}_{\vec{d}, \vec{\ell}}$ degenerates to $\mathbb{R}$, in which case $\xi\equiv
\lim\limits_{k\rightarrow\infty}\dfrac{\widetilde{L}_k}{\widetilde{R}_k}=\infty$, in general we always have
$\xi<\infty$ $\mathbf{P}$-a.s.
\end{remark}

The projective line $\textbf{PR}^1=\mathbb{R}^2/\sim\,$ is the set of the lines in $\mathbb{R}^2$
passing through the origin. Let $\pi:\mathbb{R}^2\to \textbf{PR}^1$ be the projection map.
We parameterize the projective line $\textbf{PR}^1$ by
$\pi(a,b)=\arctan\left(\dfrac{b}{a}\right) \in (-\pi/2,\pi/2]$ and equip
$\textbf{PR}^1$ with the metric $\rho(\theta_1,\theta_2):=|\theta_1-\theta_2|$.
Since ultimately we desire to study the ratio of $\dfrac{\widetilde{L}_k}{\widetilde{R}_k}$
as defined in \eqref{Eq:tild Lk} where $\widetilde{L}_k > \widetilde{R}_k>0$
by Lemma \eqref{Lemma:LRtildeMonotonicityOrder}, we are interested in $\theta \in\left[0,\dfrac{\pi}{4}\right)$.

Observe that if
\begin{align*}
\begin{pmatrix} A& B \end{pmatrix}
&=
\begin{pmatrix}a & b \end{pmatrix}M_k=\frac{1}{2 p_k \gamma_k}
\begin{pmatrix}\gamma_k^2 a+\zeta_k b&\zeta_k \gamma_k^2 a +b \end{pmatrix},
\end{align*}
then $\dfrac{B}{A}=\Phi_k\Big(\dfrac{b}{a}\Big)$,
where $\Phi_k=\Phi_{(\zeta_k,\gamma_k)}$ is the random M\"obius transform defined as
\begin{equation}\label{Eq:RandomMobiusTransform}
\Phi_{(\zeta,\gamma)}(z)=\frac{\zeta \gamma^2 +z}{\gamma^2 +\zeta z }.
\end{equation}

This leads us to consider the map $f_{(\zeta,\gamma)}:\,\textbf{PR}^1\to \textbf{PR}^1$ defined by
\begin{equation}\label{Eq:autof}
f_{(\zeta,\gamma)}(\theta)
=\arctan\left(\Phi_{(\zeta,\gamma)}(\tan\theta)\right)
=\arctan\left(\frac{\zeta\gamma^2+\tan\theta}{\gamma^2+\zeta\tan\theta}\right).
\end{equation}

The following lemma says that $f$ is a contraction if $(\zeta,\gamma)\in [0,1)\times (1,\infty)$.

\begin{lemma}\label{Lemma:Contraction-f}
Let $(\zeta,\gamma)\in [0,1)\times (1,\infty)$ and consider the deterministic function
$f=f_{(\zeta,\gamma)}:\,\textbf{PR}^1\rightarrow \textbf{PR}^1$ defined in \eqref{Eq:autof}. Then
\begin{equation}\label{Lemma:Contraction-f:Eq:Contraction}
\rho\Big(f(\theta_1),\,f(\theta_2)\Big)\leq K_{(\zeta,\gamma)}\,\rho(\theta_1,\,\theta_2)
\quad \text{for all }\theta_1,\,\theta_2 \in\left[0,\frac\pi4\right] \ ,
\end{equation}
where
\begin{equation}\label{Lemma:Contraction-f:Eq:K}
K_{(\zeta,\gamma)}:=\max\left\{\frac{\gamma^2(1-\zeta^2)}{(\zeta^2+1)\gamma^4},
\frac{2\gamma^2(1-\zeta^2)}{(\zeta^2+1)(\gamma^4+1)+4\zeta\gamma^2}\right\} \in (0,1).
\end{equation}
\end{lemma}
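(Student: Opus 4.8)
The plan is to establish the Lipschitz bound directly by computing the derivative of $f = f_{(\zeta,\gamma)}$ and bounding it on $[0,\pi/4]$. Since $f(\theta) = \arctan\big(\Phi_{(\zeta,\gamma)}(\tan\theta)\big)$, the chain rule gives
\begin{equation*}
f'(\theta) = \frac{1}{1+\Phi(\tan\theta)^2}\,\Phi'(\tan\theta)\,\frac{1}{\cos^2\theta},
\end{equation*}
where $\Phi(z) = \frac{\zeta\gamma^2 + z}{\gamma^2 + \zeta z}$. A direct computation yields $\Phi'(z) = \frac{\gamma^2(1-\zeta^2)}{(\gamma^2 + \zeta z)^2}$, which is positive since $\zeta \in [0,1)$. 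Writing $z = \tan\theta$ and using $1 + z^2 = 1/\cos^2\theta$, one gets
\begin{equation*}
f'(\theta) = \frac{\gamma^2(1-\zeta^2)(1+z^2)}{(\gamma^2+\zeta z)^2 + (\zeta\gamma^2 + z)^2}.
\end{equation*}
The denominator expands to $(\zeta^2+1)(\gamma^4 + z^2) + 4\zeta\gamma^2 z$ after collecting terms. So the task reduces to showing that the function
\begin{equation*}
g(z) := \frac{\gamma^2(1-\zeta^2)(1+z^2)}{(\zeta^2+1)(\gamma^4+z^2) + 4\zeta\gamma^2 z}
\end{equation*}
is bounded above by $K_{(\zeta,\gamma)}$ for $z \in [0,1]$ (corresponding to $\theta \in [0,\pi/4]$), and strictly less than $1$. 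The mean value theorem then gives $\rho(f(\theta_1),f(\theta_2)) = |f'(\xi)|\,|\theta_1-\theta_2|$ for some intermediate $\xi$, which is the claimed contraction.

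The next step is the elementary calculus exercise of maximizing $g$ over $z \in [0,1]$. One differentiates $g$ and checks that $g'(z)$ has no interior zero on $(0,1)$ for the relevant parameter range — or, more robustly, one observes that $g$ is a ratio of two quadratics in $z$ with positive leading coefficients, so its extrema over a compact interval are attained either at the endpoints $z=0,1$ or at the (at most two) critical points, and one verifies that the endpoint values are $g(0) = \frac{\gamma^2(1-\zeta^2)}{(\zeta^2+1)\gamma^4}$ and $g(1) = \frac{2\gamma^2(1-\zeta^2)}{(\zeta^2+1)(\gamma^4+1)+4\zeta\gamma^2}$, which are precisely the two quantities inside the max defining $K_{(\zeta,\gamma)}$. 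One then argues that $g$ has no interior maximum exceeding this max — most cleanly by showing that the numerator of $g'(z)$ (a quadratic in $z$) is, on $[0,1]$, either always nonnegative or always nonpositive, or changes sign at most once, so that $g$ is monotone or unimodal with its maximum forced to an endpoint; alternatively one bounds $g$ crudely using $1+z^2 \le \gamma^4 + z^2$ (valid since $\gamma > 1$) together with dropping the nonnegative $4\zeta\gamma^2 z$ term to get the uniform bound.

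Finally one must check $K_{(\zeta,\gamma)} \in (0,1)$. Positivity is immediate since $1 - \zeta^2 > 0$, $\gamma > 1$. For the upper bound: $g(0) < 1$ amounts to $\gamma^2(1-\zeta^2) < (\zeta^2+1)\gamma^4$, i.e.\ $1 - \zeta^2 < (\zeta^2+1)\gamma^2$, which holds because $\gamma^2 > 1$ and $1-\zeta^2 \le 1+\zeta^2$; and $g(1) < 1$ amounts to $2\gamma^2(1-\zeta^2) < (\zeta^2+1)(\gamma^4+1) + 4\zeta\gamma^2$, which follows from $2\gamma^2(1-\zeta^2) \le 2\gamma^2(1+\zeta^2) \le (\zeta^2+1)(\gamma^4+1)$ by AM--GM on $\gamma^4+1 \ge 2\gamma^2$, with the remaining $4\zeta\gamma^2 \ge 0$ to spare. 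The main obstacle I expect is not conceptual but the bookkeeping in confirming that $g$ attains its supremum on $[0,1]$ at an endpoint rather than at an interior critical point; the cleanest route is probably the crude comparison bound $1+z^2 \le \gamma^4+z^2$, which sidesteps the critical-point analysis entirely at the cost of possibly not identifying the optimal constant, but since the statement only requires the bound $K_{(\zeta,\gamma)} < 1$ with $K$ as given (an upper bound for $\sup g$, not necessarily the supremum itself), this suffices.
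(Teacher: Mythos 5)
Your overall plan --- compute $f'$, recognize that the endpoint values $g(0)$ and $g(1)$ are exactly the two entries of the $\max$ defining $K_{(\zeta,\gamma)}$, and argue that the supremum of $g$ on $[0,1]$ is attained at an endpoint --- is the same as the paper's proof (the paper works in the variable $\theta$ rather than $z=\tan\theta$ and analyzes the denominator's second derivative, but that is a superficial reparametrization). However, there is a real gap in how you propose to finish. For the critical-point route, knowing only that the numerator of $g'$ ``changes sign at most once'' is not enough: if it changed from positive to negative, $g$ would have an \emph{interior} maximum and the endpoint bound would fail. You must pin down the direction. Carrying out the algebra, after cancelling the positive factor $\gamma^2(1-\zeta^2)/D^2$, the numerator of $g'(z)$ reduces to the quadratic
\begin{equation*}
h(z)\;=\;2(\zeta^2+1)(\gamma^4-1)\,z\;+\;4\zeta\gamma^2\,(z^2-1),
\end{equation*}
which has nonnegative leading coefficient (it is linear when $\zeta=0$), and satisfies $h(0)=-4\zeta\gamma^2\le 0$ and $h(1)=2(\zeta^2+1)(\gamma^4-1)>0$. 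Since the constant term and leading coefficient have opposite signs (when $\zeta>0$), $h$ has exactly one positive root, which therefore lies in $(0,1)$; thus $g$ is decreasing then increasing on $[0,1]$, and its maximum is at the endpoints. This is the missing step, and it is the whole content of the proof --- the paper does the equivalent check via the second derivative of the denominator at its unique interior critical point.

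The ``crude comparison bound'' you advocate as the cleanest route does not work. Dropping the $4\zeta\gamma^2 z$ term and invoking $1+z^2\le\gamma^4+z^2$ yields only $g(z)\le \gamma^2(1-\zeta^2)/(\zeta^2+1)$, which overshoots the first entry of $K_{(\zeta,\gamma)}$ by a factor of $\gamma^4>1$ and need not be below $1$ at all (take $\zeta=0$, $\gamma=2$: the bound is $4$). The lemma asserts the contraction constant is the \emph{specific} quantity $K_{(\zeta,\gamma)}$, and this specific value is what the downstream argument (Corollary~\ref{Corollary:Contraction-f} and the application of the Diaconis--Freedman iterated-random-functions theorem) uses; even the weaker aim of producing \emph{some} contraction constant below $1$ is not achieved by this estimate. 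You cannot sidestep the sign analysis here.
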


\begin{proof}
Notice that for $\theta\in\left[0,\dfrac{\pi}{2}\right)$,
\begin{align}\label{Lemma:Contraction-f:Eq:f'}
f'(\theta)=\frac{\gamma^2(1-\zeta^2)}{(\zeta^2+1)(\gamma^4\cos^2\theta+\sin^2\theta)+2\zeta\gamma^2\sin(2\theta)}>0.
\end{align}

To find an upper bound for $f'(\theta)$ we notice that
$$\frac{\partial}{\partial\theta}\left((\zeta^2+1)(\gamma^4\cos^2\theta+\sin^2\theta)+2\zeta\gamma^2\sin(2\theta)\right)=0$$
has solution at
$$\theta_n=\frac{1}{2}\arctan\left(\frac{4\zeta\gamma}{(\zeta^2+1)(\gamma^4-1)}\right)+\frac{\pi n}2 \text{ for } n\in\mathbb{Z} \ .$$
Moreover,
$\{\theta_n\}_{n\in\mathbb{Z}}\cap\left[0;\dfrac{\pi}{4}\right]=\{\theta_0\}$, because
$\frac{4\zeta\gamma}{(\zeta^2+1)(\gamma^4-1)}>0$ and so $\arctan\theta_1>\dfrac{\pi}{2}$. Now,
\begin{align*}
&\frac{\partial}{\partial\theta}\left((\zeta^2+1)(\gamma^4\cos^2\theta+\sin^2\theta)
+2\zeta\gamma^2\sin(2\theta)\right)(\theta_0)\notag\\
&=2(\zeta^2+1)(1-\gamma^4)(\cos(2\theta)-8\zeta\gamma^2\sin(2\theta))(\theta_0)\notag\\
&=2(\zeta^2+1)(1-\gamma^4)\cos\left(\arctan\left(\frac{4\zeta\gamma}
{(\zeta^2+1)(\gamma^4-1)}\right)\right)-8\zeta\gamma^2\sin\left(\arctan\left(\frac{4\zeta\gamma}
{(\zeta^2+1)(\gamma^4-1)}\right)\right)\notag\\
&=\frac{1}{\sqrt{\left(\frac{4\zeta\gamma}{(\zeta^2+1)(\gamma^4-1)}\right)^2+1}}
\left[2(\zeta^2+1)(1-\gamma^4)-8\zeta\gamma^2\left(\frac{4\zeta\gamma}{(\zeta^2+1)(\gamma^4-1)}\right)\right]\notag\\
&=\frac{1}{\sqrt{\left(\frac{4\zeta\gamma}{(\zeta^2+1)(\gamma^4-1)}\right)}}
\left[2(\zeta^2+1)(1-\gamma^4)-\frac{32\zeta^2\gamma^3}{(\zeta^2+1)(\gamma^4-1)}\right]<0.
\end{align*}
Thus, the minimum value of the denominator of \eqref{Lemma:Contraction-f:Eq:f'} on $\left[0,\pi/4\right]$
occurs at the endpoints of $\left[0,\pi/4\right]$ and so
$0<\sup_{\theta\in[0,\pi/4]} f'(\theta)\leq K_{(\zeta,\gamma)}$.
The proof of the lemma is complete by the midpoint theorem.
\end{proof}

The following random version of Lemma \ref{Lemma:Contraction-f} follows immediately.

\begin{corollary}\label{Corollary:Contraction-f}
Let $(\zeta,\gamma)\in [0,1)\times (1,\infty)$ be a random variable with
the same distribution as $(\zeta_1,\gamma_1)$ under $\mathbf{P}$.
The contraction \eqref{Lemma:Contraction-f:Eq:Contraction} holds with
random variable $K_{(\zeta,\gamma)}$
satisfying $K<1$ a.s.
In particular, $\mathbf{E} K<\infty$. and $\mathbf{E} \ln K <0$ for all $\lambda\in (0,\infty)$.
\end{corollary}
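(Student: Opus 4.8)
The plan is to obtain the random statement by applying the deterministic Lemma~\ref{Lemma:Contraction-f} pointwise, for $\mathbf{P}$-almost every realization of the environment, and then reading off the integrability claims. First I would verify that the pair $(\zeta_1,\gamma_1)$ lands in the range $[0,1)\times(1,\infty)$ required by Lemma~\ref{Lemma:Contraction-f} with probability one; this is where the tree hypotheses enter. By Lemma~\ref{Assumption:ErgodicEnvironments} (equivalently Assumptions~\ref{Assumption:d} and~\ref{Assumption:ell}), $\mathbf{P}$-a.s.\ one has $p_1\in[\tfrac12,1)$ and $\ell_1\in[\underline{\ell},\overline{\ell}]$, hence $\zeta_1=2p_1-1\in[0,1)$ and, since $\lambda>0$, $\gamma_1=e^{\sqrt{2\lambda}\ell_1}\in[e^{\sqrt{2\lambda}\underline{\ell}},e^{\sqrt{2\lambda}\overline{\ell}}]\subset(1,\infty)$. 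Thus for $\mathbf{P}$-a.e.\ sample point Lemma~\ref{Lemma:Contraction-f} applies verbatim with $(\zeta,\gamma)=(\zeta_1,\gamma_1)$, giving the contraction \eqref{Lemma:Contraction-f:Eq:Contraction} on $[0,\tfrac\pi4]$ with the constant $K:=K_{(\zeta_1,\gamma_1)}\in(0,1)$ read from formula \eqref{Lemma:Contraction-f:Eq:K}. Since the right side of \eqref{Lemma:Contraction-f:Eq:K} is a continuous function of $(\zeta,\gamma)$ on $[0,1)\times(1,\infty)$ and $(\zeta_1,\gamma_1)$ is a measurable function of the i.i.d.\ environment, $K$ is a bona fide $(0,1)$-valued random variable, and in particular $K<1$ a.s.

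Next I would read off the two integrability statements. The bound $\mathbf{E}K<\infty$ is immediate from $0<K<1$ a.s. For $\mathbf{E}\ln K<0$, note that $\ln K<0$ a.s.\ and $\ln K>-\infty$ a.s., so $(\ln K)^+\equiv 0$ and $\mathbf{E}\ln K$ is well defined in $[-\infty,0)$; in particular it is negative. To upgrade this to a finite negative value (the form actually used later), I would extract a deterministic bound $K\le K_0<1$ from the two-sided assumptions on the environment: both expressions inside the maximum in \eqref{Lemma:Contraction-f:Eq:K} are non-increasing in $\gamma$ on $(1,\infty)$ and non-increasing in $\zeta$ on $[0,1)$ (the first equals $\tfrac{1-\zeta^2}{(1+\zeta^2)\gamma^2}$; for the second, divide numerator and denominator by $\gamma^2$ and observe the resulting quotient has non-increasing numerator and non-decreasing denominator in each of $\zeta,\gamma$). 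Hence, using $\zeta_1\ge 0$ and $\gamma_1\ge\underline{\gamma}:=e^{\sqrt{2\lambda}\underline{\ell}}>1$,
\[
K\le K_0:=\max\Big\{\frac{1}{\underline{\gamma}^{2}},\ \frac{2}{\underline{\gamma}^{2}+\underline{\gamma}^{-2}}\Big\}<1 \quad\text{a.s.}
\]
so that $\mathbf{E}\ln K\le\ln K_0<0$, and this holds for every $\lambda\in(0,\infty)$.

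Since Lemma~\ref{Lemma:Contraction-f} is already available, there is essentially no real obstacle here. The only points demanding a little care are (i) checking that the random parameters genuinely satisfy $(\zeta_1,\gamma_1)\in[0,1)\times(1,\infty)$ almost surely, and (ii) making sure $\mathbf{E}\ln K$ is well defined---handled by the observation $(\ln K)^+\equiv 0$---and, for finiteness, extracting the uniform gap $K\le K_0<1$ from the boundedness of $\vec{d}$ and $\vec{\ell}$; the monotonicity of the two terms in \eqref{Lemma:Contraction-f:Eq:K} is the one elementary computation worth spelling out.
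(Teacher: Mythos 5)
Your proposal is correct and proceeds exactly as the paper intends: the paper dismisses the corollary as "following immediately" from Lemma~\ref{Lemma:Contraction-f}, and you supply precisely the pointwise application of that lemma (after checking $(\zeta_1,\gamma_1)\in[0,1)\times(1,\infty)$ a.s.\ from the boundedness assumptions), together with the easy integrability observations. The extra step of extracting the deterministic bound $K\le K_0<1$ via the monotonicity of both terms in \eqref{Lemma:Contraction-f:Eq:K} in $(\zeta,\gamma)$ is a harmless and correct strengthening; the monotonicity computations you indicate do check out.
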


Corollary \ref{Corollary:Contraction-f} verifies the contraction assumptions
of \cite[Proposition 1.1]{DiaconisEtAlIteratedRandomFunctions} and thus gives the desired almost sure
limit $\xi$. This ensures that condition \eqref{Prop:ExplicitCalculationw-AuxiliaryDeterministic:Eq:xi}
is verified for the i.i.d. case.

\begin{theorem}[existence of the limit  $\xi$]\label{Theorem:Existence_xi}
The limit
$$\frac{1}{\xi_\lambda}:=\lim_{k\to\infty}\frac{R_k}{L_k} \in [0,1]$$
exists $\mathbf{P}$-a.s. for all $\lambda\in (0,\infty)$. The distribution of
$\dfrac{1}{\xi_\lambda}$ is the unique stationary distribution of the $\mathbb{R}_+$-valued
Markov chain $\{x_k\}_{k\geq 1}$ defined by
$x_{k}=\Phi_k\circ \Phi_{k-1}\circ \cdots \circ \Phi_1(1)$ for $k\geq 1$, where
$$\Phi_{k}(z)=\frac{\zeta_k \gamma^2_k +z}{\gamma^2_k +\zeta_k z } \ .$$
\end{theorem}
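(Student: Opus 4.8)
The plan is to recognize the ratio $R_k/L_k$ as a \emph{backward} composition of i.i.d.\ random M\"obius maps of the projective line and then invoke the classical convergence theorem for iterated random functions (Diaconis--Freedman), whose hypotheses have essentially been verified already in Lemma~\ref{Lemma:Contraction-f} and Corollary~\ref{Corollary:Contraction-f}.

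\emph{Step 1: rewrite the ratios as iterated maps.} From the identity $\begin{pmatrix}A & B\end{pmatrix}=\begin{pmatrix}a & b\end{pmatrix}M_k$ one has $B/A=\Phi_k(b/a)$, where $\Phi_k=\Phi_{(\zeta_k,\gamma_k)}$ is the random M\"obius transform \eqref{Eq:RandomMobiusTransform}. Applying this repeatedly to $\begin{pmatrix}L_k & R_k\end{pmatrix}=\begin{pmatrix}1 & 1\end{pmatrix}M_{k-1}M_{k-2}\cdots M_1$ gives
\begin{equation*}
\frac{R_k}{L_k}=\Phi_1\circ\Phi_2\circ\cdots\circ\Phi_{k-1}(1),\qquad k\geq 2,
\end{equation*}
a backward iteration (the newest map $\Phi_{k-1}$ is innermost), whereas $\begin{pmatrix}\widetilde L_k & \widetilde R_k\end{pmatrix}=\begin{pmatrix}1 & 1\end{pmatrix}M_1\cdots M_{k-1}$ gives the forward iteration $\widetilde R_k/\widetilde L_k=\Phi_{k-1}\circ\cdots\circ\Phi_1(1)=x_{k-1}$. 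Passing to angles through $\theta\mapsto\arctan\theta$ and writing $g_k:=f_{(\zeta_k,\gamma_k)}$ as in \eqref{Eq:autof}, we get $\arctan(R_k/L_k)=g_1\circ\cdots\circ g_{k-1}(\pi/4)$. Since $\zeta_k=2p_k-1\in[0,1)$ and $\gamma_k=e^{\sqrt{2\lambda}\ell_k}>1$, a one-line check shows $\Phi_{(\zeta,\gamma)}$ maps $[0,1]$ into $[0,1)$, so each $g_k$ maps the compact interval $S:=[0,\pi/4]$ into itself and the whole orbit remains in $S$. Because $(p_k)_{k\geq 1}$ and $(\ell_k)_{k\geq 0}$ are i.i.d.\ and mutually independent (Lemma~\ref{Assumption:ErgodicEnvironments}), the maps $(g_k)_{k\geq 1}$ are i.i.d.\ random self-maps of $S$.

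\emph{Step 2: apply the iterated-random-functions theorem.} By Lemma~\ref{Lemma:Contraction-f} and Corollary~\ref{Corollary:Contraction-f}, each $g_k$ is Lipschitz on $S$ with random constant $K_k=K_{(\zeta_k,\gamma_k)}<1$ almost surely and $\mathbf{E}\ln K<0$ for every $\lambda\in(0,\infty)$; since $S$ is a bounded complete metric space, the remaining moment condition $\mathbf{E}\log^{+} d(g_1(\theta_0),\theta_0)<\infty$ is automatic. Hence \cite[Proposition~1.1]{DiaconisEtAlIteratedRandomFunctions} applies and yields: (i) the backward iterations $g_1\circ\cdots\circ g_k(\theta)$ converge $\mathbf{P}$-a.s., uniformly in $\theta\in S$, to a random point $\Theta_\infty\in S$ independent of $\theta$; (ii) the law $\nu$ of $\Theta_\infty$ is the unique stationary distribution of the forward Markov chain $\theta\mapsto g_1(\theta)$ on $S$; and (iii) the forward iterations $g_k\circ\cdots\circ g_1(\theta)$ converge to $\nu$ in distribution. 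Applying the continuous map $\tan$ to (i) shows $R_k/L_k=\tan\big(g_1\circ\cdots\circ g_{k-1}(\pi/4)\big)\to\tan\Theta_\infty\in[0,1]$ $\mathbf{P}$-a.s.; this limit is exactly $1/\xi_\lambda$, so $\xi_\lambda\in[1,\infty]$ exists $\mathbf{P}$-a.s.

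\emph{Step 3: identify the law and the obstacle.} For each fixed $k$ we have $\begin{pmatrix}L_k & R_k\end{pmatrix}\stackrel{d}{=}\begin{pmatrix}\widetilde L_k & \widetilde R_k\end{pmatrix}$, hence $R_k/L_k\stackrel{d}{=}\widetilde R_k/\widetilde L_k=x_{k-1}$; since $R_k/L_k\to 1/\xi_\lambda$ a.s.\ (hence in distribution), $x_{k-1}$ converges in distribution to $1/\xi_\lambda$. On the other hand $x_k=\tan\big(g_k\circ\cdots\circ g_1(\pi/4)\big)$ is precisely the $\mathbb{R}_+$-valued chain of the statement read through $\tan$, which by (iii) converges in distribution to its unique stationary law (the pushforward of $\nu$ under $\tan$). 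Therefore the law of $1/\xi_\lambda$ is this unique stationary distribution, as claimed. The delicate point throughout is the bookkeeping of the order of the matrix product: the quantity feeding the wave-speed formula is the \emph{backward} product $\begin{pmatrix}1&1\end{pmatrix}M_{k-1}\cdots M_1$, while the Markov/ergodic structure — and the stationary distribution in the statement — lives on the \emph{forward} product; the two coincide in distribution for each fixed $k$ but behave very differently in the limit (backward compositions of i.i.d.\ contractions converge a.s., forward ones only in law), and reconciling these, together with checking that the orbit never leaves the interval $[0,\pi/4]$ on which Lemma~\ref{Lemma:Contraction-f} gives a genuine contraction, is the main work.
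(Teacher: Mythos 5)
Your proof is correct and follows essentially the same route as the paper: both pass from $R_k/L_k$ to angles on $[0,\pi/4]$, invoke Lemma~\ref{Lemma:Contraction-f} and Corollary~\ref{Corollary:Contraction-f} to get an a.s.\ and average contraction, and then apply the Diaconis--Freedman iterated-random-functions results (their Proposition~1.1 / Theorem~1.1) to obtain a.s.\ convergence of the backward chain and uniqueness of the stationary law for the forward chain. Your write-up is if anything a bit more careful than the paper's — you verify explicitly that the orbit stays in $[0,\pi/4]$, that the $\log^{+}$ moment condition is automatic on a bounded space, and you sort out cleanly the backward/forward bookkeeping that the paper's proof presents with some index slips.
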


\begin{proof}
The process $\left\{\frac{R_k}{L_k}\right\}$ is a backward
(non-Markov) chain in the sense that
$\frac{R_{k+1}}{L_{k+1}}=\Phi_k\left(\frac{R_k}{L_k}\right)=\Phi_1\circ \Phi_{2}\circ \cdots \circ \Phi_{k}(1)$.
The corresponding forward iteration is the $\mathbb{R}_+$-valued
Markov chain $\{x_k\}_{k\geq 1}$ defined by $x_{k+1}=\Phi_k(x_k)$ for $k\geq 1$ and $x_1=\Phi_1(1)$.

Similarly,  processes
$\psi_{k}:=\arctan\left(\frac{R_{k}}{L_{k}}\right)$ and $\theta_{k}=\arctan\left(x_k\right)$ satisfy
\begin{align*}
&\psi_{k}=f_{(\zeta_1,\gamma_1)}\circ f_{(\zeta_1,\gamma_1)}\cdots\circ f_{(\zeta_k,\gamma_k)}(1) \ ,\\
&\theta_{k}=f_{(\zeta_k,\gamma_k)}\circ f_{(\zeta_2,\gamma_2)}\cdots\circ f_{(\zeta_1,\gamma_1)}(1) \ ,
\end{align*}
where $f_{(\zeta,\gamma)}$ is defined as in \eqref{Eq:autof}.
So in particular $\{\theta_k\}$ is also a Markov Chain.
By Lemma \ref{Lemma:Contraction-f},
$\{f_{(\zeta_k,\gamma_k)}\}$ is a family of Lipschitz functions
from the Corollary \ref{Corollary:Contraction-f} we know it is
contracting on average. Thus by \cite[Theorem 1.1]{DiaconisEtAlIteratedRandomFunctions}
the Markov chain $\{\theta_k\}$  converges to a unique stationary distribution.
In particular, the limit $\theta_k\to\theta_\infty$ exists in distribution. So
$$\lim_{k\to\infty}x_k=\lim_{k\to\infty}\tan(\theta_k)\stackrel{d}{=} \tan(\theta_\infty) \in [0,1] \ .$$
In the above display the limit is less or equal to 1 by Lemma \ref{Lemma:LRtildeMonotonicityOrder}.
Now by \cite[Proposition 1.1]{DiaconisEtAlIteratedRandomFunctions}
for backward iterations, there is an almost sure limit
$\psi_\infty$ and it has the same distribution as $\theta_\infty$. Thus,
$$
\frac{1}{\xi_\lambda}:=\lim_{k\to\infty}\frac{R_k}{L_k}=\lim_{k\to\infty}\tan(\psi_k)\,{\buildrel a.s. \over =}\,\tan(\psi_\infty)\stackrel{d}{=} \tan(\theta_\infty) \in [0,1] \ .
$$
\end{proof}

The following is an immediate corollary of Theorem \ref{Theorem:Existence_xi}.

\begin{corollary}\label{Corollary:InvarDistX}
Let $\lambda\in(0,\infty)$ be a fixed constant. Let $(\ell, \zeta)$ be a random variable that is
independent with $\xi_{\lambda}$ and is equal to $(\ell_k, \zeta_k)$ in marginal distribution,
where $k\geq 1$ is arbitrary. Then
\begin{equation}\label{Corollary:InvarDistX:Eq:InvariantX}
\frac{1}{\xi_{\lambda}}\stackrel{d}{=}   \Phi_{(\gamma,\zeta)}\left(\frac{1}{\xi_{\lambda}}\right).
\end{equation}
where $\gamma=e^{\sqrt{2\lambda}\ell}$.
Such random variable $\dfrac{1}{\xi_{\lambda}}$ is unique in distribution.
\end{corollary}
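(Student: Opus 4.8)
The plan is to read the result off directly from Theorem \ref{Theorem:Existence_xi}, which already establishes that $1/\xi_\lambda$ has the law of the unique stationary distribution of the $\mathbb{R}_+$-valued Markov chain $\{x_k\}_{k\ge1}$ generated by the i.i.d.\ random M\"obius maps $\Phi_k=\Phi_{(\zeta_k,\gamma_k)}$ with $\gamma_k=e^{\sqrt{2\lambda}\ell_k}$. First I would recall the standard fact that if a Markov chain is generated by i.i.d.\ random maps $x_{k+1}=\Phi_{k+1}(x_k)$ and $\nu$ is a stationary law, then a $\nu$-distributed random variable $X$ chosen independently of the next map $\Phi$ satisfies $\Phi(X)\stackrel{d}{=}X$; conversely, any distributional fixed point of this form is a stationary law. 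Applying this with $X=1/\xi_\lambda$ and $\Phi=\Phi_{(\zeta,\gamma)}$, where $(\ell,\zeta)$ is an independent copy of $(\ell_k,\zeta_k)$ (using the i.i.d.\ structure from Lemma \ref{Assumption:ErgodicEnvironments}) and $\gamma=e^{\sqrt{2\lambda}\ell}$, immediately gives \eqref{Corollary:InvarDistX:Eq:InvariantX}.

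For the uniqueness assertion I would again invoke Theorem \ref{Theorem:Existence_xi}: the stationary law of $\{x_k\}$ is unique, this being part of the conclusion there, which in turn rests on the contraction-on-average property from Corollary \ref{Corollary:Contraction-f} together with \cite[Theorem 1.1]{DiaconisEtAlIteratedRandomFunctions}. Since, by the converse direction noted above, every solution in distribution of \eqref{Corollary:InvarDistX:Eq:InvariantX} is a stationary law of $\{x_k\}$, uniqueness of the stationary law forces uniqueness of the law of $1/\xi_\lambda$.

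The only points that require attention, rather than posing a genuine obstacle, are bookkeeping ones: matching the parametrization so that the fresh environment variable enters precisely as $\gamma=e^{\sqrt{2\lambda}\ell}$ with $(\ell,\zeta)$ independent of $\xi_\lambda$, and recalling that the backward iteration defining $R_k/L_k$ and the forward iteration defining $x_k$ share the same limiting law, which is exactly the content of \cite[Proposition 1.1]{DiaconisEtAlIteratedRandomFunctions} already used in the proof of Theorem \ref{Theorem:Existence_xi}. No new estimates are needed; the corollary is simply a restatement of the fixed-point characterization of the stationary distribution.
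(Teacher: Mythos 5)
Your proposal is correct and takes the same route as the paper, which states only that the corollary is ``immediate'' from Theorem~\ref{Theorem:Existence_xi} and gives no further argument. You have correctly unpacked what ``immediate'' means here: the distributional fixed-point relation $X\stackrel{d}{=}\Phi(X)$ with $\Phi$ an independent copy of the random M\"obius map is precisely the definition of stationarity for the forward chain $\{x_k\}$, and both existence and uniqueness of that stationary law are already part of the conclusion of Theorem~\ref{Theorem:Existence_xi} (resting on the contraction-on-average argument and the forward/backward equivalence from \cite{DiaconisEtAlIteratedRandomFunctions}).
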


Based on Corollary \ref{Corollary:InvarDistX}, we can further derive the following corollaries about the properties of
$\xi=\xi_\lambda$.

\begin{corollary}\label{Corollary:xiBoundedAboveExpectation}
Unless $\zeta$ is identically $0$, i.e., $d_i$ is identically $2$,
or equivalently the tree degenerates to the real line $\mathbb{R}$, we have
\begin{equation}\label{Corollary:xiBoundedAboveExpectation:Eq:ExpectationOneOverXi}
\mathbf{E}\left[\dfrac{1}{\xi_{\lambda}}\right]>0 \ .
\end{equation}
When the tree degenerates to the real line $\mathbb{R}$, the above expectation is $0$.
\end{corollary}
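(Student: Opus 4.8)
The plan is to push the fixed-point identity of Corollary~\ref{Corollary:InvarDistX} through an expectation. Write $X:=1/\xi_\lambda$. By Theorem~\ref{Theorem:Existence_xi} we have $X\in[0,1]$ $\mathbf{P}$-a.s., and Corollary~\ref{Corollary:InvarDistX} gives the distributional identity
$$X\;\stackrel{d}{=}\;\frac{\zeta\gamma^2+X}{\gamma^2+\zeta X},$$
where $(\gamma,\zeta)$ is independent of $X$, with $\gamma=e^{\sqrt{2\lambda}\ell}\in(1,\infty)$ and $\zeta=2p-1\in[0,1)$. First I would record the elementary fact that, for such $(\gamma,\zeta)$, the M\"obius map $z\mapsto(\zeta\gamma^2+z)/(\gamma^2+\zeta z)$ carries $[0,1]$ into $[0,1]$: nonnegativity on $[0,\infty)$ is immediate, and $\frac{\zeta\gamma^2+z}{\gamma^2+\zeta z}-1=\frac{(1-\zeta)(z-\gamma^2)}{\gamma^2+\zeta z}\le 0$ because $z\le 1<\gamma^2$ and $\zeta<1$. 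Hence both sides of the identity are bounded random variables and one may take $\mathbf{E}$ to get $\mathbf{E}[X]=\mathbf{E}\big[(\zeta\gamma^2+X)/(\gamma^2+\zeta X)\big]$.

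For the generic case the key observation is that when $\zeta$ is not identically $0$ we have $\mathbf{P}(\zeta>0)>0$. On $\{\zeta>0\}$ the right-hand integrand has numerator $\ge\zeta\gamma^2>0$ and a positive finite denominator $\gamma^2+\zeta X\le\gamma^2+1$, so it is $\ge\zeta\gamma^2/(\gamma^2+1)>0$; on $\{\zeta=0\}$ it equals $X/\gamma^2\ge 0$. Thus $(\zeta\gamma^2+X)/(\gamma^2+\zeta X)$ is a nonnegative random variable that is strictly positive on a set of positive $\mathbf{P}$-measure, so its expectation is strictly positive, which yields $\mathbf{E}[1/\xi_\lambda]>0$.

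For the degenerate case $\zeta\equiv 0$ (i.e.\ $d_i\equiv 2$, so $\mathbb{T}=\mathbb{R}$) the map collapses to $z\mapsto z/\gamma^2$, so the identity reads $\mathbf{E}[X]=\mathbf{E}[X/\gamma^2]=\mathbf{E}[X]\,\mathbf{E}[\gamma^{-2}]$ by independence of $X$ and $\gamma$. Since $\lambda>0$ and $\ell\ge\underline{\ell}>0$ by Assumption~\ref{Assumption:ell}, we have $\gamma=e^{\sqrt{2\lambda}\ell}>1$ $\mathbf{P}$-a.s., hence $\mathbf{E}[\gamma^{-2}]<1$; together with $\mathbf{E}[X]\le 1<\infty$ this forces $\mathbf{E}[X]=0$, and since $X\ge 0$ we recover $\xi_\lambda=+\infty$ a.s., consistent with Remark~\ref{Remark:RtildeTendInfty}.

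I do not expect any genuine obstacle here. The only step needing a line of justification is the passage to expectations in a distributional equality, which is handled by the invariance of $[0,1]$ under the M\"obius map noted above (equivalently, by the fact from Theorem~\ref{Theorem:Existence_xi} that the stationary law already lives on $[0,1]$); everything else is the elementary principle that a nonnegative random variable which is positive on an event of positive probability has positive mean, together with the cancellation $\mathbf{E}[X]=\mathbf{E}[X]\,\mathbf{E}[\gamma^{-2}]$ in the degenerate case.
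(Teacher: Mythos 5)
Your proof is correct and rests on the same central ingredient as the paper's: the distributional fixed-point identity $1/\xi_\lambda \stackrel{d}{=} \Phi_{(\gamma,\zeta)}(1/\xi_\lambda)$ from Corollary~\ref{Corollary:InvarDistX}, exploited together with the facts that $1/\xi_\lambda\in[0,1]$ and $\mathbf{P}(\zeta>0)>0$ in the non-degenerate case. The paper argues by contradiction (assume $1/\xi_\lambda=0$ a.s.\ and read off $\zeta\stackrel{d}{=}0$), whereas you take expectations on both sides and bound the right-hand integrand below by $\zeta\gamma^2/(\gamma^2+1)$ on $\{\zeta>0\}$; for the degenerate case you replace the paper's "$0$ is a fixed point, hence $1/\xi=0$" by the clean cancellation $\mathbf{E}[X]=\mathbf{E}[X]\,\mathbf{E}[\gamma^{-2}]$ with $\mathbf{E}[\gamma^{-2}]<1$. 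These are presentational variants of the same argument rather than a genuinely different route, and both are valid.
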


\begin{proof}
Set $z=z_\lambda=\dfrac{1}{\xi_{\lambda}}\geq 0$. It suffices to show that
unless $\zeta$ is identically $0$, we have $\mathbf{P}(z>0)>0$.
Suppose this is not the case, then with $\mathbf{P}$-probability $1$ we have $z=0$.
By Corollary \ref{Corollary:InvarDistX},
we have $\dfrac{\zeta \gamma^2 +z}{\gamma^2 +\zeta z}\stackrel{d.}{=}z$ is $0$ with probability $1$.
But since $z\geq 0$ and $\zeta$ is not identically $0$, we arrive at a contradiction.

On the other hand, when $\zeta$ is identically $0$, i.e.,
$d_i$ is identically $2$, which means that the tree degenerates to
the real line $\mathbb{R}$, $0$ is a fixed point of the
transformation \eqref{Eq:autof} and thus $1/\xi=0$.
\end{proof}

\begin{remark}\rm\label{Remark:PropertyXi}
If $\zeta$ is not identically $0$, i.e., $d_i$ is not identically $2$,
then $\dfrac{1}{\xi_\lambda}\in (0, 1]$
with positive probability. Otherwise,
when $\zeta$ is identically $0$, i.e., $d_i$ is identically $2$, or in other words
the tree degenerates to the real line $\mathbb{R}$
and $\dfrac{1}{\xi_\lambda}=0$. Moreover, if all $d_i\geq 3$, then $\dfrac{1}{\zeta_\lambda}\in (0, 1]$
with $\mathbf{P}$-probability $1$ since in this case $\mathbf{P}(\zeta>0)=1$ and $0$ is not a fixed point of the
transformation \eqref{Eq:autof}.
\end{remark}

\begin{corollary}\label{Corollary:xiBoundedBelow1Plusc}
For all $\lambda\in(0,\infty)$,
\begin{equation}\label{Corollary:xiBoundedBelow1Plusc:Eq:xiBoundedBelow1Plusc}
\mathbf{P}\left(
\xi_{\lambda}\geq 1+ 2(\overline{d})^{-1}\dfrac{e^{2\underline{\ell}\sqrt{2\lambda}}-1}{e^{2\overline{\ell}\sqrt{2\lambda}}+1}\right)=1  \ .
\end{equation}
\end{corollary}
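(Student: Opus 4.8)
The plan is to exploit the distributional fixed--point characterization of $1/\xi_\lambda$ furnished by Corollary \ref{Corollary:InvarDistX}, together with the a.s.\ bound $1/\xi_\lambda\in[0,1]$ from Theorem \ref{Theorem:Existence_xi}, and to convert a purely deterministic pointwise estimate on the M\"obius map $\Phi_{(\zeta,\gamma)}$ into the claimed almost--sure inequality. Writing $z:=1/\xi_\lambda$, Corollary \ref{Corollary:InvarDistX} says $z\stackrel{d}{=}\Phi_{(\gamma,\zeta)}(z)$ where $(\gamma,\zeta)=(e^{\sqrt{2\lambda}\ell},2p-1)$ has the marginal law of $(\gamma_1,\zeta_1)$ and is \emph{independent} of the copy of $z$ appearing on the right, while Theorem \ref{Theorem:Existence_xi} gives $z\in[0,1]$ $\mathbf{P}$-a.s.

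First I would record that $z\mapsto\Phi_{(\zeta,\gamma)}(z)$ is increasing on $[0,1]$ for every admissible pair $(\zeta,\gamma)\in[0,1)\times(1,\infty)$, since
\[
\frac{\partial}{\partial z}\Phi_{(\zeta,\gamma)}(z)=\frac{\gamma^2(1-\zeta^2)}{(\gamma^2+\zeta z)^2}>0 \ .
\]
Hence $\Phi_{(\zeta,\gamma)}(z)\le \Phi_{(\zeta,\gamma)}(1)=\dfrac{\zeta\gamma^2+1}{\gamma^2+\zeta}$ for all $z\in[0,1]$.

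Next I would estimate $\Phi_{(\zeta,\gamma)}(1)$ from above uniformly over the parameter ranges. Using the elementary identity
\[
\frac{\gamma^2+\zeta}{\zeta\gamma^2+1}=1+\frac{(\gamma^2-1)(1-\zeta)}{\zeta\gamma^2+1}
\]
together with $\gamma^2-1\ge e^{2\underline\ell\sqrt{2\lambda}}-1>0$ (from $\ell\ge\underline\ell$ and $\lambda>0$), with $1-\zeta=2/d\ge 2/\overline d$ (recall $\zeta=(d-2)/d$ and $d\le\overline d$), and with $\zeta\gamma^2+1\le\gamma^2+1\le e^{2\overline\ell\sqrt{2\lambda}}+1$, I obtain
\[
\frac{\gamma^2+\zeta}{\zeta\gamma^2+1}\ \ge\ 1+\frac{2}{\overline d}\cdot\frac{e^{2\underline\ell\sqrt{2\lambda}}-1}{e^{2\overline\ell\sqrt{2\lambda}}+1}\ =:\ c \ ,
\]
that is, $\Phi_{(\zeta,\gamma)}(z)\le\Phi_{(\zeta,\gamma)}(1)\le 1/c$ for every $z\in[0,1]$ and every admissible $(\zeta,\gamma)$.

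Finally, combining these steps: since $1/\xi_\lambda\in[0,1]$ $\mathbf P$-a.s.\ and is independent of $(\gamma,\zeta)$ on the right--hand side of the fixed--point identity, the deterministic bound just proved gives $\Phi_{(\gamma,\zeta)}(1/\xi_\lambda)\le 1/c$ $\mathbf P$-a.s., and hence $1/\xi_\lambda\le 1/c$ $\mathbf P$-a.s.\ by equality in distribution. This is exactly $\xi_\lambda\ge c$ $\mathbf P$-a.s. I do not anticipate a genuine obstacle, as the whole argument reduces to monotonicity of a M\"obius map plus a one--line bound; the only point needing care is that Corollary \ref{Corollary:InvarDistX} is an identity \emph{in distribution} rather than almost surely, so the reasoning must be organized so that the deterministic inclusion $\Phi_{(\zeta,\gamma)}([0,1])\subset[0,1/c]$ is applied under the independence of $1/\xi_\lambda$ and $(\gamma,\zeta)$, after which the distributional equality transports the bound back onto $1/\xi_\lambda$ itself.
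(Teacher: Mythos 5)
Your proof is correct and follows essentially the same route as the paper's: both use the fixed--point identity from Corollary \ref{Corollary:InvarDistX}, the a.s.\ bound $1/\xi_\lambda\in[0,1]$ from Theorem \ref{Theorem:Existence_xi}, and the same parameter bounds $\gamma\in[e^{\underline{\ell}\sqrt{2\lambda}},e^{\overline{\ell}\sqrt{2\lambda}}]$ and $1-\zeta=2/d\ge 2/\overline d$. The only cosmetic difference is that you first reduce to $z=1$ via monotonicity of $\Phi_{(\zeta,\gamma)}$ and bound $\Phi_{(\zeta,\gamma)}(1)$, whereas the paper directly writes $1/\Phi_{(\zeta,\gamma)}(\Delta)=1+\frac{(\gamma^2-\Delta)(1-\zeta)}{\zeta\gamma^2+\Delta}$ and estimates this for arbitrary $\Delta\in(0,1]$; the resulting bound is identical, and your explicit monotonicity step is a harmless detour.
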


\begin{proof}
By Theorem \ref{Theorem:Existence_xi},  $\mathbf{P}$-a.s, we have $\xi=\xi_{\lambda}\geq 1$. Thus from
\eqref{Corollary:InvarDistX:Eq:InvariantX}, $\dfrac{1}{\xi} \stackrel{d}{=} \Phi_{\gamma, \zeta}(\Delta)$, where
$\gamma\geq e^{\underline{\ell}\sqrt{2\lambda}}>1$, $\zeta\in (0,1)$, $\Delta\in (0,1]$ and
$$\Phi_{\gamma,\zeta}(\Delta)=\dfrac{\zeta\gamma^2+\Delta}{\gamma^2+\zeta \Delta}=\dfrac{1}{1+\dfrac{(\gamma^2-\Delta)(1-\zeta)}{\zeta\gamma^2+\Delta}} \ .$$
This implies that $\xi$ has the same distribution with $1+\dfrac{(\gamma^2-\Delta)(1-\zeta)}{\zeta\gamma^2+\Delta}$. Since
$\zeta\leq 1-2(\overline{d})^{-1}$,
our choice of $\gamma$, $\zeta$ and $\Delta$ guarantee that
$$\dfrac{(\gamma^2-\Delta)(1-\zeta)}{\zeta\gamma^2+\Delta}\geq
2(\overline{d})^{-1}\dfrac{e^{2\underline{\ell}\sqrt{2\lambda}}-1}{e^{2\overline{\ell}\sqrt{2\lambda}}+1} \ ,$$
where $\overline{d}$ is the upper bound of the number of branches in Assumption \eqref{Assumption:d:Eq:UpperboundBranching}.
Thus  $\mathbf{P}$-a.s. we have
$$\xi\geq 1+ 2(\overline{d})^{-1}\dfrac{e^{2\underline{\ell}\sqrt{2\lambda}}-1}{e^{2\overline{\ell}\sqrt{2\lambda}}+1} \ .$$
\end{proof}

\section{Large deviations principle for multi-skewed Brownian
motion} \label{Sec:LDP}

Recall that $Y_t$ is the multi-skewed Brownian motion in an
i.i.d. environment $\{(\vec{p},\vec{z})\}$ under $\mathbf{P}$ and that Lemma \ref{Assumption:ErgodicEnvironments} holds.
Recall  the probability measure $P^{(\vec{p}, \vec{z})}$ that determines the
quenched law of $Y$ in a given tree $\mathbb{T}_{\vec{p}, \vec{z}}$,


As in the proof of part (2) of Lemma \ref{Lemma:ElementaryPropertiesw},
we know from (1) in Lemma \ref{Assumption:ErgodicEnvironments}
 that \eqref{Lemma:ElementaryPropertiesw:Eq:SymmstryProbabilityDistributionSkewBM} holds, i.e., for any Borel set $A\subset (0,\infty)$
 we have
$P^{(\vec{p}, \vec{z})}(Y_t\in A)=P^{(\vec{p}, \vec{z})}(Y_t \in -A)$.
This fact indicates that in our case,
we only have to consider wave-propagation in the positive direction, and the wave
speed in the negative direction should be the same as that in the positive direction.
To this end, we shall first establish Theorem \ref{Theorem:LyapunovExponentPositiveDirection} below,
which is parallel to Lemma 5.1 of \cite{FreidlinHu13Motor}.

Recall that the function $w(x)=w_\lambda(x)=E^{(\vec{p},\vec{z})}\left[e^{-\lambda T_0^x}\mathbf{1}_{T_0^x<\infty}\right]$ as
in \eqref{Eq:w-AuxiliaryFunction}. As in Section \ref{Sec:MultiSkewBMAssumptions:Subsection:HittingTimeEstimates}, we fix
a notational convention that $\eta=-\lambda$ in the rest of this section. Thus we can also write
 $w(x)=w_{-\eta}(x)=E^{(\vec{p},\vec{z})}\left[e^{\eta T_0^x}\mathbf{1}_{T_0^x<\infty}\right]$.

For any $\eta\in \mathbb{R}$ we define the \emph{Lyapunov Exponent}
\begin{equation}\label{Eq:Mu}
\mu(\eta)\equiv\dfrac{1}{\mathbf{E}\ell_0}\mathbf{E}\left(\ln
E^{(\vec{p},\vec{z})}[e^{\eta T_0^{\ell_0}}\mathbf{1}_{T^{\ell_0}_0<\infty}]\right) \ ,
\end{equation}
in which we allow for some choices of $\eta$ the quantity $\mu(\eta)$ to be $+\infty$.

Notice that by \eqref{Eq:w-AuxiliaryFunction} we have

\begin{equation}\label{Eq:MuEqliwWheneta<0}
\mu(\eta)=\dfrac{\mathbf{E}\ln w_{-\eta}(\ell_0)}{\mathbf{E}\ell_0} \ .
\end{equation}

\begin{theorem}[Lyapunov Exponent identity]\label{Theorem:LyapunovExponentPositiveDirection}
Let $\eta \in \mathbb{R}$ be such that
\begin{equation}\label{Theorem:LyapunovExponentPositiveDirection:Eq:BoundedNessExponentialMomentsAssumption}
\mathbf{E}\left(\left|\ln E^{(\vec{p},\vec{z})}\left[e^{\eta T_0^{\ell_0}}\mathbf{1}_{T^{\ell_0}_0<\infty}\right]\right|\right)<\infty  \ .
\end{equation}
Let $0<c<v$. Then almost surely the following limit holds
\begin{equation}\label{Theorem:LyapunovExponentPositiveDirection:Eq:LyapunovExponentIdentity}
\mu(\eta)=\lim_{t\to\infty}\frac{1}{(v-c)t}
\ln E^{(\vec{p},\vec{z})}\left[e^{\eta T_{ct}^{vt}}\mathbf{1}_{T^{vt}_{ct}<\infty}\right] \ .
\end{equation}
The convergence is uniform with respect to $v$ and $c$ as they vary on the subset of $(0,\infty)$ that is
bounded with $(v-c)>0$ bounded away from $0$. Furthermore, this limit is independent of $v$ and $c$.
\end{theorem}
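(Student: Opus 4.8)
\emph{Step 1: a renewal decomposition of the hitting time.}
The plan is to cut the macroscopic descent from $vt$ down to $ct$ at the barriers it must cross, and then apply an ergodic theorem. Fix $\eta$ satisfying \eqref{Theorem:LyapunovExponentPositiveDirection:Eq:BoundedNessExponentialMomentsAssumption}. For $t$ large let $z_{n_1}=z_{n_1(t)}$ be the smallest barrier with $z_{n_1}\ge ct$ and $z_{n_2}=z_{n_2(t)}$ the largest with $z_{n_2}\le vt$; since barrier gaps lie in $[\underline\ell,\overline\ell]$ and $(v-c)t\to\infty$, for $t$ large we have $ct\le z_{n_1}\le z_{n_2}\le vt$ and $n_1(t),n_2(t)\to\infty$. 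A continuous path going from $vt$ down to $ct$ must hit $z_{n_2},z_{n_2-1},\dots,z_{n_1}$ in this order; applying the strong Markov property of $Y$ at these successive hitting times (each restarted copy is, given the environment, an independent multi-skewed Brownian motion started afresh) yields the exact factorization
\begin{equation*}
E^{(\vec{p},\vec{z})}\!\left[e^{\eta T^{vt}_{ct}}\mathbf{1}\right]
= E^{(\vec{p},\vec{z})}\!\left[e^{\eta T^{vt}_{z_{n_2}}}\mathbf{1}\right]
\left(\prod_{i=n_1+1}^{n_2} E^{(\vec{p},\vec{z})}\!\left[e^{\eta T^{z_i}_{z_{i-1}}}\mathbf{1}\right]\right)
E^{(\vec{p},\vec{z})}\!\left[e^{\eta T^{z_{n_1}}_{ct}}\mathbf{1}\right],
\end{equation*}
where $\mathbf{1}$ abbreviates the indicator that the corresponding hitting time is finite. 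Write $g_i:=\ln E^{(\vec{p},\vec{z})}[e^{\eta T^{z_i}_{z_{i-1}}}\mathbf{1}]$ for the log of the generic factor and $R_1(t),R_2(t)$ for the logs of the two boundary factors, so that $\ln E^{(\vec{p},\vec{z})}[e^{\eta T^{vt}_{ct}}\mathbf{1}]=R_1(t)+\sum_{i=n_1+1}^{n_2}g_i+R_2(t)$.

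\emph{Step 2: the bulk term via Birkhoff's theorem.}
Translating by $z_{i-1}$, the factor $e^{g_i}$ is $w_{-\eta}$ in the sense of \eqref{Eq:w-AuxiliaryFunction} evaluated at $\ell_{i-1}$ in the shifted environment $(p_j,\ell_j)_{j\ge i-1}$; in particular $g_1=\ln w_{-\eta}(\ell_0)$. Because $(p_j)_{j\ge1}$ and $(\ell_j)_{j\ge0}$ are i.i.d.\ and independent under $\mathbf{P}$ (Lemma \ref{Assumption:ErgodicEnvironments}), $(g_i)_{i\ge1}$ is a measurable factor of a one-sided shift of an i.i.d.\ sequence, hence stationary and ergodic, with $\mathbf{E}|g_1|=\mathbf{E}|\ln w_{-\eta}(\ell_0)|<\infty$ by \eqref{Theorem:LyapunovExponentPositiveDirection:Eq:BoundedNessExponentialMomentsAssumption}. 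Birkhoff's ergodic theorem gives $\frac1n\sum_{i=1}^n g_i\to\mathbf{E}\ln w_{-\eta}(\ell_0)$ $\mathbf{P}$-a.s. Meanwhile $z_n=\sum_{i=0}^{n-1}\ell_i$, so the strong law gives $z_n/n\to\mathbf{E}\ell_0$, and since $ct\le z_{n_1}<ct+\overline\ell$ and $vt-\overline\ell<z_{n_2}\le vt$ we get $n_1(t)/t\to c/\mathbf{E}\ell_0$ and $n_2(t)/t\to v/\mathbf{E}\ell_0$. Hence $\frac1t\sum_{i=n_1+1}^{n_2}g_i=\frac{n_2}{t}\cdot\frac1{n_2}\sum_{1}^{n_2}g_i-\frac{n_1}{t}\cdot\frac1{n_1}\sum_{1}^{n_1}g_i\to\frac{v-c}{\mathbf{E}\ell_0}\,\mathbf{E}\ln w_{-\eta}(\ell_0)$, so $\frac{1}{(v-c)t}\sum_{i=n_1+1}^{n_2}g_i\to\mathbf{E}\ln w_{-\eta}(\ell_0)/\mathbf{E}\ell_0=\mu(\eta)$ by \eqref{Eq:MuEqliwWheneta<0}. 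The limit is visibly independent of $v$ and $c$.

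\emph{Step 3: the boundary terms are $o(t)$, uniformly.}
Each boundary factor records a descent across a distance at most $\overline\ell$. For $\eta\ge0$, the strong Markov property at $T^{z_{n_2+1}}_{vt}$ gives $e^{g_{n_2+1}}=E^{(\vec{p},\vec{z})}[e^{\eta T^{z_{n_2+1}}_{vt}}\mathbf{1}]\,e^{R_1(t)}$, hence $\ln C_\ast\le R_1(t)\le g_{n_2+1}-\ln C_\ast$, where $C_\ast>0$ is a uniform-in-environment lower bound for the probability of descending one barrier gap's worth of distance, obtained from the bounded geometry Assumptions \ref{Assumption:d} and \ref{Assumption:ell} exactly as in Lemma \ref{Lemma:EstimateHittingProbabilityBackZeroMulti-SkewedBM}(b); for the upper bound on $R_1$ we used $E^{(\vec{p},\vec{z})}[e^{\eta T^{z_{n_2+1}}_{vt}}\mathbf{1}]\ge C_\ast$ and for the lower one $E^{(\vec{p},\vec{z})}[e^{\eta T^{vt}_{z_{n_2}}}\mathbf{1}]\ge P^{(\vec{p},\vec{z})}(T^{vt}_{z_{n_2}}<\infty)\ge C_\ast$. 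For $\eta<0$ one has $R_1(t)\le0$ trivially and $R_1(t)\ge \eta M+\ln p_M$ by keeping only paths that descend within a fixed time $M$ (with $p_M>0$ independent of the environment). The same bounds hold for $R_2(t)$. Since $\frac1n\sum_1^n g_i$ converges, $g_n/n\to0$, so $g_{n_1(t)}/t,\,g_{n_2(t)+1}/t\to0$ and therefore $R_1(t)/t,\,R_2(t)/t\to0$, proving \eqref{Theorem:LyapunovExponentPositiveDirection:Eq:LyapunovExponentIdentity}. Uniformity over $(v,c)$ in a bounded set with $v-c\ge\delta>0$ follows by upgrading the three inputs: monotonicity of $n\mapsto z_n$ turns the SLLN into a uniform statement for $n_1(t),n_2(t)$; writing $S_n:=\sum_{i\le n}(g_i-\mathbf{E}g_1)$, the a.s.\ relation $S_n/n\to0$ gives $\sup_{n\le Nt}|S_n|/t\to0$ and $\max_{n\le Nt}|g_n|/t\to0$ for every fixed $N$ (split the supremum at a large fixed index and let it tend to $\infty$); and the boundary estimates above are already uniform in the barrier index. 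As the numerator is then within $o(t)$ of $(v-c)t\,\mu(\eta)$ uniformly and $(v-c)t\ge\delta t$, division preserves the $o(1)$ error.

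\emph{Where the difficulty lies.}
The one-interface building blocks $e^{g_i}$ are \emph{not} independent: the excursions of $Y$ above $z_i$ before it returns to $z_{i-1}$ couple $g_i$ to the whole environment to the right of $z_i$, so one cannot invoke a law of large numbers for i.i.d.\ summands and must instead use Birkhoff's ergodic theorem for the stationary ergodic sequence $(g_i)$. The second delicate point is pinning down the boundary terms uniformly, which is what forces the uniform-in-environment lower bound $C_\ast$ for short descents (and, for $\eta<0$, the separate small-time estimate). The remaining bookkeeping—the strong Markov factorization, the renewal identification $n_j(t)/t\to\,\cdot/\mathbf{E}\ell_0$, and the uniform ergodic estimates—is routine.
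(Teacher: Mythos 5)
Your proposal is correct and follows essentially the same route as the paper: factor the hitting-time Laplace transform across consecutive barriers using the strong Markov property, apply Birkhoff's ergodic theorem to the stationary (non-independent) sequence $(g_i)$ of one-cell log-Laplace increments, normalize by the barrier count via the renewal limit $z_n/n\to\mathbf{E}\ell_0$, and control the endpoint factors. The paper's write-up is terser and delegates the endpoint and uniformity steps to Nolen (2009, Section 2, Proposition 1), whereas you spell these out explicitly, including the uniform-in-environment lower bound $C_\ast$ for one-cell descents.
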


\begin{proof}
Fix a pair $(\vec{p}, \vec{z})$. For $r,s\in \mathbb{R}$ we set
\begin{equation}\label{Theorem:LyapunovExponentPositiveDirection:Eq:q}
q(r,s,\eta)=E^{(\vec{p},\vec{z})}\left[e^{\eta T_r^s}\mathbf{1}_{T_r^s<\infty}\right] \ .
\end{equation}

By the strong Markov property of the multi-skewed Brownian motion $Y_t$, we have for $r<s<t$ that
\begin{equation}\label{Theorem:LyapunovExponentPositiveDirection:Eq:Subadditivity-lnq}
\ln q(r,t,\eta)=\ln q(r,s,\eta)+\ln q(s,t,\eta) \ .
\end{equation}

Fix $c>0$. Let the number $N(n)$ be such that $z_{N(n)}\leq cn$ and $z_{N(n)+1}>cn$, $n\in \mathbb{N}$. Then
$\lim\limits_{n\rightarrow\infty}\dfrac{cn}{N(n)}=\mathbf{E} \ell_0$ holds $\mathbf{P}$-almost surely. As we have in our Lemma \ref{Assumption:ErgodicEnvironments}, $(p_i)_{i\geq 1}$ and $(z_{i+1}-z_i\equiv \ell_i)_{i\geq 0}$ are two i.i.d
sequences of random variables, that are independent of each other. By the Law of Large Numbers
for ergodic sequences combined with \eqref{Theorem:LyapunovExponentPositiveDirection:Eq:Subadditivity-lnq} we see that
$$\lim\limits_{n\rightarrow\infty} \dfrac{\ln q(0,cn,\eta)}{N(n)}=
\mathbf{E}\left(\ln E^{(\vec{p}, \vec{z})}\left[e^{\eta T_{0}^{z_1}}\mathbf{1}_{T_{0}^{z_1}<\infty}\right]\right)$$
holds $\mathbf{P}$-almost surely, provided that we have \eqref{Theorem:LyapunovExponentPositiveDirection:Eq:BoundedNessExponentialMomentsAssumption}.
Therefore
$$\lim\limits_{n\rightarrow\infty}\dfrac{1}{cn}\ln q(0, cn, \eta)
=\lim\limits_{n\rightarrow\infty}\dfrac{1}{\dfrac{cn}{N(n)}}\dfrac{\ln q(0, cn,\eta)}{N(n)}
=\dfrac{1}{\mathbf{E}\ell_0}\mathbf{E}\left(\ln
E^{(\vec{p},\vec{z})}[e^{\eta T_0^{\ell_0}}\mathbf{1}_{T^{\ell_0}_0<\infty}]\right) \ ,$$
if \eqref{Theorem:LyapunovExponentPositiveDirection:Eq:BoundedNessExponentialMomentsAssumption} holds.
Now we can derive \eqref{Theorem:LyapunovExponentPositiveDirection:Eq:LyapunovExponentIdentity} as in
\cite[Section 2, Proposition 1]{Nolen2009}.
\end{proof}

Theorem \ref{Theorem:LyapunovExponentPositiveDirection} will lead to the
large deviations principle for both the hitting time $T_r^s$ and the process $Y_t$, and from there
we will analyze the wave-front propagation on a random $\mathbb{T}_{\vec{d}, \vec{\ell}}$ in Section \ref{Sec:WavePropagation}.
The proof here makes use of the arguments in the analysis presented in \cite{Nolen-XinCMP2007}, \cite{Nolen2009},
\cite{Taleb2001}, \cite{CometsGantertZeitouni2000}, \cite[Chapter 7]{FreidlinFunctionalBook} and \cite{FreidlinHu13},
yet there are many technical differences due to the presence of multi-skewness of the process $Y_t$ and
the symmetric structure of the tree.

The following lemma summarizes basic properties of the function $\mu(\eta)$.
To emphasize the dependence of the limit
random variable $\xi=\xi_{\lambda}$ on $\lambda=-\eta$ from Theorem \ref{Theorem:Existence_xi}, we will explicit
this dependence $\xi=\xi_{\lambda}=\xi_{-\eta}$ throughout.

Define
\begin{equation}\label{Def:CriticalEtaForMu}
\eta_c:=\sup\{\eta\in \mathbb{R}:\,\mu(\eta)<\infty\} \ .
\end{equation}

In the below, for a function $f(\eta)$ that depends on $\eta$, we denote $f(\eta_c-)$ to be the limit
$\lim\limits_{\eta \rightarrow \eta_c-}f(\eta)$. The function $f(\eta)$ can be $\mu(\eta)$ or $\mu'(\eta)$.

\begin{lemma}\label{Lemma:PropertiesMu}
The following properties of the function $\mu(\eta)$ hold.
\begin{itemize}
\item[(1)] $\mu(0) \leq 0$;

\item[(2)] When $\eta<0$ we have
\begin{equation}\label{Lemma:PropertiesMu:Eq:ExplicitCalculation-mu-eta}
\mu(\eta)=-\sqrt{-2\eta}+\dfrac{1}{\mathbf{E}\ell_0}\mathbf{E}\left(\ln \dfrac{\xi_{-\eta}-1}{\xi_{-\eta}-e^{-2\sqrt{-2\eta}\ell_0}}\right) \ .
\end{equation}
In particular, $\mu(\eta)<0$ for $\eta<0$;

\item[(3)] $\mu(\eta)\rightarrow -\infty$ as $\eta \rightarrow -\infty$;

\item[(4)] We have $\eta_c\in [0,\infty)$, so that $\mu(\eta)<\infty$ when $\eta< \eta_c$ and $\mu(\eta)=+\infty$ when $\eta>\eta_c$;

\item[(5)] When $\eta\leq \eta_c$, $\mu(\eta)$ is a convex function of $\eta$ and $\mu'(\eta)$ is monotonically
strictly increasing in $\eta$;

\item[(6)] For $\eta<\eta_c$, the function $\mu(\eta)$ is continuously differentiable with $\mu'(\eta)>0$. In particular,
$0<\mu'(0)\leq \mu'(\eta_c-)\in (0,+\infty]$ with the equality being satisfied when $\eta_c=0$.

\item[(7)] We have $\eta_c \mu'(0)+\mu(0)<\mu(\eta_c-)$ (if $\mu(\eta_c-)=+\infty$
this is saying that $\eta_c \mu'(0)+\mu(0)<+\infty$). In particular, if
$\mu(\eta_c-)\leq 0$, then $\dfrac{-\mu(0)}{\mu'(0)}> \eta_c$;
\end{itemize}
\end{lemma}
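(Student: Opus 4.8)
The plan is to establish the seven items essentially one at a time. For (1)--(4) I would compute directly from the representation $\mu(\eta)=(\mathbf{E}\ell_0)^{-1}\mathbf{E}\ln w_{-\eta}(\ell_0)$ of \eqref{Eq:MuEqliwWheneta<0} together with the explicit formula for $w_{-\eta}(\ell_0)$; for (5)--(7) I would use that, for each fixed environment $(\vec{p},\vec{z})$, the map $\eta\mapsto\Lambda(\eta):=\ln E^{(\vec{p},\vec{z})}[e^{\eta T_0^{\ell_0}}\mathbf{1}_{T_0^{\ell_0}<\infty}]$ is the logarithmic Laplace transform of a sub-probability measure on $(0,\infty)$. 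Item (1): $w_0(\ell_0)=P^{(\vec{p},\vec{z})}(T_0^{\ell_0}<\infty)\le 1$ gives $\mu(0)\le 0$, and Lemma \ref{Lemma:EstimateHittingProbabilityBackZeroMulti-SkewedBM}(b) gives a deterministic lower bound $w_0(\ell_0)\ge C_*>0$, so $\mathbf{E}\ln w_0(\ell_0)$ is finite. Item (2): apply Proposition \ref{Prop:ExplicitCalculationw-AuxiliaryDeterministic} with $\lambda=-\eta>0$, whose hypotheses \eqref{Prop:ExplicitCalculationw-AuxiliaryDeterministic:Eq:infpositive}, \eqref{Prop:ExplicitCalculationw-AuxiliaryDeterministic:Eq:xi} hold $\mathbf{P}$-a.s.\ by Lemma \ref{Lemma:LtildeTendInfty} and Theorem \ref{Theorem:Existence_xi}; evaluating \eqref{Prop:ExplicitCalculationw-AuxiliaryDeterministic:Eq:wExplicit}--\eqref{Prop:ExplicitCalculationw-AuxiliaryDeterministic:Eq:VecfPositive} at $x=z_1=\ell_0$ yields $w_{-\eta}(\ell_0)=f^+_1+f^-_1=e^{-\sqrt{2\lambda}\ell_0}(\xi_{-\eta}-1)/(\xi_{-\eta}-e^{-2\sqrt{2\lambda}\ell_0})$, and taking $\ln$, then $\mathbf{E}$, then dividing by $\mathbf{E}\ell_0$ gives \eqref{Lemma:PropertiesMu:Eq:ExplicitCalculation-mu-eta}. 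The sign is immediate: $\xi_{-\eta}\ge 1$ (Theorem \ref{Theorem:Existence_xi}) and $e^{-2\sqrt{2\lambda}\ell_0}<1$ put the ratio in $(0,1]$, so $\mu(\eta)\le-\sqrt{-2\eta}<0$; integrability of the logarithmic term uses the deterministic bound $\xi_{-\eta}>1$ from Corollary \ref{Corollary:xiBoundedBelow1Plusc}, boundedness of $\ell_0$, and $\mathbf{E}\ln\xi_{-\eta}<\infty$ (a polynomial tail estimate for $\xi_{-\eta}$ via the invariance relation \eqref{Corollary:InvarDistX:Eq:InvariantX}).

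Items (3), (4), (5) follow next. The bound $\mu(\eta)\le-\sqrt{-2\eta}$ gives $\mu(\eta)\to-\infty$ as $\eta\to-\infty$, which is (3). For (4): $\mu<\infty$ on $(-\infty,0)$ by (2), so $\eta_c\ge 0$; conversely $\mathbf{E}\ln w_{-\eta}(\ell_0)=+\infty$ once $w_{-\eta}(\ell_0)=+\infty$ with positive $\mathbf{P}$-probability, which by Theorem \ref{Theorem:etac_Y-w} happens for every $\eta>\eta_c^w$, so $\eta_c\le\eta_c^w<\infty$; the remaining claims in (4) hold since the effective domain of the convex function $\mu$ is an interval containing $(-\infty,0]$. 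For (5): for fixed $(\vec{p},\vec{z})$ the law of $T_0^{\ell_0}$ restricted to $\{T_0^{\ell_0}<\infty\}$ is a sub-probability measure on $(0,\infty)$ which is not a point mass (the one-dimensional diffusion hitting time has a density), so by H\"older's inequality $\Lambda$ is strictly convex and real-analytic on the interior of its domain; since $\mu=(\mathbf{E}\ell_0)^{-1}\mathbf{E}\Lambda$ is an average of such functions, $\mu$ is convex on $(-\infty,\eta_c]$, strictly convex on $(-\infty,\eta_c)$, and $\mu'$ is strictly increasing.

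For (6), I would differentiate under $\mathbf{E}$ by dominated convergence, dominating $\Lambda'(\eta,\vec{p},\vec{z})=E^{(\vec{p},\vec{z})}[T_0^{\ell_0}e^{\eta T_0^{\ell_0}}\mathbf{1}_{T_0^{\ell_0}<\infty}]/E^{(\vec{p},\vec{z})}[e^{\eta T_0^{\ell_0}}\mathbf{1}_{T_0^{\ell_0}<\infty}]$ uniformly over $\eta$ in any compact subinterval of $(-\infty,\eta_c)$; this gives $\mu\in C^1$ on $(-\infty,\eta_c)$ with $\mu'=(\mathbf{E}\ell_0)^{-1}\mathbf{E}\Lambda'>0$ (positivity since $T_0^{\ell_0}>0$), the limit $\mu'(\eta_c-)\in(0,\infty]$ exists by the monotonicity of $\mu'$ from (5), and the degenerate case $\mathbb{T}=\mathbb{R}$ is read off directly from $\mu(\eta)=-\sqrt{-2\eta}$. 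Item (7) is the strict tangent-line inequality for the strictly convex $C^1$ function $\mu$: for $\eta_c>0$, strict monotonicity and continuity of $\mu'$ give $\mu(\eta_c-)-\mu(0)=\int_0^{\eta_c}\mu'(s)\,ds>\mu'(0)\,\eta_c$ (trivially true when $\mu(\eta_c-)=+\infty$), i.e.\ $\eta_c\mu'(0)+\mu(0)<\mu(\eta_c-)$; rearranging, if moreover $\mu(\eta_c-)\le 0$ then $\eta_c\mu'(0)<-\mu(0)$, and dividing by $\mu'(0)>0$ yields $-\mu(0)/\mu'(0)>\eta_c$.

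The technical heart, and the main obstacle, is the package of interchanges of $\mathbf{E}$ with $\ln$, with $d/d\eta$, and with limits: the integrability estimates behind item (2) (resting on the a.s.\ lower bound for $\xi_\lambda$ and on a polynomial tail bound yielding $\mathbf{E}\ln\xi_\lambda<\infty$), the uniform domination of $\Lambda'$ on compact subsets of $(-\infty,\eta_c)$ needed for (6), and --- most delicately --- the behaviour of $\mu$ and $\mu'$ as $\eta\uparrow\eta_c$, where the boundary cases $\eta_c=0$ and $\mu'(\eta_c-)=+\infty$ must be accommodated in items (6)--(7). A further subtlety, though not needed for item (4) as stated, is whether $\eta_c$ actually equals $\eta_c^w$ rather than merely $\eta_c\le\eta_c^w$.
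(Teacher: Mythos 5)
Your proposal is correct and follows essentially the same route as the paper: item (2) via the explicit formula from Proposition \ref{Prop:ExplicitCalculationw-AuxiliaryDeterministic}, items (1), (3), (4) from that formula and from Theorem \ref{Theorem:etac_Y-w} via the essential infimum of $\eta_c^w$, item (5) via H\"older's inequality, item (6) via dominated convergence, and item (7) via the strict tangent-line inequality for a strictly convex function. The only cosmetic difference is that your integrability argument for item (2) (invoking $\mathbf{E}\ln\xi_{-\eta}<\infty$ via a tail estimate) is unnecessary, since Corollary \ref{Corollary:xiBoundedBelow1Plusc} already bounds $\xi_\lambda$ away from $1$ by a deterministic constant depending on $(\underline{\ell},\overline{\ell},\overline{d},\lambda)$, which together with $\ell_0\le\overline{\ell}$ makes the logarithmic term uniformly bounded.
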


\begin{proof}

\begin{itemize}

\item[(1)] By \eqref{Eq:Mu} and part (3) of Lemma \ref{Lemma:ElementaryPropertiesw}, setting $\eta=0$ we get
\begin{equation}\label{Lemma:PropertiesMu:Eq:Mu-0}
\mu(0)=\dfrac{1}{\mathbf{E}\ell_0}\mathbf{E}\left[\ln P^{(\vec{p},\vec{z})}(T^{\ell_0}_0<\infty)\right] \ .
\end{equation}
Since $P^{(\vec{p},\vec{z})}(T^{\ell_0}_0<\infty)\in (0, 1]$, we get $\mu(0)\leq 0$.

\item[(2)] 
By \eqref{Prop:ExplicitCalculationw-AuxiliaryDeterministic:Eq:wExplicit} in Proposition \ref{Prop:ExplicitCalculationw-AuxiliaryDeterministic}
we have $w(\ell_0)=w(z_1)=f_1^++f_1^-$. By \eqref{Prop:ExplicitCalculationw-AuxiliaryDeterministic:Eq:VecfPositive},
\begin{equation}\label{Lemma:ExplicitCalculation-mu:Eq:w-ell1-step1}
\begin{array}{ll}
w(\ell_0) & =f_1^++f_1^-
\\
& =-\dfrac{1}{e^{\sqrt{2\lambda}\ell_0}\xi_{\lambda}-e^{-\sqrt{2\lambda}\ell_0}}+
\dfrac{\xi_{\lambda}}{e^{\sqrt{2\lambda}\ell_0}\xi_{\lambda}-e^{-\sqrt{2\lambda}\ell_0}}
\\
& =\dfrac{\xi_{\lambda}-1}{e^{\sqrt{2\lambda}\ell_0}\xi_{\lambda}-e^{-\sqrt{2\lambda}\ell_0}}
\\
& = e^{-\sqrt{2\lambda}\ell_0}\cdot \dfrac{\xi_{\lambda}-1}{\xi_{\lambda}-e^{-2\sqrt{2\lambda}\ell_0}} \ .
\end{array}
\end{equation}

Thus we have
$$\ln w(\ell_0)=-\sqrt{2\lambda}\ell_0+\ln \dfrac{\xi_{\lambda}-1}{\xi_{\lambda}-e^{-2\sqrt{2\lambda}\ell_0}} \ . $$
By \eqref{Theorem:LyapunovExponentPositiveDirection:Eq:LyapunovExponentIdentity} and the convention
that $\eta=-\lambda$,
$$\mu(\eta)=\dfrac{\mathbf{E}\ln w(\ell_0)}{\mathbf{E} \ell_0}=-\sqrt{-2\eta}+\dfrac{1}{\mathbf{E}\ell_0}
\left(\mathbf{E}\ln\dfrac{\xi_{-\eta}-1}{\xi_{-\eta}-e^{-2\sqrt{-2\eta}\ell_0}}\right) \ ,$$
which is \eqref{Lemma:PropertiesMu:Eq:ExplicitCalculation-mu-eta}.

For $\eta<0$ we have $\xi_{-\eta}-1<\xi_{-\eta}-e^{-2\sqrt{-2\eta}\ell_0}$
so that $\ln\dfrac{\xi_{-\eta}-1}{\xi_{-\eta}-e^{-2\sqrt{-2\eta}\ell_0}}<0$, which ensures that $\mu(\eta)<0$ since we have the
analytic formula \eqref{Lemma:PropertiesMu:Eq:ExplicitCalculation-mu-eta} for $\mu(\eta)$ when $\eta<0$.

\item[(3)] Since $\ln\dfrac{\xi_{-\eta}-1}{\xi_{-\eta}-e^{-2\sqrt{-2\eta}\ell_0}}<0$,
as $\eta\rightarrow -\infty$ we know that $\mu(\eta)\rightarrow -\infty$ from the
analytic formula \eqref{Lemma:PropertiesMu:Eq:ExplicitCalculation-mu-eta} for $\mu(\eta)$ when $\eta<0$.

\item[(4)] Let $\eta_c^w$ be defined by \eqref{Eq:Def:etacY-w} and
$\eta_c^*:=\text{ess}\inf\limits_{(\vec{p},\vec{z})\in\widetilde{\Omega}\subset \Omega, \mathbf{P}(\widetilde{\Omega})=1}\eta_c^w$
be the essential infimum of $\eta_c^w$ under $\mathbf{P}$.
 Then $\eta_c^* \in [0,\infty)$. Assumption \ref{Assumption:ell} ensures that $\mathbf{E}\ell_0 \in (0,\infty)$.
By the proof of Theorem \ref{Theorem:etac_Y-w}, the random variable $\eta_c^w$ given by \eqref{Eq:Def:etacY-w}
takes value in $\left[0,\,\widetilde{B}\right) \subset [0,\infty)$ almost
surely under $\mathbf{P}$, where $\widetilde{B}$ is given by \eqref{Theorem:etac_Y-w:Proof:Eq:minpqY}.
Furthermore, since both $\ell_i$ and $p_i$ are bounded above and below according to
Lemma \ref{Assumption:ErgodicEnvironments}, there exists some constant $B^*\in (0,\infty)$
such that $\mathbf{P}(\widetilde{B}\leq B^*)=1$. Thus $\eta_c^*\in [0, \infty)$.
If $\eta> \eta_c^*$, then $\mathbf{P}(\eta>\eta_c)>0$ which implies $\mu(\eta)=+\infty$. Hence
 $0\leq \eta_c\leq \eta_c^*<\infty$.

\item[(5)] By H\"{o}lder's inequality, for any $\eta_1, \eta_2\leq \eta_c$ we have
$$E^{(\vec{p}, \vec{z})}\left[e^{\frac{1}{2}(\eta_1+\eta_2)}\mathbf{1}_{T^{\ell_0}_0<\infty}\right]
\leq E^{(\vec{p}, \vec{z})}\left[e^{\frac{1}{2}\eta_1}\mathbf{1}_{T^{\ell_0}_0<\infty}\right]^{1/2}
\cdot E^{(\vec{p}, \vec{z})}\left[e^{\frac{1}{2}\eta_2}\mathbf{1}_{T^{\ell_0}_0<\infty}\right]^{1/2} \ .$$
This implies that when $\eta\leq \eta_c$, $\mu(\eta)$ is a convex function of $\eta$. Due to the condition at which
H\"{o}lder's inequality is satisfied, as long as $\eta_1\neq  \eta_2$ the above inequality is a strict inequality.
This further implies that $\mu(\eta)$ is strictly convex, i.e., $\mu'(\eta)$ is monotonically
strictly increasing in $\eta$;

\item[(6)] By the same argument in the proof of part (vi) in \cite[Lemma 2.2]{Nolen2009} we can show that
\begin{equation}\label{Lemma:PropertiesMu:Eq:MuPrimeEta}
\mu'(\eta)=\mathbf{E}\left[\dfrac{E^{(\vec{p}, \vec{z})}[T^{\ell_0}_0 e^{\eta T^{\ell_0}_0}\mathbf{1}_{T^{\ell_0}_0<\infty}]}
{E^{(\vec{p}, \vec{z})}[e^{\eta T^{\ell_0}_0}\mathbf{1}_{T^{\ell_0}_0<\infty}]}\right]>0 \ .
\end{equation}

Assume that we have a sequence $\eta_n\rightarrow \eta<\eta_c$ as $n \rightarrow\infty$.
Then there exist a constant $C=C(\eta_c, \eta)$ that may depend on $\eta_c$ and $\eta$ such that
$$E^{(\vec{p}, \vec{z})}[T^{\ell_0}_0 e^{\eta_n T^{\ell_0}_0}\mathbf{1}_{T^{\ell_0}_0<\infty}]\leq
CE^{(\vec{p}, \vec{z})}[e^{\eta T^{\ell_0}_0}\mathbf{1}_{T^{\ell_0}_0<\infty}]  \ .$$
We also have by Lemma \ref{Lemma:EstimateHittingProbabilityBackZeroMulti-SkewedBM} that there
exists another $\widehat{C}>0$ such that
$$E^{(\vec{p}, \vec{z})}[e^{\eta_n T^{\ell_0}_0}\mathbf{1}_{T^{\ell_0}_0<\infty}]\geq
P^{(\vec{p}, \vec{z})}[T^{\ell_0}_0<\infty]\geq \widehat{C} \ .$$

So we have
$$\dfrac{E^{(\vec{p}, \vec{z})}[T^{\ell_0}_0 e^{\eta_n T^{\ell_0}_0}\mathbf{1}_{T^{\ell_0}_0<\infty}]}
{E^{(\vec{p}, \vec{z})}[e^{\eta_n T^{\ell_0}_0}\mathbf{1}_{T^{\ell_0}_0<\infty}]}\leq
\dfrac{C}{\widehat{C}}E^{(\vec{p}, \vec{z})}[e^{\eta T^{\ell_0}_0}\mathbf{1}_{T^{\ell_0}_0<\infty}] \ ,$$
and we can thus apply the dominated convergence theorem to conclude that $\mu'(\eta_n)\rightarrow \mu'(\eta)$
as $\eta_n\rightarrow\eta$, i.e., $\mu'(\eta)$ is continuous in $\eta$ for $\eta<\eta_c$.

Moreover, \eqref{Lemma:PropertiesMu:Eq:MuPrimeEta} gives
\begin{equation}\label{Lemma:PropertiesMu:Eq:MuPrimeZero}
\mu'(0)=\mathbf{E}\left[E^{(\vec{p},\vec{z})}[T_0^{\ell_0}\mathbf{1}_{T_0^{\ell_0}<\infty}]\right]\in (0, +\infty] \ .
\end{equation}
Since $\mu'(\eta)$ is monotonically increasing in $\eta$ as long as $\eta\leq \eta_c$ due to part (1), we get further
\begin{equation}\label{Lemma:PropertiesMu:Eq:MuPrimeEtac}
0<\mu'(0)\leq \mu'(\eta_c-) \in (0, +\infty]
\end{equation}
with the equality being satisfied as long as $\eta_c=0$;

\item[(7)] Since $\mu(\eta)$ is a strictly convex function in $\eta$ as long as $\eta\leq \eta_c$, we obtain by the property of convexity that
$\mu(0)+\eta_c\mu'(0)<\mu(\eta_c)$. If $\mu(\eta_c)\leq 0$, we further obtain that $\mu(0)+\eta_c\mu'(0)<0$, which is $\dfrac{-\mu(0)}{\mu'(0)}>\eta_c$.
\end{itemize}
\end{proof}

\begin{remark}\rm \label{Remark:ConditionMu-0LessZero}
Consider the ratio $\rho_i:=\frac{p^i_{-1}}{p^i_{+1}}$ where
 $p^i_{+1}$ and $p^i_{-1}$ are defined in \eqref{Eq:p^s2}. 
If we assume $\mathbf{E}\left[\rho_i\right]<1$, then by part (a) of
Lemma \ref{Lemma:EstimateHittingProbabilityBackZeroMulti-SkewedBM} we have $\mu(0)<0$. Moreover, the condition
$\mathbf{E}\left[\rho_i\right]<1$ implies that
$\mathbf{E}\left[\ln \rho_i \right]<0$ by Jensen's inequality, which yields the law of strong large numbers $\lim\limits_{t\to\infty}\frac{Y_t}{t}>0$ under $\mathbf{P}$ by \cite[Theorem 1.16]{SolomonRWREAnnProb}. 

By \eqref{Eq:p^s2},  $\mathbf{E}\left[\rho_i\right]<1$ is the same as saying
$\mathbf{E}\left[\dfrac{\ell_i}{\ell_{i-1}(d_i-1)}\right]<1$. The latter is satisfied, in particular, if $\ell_i=\ell$ is a constant
and $d_i\geq 3$ for all $i$.
\end{remark}

\begin{remark}\rm \label{Remark:Etac-MuPrimeEtacRealLineCase}
When the tree $\mathbb{T}_{\vec{d}, \vec{\ell}}$ degenerates to the real line $\mathbb{R}$, it was proved in Lemma 2.2 and Proposition 2 in \cite{Nolen2009} that $\eta_c=0$, $\mu(0)=0$ and $\mu'(0)=\infty$.
\end{remark}

A key quantity in large deviations theory is the Legendre transform of the Lyapunov function $\mu(\eta)$. Due to
property (6) of Lemma \ref{Lemma:PropertiesMu}, we can define the Legendre transform of $\mu(\eta)$ as a new function $I(a)$:
\begin{equation}\label{Eq:Ia}
I(a)=\sup\limits_{\eta\leq \eta_c}(a\eta-\mu(\eta)) \ .
\end{equation}

The following lemma summarizes properties of the function $I(a)$.
\begin{lemma}\label{Lemma:PropertyIa}
The following properties of the function $I(a)$ hold.
\begin{itemize}
\item[(1)] $I(a)$ is convex in $a$ and $I(a)\geq 0$ for $a\in (0,\infty)$;

\item[(2)] $I(a)$ is decreasing in $a$ for $a\in (0,\mu'(0)]$ and is increasing in $a$ for $a\in (\mu'(0), \infty)$, with $I(\mu'(0))=-\mu(0)$
to be the minimum point of $I(a)$ as $a\in (0,\infty)$;

\item[(3)] $\lim\limits_{a\rightarrow 0+}I(a)=+\infty$;

\item[(4)] $I(a)$ is piecewisely differentiable on both intervals $a\in (0, \mu'(\eta_c-))$ and $a\in [\mu'(\eta_c-), \infty)$ and
$I'(a)\leq \eta_c$ for all $a\in (0, \infty)$;

\item[(5)] $I(a)\geq a\eta_c-\mu(\eta_c-)$ and $I(a)> a\eta_c-\mu(\eta_c-)$ when $a\in (0, \mu'(\eta_c-))$;

\item[(6)] If $\mu'(\eta_c-)<\infty$, then $I(a)=a\eta_c-\mu(\eta_c-)$ for $a\in [\mu'(\eta_c-), \infty)$;

\item[(7)] If $\mu'(\eta_c-)=\infty$, then $I(a)>a\eta_c-\mu(\eta_c-)$ for $a\in (0, \infty)$ and
$I(a)-[a\eta_c-\mu(\eta_c-)]$ decreases to $0$ as $a\rightarrow \infty$;

\item[(8)] If $\mu(\eta_c-)=\infty$, then $I'(a)<\eta_c$ for all $a\in (0, \infty)$ and $I'(a)\rightarrow \eta_c$ as
$a\rightarrow \infty$.
\end{itemize}
\end{lemma}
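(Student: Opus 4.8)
The plan is to recognise $I$ as the convex conjugate (Legendre transform) of $\mu$ and to read off each of the eight items from the properties of $\mu$ collected in Lemma \ref{Lemma:PropertiesMu}, the structural input being: $\mu$ is finite, $C^{1}$ and strictly convex on $(-\infty,\eta_{c})$ with $\mu'>0$ there; $\mu(0)\le 0$; and $\mu'$ increases continuously up to $\mu'(\eta_{c}-)\in(0,\infty]$. From the explicit formula \eqref{Lemma:PropertiesMu:Eq:ExplicitCalculation-mu-eta}, together with Corollary \ref{Corollary:xiBoundedBelow1Plusc} to control the logarithmic term, one gets $\sqrt{-2\eta}\le -\mu(\eta)=O(\sqrt{-\eta})$ as $\eta\to-\infty$; in particular $\mu(\eta)=o(\eta)$, so for every $a>0$ the function $g_{a}(\eta):=a\eta-\mu(\eta)$ tends to $-\infty$ as $\eta\to-\infty$ and the supremum in \eqref{Eq:Ia} is finite and attained (the point $\eta_{c}$ itself never helps, since $\mu(\eta_{c})\ge\mu(\eta_{c}-)$ by convexity, so the supremum is really over $\eta<\eta_{c}$). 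If $a<\mu'(\eta_{c}-)$ then $g_{a}'(\eta_{c}-)=a-\mu'(\eta_{c}-)<0$, so the maximum is at the interior critical point $\eta^{*}(a)$ determined by $\mu'(\eta^{*}(a))=a$, whence $I(a)=a\,\eta^{*}(a)-\mu(\eta^{*}(a))$ and, by the envelope identity, $I'(a)=\eta^{*}(a)$; if $a\ge\mu'(\eta_{c}-)$ then $g_{a}$ is nondecreasing and $I(a)=a\eta_{c}-\mu(\eta_{c}-)$, an affine function of slope $\eta_{c}$.

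Items (1)--(3) are then immediate. $I$ is a pointwise supremum of the affine maps $a\mapsto a\eta-\mu(\eta)$, hence convex, and taking $\eta=0$ in \eqref{Eq:Ia} gives $I(a)\ge -\mu(0)\ge 0$, which is (1). For (2), the same bound gives $I\ge-\mu(0)$ everywhere, while $\eta\mapsto\mu'(0)\,\eta-\mu(\eta)$ has derivative $\mu'(0)-\mu'(\eta)$, which changes sign from $+$ to $-$ at $\eta=0$, so its maximum is $-\mu(0)$ and thus $I(\mu'(0))=-\mu(0)$ (with the evident reading as $a\to\infty$ when $\mu'(0)=+\infty$); convexity of $I$ then forces $I$ nonincreasing on $(0,\mu'(0)]$ and nondecreasing on $[\mu'(0),\infty)$ with minimum $-\mu(0)$. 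For (3) I would use $-\mu(\eta)\ge\sqrt{-2\eta}$ for $\eta<0$ (immediate from \eqref{Lemma:PropertiesMu:Eq:ExplicitCalculation-mu-eta}, the logarithmic term being $\le 0$), whence $I(a)\ge\sup_{s>0}\big(-as+\sqrt{2s}\big)=\tfrac{1}{2a}\to+\infty$ as $a\to0+$.

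Items (4)--(8) are the analysis of $I$ near $\eta_{c}$. On $(0,\mu'(\eta_{c}-))$, $\eta^{*}(a)$ is continuous (continuity and strict monotonicity of $\mu'$), so $I$ is $C^{1}$ there with $I'(a)=\eta^{*}(a)<\eta_{c}$; on $[\mu'(\eta_{c}-),\infty)$ (nonempty only when $\mu'(\eta_{c}-)<\infty$) $I$ is affine of slope $\eta_{c}$; this proves (4). For (5), always $I(a)\ge\lim_{\eta\uparrow\eta_{c}}g_{a}(\eta)=a\eta_{c}-\mu(\eta_{c}-)$, and when $a<\mu'(\eta_{c}-)$ the function $g_{a}$ is strictly increasing for $\eta<\eta^{*}(a)$ and strictly decreasing for $\eta^{*}(a)<\eta<\eta_{c}$, so $I(a)=g_{a}(\eta^{*}(a))>g_{a}(\eta')>\lim_{\eta\uparrow\eta_{c}}g_{a}(\eta)$ for any $\eta'\in(\eta^{*}(a),\eta_{c})$. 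Item (6) is the affine formula on $[\mu'(\eta_{c}-),\infty)$, legitimate because $\mu'(\eta_{c}-)<\infty$ forces $\mu(\eta_{c}-)<\infty$ by convexity; since $\mu(\eta_{c}-)=\infty$ forces $\mu'(\eta_{c}-)=\infty$ by the same convexity estimate $\int_{\eta_{0}}^{\eta_{c}}\mu'\le\mu'(\eta_{c}-)(\eta_{c}-\eta_{0})$, items (6)--(8) exhaust the cases $\mu'(\eta_{c}-)<\infty$, $\mu'(\eta_{c}-)=\infty>\mu(\eta_{c}-)$, $\mu(\eta_{c}-)=\infty$. In the case $\mu'(\eta_{c}-)=\infty$, $\mu(\eta_{c}-)<\infty$ (item (7)), the maximiser is interior for every $a$, hence $I(a)>a\eta_{c}-\mu(\eta_{c}-)$ by (5), and writing the gap as
\[
I(a)-\big(a\eta_{c}-\mu(\eta_{c}-)\big)=\int_{\eta^{*}(a)}^{\eta_{c}}\big(\mu'(s)-\mu'(\eta^{*}(a))\big)\,ds=\big(\mu(\eta_{c}-)-\mu(\eta^{*}(a))\big)-\mu'(\eta^{*}(a))\big(\eta_{c}-\eta^{*}(a)\big),
\]
and using $\eta^{*}(a)\uparrow\eta_{c}$ as $a\to\infty$ (since $\mu'(\eta^{*}(a))=a\to\infty$), both terms tend to $0$: the first because $\mu(\eta_{c}-)<\infty$, the second by the squeeze $0\le\mu'(\eta^{*}(a))(\eta_{c}-\eta^{*}(a))\le\mu(\eta_{c}-)-\mu(\eta^{*}(a))$, so the gap decreases to $0$. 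Finally, in the case $\mu(\eta_{c}-)=\infty$ (item (8)) we have $\mu'(\eta_{c}-)=\infty$, so again the maximiser is interior, $I'(a)=\eta^{*}(a)<\eta_{c}$ for all $a$, and $\eta^{*}(a)\uparrow\eta_{c}$ as $a\to\infty$ because $\mu'(\eta^{*}(a))=a\to\infty=\mu'(\eta_{c}-)$.

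Most of this is routine convex-analysis bookkeeping; the one genuinely delicate point is the asymptotics asserted in (7), where one must treat the indeterminate product $\mu'(\eta^{*}(a))\,(\eta_{c}-\eta^{*}(a))$ of type $\infty\cdot 0$ and keep straight which of the three regimes one is in — the integral identity displayed above is what makes this transparent. A secondary point requiring care is that the supremum defining $I(a)$ is actually attained (at a finite $\eta$, or at $\eta_{c}$ when $\mu(\eta_{c}-)<\infty$), for which the quantitative bound $\sqrt{-2\eta}\le-\mu(\eta)=O(\sqrt{-\eta})$ coming from \eqref{Lemma:PropertiesMu:Eq:ExplicitCalculation-mu-eta} and Corollary \ref{Corollary:xiBoundedBelow1Plusc} is the input.
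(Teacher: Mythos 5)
Your argument is correct and follows the same overall route as the paper: identify $I$ as the Legendre transform of $\mu$, locate the maximizer $\eta^*(a)$ via $\mu'(\eta^*(a))=a$ when $a<\mu'(\eta_c-)$ and note $I$ becomes affine of slope $\eta_c$ beyond, then read off the eight properties. Two of your details are actually cleaner than the paper's. For item (3), the paper merely says it follows from $\eta(a)\to-\infty$ and $\mu(\eta)\to-\infty$, which leaves the indeterminate competition $a\eta(a)\to-\infty$ versus $-\mu(\eta(a))\to+\infty$ unresolved; your bound $I(a)\ge\sup_{s>0}(-as+\sqrt{2s})=\tfrac{1}{2a}$, using $-\mu(\eta)\ge\sqrt{-2\eta}$ from \eqref{Lemma:PropertiesMu:Eq:ExplicitCalculation-mu-eta}, settles this directly. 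For item (7), your integral identity
\[
I(a)-\bigl(a\eta_c-\mu(\eta_c-)\bigr)=\int_{\eta^*(a)}^{\eta_c}\bigl(\mu'(s)-\mu'(\eta^*(a))\bigr)\,ds
\]
together with the convexity squeeze $0\le\mu'(\eta^*(a))(\eta_c-\eta^*(a))\le\mu(\eta_c-)-\mu(\eta^*(a))$ makes the $\infty\cdot 0$ limit transparent, where the paper's treatment is terser. One small point you elide: the paper justifies $I'(a)=\eta(a)$ on $(0,\mu'(\eta_c-))$ by first getting it a.e.\ (since $\eta(\cdot)$ is monotone hence a.e.\ differentiable) and then upgrading to everywhere by continuity of $\eta(\cdot)$; your ``by the envelope identity'' invocation presumes differentiability of $\eta^*(\cdot)$, which strictly speaking needs this extra continuity-plus-a.e.\ step. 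This is a standard repair and does not affect the soundness of the argument.
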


\begin{proof}

According to parts (1) and (6) of Lemma \ref{Lemma:PropertiesMu}, for $\eta<\eta_c$ the function
$\mu(\eta)$ is differentiable and $\mu'(\eta)$ is continuous and
monotonically strictly increasing. Thus for any $a\in (0, \mu'(\eta_c-))$ (including possibly the case $\mu'(\eta_c-)=\infty$),
there is a unique point denoted as $\eta(a)\in (-\infty, \eta_c)$
such that

\begin{equation}\label{Lemma:PropertyIa:Eq:SecantEquations-alessMuPrimeEtac-Equality}
\mu'(\eta(a))=a \ ,
\end{equation}
and for any $-\infty<\eta_1<\eta(a)<\eta_2<\eta_c$ we have
\begin{equation}\label{Lemma:PropertyIa:Eq:SecantEquations-alessMuPrimeEtac-Inequality}
\mu'(\eta_1)<a<\mu'(\eta_2) \ .
\end{equation}

Let us consider the family of functions $\iota(\eta; a)=a\eta-\mu(\eta)$ parameterized by $a\in (0, \infty)$,
so that $I(a)=\sup\limits_{\eta\leq \eta_c}\iota(\eta; a)$. The function $\iota(\eta; a)$ is differentiable in $\eta$, such that
$\dfrac{d}{d\eta}\iota(\eta; a)=a-\mu'(\eta)$. This combined with
\eqref{Lemma:PropertyIa:Eq:SecantEquations-alessMuPrimeEtac-Equality} and
\eqref{Lemma:PropertyIa:Eq:SecantEquations-alessMuPrimeEtac-Inequality} imply that
for a fixed $a\in (0, \mu'(\eta_c-))$ we have
$\dfrac{d}{d\eta}\iota(\eta; a)>0$ for $\eta\in (-\infty, \eta(a))$,
$\dfrac{d}{d\eta}\iota(\eta; a)=0$ for $\eta=\eta(a)$,
and $\dfrac{d}{d\eta}\iota(\eta; a)<0$ for $\eta\in (\eta(a), \eta_c)$.
Notice that $\iota(\eta; a)=-\infty$ when $\eta>\eta_c$, we see that
for any $a\in (0, \mu'(\eta_c-))$ we have $I(a)=a\eta(a)-\mu(\eta(a))$.

Suppose $\mu'(\eta_c-)<\infty$ and $\mu(\eta_c-)<\infty$
\footnote{Notice that if $\mu(\eta_c-)=\infty$, then $\mu'(\eta_c-)=\infty$.}, then due to \eqref{Lemma:PropertyIa:Eq:SecantEquations-alessMuPrimeEtac-Inequality}
and part (5) of Lemma \ref{Lemma:PropertiesMu}, we see that
 for a fixed $a\in [\mu'(\eta_c-), \infty)$ we have
$\dfrac{d}{d\eta}\iota(\eta; a)=a-\mu'(\eta)\geq 0$ for all $\eta\in (-\infty, \eta_c)$ and
$\iota(\eta; a)=-\infty$ when $\eta>\eta_c$. Thus in this case, when $a\in [\mu'(\eta_c-), \infty)$ we have
$I(a)=a\eta_c-\mu(\eta_c-)$.

In summary we have

\begin{equation}\label{Lemma:PropertyIa:Eq:I-a-Explicit-eta-a}
I(a)=\left\{
\begin{array}{ll}
a\eta(a)-\mu(\eta(a)) & \text{ for } a\in (0, \mu'(\eta_c-))  \ ,
\\
a\eta_c-\mu(\eta_c-) & \text{ for } a\in [\mu'(\eta_c-), \infty) \ .
\end{array}\right.
\end{equation}

From \eqref{Lemma:PropertyIa:Eq:SecantEquations-alessMuPrimeEtac-Equality},
$\eta(a)=[\mu']^{-1}(a)$ is also a continuous and increasing function of $a\in (0, \mu'(\eta_c-))$,
and thus it is almost everywhere differentiable on $a\in (0, \mu'(\eta_c-))$. This combined with
\eqref{Lemma:PropertyIa:Eq:I-a-Explicit-eta-a} and \eqref{Lemma:PropertyIa:Eq:SecantEquations-alessMuPrimeEtac-Equality}
tell us that for almost everywhere $a\in (0, \mu'(\eta_c-))$
we have $I'(a)=\eta(a)+a\eta'(a)-\mu'(\eta(a))\eta'(a)
\stackrel{\text{use } \eqref{Lemma:PropertyIa:Eq:SecantEquations-alessMuPrimeEtac-Equality}}{=}\eta(a)+a\eta'(a)-a\eta'(a)=\eta(a)$.
Since $\eta(a)$ is continuous, this further ensures that for all $a\in (0, \mu'(\eta_c-))$
we have
\begin{equation}\label{Lemma:PropertyIa:Eq:DerivativeOfIWithRespectToa}
I'(a)=\eta(a) \ .
\end{equation}

\begin{itemize}
\item[(1)] Since when $\eta>\eta_c$ we have $a\eta-\mu(\eta)=-\infty$ due to the definition of
$\eta_c$ in \eqref{Def:CriticalEtaForMu}, we see that indeed
$I(a)=\sup\limits_{\eta\in \mathbb{R}}(a\eta-\mu(\eta))$ is the Legendre transform of the convex function $\mu(\eta)$.
This concludes the convexity of $I(a)$.
Since $\mu(0)\leq 0$ due to part (2) of Lemma \ref{Lemma:PropertiesMu}, we have
$I(a)=\sup\limits_{\eta\leq\eta_c}(a\eta-\mu(\eta))\geq a\cdot 0-\mu(0)\geq 0$ for all $a\in (0, \infty)$.

\item[(2)] Since $\mu'(0)\leq \mu'(\eta_c-)$, due to \eqref{Lemma:PropertyIa:Eq:SecantEquations-alessMuPrimeEtac-Equality} we have $\eta(\mu'(0))=0$.
This and part (5) of Lemma
\ref{Lemma:PropertiesMu} the monotonically strict
increasing property of $\mu'(\eta)$ imply that when $a\leq \mu'(0)$
we have $I'(a)=\eta(a)\leq 0$ and when $a>\mu'(0)$ we have $I'(a)=\eta(a)>0$. This implies that
$I(a)$ is decreasing in $a$ for $a\in (0, \mu'(0)]$ and increasing in $a$ for $a\in (\mu'(0), \infty)$,
and the minimum of $I(a)$ is achieved at $a=\mu'(0)$ such that $I(\mu'(0))
=\mu'(0)\eta(\mu'(0))-\mu(\eta(\mu'(0)))=-\mu(0)$.

\item[(3)] Since $\eta(a)\rightarrow -\infty$ as $a\rightarrow 0$, this property follows from part (3)
of Lemma \ref{Lemma:PropertiesMu}.

\item[(4)] This follows from \eqref{Lemma:PropertyIa:Eq:I-a-Explicit-eta-a} and the fact that $\eta(a)< \eta_c$
when $a\in (0, \mu'(\eta_c-))$.

\item[(5)] Since $I'(a)-\eta_c=\eta(a)-\eta_c<0$ when $a\in (0, \mu'(\eta_c-))$, the function
$\mathcal{I}(a)\equiv I(a)-a\eta_c$ is monotonically decreasing in $a$
and we have $\mathcal{I}(0)>-\mu(\eta_c-)$. Moreover, due to \eqref{Lemma:PropertyIa:Eq:I-a-Explicit-eta-a},
as $a\rightarrow \mu'(\eta_c-)$ we have $I(a)\rightarrow a\eta_c-\mu(\eta_c-)$, so that $\mathcal{I}(0)\rightarrow -\mu(\eta_c-)$; and
when $a\geq \mu'(\eta_c-)$ we have $I(a)=a\eta_c-\mu(\eta_c-)$, so that $\mathcal{I}(0)=-\mu(\eta_c-)$. These imply the statement.

\item[(6)] This follows directly from \eqref{Lemma:PropertyIa:Eq:I-a-Explicit-eta-a}.

\item[(7)] This follows from the proof of part (6) of this Lemma and the fact that $\eta(a)\rightarrow \eta_c$ when $a\rightarrow \infty$ in the case $\mu'(\eta_c-)=\infty$.

\item[(8)] Notice that when $\mu(\eta_c-)=\infty$ we must have $\mu'(\eta_c-)=\infty$, and thus due to
\eqref{Lemma:PropertyIa:Eq:I-a-Explicit-eta-a} and \eqref{Lemma:PropertyIa:Eq:DerivativeOfIWithRespectToa}
and the fact that in this case $\eta(a)<\eta_c$ for all $a\in (0,\infty)$, we have $I'(a)<\eta_c$ for all $a\in (0, \infty)$.
Since $\eta(a)\rightarrow \eta_c$ when $a\rightarrow\infty$, we have $I'(a)\rightarrow \eta_c$ when $a\rightarrow\infty$.
\end{itemize}
\end{proof}

We have demonstrated in Figure \ref{Fig:I-a-Graph} the shape of the function $I(a)$ that exhausts $8$ different cases:
\begin{itemize}
\item[(a-1)] $\mu(\eta_c-)\geq 0, 0<\mu'(\eta_c-)<\infty, \dfrac{-\mu(0)}{\mu'(0)}\geq \eta_c$;
\item[(a-2)] $\mu(\eta_c-)\geq 0, \mu'(\eta_c-)=\infty, \dfrac{-\mu(0)}{\mu'(0)}\geq \eta_c$;
\item[(b-1)] $\mu(\eta_c-)\geq 0, 0<\mu'(\eta_c-)<\infty, \dfrac{-\mu(0)}{\mu'(0)}<\eta_c$;
\item[(b-2)] $\mu(\eta_c-)\geq 0, \mu'(\eta_c-)=\infty, \dfrac{-\mu(0)}{\mu'(0)}<\eta_c$;
\item[(c-1)] $\mu(\eta_c-)<0, 0<\mu'(\eta_c-)<\infty$;
\item[(c-2)] $\mu(\eta_c-)<0, \mu'(\eta_c-)=\infty$;
\item[(d-1)] $\mu(\eta_c-)=\mu'(\eta_c-)=\infty, \dfrac{-\mu(0)}{\mu'(0)}\geq \eta_c$;
\item[(d-2)] $\mu(\eta_c-)=\mu'(\eta_c-)=\infty, \dfrac{-\mu(0)}{\mu'(0)}<\eta_c$.
\end{itemize}
Notice that due to part (2) of Lemma \ref{Lemma:PropertiesMu},
the condition $\dfrac{-\mu(0)}{\mu'(0)}\geq \eta_c$ or $\dfrac{-\mu(0)}{\mu'(0)}< \eta_c$ determines
whether the point $(\mu'(0), -\mu(0))$ (which is the minimum point of $I(a)$) is above or below (or, to the left or right of) the line given by $a\mapsto a\eta_c$.
This issue together with the condition about the slope of $\beta$ in Figure \ref{Fig:I-a-Graph} will be discussed
in Section \ref{Sec:WavePropagation}, Remarks \ref{Remark:PurposesOfConditions-BetaGreaterEtac-BetaGreaterMuStuff}, \ref{Remark:ConditionBetaGreaterEtacImplyMuStuff}.

\begin{figure}
\centering
\includegraphics[height=21cm, width=17cm]{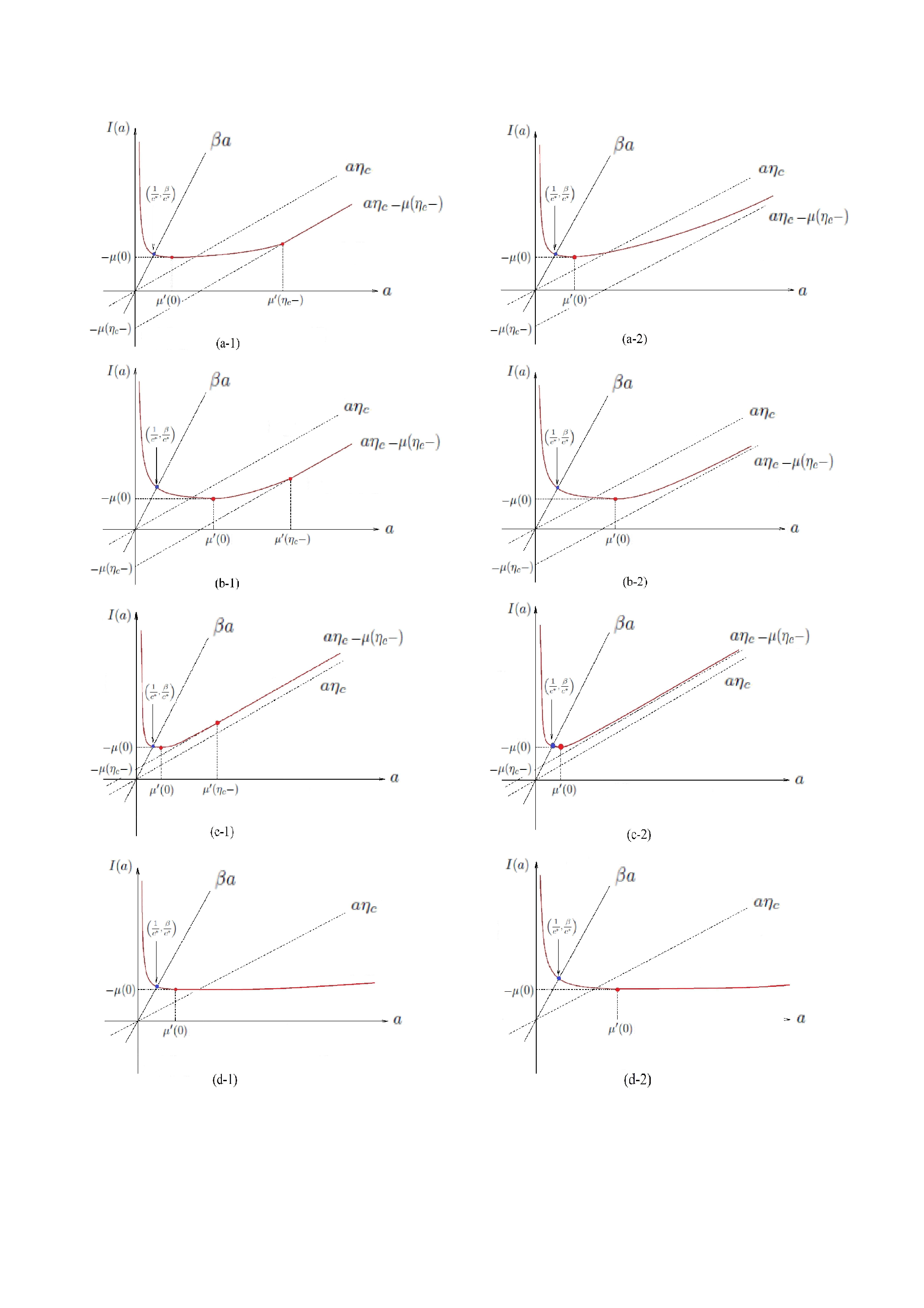}
\caption{Graph of the function $I(a)$ that exhausts all $8$ different cases. The constant-$(d,\ell)$ tree falls into Case (c-2).}
\label{Fig:I-a-Graph}
\end{figure}

The following Theorem gives the large deviations principle for the hitting time.

\begin{theorem}[Large deviations principle for the hitting time]\label{Theorem:LDPHittingTime}
Let $0<c<v$. Then $\mathbf{P}$-almost surely  the following two estimates hold. For
any closed set $G\subset (0,(v-c)\mu'(0))$ we have
\begin{equation}\label{Theorem:LDPHittingTime:Eq:UpperBound}
\lim\limits_{t\rightarrow\infty}\dfrac{1}{t}\ln P^{(\vec{p}, \vec{z})}
\left(\dfrac{T_{ct}^{vt}}{t}\in G\right)\leq -(v-c)\inf\limits_{a\in G}I\left(\dfrac{a}{v-c}\right) \ ;
\end{equation}
and for any open set $F\in (0,(v-c)\mu'(0))$ we have
\begin{equation}\label{Theorem:LDPHittingTime:Eq:LowerBound}
\lim\limits_{t\rightarrow\infty}\dfrac{1}{t}\ln P^{(\vec{p}, \vec{z})}\left(\dfrac{T_{ct}^{vt}}{t}\in F\right)
\geq -(v-c)\inf\limits_{a\in F}I\left(\dfrac{a}{v-c}\right) \ .
\end{equation}
\end{theorem}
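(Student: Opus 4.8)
The plan is to derive the large deviations principle for the normalized hitting time $T^{vt}_{ct}/t$ from the Lyapunov exponent identity in Theorem~\ref{Theorem:LyapunovExponentPositiveDirection} by a Gärtner--Ellis type argument, following the strategy used in \cite[Section~2]{Nolen2009}, \cite{Nolen-XinCMP2007} and \cite[Chapter~7]{FreidlinFunctionalBook}, but adapted to the multi-skewed process $Y$. The starting point is the observation that, by Theorem~\ref{Theorem:LyapunovExponentPositiveDirection}, for every $\eta<\eta_c$ one has $\mathbf{P}$-a.s.
\begin{equation*}
\lim_{t\to\infty}\frac{1}{t}\ln E^{(\vec{p},\vec{z})}\!\left[e^{\eta T^{vt}_{ct}}\mathbf{1}_{T^{vt}_{ct}<\infty}\right]=(v-c)\,\mu(\eta),
\end{equation*}
and that for $\eta>\eta_c$ the same quantity diverges. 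Setting $S_t:=T^{vt}_{ct}/t$, the logarithmic moment generating function of $S_t$ converges to $(v-c)\mu(\eta/(v-c))\cdot$(rescaled appropriately); more precisely $\lim_t \frac1t\ln E^{(\vec p,\vec z)}[e^{\eta t S_t}\mathbf 1_{T<\infty}]=(v-c)\mu(\eta)$, so the effective rate function is $a\mapsto (v-c)I(a/(v-c))$, which is exactly the right-hand side of \eqref{Theorem:LDPHittingTime:Eq:UpperBound}--\eqref{Theorem:LDPHittingTime:Eq:LowerBound}. The restriction of the sets $G,F$ to the interval $(0,(v-c)\mu'(0))$ is natural here because in that range $a/(v-c)<\mu'(0)$, and by parts (2) and (4) of Lemma~\ref{Lemma:PropertyIa} the optimizing $\eta(a/(v-c))$ is strictly negative, so only the ``easy'' half-line $\eta<0$ of the Legendre transform — where $\mu$ is given by the explicit analytic formula \eqref{Lemma:PropertiesMu:Eq:ExplicitCalculation-mu-eta} and is genuinely finite and smooth — is relevant; this sidesteps all pathology near $\eta_c$.

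For the \textbf{upper bound} \eqref{Theorem:LDPHittingTime:Eq:UpperBound}, I would use the exponential Chebyshev inequality with a negative parameter: for any $\eta<0$ and any $a>0$,
\begin{equation*}
P^{(\vec p,\vec z)}\!\left(S_t\le a\right)\le e^{-\eta a t}\,E^{(\vec p,\vec z)}\!\left[e^{\eta t S_t}\mathbf 1_{T<\infty}\right],
\end{equation*}
so $\frac1t\ln P^{(\vec p,\vec z)}(S_t\le a)\le -\eta a+(v-c)\mu(\eta)+o(1)$ a.s., and optimizing over $\eta<0$ gives $-(v-c)I(a/(v-c))$ since the optimizer lies in $(-\infty,0)$ for $a<(v-c)\mu'(0)$. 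A closed set $G\subset(0,(v-c)\mu'(0))$ is compact (after intersecting with a suitable bounded interval, using that $P^{(\vec p,\vec z)}(S_t\ge M)\to$ something controllable, or simply that $G$ as given is bounded), and since $I$ is convex and decreasing on $(0,\mu'(0))$, the infimum of $I(\cdot/(v-c))$ over $G$ is attained at the right endpoint of $G$; a standard covering/two-point argument upgrades the single-point estimate to the set estimate. For the \textbf{lower bound} \eqref{Theorem:LDPHittingTime:Eq:LowerBound}, I would use the standard change-of-measure (tilting) argument: given an open $F$ and a point $a\in F$ with $a<(v-c)\mu'(0)$, pick $\eta^*=\eta(a/(v-c))<0$ the unique solution of $\mu'(\eta^*)=a/(v-c)$ (which exists and is unique by parts (5)--(6) of Lemma~\ref{Lemma:PropertiesMu}), tilt the law of $S_t$ by $e^{\eta^* t S_t}$, and show that under the tilted measure $S_t$ concentrates at $a$ with the right exponential cost $-\eta^* a+(v-c)\mu(\eta^*)=-(v-c)I(a/(v-c))$; taking the supremum over $a\in F$ and using lower semicontinuity of $I$ finishes it. A subtlety is that $T^{vt}_{ct}$ may be infinite, so one must verify $P^{(\vec p,\vec z)}(T^{vt}_{ct}<\infty)$ does not decay too fast — but this is controlled, since $v>c>0$ means the walk moves away from where it starts toward $vt$, and by the structure of the embedded random walk (Remark~\ref{Remark:EmbeddedRandomWalk}, with the biases $p^i_{+1}>1/2$ pushing outward) reaching $vt$ from $ct$ has probability bounded below by a non-exponentially-small quantity; alternatively the tilting is done conditionally on $\{T<\infty\}$ throughout, consistent with how $\mu$ is defined.

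The main obstacle I expect is the \emph{uniform a.s.\ control in $t$}: Theorem~\ref{Theorem:LyapunovExponentPositiveDirection} gives the Lyapunov limit for each fixed $\eta$ on a full-measure event depending on $\eta$, and we need the LDP bounds to hold on a single full-measure event uniformly in the (continuum of) tilting parameters and in the sets $G,F$. The standard fix is to establish the moment-generating-function convergence simultaneously for a countable dense set of $\eta$'s in $(-\infty,0)$, exploit convexity and monotonicity of $\eta\mapsto\mu(\eta)$ (Lemma~\ref{Lemma:PropertiesMu}(5)) together with continuity (Lemma~\ref{Lemma:PropertiesMu}(6)) to interpolate to all $\eta<0$, and use the stated uniformity of the convergence in Theorem~\ref{Theorem:LyapunovExponentPositiveDirection} with respect to $v,c$ on bounded sets with $v-c$ bounded away from $0$. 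A secondary technical point is handling the concentration step in the lower bound: one needs a law-of-large-numbers-type statement for $S_t$ under the tilted (quenched) measure, which requires checking that the tilted process is again a multi-skewed-type process in a modified environment (the tilting multiplies the transition weights $J^i_{\eta,\pm1}$, preserving the i.i.d.\ product structure), so that an ergodic argument parallel to the proof of Theorem~\ref{Theorem:LyapunovExponentPositiveDirection} applies. Once these a.s.-uniformity issues are dispatched, the remaining computations are the routine Gärtner--Ellis bookkeeping.
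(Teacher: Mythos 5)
Your proposal follows essentially the same route as the paper's proof: for the upper bound, exponential Chebyshev with a negative tilting parameter $\eta\le 0$, combined with the Lyapunov exponent identity of Theorem \ref{Theorem:LyapunovExponentPositiveDirection} and the observation (via Lemma \ref{Lemma:PropertyIa}) that for $a<(v-c)\mu'(0)$ the Legendre supremum is achieved at $\eta\le 0$, followed by monotonicity of $I$ on $(0,\mu'(0))$ to pass from a single-level estimate to the set estimate; for the lower bound, Cram\'er's change of measure at $\eta_u=\eta(u/(v-c))\le 0$ and a concentration lemma for the tilted law of $T^{vt}_{ct}/t$, which the paper also establishes by adapting the truncation/fourth-moment/Borel--Cantelli argument of \cite{CometsGantertZeitouni2000} and \cite{Taleb2001}. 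The technical concerns you flag (carrying $\mathbf{1}_{T<\infty}$ consistently, a.s.\ uniformity over the tilting parameter, concentration under the tilted quenched measure) are exactly the ones the paper handles by the same devices or by the same external references, so there is no genuinely different ingredient here.
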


\begin{proof}
The proof follows similar ideas as those appeared in the proof of Theorem 2.3 in
\cite{Nolen2009} (see also Theorem 5.1 in \cite{FreidlinHu13}).
We consider the upper bound first. By Chebyshev inequality, for any $\alpha>0$ and
any $\eta \leq 0$:
$$\begin{array}{ll}
\limsup\limits_{t\rightarrow\infty}\dfrac{1}{t}\ln P^{(\vec{p}, \vec{z})}\left(\dfrac{T_{ct}^{vt}}{t}<\alpha\right)
& \leq \limsup\limits_{t\rightarrow\infty}\dfrac{1}{t}\ln P^{(\vec{p}, \vec{z})}\left(e^{\eta T_{ct}^{vt}}>e^{\eta \alpha t}\right)
\\
& \leq -\eta \alpha+\limsup\limits_{t\rightarrow\infty}\dfrac{1}{t}\ln q(ct, vt, \eta)
\\
& = -\eta \alpha+(v-c)\mu(\eta) \ ,
\end{array}$$
where in the last identity we have used \eqref{Theorem:LyapunovExponentPositiveDirection:Eq:LyapunovExponentIdentity}.
The above estimate works for any $\eta\leq 0$, so

$$\begin{array}{ll}
\limsup\limits_{t\rightarrow\infty}\dfrac{1}{t}\ln P^{(\vec{p}, \vec{z})}\left(\dfrac{T^{vt}_{ct}}{t}<\alpha\right)
& \leq \inf\limits_{\eta\leq 0}(-\eta \alpha + (v-c)\mu(\eta))
\\
& =-\sup\limits_{\eta\leq 0}(\eta\alpha-(v-c)\mu(\eta))
\\
& =-(v-c)\sup\limits_{\eta\leq 0}\left(\eta \dfrac{\alpha}{v-c}-\mu(\eta)\right) \ .
\end{array}$$
If $\dfrac{\alpha}{v-c}\leq \mu'(0)$, then \eqref{Lemma:PropertyIa:Eq:I-a-Explicit-eta-a} and the fact
 $\eta(a)\leq 0$ for $a\leq \mu'(0)$ due to \eqref{Lemma:PropertyIa:Eq:SecantEquations-alessMuPrimeEtac-Equality}
 of Lemma \ref{Lemma:PropertyIa} imply that $\sup\limits_{\eta\leq \eta_c}\left(\eta \dfrac{\alpha}{v-c}-\mu(\eta)\right)$ is achieved at a point $\eta\leq 0$. So by taking into account
the definition of $I(a)$ in \eqref{Eq:Ia}, we indeed have that in this case, $I\left(\dfrac{\alpha}{v-c}\right)=\sup\limits_{\eta\leq 0}\left(\eta \dfrac{\alpha}{v-c}-\mu(\eta)\right)$. Thus the above estimate enables us to obtain

$$\limsup\limits_{t\rightarrow\infty}\dfrac{1}{t}\ln P^{(\vec{p}, \vec{z})}\left(\dfrac{T^{vt}_{ct}}{t}<\alpha\right)
\leq -(v-c)I\left(\dfrac{\alpha}{v-c}\right) \ .$$

Since by part (2) of Lemma \ref{Lemma:PropertyIa}, the function $I\left(\dfrac{\alpha}{v-c}\right)$ is monotonically
decreasing in $\alpha$ when $\dfrac{\alpha}{v-c}\leq \mu'(0)$, we see that the above estimate imply \eqref{Theorem:LDPHittingTime:Eq:UpperBound}.

We then derive the lower bound. Set $u\in (0,(v-c)\mu'(0))$ and $\delta>0$. Let $B_{\delta}(u)=(u-\delta, u+\delta)$ be the $\delta$-ball centered
at $u$. Since $\dfrac{u}{v-c}\leq \mu'(0)$, \eqref{Lemma:PropertyIa:Eq:I-a-Explicit-eta-a} and the fact
 $\eta(a)\leq 0$ for $a\leq \mu'(0)$ due to \eqref{Lemma:PropertyIa:Eq:SecantEquations-alessMuPrimeEtac-Equality}
 of Lemma \ref{Lemma:PropertyIa} imply that there is a $\eta_u\leq 0$ such that

$$I\left(\dfrac{u}{v-c}\right)
=\eta_u\dfrac{u}{v-c}-\mu(\eta_u) \ .$$

We now make use of the following Cram\'{e}r's change of measure (see \cite{CometsGantertZeitouni2000}, \cite{Taleb2001}). Let

$$\dfrac{dP^{(\vec{p}, \vec{z}), u, t}}{dP^{(\vec{p}, \vec{z})}}=\dfrac{1}{S_{u,t}}e^{\eta_uT^{vt}_{ct}}\mathbf{1}_{T^{vt}_{ct}<\infty} \ ,$$
where
$$S_{u,t}=E^{(\vec{p}, \vec{z})}[e^{\eta_u T^{vt}_{ct}}\mathbf{1}_{T^{vt}_{ct}<\infty}] \ .$$

Then we get
$$P^{(\vec{p}, \vec{z})}\left(\dfrac{T^{vt}_{ct}}{t}\in B_\delta(u)\right)
\geq
e^{-\eta_u u t-\delta t|\eta_u|}P^{(\vec{p}, \vec{z}), u, t}\left(\dfrac{T^{vt}_{ct}}{t}\in B_\delta(u)\right)
E^{(\vec{p}, \vec{z})}[e^{\eta_uT^{vt}_{ct}}\mathbf{1}_{T^{vt}_{ct}<\infty}] \ .$$

One can show in the same way as in \cite[p.77]{CometsGantertZeitouni2000} or as in \cite[pp.1195-1196]{Taleb2001}
via a truncation argument, expectation identity, fourth moment estimates and Borel-Cantelli Lemma, that we have
\begin{equation}\label{Theorem:LDPHittingTime:Eq:MeasureChangedConcentration}
\lim\limits_{t\rightarrow\infty}P^{(\vec{p}, \vec{z}), u, t}\left(\dfrac{T^{vt}_{ct}}{t}\in B_\delta(u)\right)=1 \ .
\end{equation}

With \eqref{Theorem:LDPHittingTime:Eq:MeasureChangedConcentration} at hand,
we can conclude that
$$\begin{array}{ll}
\liminf\limits_{t\rightarrow\infty}\dfrac{1}{t}\ln P^{(\vec{p}, \vec{z})}\left(\dfrac{T_{ct}^{vt}}{t}\in B_\delta(u)\right)
& \geq -\eta_u u -\delta |\eta_u|+(v-c)\mu(\eta_u)
\\
& = (v-c)\left[\eta_u\dfrac{u}{v-c}-\mu(\eta_u)\right]-\delta|\eta_u|
\\
& = (v-c)I\left(\dfrac{u}{v-c}\right)-\delta|\eta_u|
\end{array}$$
which implies the lower bound \eqref{Theorem:LDPHittingTime:Eq:LowerBound}.
\end{proof}

The following theorem gives the large deviations principle for the multi-skewed
Brownian motion $Y^{y}_t$, which starts from $Y_0^y=y\in \mathbb{R}$.

\begin{theorem}[large deviations principle for the multi-skewed Brownian motion $Y_t$]\label{Theorem:LDPSkewBM}
Almost surely with respect to $\mathbf{P}$  the following estimates hold. Let $v>0$
and $\kappa\in (0,1]$. For any closed set $G\subset [(\mu'(0))^{-1},\infty)$ we have
\begin{equation}\label{Theorem:LDPSkewBM:Eq:UpperBoundPositive}
\limsup\limits_{t\rightarrow\infty}\dfrac{1}{\kappa t}\ln P^{(\vec{p}, \vec{z})}
\left(\dfrac{vt-Y^{vt}_{\kappa t}}{\kappa t}\in G\right)\leq -\inf\limits_{c\in G}cI\left(\dfrac{1}{c}\right) \ ;
\end{equation}
and for any open set $F\subset [(\mu'(0))^{-1},\infty)$ we have
\begin{equation}\label{Theorem:LDPSkewBM:Eq:LowerBoundPositive}
\liminf\limits_{t\rightarrow\infty}\dfrac{1}{\kappa t}\ln P^{(\vec{p}, \vec{z})}\left(\dfrac{vt-Y^{vt}_{\kappa t}}{\kappa t}\in F\right)
\geq -\inf\limits_{c\in F}cI\left(\dfrac{1}{c}\right) \ .
\end{equation}
For any closed set $G\subset (-\infty, -(\mu'(0))^{-1}]$ we have
\begin{equation}\label{Theorem:LDPSkewBM:Eq:UpperBoundNegative}
\limsup\limits_{t\rightarrow\infty}\dfrac{1}{\kappa t}\ln P^{(\vec{p}, \vec{z})}
\left(\dfrac{-vt-Y^{-vt}_{\kappa t}}{\kappa t}\in G\right)\leq -\inf\limits_{c\in G}|c|I\left(\dfrac{1}{|c|}\right) \ ;
\end{equation}
and for any open set $F\subset (-\infty, -(\mu'(0))^{-1}]$ we have
\begin{equation}\label{Theorem:LDPSkewBM:Eq:LowerBoundNegative}
\liminf\limits_{t\rightarrow\infty}\dfrac{1}{\kappa t}\ln P^{(\vec{p}, \vec{z})}
\left(\dfrac{-vt-Y^{-vt}_{\kappa t}}{\kappa t}\in F\right)
\geq -\inf\limits_{c\in F}|c|I\left(\dfrac{1}{|c|}\right) \ .
\end{equation}
\end{theorem}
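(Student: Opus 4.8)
The plan is to deduce all four estimates from the hitting-time large deviations principle of Theorem~\ref{Theorem:LDPHittingTime}, by rewriting events about the position $Y_{\kappa t}$ in terms of hitting times, and then to obtain the two ``negative-direction'' bounds \eqref{Theorem:LDPSkewBM:Eq:UpperBoundNegative}--\eqref{Theorem:LDPSkewBM:Eq:LowerBoundNegative} from \eqref{Theorem:LDPSkewBM:Eq:UpperBoundPositive}--\eqref{Theorem:LDPSkewBM:Eq:LowerBoundPositive} via the reflection symmetry $P^{(\vec p,\vec z)}(Y_t\in A)=P^{(\vec p,\vec z)}(Y_t\in -A)$ recorded in Lemma~\ref{Lemma:ElementaryPropertiesw}(2). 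Indeed $-Y^{-vt}\stackrel{d}{=}Y^{vt}$, so $\frac{-vt-Y^{-vt}_{\kappa t}}{\kappa t}\stackrel{d}{=}-\frac{vt-Y^{vt}_{\kappa t}}{\kappa t}$, while $c\mapsto|c|\,I(1/|c|)$ is even, so the infima in \eqref{Theorem:LDPSkewBM:Eq:UpperBoundNegative}--\eqref{Theorem:LDPSkewBM:Eq:LowerBoundNegative} match those in \eqref{Theorem:LDPSkewBM:Eq:UpperBoundPositive}--\eqref{Theorem:LDPSkewBM:Eq:LowerBoundPositive} after the reflection $G\mapsto -G$. Since Theorem~\ref{Theorem:LDPHittingTime} already holds $\mathbf P$-a.s.\ and only countably many rational parameters enter, all of this can be carried out on a single full-measure event.

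For the \emph{upper bound} \eqref{Theorem:LDPSkewBM:Eq:UpperBoundPositive}, fix a closed $G\subset[(\mu'(0))^{-1},\infty)$ and put $c_0:=\inf G$. Since $Y$ has continuous paths and starts at $vt>(v-c_0\kappa)t$,
\[
\Bigl\{\tfrac{vt-Y^{vt}_{\kappa t}}{\kappa t}\in G\Bigr\}\ \subseteq\ \{Y^{vt}_{\kappa t}\le(v-c_0\kappa)t\}\ \subseteq\ \{T^{vt}_{(v-c_0\kappa)t}\le\kappa t\}.
\]
In the generic regime $v-c_0\kappa>0$ we split $\{T^{vt}_{(v-c_0\kappa)t}/t\le\kappa\}$ into the super-exponentially unlikely piece $\{\,\cdot<\varepsilon_0\}$ and the piece in the closed set $[\varepsilon_0,\kappa]$ (or $[\varepsilon_0,\kappa-\varepsilon_1]$ when $c_0=(\mu'(0))^{-1}$, afterwards letting $\varepsilon_1\downarrow 0$), apply Theorem~\ref{Theorem:LDPHittingTime} with its parameter ``$c$'' equal to $v-c_0\kappa$, and use that $a\mapsto I\bigl(a/(v-c_0\kappa)\bigr)$ is nonincreasing on $[\varepsilon_0,\kappa]$ because $\kappa/(v-c_0\kappa)=1/c_0\le\mu'(0)$ and $I$ is decreasing on $(0,\mu'(0)]$ (Lemma~\ref{Lemma:PropertyIa}(2)). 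Letting $\varepsilon_0\downarrow0$ yields $\limsup\frac1{\kappa t}\ln P(\cdots)\le -c_0 I(1/c_0)$. Finally $I'(a)=\eta(a)\le 0$ on $(0,\mu'(0)]$ (see \eqref{Lemma:PropertyIa:Eq:DerivativeOfIWithRespectToa}) and $I\ge 0$ imply that $c\mapsto cI(1/c)$ is nondecreasing on $[(\mu'(0))^{-1},\infty)$, whence $c_0I(1/c_0)=\inf_{c\in G}cI(1/c)$, which is \eqref{Theorem:LDPSkewBM:Eq:UpperBoundPositive}.

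For the \emph{lower bound} \eqref{Theorem:LDPSkewBM:Eq:LowerBoundPositive}, fix an open $F\subset[(\mu'(0))^{-1},\infty)$, choose $c\in F$ with $cI(1/c)<\inf_F cI(1/c)+\varepsilon$, pick $\delta>0$ with $(c-\delta,c+\delta)\subset F$, and set $r_*:=v-c\kappa$. The idea is ``hit the level $r_*t$ at almost the right time, then do not move much'': for small $\varepsilon'>0$,
\[
\Bigl\{\tfrac{vt-Y^{vt}_{\kappa t}}{\kappa t}\in(c-\delta,c+\delta)\Bigr\}\ \supseteq\ \Bigl\{\tfrac{T^{vt}_{r_*t}}{t}\in(\kappa-\varepsilon',\kappa)\Bigr\}\cap\Bigl\{\sup_{s\in[T^{vt}_{r_*t},\,\kappa t]}\bigl|Y_s-r_*t\bigr|<\delta\kappa t\Bigr\}.
\]
The first event has $\liminf\frac1{\kappa t}\ln(\cdot)\ge -cI(1/c)$ by Theorem~\ref{Theorem:LDPHittingTime} (applicable since $\kappa\le c\kappa\,\mu'(0)=(v-r_*)\mu'(0)$), together with the continuity and monotonicity of $I$; and by the strong Markov property at $T^{vt}_{r_*t}$ the conditional probability of the second event tends to $1$ once $\varepsilon'$ is small enough relative to $\delta$, so it does not affect the exponential rate. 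Hence $\liminf\frac1{\kappa t}\ln P(\cdots)\ge -cI(1/c)\ge -\inf_F cI(1/c)-\varepsilon$, and $\varepsilon\downarrow0$ finishes the argument.

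The step I expect to be the main obstacle is this lower bound, i.e.\ making the ``hit at the right time, then stay localized'' construction rigorous. One must pin the hitting time of $r_*t$ into a window of width $\varepsilon't$ just below $\kappa t$, rather than merely before $\kappa t$ (a bare ``hit early and sit'' argument fails, because the drift-like skewness of $Y$ makes it travel a macroscopic $O(t)$ distance over any remaining time of order $t$), and one must establish a uniform-in-environment displacement bound of the form $P^{(\vec p,\vec z)}\bigl(\sup_{s\le\varepsilon't}|Y^x_s-x|<\delta\kappa t\bigr)\to1$, which follows from an almost-sure bound on the number of interface crossings of $Y$ in a given time (Assumption~\ref{Assumption:ell}) and $\varepsilon'$ small. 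A secondary technical point is the degenerate regime $v-c_0\kappa\le 0$ in the upper bound, where the target level lies at or below the origin: this requires the analogous — and easier, since that direction is ballistic — large-deviation estimate for the ``downstream'' hitting time, obtained from the reflection symmetry of the environment and the same Lyapunov-exponent machinery, or else by iterating the hitting-time bound through level $0$ and combining it with the displacement estimate above.
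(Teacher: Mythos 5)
Your proposal follows essentially the same strategy as the paper's proof: reduce to the hitting-time LDP of Theorem~\ref{Theorem:LDPHittingTime} via the duality $\{\,vt-Y^{vt}_{\kappa t}>c\kappa t\,\}\subseteq\{\,T^{vt}_{(v-c\kappa)t}<\kappa t\,\}$ for the upper bound, establish the lower bound by forcing $T^{vt}_{(v-\kappa u)t}$ into a thin window $((1-\varepsilon)\kappa t,\kappa t)$ and controlling the residual displacement, and obtain the negative-direction bounds by the reflection symmetry $P^{(\vec p,\vec z)}(Y_t\in A)=P^{(\vec p,\vec z)}(Y_t\in -A)$. Your formulation of the lower bound as an intersection event is logically equivalent to the paper's subtraction argument; the only difference of note is that you suggest controlling the short-time displacement via an interface-crossing count, whereas the paper's Lemma~\ref{Lemma:ExponentialTightnessHittingTimePositiveNegative} does this via Chebyshev plus the subadditive Laplace-transform/ergodic argument, which directly yields the super-exponential bound $\limsup_t\frac1t\ln P(T^{at}_{bt}<\varepsilon t)\leq-M$ that the argument needs.
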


\begin{proof}
Due to the symmetry identity \eqref{Lemma:ElementaryPropertiesw:Eq:SymmstryProbabilityDistributionSkewBM}, we see that
\eqref{Theorem:LDPSkewBM:Eq:UpperBoundNegative} and \eqref{Theorem:LDPSkewBM:Eq:LowerBoundNegative} follow from
\eqref{Theorem:LDPSkewBM:Eq:UpperBoundPositive} and \eqref{Theorem:LDPSkewBM:Eq:LowerBoundPositive}, respectively. So we can focus on the proof of
\eqref{Theorem:LDPSkewBM:Eq:UpperBoundPositive} and \eqref{Theorem:LDPSkewBM:Eq:LowerBoundPositive}.
These two identities are parallel to Theorem 5.2 in \cite{FreidlinHu13},
Theorem 2.4 of \cite{Nolen2009} and \cite[Section 5]{Taleb2001}, and we will employ very similar methods in the proof.
However, the symmetric branching structure in our case will bring in new technical differences, as the reader will notice
in our use of Lemma \ref{Lemma:ExponentialTightnessHittingTimePositiveNegative} during the proof.

First, for $c\geq 0$ we have
\begin{equation}\label{Theorem:LDPSkewBM:Eq:DualityUpperBound}
P^{(\vec{p}, \vec{z})}\left(\dfrac{vt-Y^{vt}_{\kappa t}}{\kappa t}>c\right)\leq P^{(\vec{p}, \vec{z})}\left(\dfrac{T^{vt}_{(v-c\kappa)t}}{t}<\kappa\right) \ .
\end{equation}
Thus due to Theorem \ref{Theorem:LDPHittingTime} we conclude that
$$\limsup\limits_{t\rightarrow\infty}\dfrac{1}{\kappa t}
\ln P^{(\vec{p}, \vec{z})}\left(\dfrac{vt-Y^{vt}_{\kappa t}}{\kappa t}>c\right)
\leq -c\inf\limits_{a\in (0,\kappa)}I\left(\dfrac{a}{c\kappa}\right)=-cI\left(\dfrac{1}{c}\right)$$
whenever $c\geq (\mu'(0))^{-1}$. This proves the upper bound \eqref{Theorem:LDPSkewBM:Eq:UpperBoundPositive}.

Let $u\geq (\mu'(0))^{-1}$. Let $\varepsilon>0$ and $\delta>0$ be given. We have the identity
\begin{equation}\label{Theorem:LDPSkewBM:Eq:LowerBoundDualityIdentity:Step1}
P^{(\vec{p}, \vec{z})}\left(\dfrac{vt-Y^{vt}_{\kappa t}}{\kappa t}\in B_{\delta}(u)\right)
=P^{(\vec{p}, \vec{z})}\left(\left|Y^{vt}_{\kappa t}-(v-\kappa u)t\right|< \kappa t \delta\right) \ .
\end{equation}

We apply the method in \cite[Section 5]{Taleb2001}. By splitting the event
$\{(1-\varepsilon)\kappa t< T^{vt}_{(v-\kappa u)t}<\kappa t\}$ into two parts depending on
whether or not $|Y^{vt}_{\kappa t}-(v-\kappa u)t|< \kappa t \delta$,
\begin{equation}\label{Theorem:LDPSkewBM:Eq:LowerBoundDualityIdentity:Step2}
\begin{array}{ll}
& P^{(\vec{p}, \vec{z})}\left((1-\varepsilon)\kappa t< T^{vt}_{(v-\kappa u)t}<\kappa t\right)
\\
\leq
& P^{(\vec{p}, \vec{z})}\left(|Y^{vt}_{\kappa t}-(v-\kappa u)t|< \kappa t \delta\right)
\\
& + P^{(\vec{p}, \vec{z})}\left(\left|Y^{vt}_{\kappa t}-(v-\kappa u)t\right|\geq  \kappa t \delta; (1-\varepsilon)\kappa t< T^{vt}_{(v-\kappa u)t}<\kappa t\right) \ .
\end{array}
\end{equation}

Combining \eqref{Theorem:LDPSkewBM:Eq:LowerBoundDualityIdentity:Step1} and
\eqref{Theorem:LDPSkewBM:Eq:LowerBoundDualityIdentity:Step2} we see that
\begin{equation}\label{Theorem:LDPSkewBM:Eq:LowerBoundDualityIdentity:Step3}
\begin{array}{ll}
& P^{(\vec{p}, \vec{z})}\left(\dfrac{vt-Y^{vt}_{\kappa t}}{\kappa t}\in B_{\delta}(u)\right)
\\
\geq & P^{(\vec{p}, \vec{z})}\left((1-\varepsilon)\kappa t< T^{vt}_{(v-\kappa u)t}<\kappa t\right)
\\
& -
P^{(\vec{p}, \vec{z})}\left(\left|Y^{vt}_{\kappa t}-(v-\kappa u)t\right|\geq  \kappa t \delta; (1-\varepsilon)\kappa t< T^{vt}_{(v-\kappa u)t}<\kappa t\right) \ .
\end{array}
\end{equation}

The last term
\begin{equation}\label{Theorem:LDPSkewBM:Eq:LowerBoundDualityIdentity:Step4}
\begin{array}{ll}
& P^{(\vec{p}, \vec{z})}\left(\left|Y^{vt}_{\kappa t}-(v-\kappa u)t\right|\geq \kappa t \delta; (1-\varepsilon)\kappa t< T^{vt}_{(v-\kappa u)t}<\kappa t\right)
\\
\leq & P^{(\vec{p}, \vec{z})} \left(\sup\limits_{0<s-T^{vt}_{(v-\kappa u)t}<\varepsilon\kappa t}
\left|Y^{vt}_s-(v-\kappa u)t\right|\geq \kappa t \delta\right)
\\
= & P^{(\vec{p}, \vec{z})} \left(\sup\limits_{0<s<\varepsilon\kappa t}
\left|Y^{(v-\kappa u)t}_s-(v-\kappa u)t \right|\geq \kappa t \delta\right) \ ,
\end{array}
\end{equation}
where the first inequality is due to the fact that $0<\kappa t- T^{vt}_{(v-\kappa u)t}<\varepsilon\kappa t$ and the second equality is due to the strong Markov property
of $Y_t$. We can then turn the above estimate to the hitting time by duality to obtain that
\begin{equation}\label{Theorem:LDPSkewBM:Eq:LowerBoundDualityIdentity:Step5}
\begin{array}{ll}
& P^{(\vec{p}, \vec{z})} \left(\sup\limits_{0<s<\varepsilon\kappa t}
\left|Y^{(v-\kappa u)t}_s-(v-\kappa u)t \right|\geq \kappa t \delta\right)
\\
= & P^{(\vec{p}, \vec{z})} \left( T^{(v-\kappa u)t}_{(v-\kappa u)t-\kappa t \delta}\wedge  T^{(v-\kappa u)t}_{(v-\kappa u)t+\kappa t \delta}<\varepsilon \kappa t\right)
\\
\leq & P^{(\vec{p}, \vec{z})} \left( T^{(v-\kappa u)t}_{(v-\kappa u)t-\kappa t \delta}<\varepsilon \kappa t\right)+
 P^{(\vec{p}, \vec{z})} \left( T^{(v-\kappa u)t}_{(v-\kappa u)t+\kappa t \delta}<\varepsilon \kappa t\right) \ .
\end{array}
\end{equation}

By Lemma \ref{Lemma:ExponentialTightnessHittingTimePositiveNegative},
\begin{equation}\label{Theorem:LDPSkewBM:Eq:LowerBoundDualityIdentity:Step6}
\lim\limits_{\varepsilon\rightarrow 0}\limsup\limits_{t\rightarrow\infty}\dfrac{1}{t}\ln
\left[P^{(\vec{p}, \vec{z})} \left( T^{(v-\kappa u)t}_{(v-\kappa u)t-\kappa t \delta}<\varepsilon \kappa t\right)
+P^{(\vec{p}, \vec{z})} \left( T^{(v-\kappa u)t}_{(v-\kappa u)t+\kappa t \delta}<\varepsilon \kappa t\right)\right]=-\infty \ .
\end{equation}

From \eqref{Theorem:LDPSkewBM:Eq:LowerBoundDualityIdentity:Step4}, \eqref{Theorem:LDPSkewBM:Eq:LowerBoundDualityIdentity:Step5},
\eqref{Theorem:LDPSkewBM:Eq:LowerBoundDualityIdentity:Step6} we obtain that
\begin{equation}\label{Theorem:LDPSkewBM:Eq:LowerBoundDualityIdentity:Step7}
\lim\limits_{\varepsilon\rightarrow 0}\limsup\limits_{t\rightarrow\infty}\dfrac{1}{t}\ln
P^{(\vec{p}, \vec{z})}\left(\left|Y^{vt}_{\kappa t}-(v-\kappa u)t\right|\geq  \kappa t \delta; (1-\varepsilon)\kappa t< T^{vt}_{(v-\kappa u)t}<\kappa t\right)=-\infty \ .
\end{equation}

Therefore by \eqref{Theorem:LDPSkewBM:Eq:LowerBoundDualityIdentity:Step3}
and \eqref{Theorem:LDPSkewBM:Eq:LowerBoundDualityIdentity:Step7}, combined with \eqref{Theorem:LDPHittingTime:Eq:LowerBound} in Theorem \ref{Theorem:LDPHittingTime},
we obtain
$$\begin{array}{l}
\liminf\limits_{t\rightarrow\infty}\dfrac{1}{t}\ln P^{(\vec{p},\vec{z})}\left(\dfrac{vt-Y^{vt}_{\kappa t}}{\kappa t}\in B_\delta(u)\right)
\\
\geq \liminf\limits_{\varepsilon\rightarrow 0}\liminf\limits_{t\rightarrow\infty}\dfrac{1}{t}\ln P^{(\vec{p}, \vec{z})}\left(T^{vt}_{(v-\kappa u)t}\in \left((1-\varepsilon)\kappa t, \kappa t\right)\right)
\\
=-\kappa uI\left(\dfrac{1}{u}\right) \ ,
\end{array}$$
which proves the lower bound \eqref{Theorem:LDPSkewBM:Eq:LowerBoundPositive}.
\end{proof}

Finally we provide the technical Lemma \ref{Lemma:ExponentialTightnessHittingTimePositiveNegative}
that we have used in the proof of Theorem \ref{Theorem:LDPSkewBM}, and we will be using it again during the proof of some Lemmas that leads to the proof of Theorem \ref{Theorem:WaveSpeed}, in particular in Lemmas \ref{Lemma:WaveSpeedInsideWaveFrontLowerBound}, \ref{Lemma:WaveSpeedInsideWaveFront}.

\begin{lemma}\label{Lemma:ExponentialTightnessHittingTimePositiveNegative}
For any $a,b\in \mathbb{R}$ such that $a\neq b$, there exist some $\varepsilon_0=\varepsilon_0(a,b,\overline{\ell},\underline{\ell},\overline{d}, \mathbf{E}\ell_0)>0$
depending on $a, b$ and the constants $\overline{\ell}, \underline{\ell}, \overline{d}$, $\mathbf{E}\ell_0$ that are related to the tree structure,
such that for any $0<\varepsilon<\varepsilon_0$
and any $M>0$, we have
\begin{equation}\label{Lemma:ExponentialTightnessHittingTimePositiveNegative:Eq:Tightness}
\limsup\limits_{t\rightarrow\infty}\dfrac{1}{t}\ln
P^{(\vec{p}, \vec{z})} \left( T^{at}_{bt}<\varepsilon t\right)\leq -M \ ,
\end{equation}
almost surely with respect to $\mathbf{P}$.
\end{lemma}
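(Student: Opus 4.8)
The plan is to establish the estimate by an exponential Chebyshev inequality after bounding the quenched exponential moment of $T^{at}_{bt}$ by a product, over the interfaces the path of $Y$ is forced to cross, of single‑step factors that can be controlled uniformly in $j$ and in the environment by the Laplace transform of a standard Brownian exit time.

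First I would set up the Chebyshev bound and the interface decomposition. Fix $a\neq b$; without loss of generality take $a<b$ (the case $a>b$ is identical with ``upward'' replaced by ``downward''). For any $\eta<0$,
\begin{equation*}
P^{(\vec p,\vec z)}\big(T^{at}_{bt}<\varepsilon t\big)=P^{(\vec p,\vec z)}\big(e^{\eta T^{at}_{bt}}\mathbf 1_{T^{at}_{bt}<\infty}>e^{\eta\varepsilon t}\big)\le e^{-\eta\varepsilon t}\,E^{(\vec p,\vec z)}_{at}\big[e^{\eta T^{at}_{bt}}\mathbf 1_{T^{at}_{bt}<\infty}\big].
\end{equation*}
Let $z_m<z_{m+1}<\dots<z_M$ be the interface points in $(at,bt)$; by Lemma \ref{Assumption:ErgodicEnvironments} and the strong law of large numbers $z_k/k\to\mathbf E\ell_0$, so $\mathbf P$‑a.s.\ for large $t$ there are $M-m\ge |b-a|t/\overline\ell-2$ of them with $(M-m)/t\to(b-a)/\mathbf E\ell_0$. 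Since $Y$ has continuous paths and $at<z_m<\dots<z_M<bt$, the process started at $at$ must hit $z_m,z_{m+1},\dots,z_M$ in this order before it can hit $bt$; writing $H_j$ for the first hitting time of $z_j$, this gives $T^{at}_{bt}\ge H_M\ge\sum_{j=m+1}^M(H_j-H_{j-1})$, and by the strong Markov property at the $H_{j-1}$ (where $Y=z_{j-1}$), conditioning successively and using that each factor is a deterministic constant $\le1$,
\begin{equation*}
E^{(\vec p,\vec z)}_{at}\big[e^{\eta T^{at}_{bt}}\mathbf 1_{\cdot<\infty}\big]\le E^{(\vec p,\vec z)}_{at}\Big[e^{\eta\sum_{j=m+1}^M(H_j-H_{j-1})}\mathbf 1_{H_M<\infty}\Big]\le\prod_{j=m+1}^M E^{(\vec p,\vec z)}_{z_{j-1}}\big[e^{\eta T^{z_{j-1}}_{z_j}}\mathbf 1_{T^{z_{j-1}}_{z_j}<\infty}\big].
\end{equation*}

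The crux is the single‑step bound. Since $|z_j-z_{j-1}|=\ell_{j-1}\ge\underline\ell>\underline\ell/2$, before reaching $z_j$ the path started at $z_{j-1}$ must leave the interval $(z_{j-1}-\underline\ell/2,\,z_{j-1}+\underline\ell/2)$, hence $T^{z_{j-1}}_{z_j}\ge\sigma_{j-1}$ with $\sigma_{j-1}$ the exit time of $Y$ from that interval. That interval contains the single interface $z_{j-1}$, at its midpoint, so on it $Y-z_{j-1}$ is a standard $p_{j-1}$‑skew Brownian motion started at $0$; because the absolute value of a skew Brownian motion is a reflected standard Brownian motion, $\sigma_{j-1}$ has the law of the exit time of a standard Brownian motion from $(-\underline\ell/2,\underline\ell/2)$. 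Therefore, for $\eta<0$,
\begin{equation*}
E^{(\vec p,\vec z)}_{z_{j-1}}\big[e^{\eta T^{z_{j-1}}_{z_j}}\mathbf 1_{\cdot<\infty}\big]\le E^{(\vec p,\vec z)}_{z_{j-1}}\big[e^{\eta\sigma_{j-1}}\big]=\frac1{\cosh\!\big(\tfrac{\underline\ell}{2}\sqrt{-2\eta}\big)}=:\rho(\eta)\in(0,1),
\end{equation*}
uniformly in $j$ and in the environment.

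Combining the three displays gives $P^{(\vec p,\vec z)}(T^{at}_{bt}<\varepsilon t)\le e^{-\eta\varepsilon t}\rho(\eta)^{M-m}$, so $\mathbf P$‑a.s., for every $\eta<0$,
\begin{equation*}
\limsup_{t\to\infty}\frac1t\ln P^{(\vec p,\vec z)}\big(T^{at}_{bt}<\varepsilon t\big)\le |\eta|\varepsilon+\frac{|b-a|}{\mathbf E\ell_0}\ln\rho(\eta)\le |\eta|\varepsilon+\frac{|b-a|}{\mathbf E\ell_0}\Big(\ln 2-\tfrac{\underline\ell}{2}\sqrt{-2\eta}\Big).
\end{equation*}
Optimizing over $\eta<0$ (taking $\sqrt{-2\eta}=|b-a|\underline\ell/(2\varepsilon\,\mathbf E\ell_0)$) yields a bound of the form $-\dfrac{(a-b)^2\underline\ell^2}{8\,\varepsilon\,(\mathbf E\ell_0)^2}+\dfrac{|b-a|\ln2}{\mathbf E\ell_0}$, which tends to $-\infty$ as $\varepsilon\downarrow0$; taking $\varepsilon_0$ small enough (depending on $a,b$, the tree constants $\overline\ell,\underline\ell,\overline d,\mathbf E\ell_0$ and $M$) forces it to be $\le-M$ for all $0<\varepsilon<\varepsilon_0$, which is the assertion. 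The main obstacle is precisely the single‑step estimate: one should resist tracking the true inter‑interface crossing times, which involve excursions away from the target whose size depends delicately on the (possibly recurrent, possibly transient) environment, and instead bound each step below by the exit time of a small symmetric interval around one interface, where the skewness disappears upon passing to absolute values. For the ``inward'' configurations (and the sign‑change case, via $\{T^{at}_{bt}<\varepsilon t\}\subseteq\{T^{at}_0<\varepsilon t\}$) one could alternatively invoke Theorem \ref{Theorem:LyapunovExponentPositiveDirection} together with $\mu(\eta)\le-\sqrt{-2\eta}$ from Lemma \ref{Lemma:PropertiesMu}(2), but the interface argument above disposes of all configurations of $a$ and $b$ uniformly.
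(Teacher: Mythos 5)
Your argument is correct and establishes the version of the lemma that the paper actually uses downstream (namely $\lim_{\varepsilon\to 0}\limsup_{t}\tfrac1t\ln P(\cdot)=-\infty$, i.e.\ $\varepsilon_0$ depending on $M$). The overall skeleton is the same as the paper's — exponential Chebyshev, decomposition of $T^{at}_{bt}$ along the interfaces, linear growth of the interface count via the LLN for $(\ell_i)$ — but the single‑step estimate you use is genuinely different and, in fact, repairs a gap in the paper's argument. The paper bounds the one‑step factor by asserting that $T^{z(k)}_{z(k+1)}\ge C_2>0$ with $P^{(\vec p,\vec z)}$‑probability one, and deduces $\mathbf E(\ln E^{(\vec p,\vec z)}e^{-\lambda T^{z(k)}_{z(k+1)}})\le -\lambda C_2$; but first‑passage times of a (skew) Brownian motion across a positive distance have full support on $(0,\infty)$, so no such a.s.\ positive lower bound exists and that display does not follow. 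You avoid this by bounding $T^{z_{j-1}}_{z_j}$ below by the exit time of the $p_{j-1}$‑skew BM from the symmetric interval $(z_{j-1}-\underline\ell/2,z_{j-1}+\underline\ell/2)$, noting that $|Y-z_{j-1}|$ is a reflected BM there, which gives the deterministic, environment‑uniform factor $\rho(\eta)=1/\cosh(\tfrac{\underline\ell}{2}\sqrt{-2\eta})<1$. This yields the correct decay $\ln\rho(\eta)\le \ln 2-\tfrac{\underline\ell}{2}\sqrt{-2\eta}$, i.e.\ of order $\sqrt{-\eta}$ rather than the linear $-C_2\lambda$ claimed in the paper; your explicit optimization over $\eta$ then produces a rate of order $-\dfrac{(b-a)^2\underline\ell^2}{8\varepsilon(\mathbf E\ell_0)^2}+O(1)$, which diverges to $-\infty$ as $\varepsilon\downarrow 0$ and suffices. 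A side benefit of your approach is that the one‑step factor is a constant rather than a random variable in the environment, so you need the ergodic theorem only for the interface count and can bypass the paper's discussion of stationarity/asymptotic stationarity of $\ln E^{(\vec p,\vec z)}e^{-\lambda T^{z(k)}_{z(k+1)}}$ for the two sign configurations. The only caveat worth noting explicitly is the one you already flag: with the true $\sqrt{-\eta}$ decay, the lemma's literal quantifier order ($\varepsilon_0$ chosen independently of $M$) cannot hold — indeed for fixed $\varepsilon$ the rate is finite even for a standard BM — so the correct statement (and the one used in Theorems~\ref{Theorem:LDPSkewBM} and~\ref{Lemma:WaveSpeedInsideWaveFront}) is the double‑limit version, which is what you prove.
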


\begin{proof}

By Chebyshev's inequality, for any $\lambda>0$,
$$P^{(\vec{p}, \vec{z})} \left( T^{at}_{bt}<\varepsilon t\right)
= P^{(\vec{p}, \vec{z})} \left( e^{-\lambda T^{at}_{bt}}>e^{-\lambda \varepsilon t}\right)
\leq  e^{\lambda \varepsilon t}E^{(\vec{p}, \vec{z})} e^{-\lambda T^{at}_{bt}} \ ,$$
and therefore
\begin{equation}\label{Lemma:ExponentialTightnessHittingTimePositiveNegative:Eq:ExpConcentration-1}
\dfrac{1}{t}\ln P^{(\vec{p}, \vec{z})} \left( T^{at}_{bt}<\varepsilon t\right)
\leq \lambda \varepsilon + \dfrac{1}{t}\ln E^{(\vec{p}, \vec{z})} e^{-\lambda T^{at}_{bt}} \ .
\end{equation}

It now suffices to prove
\begin{equation}\label{Lemma:ExponentialTightnessHittingTimePositiveNegative:Eq:LinearLowerBoundHittingTime}
\limsup\limits_{t\rightarrow \infty}\dfrac{1}{t}\ln E^{(\vec{p}, \vec{z})} e^{-\lambda T^{at}_{bt}}\leq - C\lambda|b-a| \ ,
\end{equation}
where $C=C(\overline{\ell}, \underline{\ell}, \overline{d}, \mathbf{E}\ell_0)>0$
depends on the structure of the tree $\mathbb{T}_{\vec{d}, \vec{\ell}}$. This is because with
\eqref{Lemma:ExponentialTightnessHittingTimePositiveNegative:Eq:LinearLowerBoundHittingTime} we can bound
from \eqref{Lemma:ExponentialTightnessHittingTimePositiveNegative:Eq:ExpConcentration-1} that
\begin{equation}\label{Lemma:ExponentialTightnessHittingTimePositiveNegative:Eq:ExpConcentration-2}
\limsup\limits_{t\rightarrow\infty}\dfrac{1}{t}\ln P^{(\vec{p}, \vec{z})} \left(T^{at}_{bt}<\varepsilon t\right)
\leq \lambda \varepsilon -\lambda C|b-a|=-\lambda[C|b-a|-\varepsilon] \ .
\end{equation}
We can then pick $0<\varepsilon_0<\dfrac{1}{2}C|b-a|$ and choose $\lambda>\dfrac{2M}{C|b-a|}$
to conclude \eqref{Lemma:ExponentialTightnessHittingTimePositiveNegative:Eq:Tightness}.

It remains to prove \eqref{Lemma:ExponentialTightnessHittingTimePositiveNegative:Eq:LinearLowerBoundHittingTime}.
If $a$ and $b$ have different signs, then by strong Markov property of $Y_t$ we have $T^{at}_{bt}=T^{at}_0+T^{0}_{bt}$.
Thus without loss of generality we only need to consider the cases $0\leq a< b$ or $0\leq b< a$.
We label the interface points $z\in (z_i)_{i\in \mathbb{Z}}$ between $at$ and $bt$ as
$at\leq z(1)<z(2)<...<z(n)\leq bt$ (if $0\leq a< b$) or
$at\geq z(1)>z(2)>...>z(n)\geq bt$ (if $0\leq b<a$). Here $n=n(a,b,t)$ is the number of interface points
between $at$ and $bt$. By Lemma \ref{Assumption:ErgodicEnvironments},
$$\lim\limits_{t\rightarrow \infty}\dfrac{|z(n)-z(1)|}{n}=\mathbf{E}\ell_0 \ .$$
Since $|b-a|t-2\overline{\ell}\leq |z(n)-z(1)|\leq |b-a|t$, from the above we have
\begin{equation}\label{Lemma:ExponentialTightnessHittingTimePositiveNegative:Eq:RatioLimitnOvert}
\lim\limits_{t\rightarrow\infty} \dfrac{n}{t}=C_1|b-a| \ ,
\end{equation}
for constant $C_1=\dfrac{1}{\mathbf{E}\ell_0}>0$.

We can then write
\begin{equation}\label{Lemma:ExponentialTightnessHittingTimePositiveNegative:Eq:HittingTimeDecopositionInterfacePoints}
T^{at}_{bt}=T^{at}_{z(1)}+T^{z(1)}_{z(2)}+...+T^{z(n-1)}_{z(n)}+T^{z(n)}_{bt} \ .
\end{equation}
By the strong Markov property of $Y_t$, the sequence $T^{z(k)}_{z(k+1)}$ is a $P^{(\vec{p}, \vec{z})}$-independent sequence so that
from \eqref{Lemma:ExponentialTightnessHittingTimePositiveNegative:Eq:HittingTimeDecopositionInterfacePoints} we obtain
\begin{equation}\label{Lemma:ExponentialTightnessHittingTimePositiveNegative:Eq:LaplaceTransformHittingTimeDecopositionInterfacePoints}
\ln E^{(\vec{p}, \vec{z})}e^{-\lambda T^{at}_{bt}}=
\ln E^{(\vec{p}, \vec{z})}e^{-\lambda T^{at}_{z(1)}}+
\ln E^{(\vec{p}, \vec{z})}e^{-\lambda T^{z(1)}_{z(2)}}+
...+
\ln E^{(\vec{p}, \vec{z})}e^{-\lambda T^{z(n-1)}_{z(n)}}+
\ln E^{(\vec{p}, \vec{z})}e^{-\lambda T^{z(n)}_{bt}} \ .
\end{equation}

If $0\leq b<a$, then $z(k)> z(k+1)>0$ for all $1\leq k\leq n-1$. By the same reason as we prove
part (1) in Lemma \ref{Lemma:ElementaryPropertiesw}, as well as
Lemma \ref{Assumption:ErgodicEnvironments}, the sequence
$\ln E^{(\vec{p}, \vec{z})}e^{-\lambda T^{z(k)}_{z(k+1)}}$ is a stationary ergodic sequence with respect to $\mathbf{P}$, so that by the
Law of Large Numbers for ergodic sequences and \eqref{Lemma:ExponentialTightnessHittingTimePositiveNegative:Eq:LaplaceTransformHittingTimeDecopositionInterfacePoints} we have, with $\mathbf{P}$-probability $1$, that

\begin{equation}\label{Lemma:ExponentialTightnessHittingTimePositiveNegative:Eq:LLN-Ergodic-lnLaplaceTransform}
\lim\limits_{n\rightarrow\infty} \dfrac{1}{n}\ln E^{(\vec{p}, \vec{z})}e^{-\lambda T^{at}_{bt}}
=\mathbf{E} \left(\ln E^{(\vec{p}, \vec{z})}e^{-\lambda T^{z(1)}_{z(2)}}\right) \ .
\end{equation}

Combining \eqref{Lemma:ExponentialTightnessHittingTimePositiveNegative:Eq:LLN-Ergodic-lnLaplaceTransform} and
\eqref{Lemma:ExponentialTightnessHittingTimePositiveNegative:Eq:RatioLimitnOvert} we see that

\begin{equation}\label{Lemma:ExponentialTightnessHittingTimePositiveNegative:Eq:LinearLowerBoundHittingTime-Prestep1}
\lim\limits_{t\rightarrow \infty}\dfrac{1}{t}\ln E^{(\vec{p}, \vec{z})}e^{-\lambda T^{at}_{bt}}
=C_1|b-a|\mathbf{E} \left(\ln E^{(\vec{p}, \vec{z})}e^{-\lambda T^{z(1)}_{z(2)}}\right) \ .
\end{equation}
By Lemma \ref{Assumption:ErgodicEnvironments} we have that there exist some
$C_2=C_2(\overline{\ell}, \underline{\ell}, \overline{d})>0$ that $T^{z(1)}_{z(2)}\geq C_2$
with $P^{(\vec{p}, \vec{z})}$-probability $1$. Thus

\begin{equation}\label{Lemma:ExponentialTightnessHittingTimePositiveNegative:Eq:UpperBoundEnvironmentExpectedLaplaceTransform}
\mathbf{E} \left(\ln E^{(\vec{p}, \vec{z})}e^{-\lambda T^{z(1)}_{z(2)}}\right)\leq -\lambda C_2 \ .
\end{equation}
Finally \eqref{Lemma:ExponentialTightnessHittingTimePositiveNegative:Eq:LinearLowerBoundHittingTime-Prestep1} and
\eqref{Lemma:ExponentialTightnessHittingTimePositiveNegative:Eq:UpperBoundEnvironmentExpectedLaplaceTransform} conclude
\eqref{Lemma:ExponentialTightnessHittingTimePositiveNegative:Eq:LinearLowerBoundHittingTime} with $C=C_1C_2$.

If $0\leq a< b$, then $0<z(k)<z(k+1)$ for all $1\leq k\leq n-1$. In this case the sequence
$\ln E^{(\vec{p}, \vec{z})}e^{-\lambda T^{z(k)}_{z(k+1)}}$ is not a stationary ergodic sequence with respect to $\mathbf{P}$,
but as $t,n\rightarrow \infty$ and thus $k\rightarrow \infty$, the sequence
$\ln E^{(\vec{p}, \vec{z})}e^{-\lambda T^{z(k)}_{z(k+1)}}$ becomes asymptotically stationary ergodic and its distribution tends to
$\lim\limits_{k\rightarrow\infty}\ln E^{(\vec{p}, \vec{z})}e^{-\lambda T^{z(k)}_{z(k+1)}}$. Hence
\eqref{Lemma:ExponentialTightnessHittingTimePositiveNegative:Eq:LLN-Ergodic-lnLaplaceTransform} will be replaced by
$$\lim\limits_{n\rightarrow\infty} \dfrac{1}{n}\ln E^{(\vec{p}, \vec{z})}e^{-\lambda T^{at}_{bt}}
=\mathbf{E} \lim\limits_{k\rightarrow\infty}\left(\ln E^{(\vec{p}, \vec{z})}e^{-\lambda T^{z(k)}_{z(k+1)}}\right) \ .
$$
Therefore, in this case we can still obtain
\eqref{Lemma:ExponentialTightnessHittingTimePositiveNegative:Eq:LinearLowerBoundHittingTime-Prestep1} and
\eqref{Lemma:ExponentialTightnessHittingTimePositiveNegative:Eq:UpperBoundEnvironmentExpectedLaplaceTransform} if we replace
$\ln E^{(\vec{p}, \vec{z})}e^{-\lambda T^{z(1)}_{z(2)}}$ by
$\lim\limits_{k\rightarrow\infty}\left(\ln E^{(\vec{p}, \vec{z})}e^{-\lambda T^{z(k)}_{z(k+1)}}\right)$. So we still conclude
\eqref{Lemma:ExponentialTightnessHittingTimePositiveNegative:Eq:LinearLowerBoundHittingTime}.
\end{proof}

\section{From LDP to wave propagation  on infinite random trees} \label{Sec:WavePropagation}

Based on the large deviations principle established for the multi-skewed Brownian motion $Y_t$ as in Theorems \ref{Theorem:LDPHittingTime} and \ref{Theorem:LDPSkewBM} in Section \ref{Sec:LDP}, we establish in this section the wavefront propagation for
FKPP  equation \eqref{Eq:FisherKPPInitialBoundaryConditionVertices}
on infinite random tree $\mathbb{T}_{\vec{d}, \vec{\ell}}$.

Let us define a non-random constant $c^*>0$ as the solution to the equation
\begin{equation}\label{Eq:WaveSpeedFormula}
c^*I\left(\dfrac{1}{c^*}\right)=\beta \ ,
\end{equation}
where $\beta=f'(0)$ is the constant in \eqref{Eq:FisherKPPInitialBoundaryConditionVertices}.
The next lemma characterizes $c^*$.
\begin{lemma}\label{Lemma:SolutionPropertyWaveSpeedFormula}
When $\beta>\max\left(\eta_c, \dfrac{-\mu(0)}{\mu'(0)}\right)$, the equation \eqref{Eq:WaveSpeedFormula}
admits a unique solution $c^*>0$ with the following properties:
\begin{itemize}
\item[(1)] $c^*>(\mu'(0))^{-1}$;
\item[(2)] For any $\delta>0$ such that $(c^*-\delta, c^*+\delta)\subset (0, \infty)$, there exist some $\widetilde{\varepsilon}=\widetilde{\varepsilon}(\delta)>0$ such that we have
\begin{equation}\label{Lemma:SolutionPropertyWaveSpeedFormula:Eq:c-small}
cI\left(\dfrac{1}{c}\right)-\beta<-\varepsilon  \text{ whenever } 0<c<c^*-\delta \ ,
\end{equation}
and
\begin{equation}\label{Lemma:SolutionPropertyWaveSpeedFormula:Eq:c-large}
cI\left(\dfrac{1}{c}\right)-\beta>\varepsilon  \text{ whenever } c>c^*+\delta \ ,
\end{equation}
where the positive constant $\varepsilon=c\widetilde{\varepsilon}$ depends on $\delta$ and the choice of $c\in (0,c^*-\delta)\cup (c^*+\delta, \infty)$.
\item[(3)] When $c>c^*$ the function $cI\left(\dfrac{1}{c}\right)$ is monotonically increasing as $c$ is increasing;
\end{itemize}
\end{lemma}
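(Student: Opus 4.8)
The plan is to recast \eqref{Eq:WaveSpeedFormula} in terms of the single auxiliary function
\[
\phi(c):=I(1/c)-\beta/c,\qquad c\in(0,\infty),
\]
so that the equation $c^*I(1/c^*)=\beta$ is exactly $\phi(c^*)=0$ and $cI(1/c)-\beta=c\,\phi(c)$. By Lemma \ref{Lemma:PropertyIa}, $I$ is finite, convex and continuous on $(0,\infty)$ and (from the proof of that lemma, where $I'(a)=\eta(a)$ with $\eta(\cdot)$ continuous on $(0,\mu'(\eta_c-))$ and $I$ is affine with slope $\eta_c$ beyond $\mu'(\eta_c-)$) is $C^1$ there with $I'(a)\le\eta_c$. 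Hence $\phi$ is $C^1$ with $\phi'(c)=c^{-2}\bigl(\beta-I'(1/c)\bigr)>0$, the strict inequality being precisely where the hypothesis $\beta>\eta_c$ is used. Thus $\phi$ is \emph{strictly increasing} on $(0,\infty)$; every assertion below is read off from this.

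First I would determine the two endpoint limits of $\phi$. As $c\to\infty$ we have $1/c\to0$, so $I(1/c)\to+\infty$ by part (3) of Lemma \ref{Lemma:PropertyIa}, hence $\phi(c)\to+\infty$. As $c\to0+$, convexity together with $I'\le\eta_c$ gives $I(a)\le I(a_0)+\eta_c(a-a_0)$ for $a\ge a_0>0$, so $cI(1/c)\le cI(a_0)+\eta_c-c\eta_c a_0\to\eta_c$; since $\eta_c<\beta$ this yields $\phi(c)=c^{-1}\bigl(cI(1/c)-\beta\bigr)\to-\infty$. Combined with strict monotonicity, $\phi$ has a unique zero $c^*\in(0,\infty)$ by the intermediate value theorem, which is the unique solution of \eqref{Eq:WaveSpeedFormula}.

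It remains to check the three properties. For (1): if $\mu'(0)=+\infty$ then $(\mu'(0))^{-1}=0<c^*$ is trivial, so assume $\mu'(0)\in(0,\infty)$; by part (2) of Lemma \ref{Lemma:PropertyIa}, $I(\mu'(0))=-\mu(0)$, hence $\phi\bigl((\mu'(0))^{-1}\bigr)=-\mu(0)-\beta\mu'(0)$, which is negative exactly because $\beta>-\mu(0)/\mu'(0)$ (using $\mu'(0)>0$ from part (6) of Lemma \ref{Lemma:PropertiesMu}); since $\phi$ is strictly increasing and $\phi(c^*)=0$, this forces $c^*>(\mu'(0))^{-1}$. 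For (2): given $\delta>0$ with $c^*-\delta>0$, set $\widetilde{\varepsilon}:=\min\{\phi(c^*+\delta),\,-\phi(c^*-\delta)\}>0$ (both terms are positive since $\phi(c^*+\delta)>0>\phi(c^*-\delta)$); for $0<c<c^*-\delta$, monotonicity gives $\phi(c)<\phi(c^*-\delta)\le-\widetilde{\varepsilon}$, hence $cI(1/c)-\beta=c\,\phi(c)<-c\widetilde{\varepsilon}=:-\varepsilon$, and for $c>c^*+\delta$ it gives $\phi(c)>\phi(c^*+\delta)\ge\widetilde{\varepsilon}$, hence $cI(1/c)-\beta=c\,\phi(c)>c\widetilde{\varepsilon}=:\varepsilon$. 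For (3): with $g(c):=cI(1/c)$, for $c>c^*$ we have $\phi(c)>0$, i.e. $g(c)>\beta$; then $g'(c)=I(1/c)-c^{-1}I'(1/c)>\beta/c-\eta_c/c=(\beta-\eta_c)/c>0$, using $I(1/c)=g(c)/c>\beta/c$, $I'(1/c)\le\eta_c$ and $\beta>\eta_c$, so $g$ is strictly increasing on $(c^*,\infty)$.

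The only genuinely delicate point is the $c\to0+$ limit in the existence step: the sharp behaviour of $I(a)/a$ as $a\to\infty$ depends on whether $\mu(\eta_c-)$ and $\mu'(\eta_c-)$ are finite (this is what splits the eight cases in Figure \ref{Fig:I-a-Graph}), but the one-sided convexity estimate $I(a)\le I(a_0)+\eta_c(a-a_0)$ bypasses that case analysis and gives the single inequality $\limsup_{a\to\infty}I(a)/a\le\eta_c<\beta$ that is all we need. Once $\phi$ is known to be $C^1$, strictly increasing, and to run from $-\infty$ to $+\infty$, the rest is bookkeeping.
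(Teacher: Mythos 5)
Your proof is correct and takes essentially the same route as the paper: both reduce the statement to the strict monotonicity of $I(a)-\beta a$ (equivalently your $\phi(c)=I(1/c)-\beta/c$), with $\beta>\eta_c$ supplying strict monotonicity, $\beta>-\mu(0)/\mu'(0)$ supplying part (1), and the IVT supplying existence and uniqueness of $c^*$. The only cosmetic differences are your change of variables $c=1/a$, the explicit one-sided convexity bound $I(a)\le I(a_0)+\eta_c(a-a_0)$ used to nail down the $a\to\infty$ behavior (the paper leaves this implicit), and the direct computation $g'(c)=I(1/c)-c^{-1}I'(1/c)>(\beta-\eta_c)/c>0$ for part (3), where the paper instead appeals to monotonicity of $I(1/c)$ on $(c^*,\infty)$.
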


\begin{proof}
The validity of the statements in this Lemma can be seen from Figure \ref{Fig:I-a-Graph}.
To be precise, by property (3) of Lemma \ref{Lemma:PropertyIa}, the function $I(a)-\beta a$ approaches $+\infty$
when $a\rightarrow 0$. Since $I'(a)\leq \eta_c$ for all $a\in (0, \infty)$, and $\beta>\eta_c$,
the function $I(a)-\beta a$ is monotonically decreasing in $a$ and it approaches $-\infty$ as $a\rightarrow \infty$. Thus there exists
a unique $a^*\in (0, \infty)$ such that $I(a^*)-\beta a^*=0$. We can then set $c^*=\dfrac{1}{a^*}$.

\begin{itemize}
\item[(1)] Since $\beta>\dfrac{-\mu(0)}{\mu'(0)}$, and both the points
$\left(\dfrac{1}{c^*}, \dfrac{\beta}{c^*}\right)$ and $(\mu'(0), -\mu(0))$ lie on the graph of $I(a)$, we see that
the intersection of the line $\beta a$ with $I(a)$ must happen at a point with the $a$-coordinate
less that $\mu'(0)$. That is, $a^*=\dfrac{1}{c^*}< \mu'(0)$, i.e., $c^*> (\mu'(0))^{-1}$.

\item[(2)] As we have seen, the function $I(a)-\beta a$ is strictly monotonically decreasing from $+\infty$
to $-\infty$ as $a$ goes from $0$ to $\infty$. This implies that for any $\widetilde{\delta}>0$ and
any $a>a^*+\widetilde{\delta}$ we have $I(a)-\beta a<-\widetilde{\varepsilon}$, any $a<a^*-\widetilde{\delta}$ we have
$I(a)-\beta a>\widetilde{\varepsilon}$, where $\widetilde{\varepsilon}$ is a positive constant that may depend on $\widetilde{\delta}$
and $a$. Set $c=\dfrac{1}{a}$, we get from here that for any $\delta>0$, for any $c<c^*-\delta$ we have
$I\left(\dfrac{1}{c}\right)-\dfrac{\beta}{c}<-\widetilde{\varepsilon}$, and for any $c>c^*+\delta$ we have
$I\left(\dfrac{1}{c}\right)-\dfrac{\beta}{c}>\widetilde{\varepsilon}$. Set $\varepsilon=c\widetilde{\varepsilon}$, we get the statement.

\item[(3)] By part (2) of Lemma \ref{Lemma:PropertyIa}, the function $I\left(\dfrac{1}{c}\right)$ is a monotonically increasing
function of $c$ when $\dfrac{1}{c}<\mu'(0)$, i.e., $c>[\mu'(0)]^{-1}=c^*$. This implies the statement.
\end{itemize}
\end{proof}

\begin{remark}\rm\label{Remark:PurposesOfConditions-BetaGreaterEtac-BetaGreaterMuStuff}
As we will see in the arguments below that to prove Theorem \ref{Theorem:WaveSpeed}, the condition
$\beta> \dfrac{-\mu(0)}{\mu'(0)}$ is to ensure that we can use the LDP Theorems \ref{Theorem:LDPHittingTime},
\ref{Theorem:LDPSkewBM} in our analysis of the wavefront propagation, and the condition
$\beta> \eta_c$ is to ensure that property (2) in Lemma \ref{Lemma:SolutionPropertyWaveSpeedFormula} holds,
which ensures the existence of a unique wavefront.
\end{remark}

\begin{remark}\rm\label{Remark:ConditionBetaGreaterEtacImplyMuStuff}
According to part (7) of Lemma \ref{Lemma:PropertiesMu}, once we have $\mu(\eta_c-)\leq 0$,
then $\dfrac{-\mu(0)}{\mu'(0)}> \eta_c$, so that the condition
$\beta> \max\left(\eta_c, \dfrac{-\mu(0)}{\mu'(0)}\right)$ becomes the only condition that
$\beta> \dfrac{-\mu(0)}{\mu'(0)}$. However, when $\mu(\eta_c-)>0$, it might happen that
$\dfrac{-\mu(0)}{\mu'(0)}< \eta_c$. To this end, Figure \ref{Fig:I-a-Graph} parts (a-1),
(a-2), (d-1) demonstrate the case when $\dfrac{-\mu(0)}{\mu'(0)}\geq \eta_c$, and parts
(b-1), (b-2), (d-2) demonstrate the case when $\dfrac{-\mu(0)}{\mu'(0)}< \eta_c$.
\end{remark}

Due to Lemma \ref{Lemma:SolutionPropertyWaveSpeedFormula}, in the following we will obtain our result about the existence of a travelling wavefront based on the assumption that $\beta$ is large enough. Thus we have
\begin{assumption}\label{Assumption:ReactionRateBetaLarge}
We assume that the reaction rate \begin{equation}\label{Assumption:ReactionRateBetaLarge:Eq:BetaLarge}
\beta>\max\left(\eta_c, \dfrac{-\mu(0)}{\mu'(0)}\right) \equiv \beta_c \ .
\end{equation}
\end{assumption}

Our main result that characterizes the wave-speed is given by the following
\begin{theorem}[wavefront propagation for FKPP equation on infinite random tree $\mathbb{T}_{\vec{d}, \vec{\ell}}$]\label{Theorem:WaveSpeed}
Assume Assumption \ref{Assumption:ReactionRateBetaLarge} holds. For any closed set $F\subset (-\infty, -c^*)\cup (c^*, \infty)$ we have
\begin{equation}\label{Theorem:WaveSpeed:UpperBound}
\lim\limits_{t\rightarrow\infty}\sup\limits_{c\in F}v(t, ct)=0
\end{equation}
almost surely with respect to $\mathbf{P}$. For any compact set $K\subset (-c^*, c^*)$ we have
\begin{equation}\label{Theorem:WaveSpeed:LowerBound}
\lim\limits_{t\rightarrow\infty}\inf\limits_{c\in K}v(t, ct)=1
\end{equation}
almost surely with respect to $\mathbf{P}$.
\end{theorem}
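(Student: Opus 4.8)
The plan is to use the Feynman-Kac representation \eqref{Lemma:proj:Eq:FeynmanKac-v} for $v(t,y)$ together with the large deviations principle in Theorem \ref{Theorem:LDPSkewBM}, following the classical Freidlin argument \cite[Chapter 7]{FreidlinFunctionalBook} but with the multi-skewed process $Y_t$ in place of Brownian motion. For the upper bound \eqref{Theorem:WaveSpeed:UpperBound}, I would first use $v\le 1$ inside the exponential to get the crude bound $v(t,ct)\le e^{\beta t}\,P^{(\vec p,\vec z)}_{ct}\bigl(Y_t\in(-\delta,\delta)\bigr)$ (since $v_0=\mathbf 1_{(-\delta,\delta)}$). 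Writing $c=v$ in the notation of Theorem \ref{Theorem:LDPSkewBM}, the event $\{Y^{vt}_t\in(-\delta,\delta)\}$ forces $\tfrac{vt-Y^{vt}_t}{t}$ to lie near $v$, so the LDP upper bound \eqref{Theorem:LDPSkewBM:Eq:UpperBoundPositive} (with $\kappa=1$) gives, for $v>c^*>(\mu'(0))^{-1}$,
\[
\limsup_{t\to\infty}\frac1t\ln P^{(\vec p,\vec z)}_{vt}\bigl(Y_t\in(-\delta,\delta)\bigr)\le -\,v\,I\!\left(\tfrac1v\right),
\]
whence $\limsup_t \tfrac1t\ln v(t,vt)\le \beta - vI(1/v)$. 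By part (2) of Lemma \ref{Lemma:SolutionPropertyWaveSpeedFormula} this is $\le -\varepsilon<0$ uniformly for $v$ in a compact subset of $(c^*,\infty)$, giving exponential decay, hence \eqref{Theorem:WaveSpeed:UpperBound}; the negative half follows from the symmetry \eqref{Lemma:ElementaryPropertiesw:Eq:SymmstryProbabilityDistributionSkewBM} and the estimates \eqref{Theorem:LDPSkewBM:Eq:UpperBoundNegative}. A small technical point is to convert the pointwise-in-$c$ bound into a uniform-over-$F$ bound, which is routine since $F$ is closed and $v\mapsto vI(1/v)-\beta$ is monotone increasing on $(c^*,\infty)$ by part (3) of Lemma \ref{Lemma:SolutionPropertyWaveSpeedFormula}, reducing to the endpoint nearest $c^*$.

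For the lower bound \eqref{Theorem:WaveSpeed:LowerBound} the strategy is the standard bootstrap: first show $v(t,ct)$ is bounded below by a positive constant for $c$ in a slightly larger interval, then feed this back into \eqref{Lemma:proj:Eq:FeynmanKac-v} to push the lower bound up to $1$. Concretely, iterating the integral equation once and using $1-v\ge 0$ together with a barrier/stopping-time argument: for $|c|<c^*$ one bounds $v(t,ct)$ from below by running $Y$ until it first reaches a space-time region where $v$ is already known to be bounded below (at an earlier time), and collecting the exponential reward $e^{\beta\int_0^\tau(1-v)}\ge e^{\beta(\tau-o(t))}$ along the way. The LDP lower bound \eqref{Theorem:LDPSkewBM:Eq:LowerBoundPositive}, giving $P^{(\vec p,\vec z)}_{vt}(\,\cdot\,)\ge e^{-vtI(1/v)+o(t)}$ for $v$ slightly above $c^*$, combined with $\beta - vI(1/v)>0$ for $v<c^*$ (again Lemma \ref{Lemma:SolutionPropertyWaveSpeedFormula}(2)), shows the product blows up, forcing $\liminf_t v(t,ct)>0$; here one also needs Lemma \ref{Lemma:ExponentialTightnessHittingTimePositiveNegative} to discard the contribution of trajectories that travel too fast. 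The final step — upgrading ``bounded below by $\delta_0>0$'' to ``converges to $1$'' — is the classical comparison with the ODE $\dot\phi=\beta\phi(1-\phi)$: once $v\ge\delta_0$ on a region of the form $\{|y|<(c^*-h)t\}$ for all large $t$, a sub-solution argument (or a second iteration of \eqref{Lemma:proj:Eq:FeynmanKac-v} using that $1-v(t-s,Y_s)$ stays bounded away from $0$ with high probability) drives $v$ to $1$ uniformly on compacts $K\subset(-c^*,c^*)$.

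I expect the main obstacle to be the lower bound, specifically making the bootstrap rigorous in the random environment: the ``region where $v$ is already bounded below'' must be described uniformly in the environment $(\vec p,\vec z)$ (using the $\mathbf P$-a.s. LDP and the uniformity in Theorem \ref{Theorem:LyapunovExponentPositiveDirection}), and one must control the multi-skewed walk's excursions away from the origin — this is where the ``drift-like skewness'' genuinely complicates matters, since unlike a true drift it cannot be removed by a moving frame. Lemma \ref{Lemma:ExponentialTightnessHittingTimePositiveNegative} is the key tool here: it provides the exponential tightness needed to show that the paths contributing to the lower bound do not overshoot, and that the error terms in the change-of-measure/barrier argument are genuinely $o(t)$ on the exponential scale. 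The role of Assumption \ref{Assumption:ReactionRateBetaLarge} ($\beta>\beta_c$) is exactly what makes both halves work: $\beta>-\mu(0)/\mu'(0)$ guarantees $c^*>(\mu'(0))^{-1}$ so that the LDP regime $G\subset[(\mu'(0))^{-1},\infty)$ in Theorem \ref{Theorem:LDPSkewBM} actually covers the relevant speeds, and $\beta>\eta_c$ guarantees the strict sign of $vI(1/v)-\beta$ on both sides of $c^*$ via Lemma \ref{Lemma:SolutionPropertyWaveSpeedFormula}(2), which is what gives a genuine (non-degenerate) wavefront rather than a ``stalled'' front.
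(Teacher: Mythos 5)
Your overall strategy---Feynman--Kac representation $v(t,y)=E^{(\vec p,\vec z)}_y[\cdots]$ combined with the quenched LDP of Theorem \ref{Theorem:LDPSkewBM} and the exponential tightness of Lemma \ref{Lemma:ExponentialTightnessHittingTimePositiveNegative}---is the same as the paper's, and your upper-bound argument matches Lemma \ref{Lemma:WaveSpeedOutOfWaveFront} almost verbatim, including the reduction to the nearest endpoint of $F$ via monotonicity of $c\mapsto cI(1/c)$. For the lower bound, however, you propose a two-step bootstrap (first show $\liminf_t v(t,ct)\ge\delta_0>0$, then upgrade to $1$ by ODE comparison with $\dot\phi=\beta\phi(1-\phi)$ or a second iteration of the integral equation), whereas the paper's Lemma \ref{Lemma:WaveSpeedInsideWaveFrontLowerBound} does both in one pass. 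It introduces the level-set stopping time $\sigma_h(t)=\min\{s:v(t-s,Y^y_s)\ge h\}$ and the barrier stopping time $\sigma_\Gamma(t)$, decomposes the sample space into events $A_1,A_2,A_3$ according to which fires first and how early, and concludes $P^{(\vec p,\vec z)}(A_1)\to 1$; then $v(t,ct)\ge h\,P^{(\vec p,\vec z)}(A_1)\to h$, and since $h<1$ is arbitrary this gives the limit $1$ directly. The argument that $P^{(\vec p,\vec z)}(A_2)\to 0$ is by contradiction from \eqref{Lemma:WaveSpeedInsideWaveFrontLowerBound:Eq:vFeynmanKacStoppingTimeA1A2} and the constraint $v\le 1$, which is precisely the ``product blows up'' mechanism you describe---but to make it work one needs a quantitative lower bound on $v$ along the boundary of the space-time barrier $\Gamma_t$, where $v$ is \emph{not} bounded below by a constant but decays exponentially. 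This is supplied by the auxiliary Lemma \ref{Lemma:WaveSpeedInsideWaveFront} (itself proved by a Cram\'er change-of-measure and the LDP lower bound, with Lemma \ref{Lemma:Finiteness-q-WaveSpeedInsideWaveFront} closing the recursion), and it is the ingredient your outline leaves vague when you say ``reaches a space-time region where $v$ is already known to be bounded below.'' The paper's $\sigma_h$ device also sidesteps a technical wrinkle in your ODE-comparison alternative: since $v$ is defined only through the integral equation \eqref{Lemma:proj:Eq:FeynmanKac-v}, one would first have to justify that it is a classical supersolution of the ODE before comparing, which the one-pass stopping-time argument avoids entirely.
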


\begin{proof}
The proof makes use of the arguments from the classical variational analysis as in \cite[Chapter 7, Theorem 3.1]{FreidlinFunctionalBook},
\cite[Theorem 4.1, Lemma 4.1, Lemma 4.2]{Nolen2009} and \cite[Theorem 1 and Theorem 2]{Nolen-XinCMP2007}.
Lemma \ref{Lemma:WaveSpeedOutOfWaveFront} provides the upper bound \eqref{Theorem:WaveSpeed:UpperBound}
for the behavior of the wave outside $(-c^*, c^*)$. Lemma \ref{Lemma:WaveSpeedInsideWaveFrontLowerBound} provides the lower bound \eqref{Theorem:WaveSpeed:LowerBound} for the behavior of the wave inside $(-c^*, c^*)$. Lemmas \ref{Lemma:WaveSpeedInsideWaveFront} and
\ref{Lemma:Finiteness-q-WaveSpeedInsideWaveFront} are of auxiliary nature, but they are important in proving Lemma
\ref{Lemma:WaveSpeedInsideWaveFrontLowerBound}. Thus the Theorem is proved.
\end{proof}

The following lemma provides the upper bound \eqref{Theorem:WaveSpeed:UpperBound}
for the behavior of the wave outside $(-c^*, c^*)$.
\begin{lemma}\label{Lemma:WaveSpeedOutOfWaveFront}
Suppose Assumption \ref{Assumption:ReactionRateBetaLarge} holds. For any closed set $F\subset (-\infty, -c^*)\cup (c^*, \infty)$,
\begin{equation}\label{Lemma:WaveSpeedOutOfWaveFront:Eq:UpperBound}
\lim\limits_{t\rightarrow\infty}\sup\limits_{c\in F}v(t, ct)=0
\end{equation}
almost surely with respect to $\mathbf{P}$.
\end{lemma}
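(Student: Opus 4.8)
The plan is to dominate $v$ from above by a pure hitting probability for the multi-skewed Brownian motion $Y$ and then feed that probability into the large deviations estimates already established. Since $0\le v\le 1$ and $v_0=\mathbf 1_{(-\delta,\delta)}$, the Feynman--Kac identity \eqref{Lemma:proj:Eq:FeynmanKac-v} gives immediately
\begin{equation*}
v(t,y)\ \le\ e^{\beta t}\,E^{(\vec p,\vec z)}_y\big[v_0(Y_t)\big]\ =\ e^{\beta t}\,P^{(\vec p,\vec z)}_y\big(|Y_t|<\delta\big),\qquad t>0,\ y\in\mathbb R.
\end{equation*}
Because $v_0$ is even and the law of $Y$ is symmetric about the origin (identity \eqref{Lemma:ElementaryPropertiesw:Eq:SymmstryProbabilityDistributionSkewBM}), one has $v(t,y)=v(t,-y)$. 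Hence, setting $h:=\mathrm{dist}\big(F,[-c^*,c^*]\big)>0$ (positive since $F$ is closed and disjoint from the compact interval $[-c^*,c^*]$), so that $F\subseteq(-\infty,-c^*-h]\cup[c^*+h,\infty)$, it suffices to prove that $\sup_{c\ge c^*+h}v(t,ct)\to 0$ as $t\to\infty$, $\mathbf P$-almost surely.

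The second step is to collapse the supremum over $c$ to a single quenched quantity via the strong Markov property. Fix $t$ with $(c^*+h)t>\overline\ell$. For any $c\ge c^*+h$, starting from $ct>\overline\ell>\delta$ the continuity of $Y$ forces $\{|Y^{ct}_t|<\delta\}\subseteq\{T^{ct}_\delta\le t\}$, and since the path must pass through the level $(c^*+h)t$ before hitting $\delta$, restarting there gives $P^{(\vec p,\vec z)}_{ct}(T^{ct}_\delta\le t)\le P^{(\vec p,\vec z)}_{(c^*+h)t}(T^{(c^*+h)t}_\delta\le t)$. Thus $\sup_{c\ge c^*+h}v(t,ct)\le e^{\beta t}\,P^{(\vec p,\vec z)}_{(c^*+h)t}\!\big(T^{(c^*+h)t}_\delta\le t\big)$. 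Applying an exponential Chebyshev bound, then the elementary domination $T^{(c^*+h)t}_\delta\ge T^{(c^*+h)t}_{\overline\ell}$ (the path passes $\overline\ell$ on its way down), and finally Theorem \ref{Theorem:LyapunovExponentPositiveDirection} with $\eta=-\lambda$ (applied to the endpoint $\overline\ell$, which is covered by the uniformity-in-$(v,c)$ clause of that theorem), I obtain, $\mathbf P$-a.s. and for every $\lambda>0$,
\begin{equation*}
\limsup_{t\to\infty}\frac1t\ln\sup_{c\ge c^*+h}v(t,ct)\ \le\ \beta+\lambda+(c^*+h)\,\mu(-\lambda).
\end{equation*}

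It then remains to optimise in $\lambda$ and to invoke the properties of $c^*$. Since $c^*+h>c^*>(\mu'(0))^{-1}$ by Lemma \ref{Lemma:SolutionPropertyWaveSpeedFormula}(1), one has $1/(c^*+h)<\mu'(0)$, so (as in the proof of Theorem \ref{Theorem:LDPHittingTime}, using Lemma \ref{Lemma:PropertyIa}) the supremum defining $I\big(1/(c^*+h)\big)=\sup_{\eta\le\eta_c}\big(\eta-(c^*+h)\mu(\eta)\big)$ is attained at some $\eta<0$; hence $\inf_{\lambda>0}\big(\lambda+(c^*+h)\mu(-\lambda)\big)=-(c^*+h)\,I\big(1/(c^*+h)\big)$. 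Taking the infimum over rational $\lambda$ in the displayed bound (the full-measure set is the countable intersection, and $\lambda\mapsto\mu(-\lambda)$ is continuous) yields $\mathbf P$-a.s.
\begin{equation*}
\limsup_{t\to\infty}\frac1t\ln\sup_{c\ge c^*+h}v(t,ct)\ \le\ \beta-(c^*+h)\,I\!\left(\tfrac1{c^*+h}\right)\ <\ 0,
\end{equation*}
the strict inequality being exactly the strict monotonicity of $c\mapsto cI(1/c)$ on $(c^*,\infty)$ (Lemma \ref{Lemma:SolutionPropertyWaveSpeedFormula}(3)) together with the defining relation $c^*I(1/c^*)=\beta$ in \eqref{Eq:WaveSpeedFormula}. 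Exponential decay of $\sup_{c\ge c^*+h}v(t,ct)$ follows, which combined with the symmetry reduction of the first paragraph proves the lemma.

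The one genuine difficulty is that $F$ may be \emph{unbounded}, so that a mere pointwise-in-$c$ estimate is insufficient; this is precisely what forces the strong Markov ``restart at level $(c^*+h)t$'' step, which reduces $\sup_{c\ge c^*+h}$ to a single starting configuration and leaves only the already-established Lyapunov/LDP machinery to be invoked. A secondary technical point is matching the hitting time $T^{\cdot}_\delta$ of the fixed level $\delta$ to the quantities controlled by Theorem \ref{Theorem:LyapunovExponentPositiveDirection}, whose endpoints grow linearly in $t$; this is handled by $T^{(c^*+h)t}_\delta\ge T^{(c^*+h)t}_{\overline\ell}$ and the subadditivity \eqref{Theorem:LyapunovExponentPositiveDirection:Eq:Subadditivity-lnq}. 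Alternatively, one may bypass the Chebyshev step and invoke the hitting-time LDP \eqref{Theorem:LDPHittingTime:Eq:UpperBound} directly with $v=c^*+h$ and a fixed small left endpoint $c_0$, at the cost of an extra continuity argument for $c\mapsto cI(1/c)$ as $c_0\downarrow 0$.
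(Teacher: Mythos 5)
Your proof is correct, but it takes a genuinely different route from the paper's. The paper bounds $v(t,ct)$ pointwise in $c$ by rewriting $P^{(\vec p,\vec z)}(|Y^{ct}_t|<\delta)$ as a probability for $\tfrac{ct-Y^{ct}_t}{t}$ to lie in the shrinking interval $(c-\delta/t,\,c+\delta/t)$ and then invoking the process-level LDP upper bound \eqref{Theorem:LDPSkewBM:Eq:UpperBoundPositive} from Theorem \ref{Theorem:LDPSkewBM} (with $\kappa=1$, $v=c$), handling the negative half-line by \eqref{Theorem:LDPSkewBM:Eq:UpperBoundNegative}. You bypass Theorem \ref{Theorem:LDPSkewBM} (and Theorem \ref{Theorem:LDPHittingTime}) entirely: you go straight from the Feynman--Kac bound to the hitting time $T^{(c^*+h)t}_\delta$, apply exponential Chebyshev, dominate by $T^{(c^*+h)t}_{\overline\ell}$, and feed the resulting $\frac1t\ln q(\overline\ell,(c^*+h)t,-\lambda)$ into the Lyapunov-exponent Theorem \ref{Theorem:LyapunovExponentPositiveDirection}, optimizing over $\lambda$ afterwards. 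This is more elementary in the sense that it only uses one upstream limit theorem rather than the full LDP machinery, at the cost of re-deriving the Legendre duality inline. You also gain two things over the paper's version: (i) the strong-Markov ``restart at level $(c^*+h)t$'' collapses $\sup_{c\ge c^*+h}$ to a single probability, giving the uniform statement for unbounded closed $F$ directly, whereas the paper derives a pointwise estimate for each $c$ and passes to the supremum without explicitly addressing uniformity; (ii) the symmetry $v(t,y)=v(t,-y)$ disposes of the negative ray in one stroke. One small slip: the intermediate display ``$I(1/(c^*+h))=\sup_{\eta\le\eta_c}(\eta-(c^*+h)\mu(\eta))$'' is missing a factor, it should read $(c^*+h)\,I\bigl(1/(c^*+h)\bigr)=\sup_{\eta\le\eta_c}\bigl(\eta-(c^*+h)\mu(\eta)\bigr)$, but your subsequent identity $\inf_{\lambda>0}\bigl(\lambda+(c^*+h)\mu(-\lambda)\bigr)=-(c^*+h)I(1/(c^*+h))$ and the final conclusion are correct. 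Also note that Theorem \ref{Theorem:LyapunovExponentPositiveDirection} as stated requires $c>0$ bounded away from $0$; your appeal to a fixed lower endpoint $\overline\ell$ together with the subadditivity identity \eqref{Theorem:LyapunovExponentPositiveDirection:Eq:Subadditivity-lnq} is indeed the right fix, since $\ln q(\overline\ell,(c^*+h)t,-\lambda)=\ln q(0,(c^*+h)t,-\lambda)-\ln q(0,\overline\ell,-\lambda)$ and the subtracted term is a finite constant, so it would be worth spelling that equality out rather than leaning on the ``uniformity clause.''
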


\begin{proof}
We first consider the case when $c>c^*$. We can apply Lemma \ref{Lemma:proj} and in particular equation \eqref{Lemma:proj:Eq:FeynmanKac-v} and we obtain that,
for the function $v(t,y)$ defined in \eqref{Lemma:proj:Eq:uEqualv} we have
$$
v(t,y) =\displaystyle{E^{(\vec{p}, \vec{z})}_y\left[v_0(Y_t)\exp\left\{\beta \int_0^t \left(1-v(t-s, Y_s)\right)ds\right\}\right]}
\leq \exp(\beta t)E^{(\vec{p}, \vec{z})}_y v_0(Y_t) \ .
$$

Let the support of the function $v_0(y)$ be a compact set $U\subset (-\infty, \infty)$, and further assume that
$U=B_\delta=(-\delta, \delta)$ for some $\delta>0$. Thus we have
$$\begin{array}{ll}
v(t, ct)& \leq \|v_0\|\exp(\beta t)P^{(\vec{p}, \vec{z})}\left(-\delta\leq Y^{ct}(t)\leq \delta\right)
\\
& = \|v_0\|\exp(\beta t)P^{(\vec{p}, \vec{z})}\left(c+\dfrac{\delta}{t}\geq \dfrac{ct-Y^{ct}(t)}{t}\geq c-\dfrac{\delta}{t}\right) \ .
\end{array}$$
By Lemma \ref{Lemma:SolutionPropertyWaveSpeedFormula},
when $c>c^*$, we see from \eqref{Lemma:SolutionPropertyWaveSpeedFormula:Eq:c-large} that $\beta-cI\left(\dfrac{1}{c}\right)<-\varepsilon<0$ for some
$\varepsilon>0$ that may depend on $c$. Notice that since $c^*>(\mu'(0))^{-1}$ due to part (1) of Lemma \ref{Lemma:SolutionPropertyWaveSpeedFormula},
as $t$ is large and $c>c^*$, we have $\left(c-\dfrac{\delta}{t}, c+\dfrac{\delta}{t}\right)\subset [(\mu'(0))^{-1}, \infty)$.
We can then apply Theorem \ref{Theorem:LDPSkewBM} estimate \eqref{Theorem:LDPSkewBM:Eq:UpperBoundPositive} with $\kappa=1$ and $v=c$, and we obtain that
$\limsup\limits_{t\rightarrow\infty}\dfrac{1}{t}\ln v(t, ct)\leq -\dfrac{\varepsilon}{2}$ almost surely with respect to $\mathbf{P}$, which implies that
$\lim\limits_{t\rightarrow\infty}\sup\limits_{c\in F\cap (c^*, \infty)}v(t, ct)=0$ almost surely with respect to $\mathbf{P}$. The case when $c<-c^*$ can be argued similarly using
estimate \eqref{Theorem:LDPSkewBM:Eq:UpperBoundNegative} in Theorem \ref{Theorem:LDPSkewBM}.
\end{proof}

The following lemma provides the lower bound \eqref{Theorem:WaveSpeed:LowerBound} for the behavior of the wave inside $(-c^*, c^*)$.

\begin{lemma}\label{Lemma:WaveSpeedInsideWaveFrontLowerBound}
Suppose Assumption \ref{Assumption:ReactionRateBetaLarge} holds. For any compact set $K\subset (-c^*, c^*)$,
\begin{equation}\label{Lemma:WaveSpeedInsideWaveFrontLowerBound:Eq:LowerBound}
\lim\limits_{t\rightarrow\infty}\inf\limits_{c\in K}v(t, ct)=1
\end{equation}
almost surely with respect to $\mathbf{P}$.
\end{lemma}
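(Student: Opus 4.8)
The plan is to exploit the Feynman--Kac representation \eqref{Lemma:proj:Eq:FeynmanKac-v} together with the lower bound in the LDP for $Y_t$, following the strategy of \cite[Theorem 4.1]{Nolen2009} and \cite[Chapter 7]{FreidlinFunctionalBook}, but with the technical complications introduced by the multi-skewness handled via Lemma \ref{Lemma:ExponentialTightnessHittingTimePositiveNegative}. First I would set up a self-improving (bootstrap) argument: it is enough to show that $v(t,ct)$ is bounded below, uniformly for $c\in K$, by some constant $\delta_0>0$ for all large $t$; once a uniform positive lower bound is known, one feeds it back into \eqref{Lemma:proj:Eq:FeynmanKac-v} to upgrade it to a limit equal to $1$. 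Indeed, if $v(s,y)\ge \delta_0$ on a space-time cone containing the relevant trajectories, then $\exp\{\beta\int_0^t(1-v(t-s,Y_s))\,ds\}$ can be pushed, but more efficiently: from $v\le 1$ always, writing $g=1-v$ one gets that $g$ satisfies an integral inequality forcing $g(t,ct)\to 0$; the standard way is to observe that $v(t,ct)\ge E^{(\vec p,\vec z)}_{ct}[v_0(Y_t)\exp\{\beta\int_0^t(1-v(t-s,Y_s))ds\}] \ge e^{(\beta-o(1))t}\cdot(\text{prob. of a favorable event})$ and that once $v$ is known to be close to $1$ on a slightly smaller cone the exponential gain disappears and a renewal/iteration over time windows of length $O(1)$ closes the argument.

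For the core estimate I would proceed as follows. Fix $c\in K$, so $|c|<c^*$. By Lemma \ref{Lemma:SolutionPropertyWaveSpeedFormula}\,(2) (applied with an appropriate $\delta$ so that $K\subset(-c^*+\delta,c^*-\delta)$), there is $\varepsilon>0$ with $|c|I(1/|c|)-\beta<-\varepsilon$ uniformly on $K$; here I also use part (1), $c^*>(\mu'(0))^{-1}$, so that $1/|c|>1/c^*\ge (\mu'(0))^{-1}$ is not the issue — rather $|c|\in(0,c^*)$ places $1/|c|$ in the regime $[(\mu'(0))^{-1},\infty)$ handled by Theorem \ref{Theorem:LDPSkewBM}. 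Using \eqref{Lemma:proj:Eq:FeynmanKac-v} and $v_0=\mathbf 1_{(-\delta,\delta)}$,
\begin{equation*}
v(t,ct)\ \ge\ E^{(\vec p,\vec z)}_{ct}\Big[\mathbf 1_{\{Y_t\in(-\delta,\delta)\}}\exp\Big\{\beta\!\int_0^t\!(1-v(t-s,Y_s))\,ds\Big\}\,\mathbf 1_{A_t}\Big],
\end{equation*}
where $A_t$ is a ``good event'' forcing the path $Y$ (started at $ct$) to travel ballistically to a neighborhood of the origin with speed $\approx c$ and then stay near $0$. On $A_t$ the path spends almost all its time in the region where the wave $u$ has already passed; I would make this precise by a comparison/monotonicity argument showing $v(t-s,Y_s)\le \text{const}<1$ only on an asymptotically negligible portion of $[0,t]$, hence the exponential contributes $e^{\beta t(1-o(1))}$. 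Combining with the lower bound \eqref{Theorem:LDPSkewBM:Eq:LowerBoundPositive} (and \eqref{Theorem:LDPSkewBM:Eq:LowerBoundNegative} when $c<0$), which gives $P^{(\vec p,\vec z)}(A_t)\ge e^{-|c|I(1/|c|)t(1+o(1))}$, and absorbing the error terms near the origin via Lemma \ref{Lemma:ExponentialTightnessHittingTimePositiveNegative} (so that the ``stay near $0$'' portion costs sub-exponentially), one gets
\begin{equation*}
\liminf_{t\to\infty}\frac1t\ln v(t,ct)\ \ge\ \beta-|c|I\!\Big(\frac1{|c|}\Big)\ \ge\ \varepsilon\ >\ 0 ,
\end{equation*}
so in particular $v(t,ct)\ge 1$ eventually — but of course $v\le 1$, so this forces the refined bootstrap to conclude $v(t,ct)\to1$. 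The uniformity over the compact $K$ comes from the uniformity of $\varepsilon$ in Lemma \ref{Lemma:SolutionPropertyWaveSpeedFormula}\,(2) and the uniform-in-$(v,c)$ convergence in Theorem \ref{Theorem:LyapunovExponentPositiveDirection}, which propagates to the LDP bounds.

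The main obstacle will be the step where one shows that $v(t-s,Y_s)$ is bounded away from $1$ only on an $o(t)$-fraction of the time interval along the good trajectories — i.e., controlling the region where the wavefront has \emph{not} yet arrived. On $\mathbb{R}$ this is handled by the classical hair-trigger / comparison argument, but here the multi-skewed drift means the wavefront region is itself moving at the unknown speed $c^*$, and the symmetric branching makes the process recurrent-like near the origin in a way that must be reconciled with ballistic excursions. I expect one must iterate: first establish the weaker statement $\liminf_t v(t,ct)>0$ for each fixed $c$ (using only a crude lower bound on the favorable probability and the trivial bound $v\ge 0$ in the exponent, i.e. dropping the integral entirely and just using $P(A_t)e^{\beta t}$ is not enough since $I(1/c)$ can exceed $\beta/c$ — so instead use that the path reaches $(-\delta,\delta)$ and $v_0$ there is $1$, together with a localization showing the integral is genuinely $\approx\beta t$), then use that positivity plus the parabolic smoothing/comparison to bootstrap to $1$ on compacts. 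Making the comparison rigorous for the generalized (mild) solution on the singular space $\mathbb{T}$ — rather than a classical solution — is where I anticipate the most care is needed, and Lemma \ref{Lemma:ExponentialTightnessHittingTimePositiveNegative} together with the auxiliary Lemmas \ref{Lemma:WaveSpeedInsideWaveFront} and \ref{Lemma:Finiteness-q-WaveSpeedInsideWaveFront} (cited in the proof of Theorem \ref{Theorem:WaveSpeed}) are presumably the tools that absorb exactly this difficulty.
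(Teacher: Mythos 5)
Your high-level intuition (Feynman--Kac plus LDP lower bound plus a ``the wave has already passed'' argument) points in the right direction and you correctly identify Lemmas~\ref{Lemma:WaveSpeedInsideWaveFront}, \ref{Lemma:Finiteness-q-WaveSpeedInsideWaveFront} and \ref{Lemma:ExponentialTightnessHittingTimePositiveNegative} as the auxiliary tools; but the core estimate you propose does not hold, and the ``contradiction forces a bootstrap'' step is not a valid mechanism.

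The central problem is your claimed inequality
$\liminf_{t\to\infty}\frac{1}{t}\ln v(t,ct)\geq \beta-|c|I(1/|c|)>0$.
Since $v\in[0,1]$ always, the left side is nonpositive for every $(\vec p,\vec z)$. A proof that yields a strictly positive lower bound for $\frac1t\ln v$ is not a ``refined bootstrap that forces $v\to1$''; it is evidence that one of the intermediate estimates is false. And indeed, the false step is the assertion that the exponential factor contributes $e^{\beta t(1-o(1))}$ on the favorable event. On exactly those trajectories that travel ballistically into the region where the wave has already passed, $v(t-s,Y_s)$ is close to $1$, so $1-v(t-s,Y_s)\approx 0$ and the integral is $o(t)$, not $\approx \beta t$. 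You cannot simultaneously collect the full exponential gain $e^{\beta t}$ and the full LDP cost $e^{-|c|I(1/|c|)t}$: the favorable event that makes $v_0(Y_t)=1$ is incompatible with the integrand being uniformly close to $1$. This is precisely the tension that makes the interior lower bound genuinely harder than the exterior upper bound, and it is why Lemma~\ref{Lemma:WaveSpeedInsideWaveFront} is stated only for $|c|>c^*$ (where the rate $\beta-|c|I(1/|c|)$ is negative, hence consistent with $v\leq 1$), not for $|c|<c^*$.

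The paper's actual proof does not estimate $v(t,ct)$ directly via one LDP bound. It uses the strong-Markov/stopping-time form \eqref{Lemma:WaveSpeedInsideWaveFrontLowerBound:Eq:vFeynmanKacStoppingTime}, introduces the moving boundary $\Gamma_t$ built from $\Psi(\delta)$, and splits the sample space into three events $A_1$ (the first time $v(t-s,Y_s)\geq h$ occurs before $t$), $A_2$ (the path reaches $\Gamma_t$, but only after time $rt$), and $A_3$ (the path reaches $\Gamma_t$ before $rt$). The key inputs are: (i) Lemma~\ref{Lemma:WaveSpeedInsideWaveFront}, which gives the sub-exponential lower bound $v(s,y)\geq e^{-2\delta t}$ on $\Gamma_t$ -- exactly the quantitative replacement for your heuristic ``the wave has already passed''; (ii) the observation that the $A_2$ contribution to \eqref{Lemma:WaveSpeedInsideWaveFrontLowerBound:Eq:vFeynmanKacStoppingTimeA1A2} would blow up past $1$ unless $P(A_2)\to0$ exponentially -- this is where the contradiction with $v\leq1$ is actually used, and note it is used to kill a probability, not to overturn $v\leq 1$; and (iii) Lemma~\ref{Lemma:ExponentialTightnessHittingTimePositiveNegative} to show $P(A_3)\to0$. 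Then $P(A_1)\to1$ and $v(t,ct)\geq hP(A_1)\to h$ for every $h<1$. Your sketch is missing this three-way decomposition and the role of $\Gamma_t$; without it the argument as you have written it cannot be closed.
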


\begin{proof}
The proof of makes use of ideas from \cite[Section 5]{Nolen-XinCMP2007} and
\cite[Chapter 7, Theorem 3.1]{FreidlinFunctionalBook}, but is adapted to the case when $\beta$ is large (see Lemma \ref{Lemma:SolutionPropertyWaveSpeedFormula}).
From \eqref{Lemma:proj:Eq:FeynmanKac-v} we have
\begin{equation}\label{Lemma:WaveSpeedInsideWaveFrontLowerBound:Eq:vFeynmanKac}
v(t,y) =\displaystyle{E^{(\vec{p}, \vec{z})}_y\left[v_0(Y_t)\exp\left\{\beta \int_0^t \left(1-v(t-s, Y_s)\right)ds\right\}\right]} \ .
\end{equation}
If $\tau$ is any stopping time, we also have
\begin{equation}\label{Lemma:WaveSpeedInsideWaveFrontLowerBound:Eq:vFeynmanKacStoppingTime}
v(t,y) =\displaystyle{E^{(\vec{p}, \vec{z})}_y\left[v(t-t\wedge\tau, Y_{t\wedge\tau})\exp\left\{\beta \int_0^{t\wedge\tau} \left(1-v(t-s, Y_s)\right)ds\right\}\right]} \ .
\end{equation}
Indeed, since $Y_t$ is a strong Markov process, given $\widetilde{Y}_0=Y_{t\wedge\tau}$, the process $\widetilde{Y}_r=Y_{(t\wedge\tau)+r}$, $0\leq r \leq t-t\wedge\tau$ has the same distribution as $Y$ and hence satisfies \eqref{Lemma:WaveSpeedInsideWaveFrontLowerBound:Eq:vFeynmanKac}, so that
$$v(t-t\wedge\tau,\widetilde{Y}_0) =\displaystyle{E^{(\vec{p}, \vec{z})}_{\widetilde{Y}_0}{\left[v_0(\widetilde{Y}_{t-t\wedge\tau})\exp\left\{\beta \int_0^{t-t\wedge\tau} \left(1-v(t-t\wedge\tau-r, \widetilde{Y}_r)\right)dr\right\}\right]}} \ ,$$
which translates, by setting $s=t\wedge\tau + r$, to
$$v(t-t\wedge\tau,Y_{t\wedge \tau}) =\displaystyle{E^{(\vec{p}, \vec{z})}_{Y_{t\wedge \tau}}{\left[v_0(Y_t)\exp\left\{\beta \int_{t\wedge\tau}^{t} \left(1-v(t-s, Y_s)\right)ds\right\}\right]}} \ .$$
The above equation, when plugged in, justifies \eqref{Lemma:WaveSpeedInsideWaveFrontLowerBound:Eq:vFeynmanKacStoppingTime}.

Therefore, we can obtain estimates on $v$ by choosing stopping times and restricting the expectation to
certain sets of paths. The exponential term inside the expectation will be large when the path $Y_t^y$ passes through
regions where $v$ is small; on the other hand, if $v(t-t\wedge\tau, Y_{t\wedge\tau})$ is too small, then the expectation
as a whole may be small.

For $s\in \mathbb{R}$ we define the set
$$\Psi(s)=\left\{c\in \mathbb{R}; |c|I\left(\dfrac{1}{|c|}\right)-\beta=s\right\}  \text{ and }
\underline{\Psi}(s)=\left\{c\in \mathbb{R}; |c|I\left(\dfrac{1}{|c|}\right)-\beta\leq s\right\} \ .$$

For any $\delta>0$ and $T>1$ we define
$$\Gamma_T=\left([\{1\} \times \underline{\Psi}(\delta)]\cup\left[\bigcup\limits_{1\leq t\leq T}(\{t\} \times t\Psi(\delta))\right]\right) \ .$$
Notice that for $1\leq t_1<t_2$ we have $\Gamma_{t_1}\subset \Gamma_{t_2}$ and
the set $\Gamma\equiv \bigcup\limits_{1<t<\infty}\Gamma_t$ defines the boundary of an unbounded region that spreads outward in $z$ and is linearly in $t$.
Due to part (3) of Lemma \ref{Lemma:SolutionPropertyWaveSpeedFormula}, as $|c|>c^*$ is monotonically increasing we have $|c|I\left(\dfrac{1}{|c|}\right)-\beta>0$
is monotonically increasing. By the argument of Lemma \ref{Lemma:WaveSpeedOutOfWaveFront} this indicates that outside the region $\Gamma$,
as $t$ is sufficiently large,  $v(t, y)$ may be close to zero. But on the boundary of this region $\Gamma$, we have the crucial lower bound from
Lemma \ref{Lemma:WaveSpeedInsideWaveFront}, which gives
\begin{equation}\label{Lemma:WaveSpeedInsideWaveFrontLowerBound:Eq:LowerBoundvBoundaryGamma-t}
v(s, y)\geq e^{-2\delta t} \text{ for all } (s, y)\in \Gamma_t
\end{equation}
when $t$ is sufficiently large.

Let $K$ be a compact set such that $K\subset (-c^*, c^*)$. By \eqref{Lemma:SolutionPropertyWaveSpeedFormula:Eq:c-small}
in Lemma \ref{Lemma:SolutionPropertyWaveSpeedFormula},
for any $c\in K$ we have $|c|I\left(\dfrac{1}{|c|}\right)-\beta<0$.
Set $y=ct$ for some $c\in K$. Set $h\in (0,1)$ and $t>0$. We define the stopping times
$$\begin{array}{lll}
\sigma_h(t)& =& \min\{s\in [0,t]; v(t-s, Y^y_s)\geq h\} \ ,
\\
\sigma_\Gamma(t)& = &\min\{s\in [0,t]; (t-s, Y^y_s)\in \Gamma_t\}  \ ,
\\
\hat{\sigma}(t)& =& \sigma_h(t)\wedge \sigma_\Gamma(t) \ .
\end{array}$$

We then apply \eqref{Lemma:WaveSpeedInsideWaveFrontLowerBound:Eq:vFeynmanKacStoppingTime} with the stopping time
$\hat{\sigma}(t)$ we express $v(t,y)$ as
\begin{equation}\label{Lemma:WaveSpeedInsideWaveFrontLowerBound:Eq:vFeynmanKacStoppingTimeA1A2A3}
v(t,y) =\displaystyle{E^{(\vec{p}, \vec{z})}_y\left[v(t-t\wedge{\hat{\sigma}}, Y_{t\wedge{\hat{\sigma}}})
\exp\left\{\beta \int_0^{t\wedge{\hat{\sigma}}} \left(1-v(t-s, Y_s)\right)ds\right\}(\mathbf{1}_{A_1}+\mathbf{1}_{A_2}+\mathbf{1}_{A_3})\right]} \ ,
\end{equation}
where $A_1, A_2, A_3$ are disjoint sets separating the whole sample space
$$\begin{array}{lll}
A_1 & = & \{\omega; \sigma_h(t)\leq t\} \ ,
\\
A_2 & = & \{\omega; \sigma_h(t)> t \ , \ \sigma_\Gamma(t)\geq rt\} \ ,
\\
A_3 & = & \{\omega; \sigma_h(t)> t \ , \ \sigma_\Gamma(t)< rt\}
\end{array}$$
for some $r\in (0,1)$ to be chosen.

Because $A_1, A_2, A_3$ are disjoint, the expectation \eqref{Lemma:WaveSpeedInsideWaveFrontLowerBound:Eq:vFeynmanKacStoppingTimeA1A2A3}
splits into three integrals. We can bound the first integral over $A_1$ from below by
\begin{equation}\label{Lemma:WaveSpeedInsideWaveFrontLowerBound:Eq:vFeynmanKacStoppingTimeA1}
\displaystyle{E^{(\vec{p}, \vec{z})}_y\left[v(t-t\wedge{\hat{\sigma}}, Y_{t\wedge{\hat{\sigma}}})
\exp\left\{\beta \int_0^{t\wedge{\hat{\sigma}}} \left(1-v(t-s, Y_s)\right)ds\right\}\mathbf{1}_{A_1}\right]}\geq hP^{(\vec{p},\vec{z})}(A_1) \ .
\end{equation}

The second integral over $A_2$ can be bounded from below by
\begin{equation}\label{Lemma:WaveSpeedInsideWaveFrontLowerBound:Eq:vFeynmanKacStoppingTimeA2}
\displaystyle{E^{(\vec{p}, \vec{z})}_y\left[v(t-t\wedge{\hat{\sigma}}, Y_{t\wedge{\hat{\sigma}}})
\exp\left\{\beta \int_0^{t\wedge{\hat{\sigma}}} \left(1-v(t-s, Y_s)\right)ds\right\}\mathbf{1}_{A_2}\right]}\geq e^{-2\delta t}
e^{\beta(1-h) r t}P^{(\vec{p}, \vec{z})}(A_2) \ ,
\end{equation}
where we have used \eqref{Lemma:WaveSpeedInsideWaveFrontLowerBound:Eq:LowerBoundvBoundaryGamma-t}.

Combining \eqref{Lemma:WaveSpeedInsideWaveFrontLowerBound:Eq:vFeynmanKacStoppingTimeA1} and
\eqref{Lemma:WaveSpeedInsideWaveFrontLowerBound:Eq:vFeynmanKacStoppingTimeA2} we obtain that

\begin{equation}\label{Lemma:WaveSpeedInsideWaveFrontLowerBound:Eq:vFeynmanKacStoppingTimeA1A2}
v(t,y)\geq h P^{(\vec{p}, \vec{z})}(A_1)+e^{-2\delta t}e^{\beta(1-h) r t}P^{(\vec{p}, \vec{z})}(A_2) \ .
\end{equation}

We will choose $\delta=\delta(h,r)>0$ to be small so that $-2\delta t+\beta(1-h)rt>0$. Then since $v(t,y)\in (0,1)$
for all $(t,y)$, \eqref{Lemma:WaveSpeedInsideWaveFrontLowerBound:Eq:vFeynmanKacStoppingTimeA1A2} implies that
$P^{(\vec{p}, \vec{z})}(A_2)\rightarrow 0$ exponentially fast as $t\rightarrow\infty$ for small $\delta>0$. Thus if we can show that $P^{(\vec{p}, \vec{z})}(A_3)\rightarrow 0$
as $t\rightarrow\infty$, then we conclude that $P^{(\vec{p}, \vec{z})}(A_1)\rightarrow 1$ as $t\rightarrow\infty$, which then implies that
$v(t,y)>h$ as $t\rightarrow\infty$ for any $h\in (0,1)$, that is \eqref{Lemma:WaveSpeedInsideWaveFrontLowerBound:Eq:LowerBound}.

It remains to show $P^{(\vec{p}, \vec{z})}(A_3)\rightarrow 0$ as $t\rightarrow\infty$.
By Lemma \ref{Lemma:SolutionPropertyWaveSpeedFormula} parts (2) and (3), we see that $\Psi(0)=\{\pm c^*\}$ and $\Psi(\delta)=\{\pm c_\Psi(\delta)\}$ for some
$c_\Psi(\delta)>c^*$. Notice that the initial point $y=ct$ for $c\in K\subset (-c^*, c^*)$, and thus
we have $|c|<c^*<c_\Psi(\delta)$. Thus $\sigma_\Gamma(t)\geq \min\{s\in [0,t]: Y^{ct}_s=\pm c^*(t-s)\}$.
Therefore we have the inclusion of the events
$$\{\sigma_\Gamma(t)\leq rt\}\subseteq \left\{\min\{s\in [0,t]: Y^{ct}_s=\pm c^*(t-s)\}\leq rt\right\} \ .$$
Notice that on the event $\left\{\min\{s\in [0,t]: Y^{ct}_s=\pm c^*(t-s)\}\leq rt\right\}$ we have
$$\min\{s\in [0,t]: Y^{ct}_s=\pm c^*(t-s)\}\geq T^{ct}_{c^*(1-r)t}\wedge T^{ct}_{-c^*(1-r)t} \ ,$$
so that we can simply bound
$$P^{(\vec{p}, \vec{z})}(A_3)\leq P^{(\vec{p}, \vec{z})}(\sigma_\Gamma(t)<rt)
\leq P^{(\vec{p}, \vec{z})}(T^{ct}_{c^*(1-r)t}\wedge T^{ct}_{-c^*(1-r)t}<rt)\rightarrow 0 \ ,$$
when $r>0$ is picked to be sufficiently small, due to Lemma \ref{Lemma:ExponentialTightnessHittingTimePositiveNegative}.
\end{proof}

The following lemma helps to prove Lemma \ref{Lemma:WaveSpeedInsideWaveFrontLowerBound}.
\begin{lemma}\label{Lemma:WaveSpeedInsideWaveFront}
Suppose Assumption \ref{Assumption:ReactionRateBetaLarge} holds. For any compact set $K\subset (-\infty, -c^*) \cup (c^*, \infty)$,
\begin{equation}\label{Lemma:WaveSpeedInsideWaveFront:Eq:LowerBound}
\liminf\limits_{t\rightarrow\infty}\dfrac{1}{t}\ln \inf\limits_{c\in K}v(t, ct)\geq -\max\limits_{c\in K}\left[|c|I\left(\dfrac{1}{|c|}\right)-\beta\right] \ .
\end{equation}
\end{lemma}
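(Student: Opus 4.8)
The plan is to bound $v(t,ct)$ from below by restricting the Feynman--Kac expectation \eqref{Lemma:proj:Eq:FeynmanKac-v} to a family of paths that move essentially ``straight'' from $ct$ down to a neighbourhood of the origin while staying throughout in the region where $v$ is already known to be small by Lemma \ref{Lemma:WaveSpeedOutOfWaveFront}; along such paths the weight $\exp\{\beta\int_0^t(1-v(t-s,Y_s))\,ds\}$ will be of order $e^{\beta t}$, while the probability of the restricting event decays at the large--deviations rate $cI(1/c)$, so the product is of order $e^{-(cI(1/c)-\beta)t}$. First I would note that, by the reflection symmetry of $Y$ from Lemma \ref{Assumption:ErgodicEnvironments} (used as in \eqref{Lemma:ElementaryPropertiesw:Eq:SymmstryProbabilityDistributionSkewBM}) together with the evenness of $v_0$, one has $v(t,-y)=v(t,y)$; hence it suffices to treat a fixed $c>c^*$ and then combine the two symmetric cases, with uniformity over a compact $K$ recovered at the end from the local uniformity of all estimates used.

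Fix $c>c^*$, a small $\varepsilon>0$ and a slope $c_1\in(c^*,c)$, and recall from Lemma \ref{Lemma:SolutionPropertyWaveSpeedFormula} that $c^*>(\mu'(0))^{-1}$. With $Y_0=ct$ I would work with the event
$$
E_t:=\big\{Y_s\ge c_1(t-s)\ \text{for }0\le s\le(1-\varepsilon)t\big\}\cap\big\{Y_s\le(c+1)t\ \text{for }0\le s\le t\big\}\cap\{Y_t\in(-\delta,\delta)\}.
$$
Discarding the nonnegative part of the exponent over $[(1-\varepsilon)t,t]$ and using $v_0\equiv 1$ on $(-\delta,\delta)$, formula \eqref{Lemma:proj:Eq:FeynmanKac-v} gives
$$
v(t,ct)\ \ge\ E^{(\vec{p},\vec{z})}_{ct}\Big[\exp\Big\{\beta\int_0^{(1-\varepsilon)t}\big(1-v(t-s,Y_s)\big)\,ds\Big\}\mathbf 1_{E_t}\Big].
$$
On $E_t$ and for $0\le s\le(1-\varepsilon)t$ one has $t-s\ge\varepsilon t$ and $c_1\le Y_s/(t-s)\le(c+1)/\varepsilon$, so Lemma \ref{Lemma:WaveSpeedOutOfWaveFront} applied with the closed slope--set $[c_1,(c+1)/\varepsilon]\subset(c^*,\infty)$ supplies a deterministic $\rho(t)\to0$ with $\sup_{0\le s\le(1-\varepsilon)t}v(t-s,Y_s)\le\rho(t)$ on $E_t$. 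Thus the exponent is at least $\beta(1-\varepsilon)(1-\rho(t))t$ there, and
$$
v(t,ct)\ \ge\ e^{\beta(1-\varepsilon)(1-\rho(t))t}\,P^{(\vec{p},\vec{z})}(E_t).
$$

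The remaining step --- and the hard part --- is to show $\liminf_{t\to\infty}\frac1t\ln P^{(\vec{p},\vec{z})}(E_t)\ge-cI(1/c)$, $\mathbf P$--a.s. The difficulty is that Theorem \ref{Theorem:LDPSkewBM} controls only the one--time marginal of $Y$, not the probability that a whole trajectory stays in a region. I would handle this by discretization: partition $[0,t]$ into $N$ equal subintervals, require $Y$ at each partition point to lie in a small ball around the straight--line profile $s\mapsto c(t-s)$ (the last such ball sitting inside $(-\delta,\delta)$), and bound each box--to--box transition from below by \eqref{Theorem:LDPSkewBM:Eq:LowerBoundPositive} via the strong Markov property --- legitimate since the relevant velocity equals $c>c^*>(\mu'(0))^{-1}$. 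Each of the $N$ legs has velocity $c$ and they cover time $t$, so the costs telescope to exactly $cI(1/c)\,t$, up to an error $o_N(1)\cdot t$ coming from the ball radii and the continuity of $c'\mapsto c'I(1/c')$. The constraints defining $E_t$ are absorbed as follows: on $[0,(1-\varepsilon)t]$ the profile $c(t-s)$ stays above $c_1(t-s)$ and below $(c+1)t$ with a margin of order $t$, so a within--subinterval excursion reaching either level must move a distance of order $t$ in time $t/N$, an event which, by Lemma \ref{Lemma:ExponentialTightnessHittingTimePositiveNegative} (with $N$ large enough that $1/N$ lies below the relevant threshold $\varepsilon_0$ there), is exponentially small with an arbitrarily large exponential rate, hence contributes nothing to the decay rate as $N\to\infty$. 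Letting $N\to\infty$ gives the claimed bound for $P^{(\vec{p},\vec{z})}(E_t)$, and hence $\liminf_{t\to\infty}\frac1t\ln v(t,ct)\ge\beta(1-\varepsilon)-cI(1/c)$; finally letting $\varepsilon\downarrow0$ yields $\liminf_{t\to\infty}\frac1t\ln v(t,ct)\ge-(cI(1/c)-\beta)$.

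To finish, I would note that the environment sets on which Lemma \ref{Lemma:WaveSpeedOutOfWaveFront} and Theorem \ref{Theorem:LDPSkewBM} hold have full $\mathbf P$--measure, and that all estimates above are uniform in $c$ over compact subsets of $(c^*,\infty)$ (with the analogous uniformity for $c<-c^*$); combined with the continuity of $c\mapsto cI(1/c)$, a finite--cover argument over $K$ upgrades the pointwise bound to $\liminf_{t\to\infty}\frac1t\ln\inf_{c\in K}v(t,ct)\ge-\max_{c\in K}\big(|c|I(1/|c|)-\beta\big)$, which is \eqref{Lemma:WaveSpeedInsideWaveFront:Eq:LowerBound}. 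The crux, as flagged, is the path--constrained large--deviations lower bound for $P^{(\vec{p},\vec{z})}(E_t)$: verifying that imposing the barrier and ceiling constraints does not inflate the decay rate above $cI(1/c)$, which is precisely where the order--$t$ margin of the straight--line profile on $[0,(1-\varepsilon)t]$ and the exponential tightness of hitting times (Lemma \ref{Lemma:ExponentialTightnessHittingTimePositiveNegative}) enter.
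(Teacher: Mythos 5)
The overall strategy you propose — carve out in the Feynman–Kac formula a family of paths that run roughly at slope $c$ down towards the origin, collect a factor $\approx e^{\beta t}$ from the reaction because $v$ is already small along such paths (by Lemma \ref{Lemma:WaveSpeedOutOfWaveFront}), and pay a large-deviations price $\approx e^{-cI(1/c)\,t}$ for the constraint — is exactly the right heuristic, and your use of Lemma \ref{Lemma:ExponentialTightnessHittingTimePositiveNegative} to dismiss excursions out of the tube at negligible exponential cost matches the paper's.

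However, your way of making the LDP price rigorous (discretize $[0,t]$ into $N$ legs of length $t/N$, telescope the costs via \eqref{Theorem:LDPSkewBM:Eq:LowerBoundPositive}) has a genuine gap at the \emph{endpoint}, not at the barrier/ceiling step you flag as the crux. Theorem \ref{Theorem:LDPSkewBM} gives lower bounds for events of the form $\frac{vt-Y^{vt}_{\kappa t}}{\kappa t}\in F$ with $F$ a \emph{fixed} open set and $\kappa$ \emph{fixed}; unwound, this controls hitting a window of width of order $\kappa t$ in the original coordinates. For your $k$-th leg this forces the ball at partition point $k$ to have radius of order $t/N$. But your last ball must sit inside the fixed interval $(-\delta,\delta)$, which for fixed $N$ fails as soon as $t$ is large; and reversing the order of limits (letting $N\to\infty$ before $t\to\infty$) gives degenerate legs. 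In short, the quenched LDP supplied by the paper does not give a lower bound for a target that shrinks on the LDP scale, so the $N$-leg telescoping does not close, and $\liminf_t\frac1t\ln P(E_t)\ge -cI(1/c)$ is not established by your argument.

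The paper circumvents exactly this difficulty with a bootstrap: it runs the path for only time $\kappa t$, constrains $Y_{\kappa t}$ to a ball of radius $(1-\kappa)\delta t$ (which \emph{does} scale with $t$, so \eqref{Theorem:LDPSkewBM:Eq:LowerBoundPositive} applies), and replaces the rest of the trajectory by the quantity $q=\liminf_t\frac1t\ln\inf_{\widetilde c\in B_\delta(c)}v(t,\widetilde c t)$ via \eqref{Lemma:WaveSpeedInsideWaveFrontLowerBound:Eq:vFeynmanKacStoppingTime}. After first proving $q>-\infty$ through Lemma \ref{Lemma:Finiteness-q-WaveSpeedInsideWaveFront} (a crude hitting-time bound that does tolerate a fixed target), the inequality $q\ge(1-\kappa)q+\kappa\beta(1-h)+\liminf_t\frac1t\ln\inf P(A)$ can be solved for $q$, yielding $q\ge\beta(1-h)-\hat c I(1/\hat c)$ and then \eqref{Lemma:WaveSpeedInsideWaveFront:Eq:LowerBoundEquivalentBallConcentrated} by letting $h,\delta\downarrow 0$. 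Either a recursion of this type, or an LDP lower bound strong enough to handle $O(1)$-size end windows (which the paper does not prove), is needed to repair your proposal.
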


\begin{proof}
We use the argument in \cite[Lemma 4.1]{Nolen2009}, \cite[Lemma 7]{Nolen-XinCMP2007},
\cite[Lemma 7.3.2]{FreidlinFunctionalBook}, with various technical differences that come from
Lemma \ref{Lemma:SolutionPropertyWaveSpeedFormula}. The compactness of $K$ implies that
it suffices to show that given $\varepsilon>0$ and any $c$ for which
$|c|I\left(\dfrac{1}{|c|}\right)-\beta>0$, we have
\begin{equation}\label{Lemma:WaveSpeedInsideWaveFront:Eq:LowerBoundEquivalentBallConcentrated}
\liminf\limits_{t\rightarrow\infty}\left(\dfrac{1}{t}\ln \inf\limits_{\widetilde{c}\in B_\delta(c)}v(t, \widetilde{c}t)\right)
\geq \beta-|c|I\left(\dfrac{1}{|c|}\right)-\varepsilon \ ,
\end{equation}
for $\delta>0$ sufficiently small. Due to part (2) of Lemma \ref{Lemma:SolutionPropertyWaveSpeedFormula},
 we see that such a $c$ satisfies $|c|>c^*$. Without loss of generality we can assume that the initial data
$v_0(y)\geq \mathbf{1}_{B_{\delta}(0)}(y)$ for some $\delta>0$, and we can assume that $c>c^*$ with $B_{6\delta}(c)\subset (c^*, \infty)$.
Let us define the limit on the left-hand side of
\eqref{Lemma:WaveSpeedInsideWaveFront:Eq:LowerBoundEquivalentBallConcentrated} as
\begin{equation}\label{Lemma:WaveSpeedInsideWaveFront:Eq:q}
q=\liminf\limits_{t\rightarrow\infty}\left(\dfrac{1}{t}\ln \inf\limits_{\widetilde{c}\in B_{\delta}(c)}v(t, \widetilde{c}t)\right) \ .
\end{equation}

The estimate \eqref{Lemma:Finiteness-q-WaveSpeedInsideWaveFront:Eq:Bound-hatq} in
Lemma \ref{Lemma:Finiteness-q-WaveSpeedInsideWaveFront} immediately implies that $q>-\infty$.
As above we set $c\in K$ and $c>c^*$ so that $cI\left(\dfrac{1}{c}\right)-\beta>0$. Suppose for the moment that $q$ is finite.
By the representation \eqref{Lemma:WaveSpeedInsideWaveFrontLowerBound:Eq:vFeynmanKacStoppingTime} we have for any $\kappa\in (0,1]$
that
\begin{equation}\label{Lemma:WaveSpeedInsideWaveFront:Eq:LowerBoundFeynmanKac}
\inf\limits_{\widetilde{c}\in B_{\delta}(c)}v(t,\widetilde{c}t)
\geq
\displaystyle{\inf\limits_{\widetilde{c}\in B_{\delta}(c)}
E^{(\vec{p}, \vec{z})}_{\widetilde{c}t}\left[v(t-\kappa t, Y_{\kappa t})
\exp\left\{\beta \int_0^{\kappa t} \left(1-v(t-s, Y_s)\right)ds\right\}\cdot \mathbf{1}_A\right]}
\end{equation}
for some $P^{(\vec{p}, \vec{z})}$-adapted set $A$. We pick some small $h>0$ and choose $A$ to be the set of paths
satisfying that for all $\widetilde{c}\in B_\delta(c)$ we have both
\begin{equation}\label{Lemma:WaveSpeedInsideWaveFront:Eq:SetOfPathsACondition-Y}
Y^{\widetilde{c}t}_{\kappa t}\in B_{(1-\kappa)\delta t}\left((1-\kappa)tc\right)
\end{equation}
and
\begin{equation}\label{Lemma:WaveSpeedInsideWaveFront:Eq:SetOfPathsACondition-u}
v(t-s,Y_s^{\widetilde{c}t})\leq h \text{ for all } s\in [0,\kappa t] \ .
\end{equation}
Then
$$\begin{array}{ll}
&\displaystyle{\inf\limits_{\widetilde{c}\in B_{\delta}(c)}
E^{(\vec{p}, \vec{z})}_{\widetilde{c}t}\left[v(t-\kappa t, Y_{\kappa t})
\exp\left\{\beta \int_0^{\kappa t} \left(1-v(t-s, Y_s)\right)ds\right\}\cdot \mathbf{1}_A\right]}
\\
\geq & \inf\limits_{\widetilde{c}\in B_\delta(c)}v((1-\kappa)t, \widetilde{c}(1-\kappa)t)\cdot e^{\beta(1-h)\kappa t}
\cdot \inf\limits_{\widetilde{c}\in B_\delta(c)}P^{(\vec{p}, \vec{z})}(A) \ ,
\end{array}
$$
which gives
$$\begin{array}{ll}
&\dfrac{1}{t}\ln \inf\limits_{\widetilde{c}\in B_{\delta}(c)}v(t, \widetilde{c}t)
\\
\geq & (1-\kappa)\dfrac{1}{(1-\kappa)t}\ln\inf\limits_{\widetilde{c}\in B_\delta(c)}v\left((1-\kappa)t, \widetilde{c}(1-\kappa)t\right)
+\kappa \beta(1-h)+\dfrac{1}{t}\ln \inf\limits_{\widetilde{c}\in B_{\delta}(c)} P^{(\vec{p}, \vec{z})}(A) \ .
\end{array}$$
Thus taking $t\rightarrow\infty$ this gives

\begin{equation}\label{Lemma:WaveSpeedInsideWaveFront:Eq:LowerBound-q}
q\geq \beta(1-h)+\liminf\limits_{t\rightarrow\infty}\dfrac{1}{\kappa t}\ln \inf\limits_{\widetilde{c}\in B_{\delta}(c)} P^{(\vec{p}, \vec{z})}(A) \ .
\end{equation}

Since $c\in K$ and $c>c^*$ is chosen such that $cI\left(\dfrac{1}{c}\right)-\beta>0$, by Lemma \ref{Lemma:WaveSpeedOutOfWaveFront}
we see that there is a $\delta>0$ sufficiently small so that for any $h\in (0,1)$ there is a constant $t_0>0$ depending on $h$
such that
$$v(t, c't)\leq h \text{ for all } c'\in B_{6\delta}(c) \text{ and all } t\geq t_0 \ .$$

Now if $0<\kappa<\dfrac{1}{2}$ and for any $\widetilde{c}\in B_\delta(c)$ we have
\begin{equation}\label{Lemma:WaveSpeedInsideWaveFront:Eq:RestrictionYCenter(t-s)c}
\sup\limits_{s\in [0, \kappa t]}|Y^{\widetilde{c}t}_s-(t-s)c|\leq 3\delta t \ ,
\end{equation}
then \eqref{Lemma:WaveSpeedInsideWaveFront:Eq:SetOfPathsACondition-u} is achieved along such paths when $t>2t_0$.

Next, if $\widetilde{c}\in B_{\delta}(c)$ is written as $\widetilde{c}=c+\Delta_1$ for $|\Delta_1|<\delta$, then define $\hat{c}=c+2\Delta_1$, and for any
$\Delta_2$ with $|\Delta_2|<\delta$ we have
\begin{equation}\label{Lemma:WaveSpeedInsideWaveFront:Eq:KeyConditionTechnial}
\widetilde{c}t-\kappa t\hat{c}+\kappa t \Delta_2\in B_{(1-\kappa)\delta t}((1-\kappa) ct)
\end{equation}
when $\kappa\in \left(0, \dfrac{1}{3}-\dfrac{|\widetilde{c}-c|}{3\delta}\right)$ is sufficiently small. Indeed
$$
\left(\widetilde{c}t-\kappa t \hat{c}+\kappa t \Delta_2\right)-(1-\kappa)ct=
t\left[\Delta_1-\kappa (2\Delta_1-\Delta_2)\right] \ .
$$
We see from here that $-(1-\kappa)\delta<\Delta_1-\kappa(2\Delta_1-\Delta_2)<(1-\kappa)\delta$ ensures \eqref{Lemma:WaveSpeedInsideWaveFront:Eq:KeyConditionTechnial}.
This reduces to $\kappa\left(\delta-(2\Delta_1-\Delta_2)\right)<\delta-\Delta_1$ and $\kappa\left(\delta+(2\Delta_1-\Delta_2)\right)<\delta+\Delta_1$.
Since $-3\delta<2\Delta_1-\Delta_2<3\delta$, we see \eqref{Lemma:WaveSpeedInsideWaveFront:Eq:KeyConditionTechnial} is guaranteed if
$0<\kappa<\dfrac{\delta-|\Delta_1|}{3\delta}=\dfrac{1}{3}-\dfrac{|\widetilde{c}-c|}{3\delta}$.

This ensures that for each $\widetilde{c}\in B_\delta(c)$
there is a $\hat{c}\in B_{2\delta}(c)$ such that \eqref{Lemma:WaveSpeedInsideWaveFront:Eq:SetOfPathsACondition-Y} is achieved
whenever
\begin{equation}\label{Lemma:WaveSpeedInsideWaveFront:Eq:RestrictionRescaledYDifferenceBallHatc}
\dfrac{\widetilde{c}t-Y^{\widetilde{c}t}_{\kappa t}}{\kappa t}\in B_\delta(\hat{c}) \ .
\end{equation}

Therefore by \eqref{Lemma:WaveSpeedInsideWaveFront:Eq:RestrictionYCenter(t-s)c} and \eqref{Lemma:WaveSpeedInsideWaveFront:Eq:RestrictionRescaledYDifferenceBallHatc} we can estimate
\begin{equation}\label{Lemma:WaveSpeedInsideWaveFront:Eq:LowerBoundP(A)}
\begin{array}{ll}
& \inf\limits_{\widetilde{c}\in B_\delta(c)}P^{(\vec{p}, \vec{z})}(A)
\\
\geq & \inf\limits_{\hat{c}\in B_{2\delta}(c), \widetilde{c}\in B_{\delta}(c)} P^{(\vec{p}, \vec{z})}\left(\sup\limits_{s\in [0, \kappa t]}|Y^{\hat{c}t}_s-(t-s)c|\leq 3\delta t \text{ and } \dfrac{\widetilde{c}t-Y^{\widetilde{c}t}_{\kappa t}}{\kappa t}\in B_\delta(\hat{c})\right) \ .
\end{array}
\end{equation}

For $\kappa\in \left(0, \dfrac{2\delta}{3\max(1,c)}\right)$ we see that
$$\begin{array}{ll}
 \sup\limits_{\hat{c}\in B_{2\delta}(c)} P^{(\vec{p}, \vec{z})}\left(\sup\limits_{s\in [0, \kappa t]}|Y^{\hat{c}t}_s-(t-s)c|> 3\delta t\right)
& \leq  \sup\limits_{\hat{c}\in B_{2\delta}(c)} P^{(\vec{p}, \vec{z})}\left(\sup\limits_{s\in [0, \kappa t]}|Y^{\hat{c}t}_s-\hat{c}t|> \dfrac{\delta t}{3}\right)
\\
& \leq  \sup\limits_{\hat{c}\in B_{2\delta}(c)} P^{(\vec{p}, \vec{z})}\left(T^{\hat{c}t}_{(\hat{c}-\delta/3)t}\wedge T^{\hat{c}t}_{(\hat{c}+\delta/3)t} <\kappa t\right) \ .
\end{array}$$

By Lemma \ref{Lemma:ExponentialTightnessHittingTimePositiveNegative}, for any $M>0$ we
can pick $\kappa$ sufficiently small so that 
$$
\limsup\limits_{t\rightarrow\infty} \dfrac{1}{\kappa t}\ln \sup\limits_{\hat{c}\in B_{2\delta}(c)} P^{(\vec{p}, \vec{z})}\left(T^{\hat{c}t}_{(\hat{c}-\delta/3)t}\wedge T^{\hat{c}t}_{(\hat{c}+\delta/3)t} <\kappa t\right)\leq -M \ , $$
That is,
\begin{equation}\label{Lemma:WaveSpeedInsideWaveFront:Eq:LowerBoundP(A)YPart}
\limsup\limits_{t\rightarrow\infty} \dfrac{1}{\kappa t}\ln \sup\limits_{\hat{c}\in B_{2\delta}(c)} P^{(\vec{p}, \vec{z})}\left(\sup\limits_{s\in [0, \kappa t]}|Y^{\hat{c}t}_s-(t-s)c|> 3\delta t\right)\leq -M \ .
\end{equation}

Combining \eqref{Lemma:WaveSpeedInsideWaveFront:Eq:LowerBound-q}, \eqref{Lemma:WaveSpeedInsideWaveFront:Eq:LowerBoundP(A)} and
\eqref{Lemma:WaveSpeedInsideWaveFront:Eq:LowerBoundP(A)YPart} we see that
\begin{equation}\label{Lemma:WaveSpeedInsideWaveFront:Eq:LowerBound-q-LDP}
q\geq \beta(1-h)+\liminf\limits_{t\rightarrow\infty}\dfrac{1}{\kappa t}\inf\limits_{\hat{c}\in B_{2\delta}(c), \widetilde{c}\in B_{\delta}(c)}
P^{(\vec{p}, \vec{z})}\left(\dfrac{\widetilde{c}t-Y^{\widetilde{c}t}_{\kappa t}}{\kappa t}\in B_\delta(\hat{c})\right) \ .
\end{equation}

Set $h>0$ and $\delta>0$ sufficiently small. By part (1) of Lemma \ref{Lemma:SolutionPropertyWaveSpeedFormula} we see that
$B_\delta(\hat{c})\subset B_{3\delta}(c)\subset [(\mu'(0))^{-1}, \infty)$. Thus we can apply
the estimate \eqref{Theorem:LDPSkewBM:Eq:UpperBoundPositive} in Theorem \ref{Theorem:LDPSkewBM} and we see that \eqref{Lemma:WaveSpeedInsideWaveFront:Eq:LowerBound-q-LDP}
gives \eqref{Lemma:WaveSpeedInsideWaveFront:Eq:LowerBoundEquivalentBallConcentrated}.
\end{proof}

The following Lemma helps to prove Lemma \ref{Lemma:WaveSpeedInsideWaveFront}.
\begin{lemma}\label{Lemma:Finiteness-q-WaveSpeedInsideWaveFront}
Suppose Assumption \ref{Assumption:ReactionRateBetaLarge} holds. For any bounded set $\Lambda\subset (c^*,\infty)$ and any small $\delta>0$,
there is a finite constant $K_1>0$ such that
\begin{equation}\label{Lemma:Finiteness-q-WaveSpeedInsideWaveFront:Eq:Bound-hatq}
\liminf\limits_{t\rightarrow\infty}\dfrac{1}{t}\ln \left(\inf\limits_{y\in B_\delta(tc)}P^{(\vec{p}, \vec{z})}
\left(Y^y_t\in B_\delta(0)\right)\right)>-K_1
\end{equation}
uniformly over all $c\in \Lambda$ such that $B_\delta(c)\subset(c^*,\infty)$.
\end{lemma}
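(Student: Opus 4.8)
The plan is to exhibit, for all large $t$, an explicit family of paths from the starting point $y\approx ct$ to $B_\delta(0)$ at time $t$ whose quenched probability is bounded below by $e^{-K_1t}$. Fix $c\in\Lambda$ with $B_\delta(c)\subset(c^*,\infty)$ (equivalently $c>c^*+\delta$) and $y\in B_\delta(tc)$, and split the target event into two successive pieces: (i) $Y^y$ hits $0$ before time $t-1$, i.e. $T^y_0\le t-1$; and (ii) after reaching $0$, the process remains in a tiny neighbourhood of $0$ for the remaining time, which forces $Y^y_t\in B_\delta(0)$. Writing $\varphi(s):=P^{(\vec{p},\vec{z})}(Y^0_s\in B_\delta(0))$, the strong Markov property of $Y$ at $T^y_0$ gives
\[
P^{(\vec{p},\vec{z})}(Y^y_t\in B_\delta(0))\ \ge\ E^{(\vec{p},\vec{z})}\Big[\mathbf{1}_{\{T^y_0\le t-1\}}\,\varphi(t-T^y_0)\Big].
\]

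For step (i) I would apply the large deviations lower bound \eqref{Theorem:LDPHittingTime:Eq:LowerBound} of Theorem~\ref{Theorem:LDPHittingTime} to $T^y_0$, with the role of ``$v$'' played by $v':=y/t$ (so $v'\to c$), the role of ``$c$'' played by $0$, and open set $F=(1-\varepsilon,1)$. Since $c>c^*>(\mu'(0))^{-1}$ by part (1) of Lemma~\ref{Lemma:SolutionPropertyWaveSpeedFormula} and $\mu'(0)\le\mu'(\eta_c-)$ by part (6) of Lemma~\ref{Lemma:PropertiesMu}, we have $1/c<\mu'(0)\le\mu'(\eta_c-)$; hence $v'>c^*>(\mu'(0))^{-1}$ for all $t\ge1$, so $F\subset(0,v'\mu'(0))$, and by \eqref{Lemma:PropertyIa:Eq:I-a-Explicit-eta-a} the rate $I$ is finite and continuous at $1/c$. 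Using in addition that the convergence in Theorem~\ref{Theorem:LyapunovExponentPositiveDirection} (hence the conclusion of Theorem~\ref{Theorem:LDPHittingTime}) is uniform over starting rates $v'$ in a neighbourhood of $c$, I would conclude that for every $\varepsilon>0$ there is $t_0$ with
\[
P^{(\vec{p},\vec{z})}(T^y_0\le t-1)\ \ge\ P^{(\vec{p},\vec{z})}\!\big(T^y_0/t\in(1-\varepsilon,1)\big)\ \ge\ e^{-t(cI(1/c)+\varepsilon)}\qquad(t\ge t_0),
\]
uniformly in $y\in B_\delta(tc)$ and in admissible $c\in\Lambda$.

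Step (ii) is elementary. Put $\delta'':=\tfrac12\min(\delta,\underline{\ell})$. On $(-\delta'',\delta'')$ the only interface point is $z_0=0$, and the skewness there vanishes because $d_0=2$, so $Y$ started from $0$ coincides on this interval with a standard Brownian motion. Hence $\varphi(s)\ge P\big(\text{standard BM from }0\text{ stays in }(-\delta'',\delta'')\text{ on }[0,s]\big)\ge c_2\,e^{-\lambda_1 s}$ for all $s\ge1$, with $\lambda_1=\pi^2/(8{\delta''}^2)$ and $c_2=c_2(\delta,\underline{\ell})>0$. Since this bound is decreasing in $s$ and $t-T^y_0\in[1,t]$ on $\{T^y_0\le t-1\}$, combining with step (i) yields
\[
\inf_{y\in B_\delta(tc)}P^{(\vec{p},\vec{z})}(Y^y_t\in B_\delta(0))\ \ge\ c_2\,e^{-\lambda_1 t}\,e^{-t(cI(1/c)+\varepsilon)}\qquad(t\ge t_0).
\]
Taking $\varepsilon=1$ and $K_1:=\lambda_1+\sup_{c}cI(1/c)+2$, which is finite because $\Lambda$ is bounded and $c\mapsto cI(1/c)$ is continuous and finite on the compact set of admissible rates (under $c\mapsto1/c$ these land in a compact subset of $(0,\mu'(\eta_c-))$ on which $I$ is continuous), gives \eqref{Lemma:Finiteness-q-WaveSpeedInsideWaveFront:Eq:Bound-hatq} uniformly in $c$.

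The one genuinely delicate point is the uniformity over the starting point $y\in B_\delta(tc)$ used in step (i): Theorem~\ref{Theorem:LDPHittingTime} is stated for starting point exactly $v't$. This should follow from the uniformity already present in Theorem~\ref{Theorem:LyapunovExponentPositiveDirection}, but one must verify it survives the Cram\'er change-of-measure step in the proof of Theorem~\ref{Theorem:LDPHittingTime}; alternatively, since $|y-ct|\le\delta$ is bounded, one can absorb the discrepancy by first forcing $Y^y$ to hit a fixed reference point in time $O(1)$ with probability bounded below (using Assumptions \ref{Assumption:d} and \ref{Assumption:ell} to control the skewness and edge lengths), which costs only a factor $e^{o(t)}$ and does not affect the exponential rate.
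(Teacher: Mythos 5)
Your proof is correct and takes a genuinely different route from the paper's. The paper argues via a trajectory/event-inclusion claim: it asserts that $\{T^{ct+\delta}_{-\delta}\le t\}\subseteq\{Y^y_t\in B_\delta(0)\}$ for every $y\in B_\delta(tc)$ ``by continuity,'' decomposes $T^{ct+\delta}_{-\delta}=T^{ct+\delta}_{ct}+T^{ct}_0+T^0_{-\delta}$, argues the two end pieces are $o(t)$, and then applies the LDP lower bound, Theorem \ref{Theorem:LDPHittingTime}, to $T^{ct}_0/t$. That inclusion is stated loosely (a sub-path from $B_\delta(ct)$ to $B_\delta(0)$ takes time $s_2-s_1\le t$, not exactly $t$, so a time-shift/coupling step is being glossed over), and also the displayed comparison $P(T^{ct}_0/t\le 1)>P(T^{ct}_0/t\in(0,c\mu'(0)))$ is in the wrong direction when $c\mu'(0)>1$. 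Your decomposition --- hit $0$ by time $t-1$ via the LDP lower bound, then stay in a small interval around $0$ for the remaining $\le t$ units of time via the explicit Brownian estimate (using $d_0=2$ so $Y$ is an unskewed Brownian motion near $0$) --- is cleaner and patches precisely those gaps, at the cost of the extra exponential factor $e^{-\lambda_1 t}$, which is harmless since only finiteness of $K_1$ is claimed. Both proofs hinge on Theorem \ref{Theorem:LDPHittingTime} applied with the theorem's ``$c$'' taken to be $0$, which is strictly outside its stated hypothesis $0<c<v$ (the paper does the same); and both need uniformity of the LDP in the starting level $v'\approx c$ over $B_\delta(tc)$. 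You flag the latter point explicitly and offer a workaround (force $Y^y$ to a fixed reference point in $O(1)$ time at a subexponential cost), which is essentially the same mechanism the paper invokes with its $T^{ct+\delta}_{ct}+T^0_{-\delta}=o(t)$ step. Your finiteness argument for $\sup_{c\in\Lambda}cI(1/c)$ --- that $1/c$ lives in a compact subset of $(0,\mu'(0))\subset(0,\mu'(\eta_c-))$ on which the convex function $I$ is finite and hence continuous --- is also sound.
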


\begin{proof}
Due to continuity of $Y_t$, for any trajectory of $Y_t^y$ starting from $y=ct+\delta$ and hitting $-\delta$ before time $t$, there must exist a piece of this trajectory that starts from some $y\in B_\delta(ct)$ and ends in
$B_\delta(0)$ before time $t$. This gives us the event inclusion
$\{T^{ct+\delta}_{-\delta}\leq t\}\subseteq \{Y^y_t\in B_\delta(0), y\in B_\delta(ct)\}$, which implies that
$$\inf\limits_{y\in B_\delta(tc)}P^{(\vec{p}, \vec{z})}
\left(Y^y_t\in B_\delta(0)\right)\geq P^{(\vec{p}, \vec{z})}\left(\dfrac{T^{ct+\delta}_{-\delta}}{t}\leq 1\right) \ .$$
This further implies that
$$\begin{array}{ll}
\liminf\limits_{t\rightarrow\infty}\dfrac{1}{t}\ln \left(\inf\limits_{y\in B_\delta(tc)}P^{(\vec{p}, \vec{z})}
\left(Y^y_t\in B_\delta(0)\right)\right)
& \geq \liminf\limits_{t\rightarrow\infty}\dfrac{1}{t}\ln P^{(\vec{p}, \vec{z})}\left(\dfrac{T^{ct+\delta}_{-\delta}}{t}\leq 1\right)
\\
& \stackrel{(*)}{=} \liminf\limits_{t\rightarrow\infty}\dfrac{1}{t}\ln P^{(\vec{p}, \vec{z})}\left(\dfrac{T^{ct}_0}{t}\leq 1\right) \ .
\end{array}$$
Here $(*)$ is due to the fact that $T^{ct+\delta}_{-\delta}=T^{ct+\delta}_{ct}+T^{ct}_0+T^0_{-\delta}$, and that
$\lim\limits_{t\rightarrow\infty}\dfrac{T^{ct+\delta}_{ct}+T^0_{-\delta}}{t}=0$ holds $P^{(\vec{p}, \vec{z})}$-almost surely.
Setting $v=c$ and $c=0$ in
\eqref{Theorem:LDPHittingTime:Eq:LowerBound} of Theorem \ref{Theorem:LDPHittingTime}, we obtain
$$\begin{array}{ll}
\liminf\limits_{t\rightarrow\infty}\dfrac{1}{t}\ln P^{(\vec{p}, \vec{z})}\left(\dfrac{T^{ct}_0}{t}\leq 1\right)
&> \liminf\limits_{t\rightarrow\infty}\dfrac{1}{t}\ln P^{(\vec{p}, \vec{z})}\left(\dfrac{T^{ct}_0}{t}\in (0, c\mu'(0))\right)
\\
&\geq -c\inf\limits_{a\in (0, c\mu'(0))}I\left(\dfrac{a}{c}\right) \equiv -K_1 \ ,
\end{array}$$
so that \eqref{Lemma:Finiteness-q-WaveSpeedInsideWaveFront:Eq:Bound-hatq} follows.
\end{proof}

\section{Variational formula for the speed 
} \label{Sec:Algorithm-Examples}

Theorem \ref{Theorem:WaveSpeed} indicates that to compute the speed $c^*$ in terms of the degrees $(d_i)$ and the branch lengths $(\ell_i)$, we need to solve the equation
\eqref{Eq:WaveSpeedFormula}, i.e.
$c^* I\Big(\dfrac{1}{c^*}\Big)=\beta$
for $c^*>0$ (assuming Assumption \ref{Assumption:ReactionRateBetaLarge}).

By part (1) of Lemma \ref{Lemma:SolutionPropertyWaveSpeedFormula}, we see that $c^*>(\mu'(0))^{-1}$, i.e.,
$0<\dfrac{1}{c^*}<\mu'(0)$. Thus $\sup\limits_{\eta\leq \eta_c}\left(\dfrac{1}{c^*}\eta-\mu(\eta)\right)$
is achieved at a point $\eta\leq 0$ due to part (1) of Lemma \ref{Lemma:PropertiesMu}, saying that $\mu'(\eta)$ is strictly monotonically increasing in $\eta$. This implies that $I\left(\dfrac{1}{c^*}\right)=
\sup\limits_{\eta\leq 0}\left(\dfrac{1}{c^*}\eta-\mu(\eta)\right)$. Thus we have

$$c^*I\left(\dfrac{1}{c^*}\right)=c^*\sup\limits_{\eta \leq 0}\left(\dfrac{1}{c^*}\eta-\mu(\eta)\right)
= \sup\limits_{\eta\leq 0}(\eta-c^*\mu(\eta))=\beta \ .$$

This gives us

\begin{equation}\label{Eq:VariationalFormulaWaveSpeedInTermsMu}
c^*=\inf\limits_{\eta\leq 0}\dfrac{\eta-\beta}{\mu(\eta)}=\inf\limits_{\lambda\geq 0}\dfrac{\lambda+\beta}{|\mu(-\lambda)|} \ .
\end{equation}

Here we have used the fact that $\mu(\eta)\leq 0$ for $\eta \leq 0$ (part (3) of Lemma \ref{Lemma:PropertiesMu}). Equation \eqref{Eq:VariationalFormulaWaveSpeedInTermsMu} provides
a \textit{variational formula} for the wave speed in terms of the Lyapunov function $\mu(\eta)$ that we introduced in
\eqref{Theorem:LyapunovExponentPositiveDirection:Eq:LyapunovExponentIdentity}.
Using \eqref{Eq:VariationalFormulaWaveSpeedInTermsMu},we obtain in the following theorem that gives
the variational formula for the wave speed $c^*$ in terms of $\vec{d}$ and $\vec{\ell}$.

\begin{theorem}[variational formula for the wave speed on $\mathbb{T}_{\vec{d}, \vec{\ell}}$]\label{Theorem:VariationalFormulaWaveSpeed}
Assuming Assumption \ref{Assumption:ReactionRateBetaLarge}. The wave speed $c^*$ for the system \eqref{Eq:FisherKPPInitialBoundaryConditionVertices} on $\mathbb{T}_{\vec{d}, \vec{\ell}}$
in the sense of Definition \ref{Def:WaveSpeedTree} is given by
\begin{equation}\label{Theorem:VariationalFormulaWaveSpeed:Eq:VariationalFormulaWaveSpeed}
c^*=\inf\limits_{\lambda\geq 0}\left\{\dfrac{\lambda+\beta}{\sqrt{2\lambda}+\dfrac{1}{\mathbf{E} \ell_0}\mathbf{E}\left[\ln\left(1+\dfrac{1-e^{-2\sqrt{2\lambda}\ell_0}}{\xi_\lambda-1}\right)\right]}\right\} \ ,
\end{equation}
where $\xi_{\lambda}\in [1,\infty)$ is given by Theorem \ref{Theorem:Existence_xi}.
In particular, $c^*\leq \sqrt{2\beta}$ with the equality achieved if and only if the tree
$\mathbb{T}_{\vec{d}, \vec{\ell}}$ degenerates to $\mathbb{R}$.
\end{theorem}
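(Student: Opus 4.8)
\emph{Proof plan.} The starting point is the representation \eqref{Eq:VariationalFormulaWaveSpeedInTermsMu}, namely $c^{*}=\inf_{\lambda\geq 0}\frac{\lambda+\beta}{|\mu(-\lambda)|}$, which is already obtained from \eqref{Eq:WaveSpeedFormula}, from the identity $I(1/c^{*})=\sup_{\eta\leq 0}\big(\eta/c^{*}-\mu(\eta)\big)$ (legitimate because $c^{*}>(\mu'(0))^{-1}$ by part (1) of Lemma \ref{Lemma:SolutionPropertyWaveSpeedFormula}), and from the sign $\mu(\eta)\leq 0$ for $\eta\leq 0$ (parts (1)--(2) of Lemma \ref{Lemma:PropertiesMu}). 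The first task is to substitute the explicit formula for $\mu$ at negative arguments. By part (2) of Lemma \ref{Lemma:PropertiesMu}, for $\lambda>0$ one has $\mu(-\lambda)=-\sqrt{2\lambda}+\frac{1}{\mathbf{E}\ell_0}\mathbf{E}\!\left(\ln\frac{\xi_{\lambda}-1}{\xi_{\lambda}-e^{-2\sqrt{2\lambda}\ell_0}}\right)$. Writing $\xi_{\lambda}-e^{-2\sqrt{2\lambda}\ell_0}=(\xi_{\lambda}-1)+(1-e^{-2\sqrt{2\lambda}\ell_0})$, the ratio inside the logarithm equals $\big(1+\tfrac{1-e^{-2\sqrt{2\lambda}\ell_0}}{\xi_{\lambda}-1}\big)^{-1}$, so
\[
\mu(-\lambda)=-\sqrt{2\lambda}-\frac{1}{\mathbf{E}\ell_0}\,\mathbf{E}\!\left[\ln\!\Big(1+\frac{1-e^{-2\sqrt{2\lambda}\ell_0}}{\xi_{\lambda}-1}\Big)\right].
\]
Since $\xi_{\lambda}\geq 1$ (Lemma \ref{Lemma:LRtildeMonotonicityOrder}) and $1-e^{-2\sqrt{2\lambda}\ell_0}\geq 0$, the bracketed term is non-negative; combined with Corollary \ref{Corollary:xiBoundedBelow1Plusc} (which bounds $\xi_{\lambda}$ away from $1$, so the integrand is bounded) this expectation is finite, and $|\mu(-\lambda)|=\sqrt{2\lambda}+\frac{1}{\mathbf{E}\ell_0}\mathbf{E}[\ln(1+\cdots)]$. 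Inserting this into \eqref{Eq:VariationalFormulaWaveSpeedInTermsMu} produces \eqref{Theorem:VariationalFormulaWaveSpeed:Eq:VariationalFormulaWaveSpeed}.

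For the bound $c^{*}\leq\sqrt{2\beta}$, the representation just derived gives $|\mu(-\lambda)|\geq\sqrt{2\lambda}$ for every $\lambda>0$, hence $\frac{\lambda+\beta}{|\mu(-\lambda)|}\leq\frac{\lambda+\beta}{\sqrt{2\lambda}}$ for every $\lambda>0$. Taking the infimum over $\lambda>0$ and using that $\lambda\mapsto\frac{\lambda+\beta}{\sqrt{2\lambda}}$ attains its minimum at $\lambda=\beta$ with value $\sqrt{2\beta}$, I obtain $c^{*}\leq\sqrt{2\beta}$.

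For the equality characterization I argue both directions. If the tree degenerates to $\mathbb{R}$, then every $d_i=2$, i.e.\ $\zeta_i\equiv 0$, so the matrices $M_i$ are diagonal and $\widetilde L_k/\widetilde R_k=\prod_{i<k}\gamma_i^{2}\to\infty$; thus $\xi_{\lambda}=\infty$, the logarithmic correction vanishes, $|\mu(-\lambda)|=\sqrt{2\lambda}$ for all $\lambda$, and the computation above gives $c^{*}=\inf_{\lambda\geq 0}\frac{\lambda+\beta}{\sqrt{2\lambda}}=\sqrt{2\beta}$. Conversely, suppose the tree does not degenerate, so $\zeta$ is not identically $0$; then Corollary \ref{Corollary:xiBoundedAboveExpectation} gives $\mathbf{E}[1/\xi_{\lambda}]>0$, i.e.\ $\mathbf{P}(\xi_{\lambda}<\infty)>0$. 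On that event $\xi_{\lambda}-e^{-2\sqrt{2\lambda}\ell_0}>\xi_{\lambda}-1$ strictly whenever $\lambda>0$, so the integrand $\ln(1+\cdots)$ is strictly positive there, while it is bounded above (Corollary \ref{Corollary:xiBoundedBelow1Plusc}); hence $\mathbf{E}[\ln(1+\cdots)]>0$ and $|\mu(-\lambda)|>\sqrt{2\lambda}$ strictly for every $\lambda>0$. Evaluating the variational formula at $\lambda=\beta$ then yields $c^{*}\leq\frac{2\beta}{|\mu(-\beta)|}<\frac{2\beta}{\sqrt{2\beta}}=\sqrt{2\beta}$, which completes the dichotomy.

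The only genuinely delicate step is the strict inequality in the non-degenerate case: one must confirm both that $\xi_{\lambda}<\infty$ with positive $\mathbf{P}$-probability (the content of Corollary \ref{Corollary:xiBoundedAboveExpectation}, see also Remark \ref{Remark:RtildeTendInfty}) and that the resulting logarithmic correction to $|\mu(-\lambda)|$ is a genuinely positive \emph{and finite} quantity, for which the uniform lower bound on $\xi_{\lambda}$ in Corollary \ref{Corollary:xiBoundedBelow1Plusc} (equivalently the two-sided control $w_{\lambda}(\ell_0)\in[c,1]$ coming from Lemma \ref{Lemma:EstimateHittingProbabilityBackZeroMulti-SkewedBM}) is exactly what prevents the correction from vanishing or blowing up. Everything else is bookkeeping with the Legendre transform and elementary one-variable calculus.
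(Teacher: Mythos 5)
Your proof is correct and follows essentially the same path as the paper: start from \eqref{Eq:VariationalFormulaWaveSpeedInTermsMu}, substitute the explicit formula \eqref{Lemma:PropertiesMu:Eq:ExplicitCalculation-mu-eta} for $\mu(-\lambda)$, rewrite $\dfrac{\xi_\lambda-1}{\xi_\lambda-e^{-2\sqrt{2\lambda}\ell_0}}$ as $\bigl(1+\tfrac{1-e^{-2\sqrt{2\lambda}\ell_0}}{\xi_\lambda-1}\bigr)^{-1}$, and invoke Corollaries \ref{Corollary:xiBoundedAboveExpectation} and \ref{Corollary:xiBoundedBelow1Plusc} for the dichotomy. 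The only difference is cosmetic but in your favor: to get the strict inequality $c^*_{\mathbb{T}}<\sqrt{2\beta}$ in the non-degenerate case, you simply evaluate the variational expression at $\lambda=\beta$ and use that the log-correction is strictly positive and finite there, giving $c^*\leq f(\beta)<g(\beta)=\sqrt{2\beta}$ directly; the paper instead passes through the pointwise bound $\ln(1+x)\geq x/(1+x)$ and a somewhat elliptical remark about where the infimum is achieved. Your one-line finish is tighter, while the paper's route has the collateral benefit of yielding the quantitative estimate later packaged in Corollary \ref{Corollary:SlowDown}. Both are sound, and your identification of the genuinely delicate points (positive probability that $\xi_\lambda<\infty$, and the uniform bound $\xi_\lambda>1$ to keep the log-correction finite) matches what the paper relies on.
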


\begin{proof}
By \eqref{Lemma:PropertiesMu:Eq:ExplicitCalculation-mu-eta} in Lemma \ref{Lemma:PropertiesMu}
we can calculate $\mu(-\lambda)$ in terms of $\vec{d}$ and $\vec{\ell}$:
\begin{equation}\label{Eq:muOfMinusLambda}
\mu(-\lambda)=-\sqrt{2\lambda}+\dfrac{1}{\mathbf{E} \ell_0}\mathbf{E}\left(\ln\dfrac{\xi_\lambda-1}{\xi_\lambda-e^{-2\sqrt{2\lambda}\ell_0}}\right) \ .
\end{equation}

Since $\xi=\xi_\lambda\geq 1$, we further see that

\begin{equation}\label{Eq:AbsloteValuemuOfMinusLambda}
|\mu(-\lambda)|=\sqrt{2\lambda}+\dfrac{1}{\mathbf{E} \ell_0}\mathbf{E}\left[\ln\left(1+\dfrac{1-e^{-2\sqrt{2\lambda}\ell_0}}{\xi_\lambda-1}\right)\right] \ .
\end{equation}

Formula \eqref{Theorem:VariationalFormulaWaveSpeed:Eq:VariationalFormulaWaveSpeed} is an easy consequence of
\eqref{Eq:VariationalFormulaWaveSpeedInTermsMu} and \eqref{Eq:AbsloteValuemuOfMinusLambda}.
We first demonstrate how  \eqref{Theorem:VariationalFormulaWaveSpeed:Eq:VariationalFormulaWaveSpeed} gives the asymptotic speed $c^*$
for the FKPP equation on $\mathbb{R}$,
\begin{equation}\label{Eq:ExampleFKPPWaveSpeed-FKPP}
\dfrac{\partial u}{\partial t} =\dfrac{1}{2}\dfrac{\partial^2 u}{\partial x^2} +\beta u(1-u) \ .
\end{equation}
In this case $\mathbb{T}_{\vec{d}, \vec{\ell}}$ degenerates to $\mathbb{R}$
and all $p_i=\dfrac{1}{2}$. Thus $\xi=+\infty$ by Theorem \ref{Theorem:Existence_xi}, Corollary \ref{Corollary:xiBoundedAboveExpectation}
and Remark \ref{Remark:PropertyXi}.
By \eqref{Theorem:VariationalFormulaWaveSpeed:Eq:VariationalFormulaWaveSpeed},
\begin{equation}\label{Eq:ExampleFKPPWaveSpeed-FKPPWaveSpeed}
c^*_{\mathbb{R}}=\inf\limits_{\lambda\geq 0}\dfrac{\lambda+\beta}{\sqrt{2\lambda}}=\sqrt{2\beta} \ .
\end{equation}

Consider the general non-degenerate tree $\mathbb{T}_{\vec{d}, \vec{\ell}}$ case.
Using the elementary inequality $\ln(1+x)\geq \dfrac{x}{1+x}$ for all $x>0$, we can estimate
$$\begin{array}{ll}
 \dfrac{1}{\mathbf{E} \ell_0}
\mathbf{E}\left[\ln\left(1+\dfrac{1-e^{-2\sqrt{2\lambda}\ell_0}}{\xi_\lambda-1}\right)\right]
& \geq  \dfrac{1}{\overline{\ell}}\mathbf{E}\left[\dfrac{\dfrac{1-e^{-2\sqrt{2\lambda}\ell_0}}{\xi_\lambda-1}}
{1+\dfrac{1-e^{-2\sqrt{2\lambda}\ell_0}}{\xi_\lambda-1}}\right]
 = \dfrac{1}{\overline{\ell}}\mathbf{E}\left[\dfrac{1-e^{-2\sqrt{2\lambda}\ell_0}}
{\xi_\lambda-e^{-2\sqrt{2\lambda}\ell_0}}\right]
\\
& \geq \dfrac{1-e^{-2\sqrt{2\lambda}\underline{\ell}}}{\overline{\ell}}\mathbf{E}\left[\dfrac{1}
{\xi_\lambda-e^{-2\sqrt{2\lambda}\ell_0}}\right]
\\
& \geq
\dfrac{1-e^{-2\sqrt{2\lambda}\underline{\ell}}}{\overline{\ell}}\mathbf{E}
\left[\dfrac{1}{\xi_\lambda}\right]>0
\end{array}$$
by Corollary \ref{Corollary:xiBoundedAboveExpectation}. Since the infinimum in \eqref{Theorem:VariationalFormulaWaveSpeed:Eq:VariationalFormulaWaveSpeed} is taken at some $\lambda\leq \sqrt{2\beta}$, it ensures that we have
\begin{equation}\label{Eq:ExampleFKPPWaveSpeedTree-SlowDown}
c^*_{\mathbb{T}_{\vec{d}, \vec{\ell}}}\,=\,\inf\limits_{\lambda\geq 0}\dfrac{\lambda+\beta}{\sqrt{2\lambda}+\dfrac{1}{\mathbf{E} \ell_0}\mathbf{E}\left[\ln\left(1+\dfrac{1-e^{-2\sqrt{2\lambda}\ell_0}}{\xi_\lambda-1}\right)\right]}
\,<\, \inf\limits_{\lambda\geq 0}\dfrac{\lambda+\beta}{\sqrt{2\lambda}}=\sqrt{2\beta}=c^*_{\mathbb{R}} \ ,
\end{equation}
i.e., the wave speed is strictly slower than the one on $\mathbb{R}$.
\end{proof}

\begin{remark}[heuristic reason for the slow down of wave speed]\rm\label{Remark:HeuristicReasonSlowDownWave}
The slow down of the wave speed on $\mathbb{T}_{\vec{d}, \vec{\ell}}$ can be heuristically explained.
The seemingly very complicated arguments that we employed in Section \ref{Sec:WavePropagation}
which lead to the existence of the wavefront is essentially based on an analysis of \eqref{Lemma:proj:Eq:FeynmanKac-v}:
$$v(t,y)=E^{(\vec{d},\vec{\ell})}_y\Big[v_0(Y_t)\exp\Big\{\beta\int^t_0 \Big(1-v(t-s,Y_s)\Big)ds\Big\} \Big] \ .$$

From this equation we see that for those regions of $y=ct$
that the value of $v(t, y)$ is small (indeed not close $1$), the reaction term $f(u)=\beta u(1-u)$ will be creating an exponential
birth of the particles at a rate of $\beta$, i.e., an $e^{\beta t}$ factor
in the solution $v(t, y)$ in \eqref{Lemma:proj:Eq:FeynmanKac-v}.
However, this exponential term $\exp\Big\{\beta\displaystyle{\int^t_0} \Big(1-v(t-s,Y_s)\Big)ds\Big\}$ is
multiplied by $v_0(Y_t)=\mathbf{1}_{(-\delta, \delta)}(Y_t)$,
the expectation of which is given by the large deviations principle of $Y_t$ at a rate of
$-|c|I(\frac{1}{|c|})t$, i.e. an $e^{-|c|I(\frac{1}{|c|})t}$ factor in the solution $v(t, y)$ in \eqref{Lemma:proj:Eq:FeynmanKac-v}.
The competition  between these two effects,
namely the exponential growth due to reaction and the large deviation effect due to diffusion,
results in the fact that the wavefront speed $c^*$ is formed by the equation
$c^*I\left(\dfrac{1}{c^*}\right)=\beta$. This is to say that the traveling speed $c^*$ (or $-c^*$) to the direction of the wave propagation
should be a speed so that, when travelling at this speed, the rate of coming back to $(-\delta, \delta)$ (the large deviations rate)
equals the birth rate $\beta$. In our case, the local time term in the multi-skewed process $Y_t$ from the stochastic differential equation \eqref{Eq:MultiSkewBM-SDE} can be viewed as providing
a drift that directs towards the direction of the wave propagation, which results in more difficulty for $Y_t$ to reach back
$(-\delta, \delta)$, i.e., larger large deviations rate $|c|I\left(\dfrac{1}{|c|}\right)$ for fixed speed $c$.
Noticing that $cI\left(\dfrac{1}{c}\right)$ is monotonically increasing when $c>c^*$ and increases,
for fixed $\beta> \max\left(\dfrac{-\mu(0)}{\mu'(0)}, \eta_c\right)$,
to satisfy $c^*I\left(\dfrac{1}{c^*}\right)=\beta$, the speed $c^*$ in our case should be \emph{slower} than the
bare line $\mathbb{R}$ case $\sqrt{2\beta}$ as we see in \eqref{Eq:ExampleFKPPWaveSpeedTree-SlowDown}.
\end{remark}

The slow down of the wave speed can be quantitatively estimated from $(d_i)$ and $(\ell_i)$ using our calculations in
Corollary \ref{Corollary:xiBoundedBelow1Plusc}, formula \eqref{Lemma:PropertiesMu:Eq:ExplicitCalculation-mu-eta} in
Lemma \ref{Lemma:PropertiesMu} as well as the variational formula for the wave speed
\eqref{Theorem:VariationalFormulaWaveSpeed:Eq:VariationalFormulaWaveSpeed}. We have

\begin{corollary}\label{Corollary:SlowDown}
Under the same assumption of Theorem \ref{Theorem:VariationalFormulaWaveSpeed},
\begin{equation}\label{Corollary:SlowDown:Eq:EstimateOfSlowDown}
0\leq c^*_{\mathbb{R}}-c^*_{\mathbb{T}_{\vec{d}, \vec{\ell}}}< \sqrt{2\beta}-\inf\limits_{\lambda\geq 0}\dfrac{\lambda+\beta}{\sqrt{2\lambda}+\dfrac{1}{\underline{\ell}}\ln\left(1+\dfrac{\overline{d}e^{2\sqrt{2\sqrt{2\beta}}\cdot\overline{\ell}}}{2}
\cdot\dfrac{e^{4\sqrt{2\lambda}\cdot\overline{\ell}}-1}
{e^{2\sqrt{2\lambda}\underline{\ell}}-1}\right)} \ .
\end{equation}
\end{corollary}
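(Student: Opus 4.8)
The plan is to derive \eqref{Corollary:SlowDown:Eq:EstimateOfSlowDown} from the variational formula of Theorem \ref{Theorem:VariationalFormulaWaveSpeed} (equations \eqref{Theorem:VariationalFormulaWaveSpeed:Eq:VariationalFormulaWaveSpeed} and \eqref{Eq:AbsloteValuemuOfMinusLambda}) by crudely over-estimating the denominator $|\mu(-\lambda)|$, using only the almost sure lower bound on $\xi_\lambda$ from Corollary \ref{Corollary:xiBoundedBelow1Plusc} together with the uniform bounds $\underline{\ell}\le\ell_0\le\overline{\ell}$ and $d_i\le\overline{d}$. Recall from the proof of Theorem \ref{Theorem:VariationalFormulaWaveSpeed} that the infimum in \eqref{Theorem:VariationalFormulaWaveSpeed:Eq:VariationalFormulaWaveSpeed} is attained at some $\lambda^*\in[0,\sqrt{2\beta}]$; under Assumption \ref{Assumption:ReactionRateBetaLarge} one in fact has $\lambda^*>0$, since $\lambda\mapsto(\lambda+\beta)/|\mu(-\lambda)|$ is strictly decreasing near $0$ (its one-sided derivative at $0$ has the sign of $-\mu(0)-\beta\,\mu'(0)$, which is negative by Assumption \ref{Assumption:ReactionRateBetaLarge} and parts (1),(6) of Lemma \ref{Lemma:PropertiesMu}). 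Hence it suffices to produce a good upper bound $\overline{D}(\lambda)$ for $|\mu(-\lambda)|$ valid on $(0,\sqrt{2\beta}]$ and then compare at $\lambda^*$.

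For the estimate I would proceed as follows. In \eqref{Eq:AbsloteValuemuOfMinusLambda}, use $\mathbf{E}\ell_0\ge\underline{\ell}$ to replace $1/\mathbf{E}\ell_0$ by $1/\underline{\ell}$, and dominate the random quantity inside the logarithm by a deterministic one, so that the expectation of the logarithm is at most the logarithm of that deterministic bound. For the bound inside the logarithm: from Corollary \ref{Corollary:xiBoundedBelow1Plusc} and $e^{2\overline{\ell}\sqrt{2\lambda}}+1\le 2e^{2\overline{\ell}\sqrt{2\lambda}}$ one gets $\xi_\lambda-1\ge(e^{2\underline{\ell}\sqrt{2\lambda}}-1)/(\overline{d}\,e^{2\overline{\ell}\sqrt{2\lambda}})$ almost surely, while $\ell_0\le\overline{\ell}$ gives
\[
1-e^{-2\sqrt{2\lambda}\ell_0}\ \le\ 1-e^{-2\sqrt{2\lambda}\overline{\ell}}\ =\ e^{-2\sqrt{2\lambda}\overline{\ell}}\big(e^{2\sqrt{2\lambda}\overline{\ell}}-1\big)\ \le\ e^{2\sqrt{2\lambda}\overline{\ell}}-1\ \le\ \tfrac12\big(e^{4\sqrt{2\lambda}\overline{\ell}}-1\big),
\]
where the last step is the elementary inequality $u-1\le\tfrac12(u^2-1)$ for $u\ge1$. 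Multiplying these two bounds and then using $\lambda\le\sqrt{2\beta}$ to replace the leftover factor $e^{2\overline{\ell}\sqrt{2\lambda}}$ by the constant $e^{2\sqrt{2\sqrt{2\beta}}\,\overline{\ell}}$ produces exactly the quantity inside the logarithm in \eqref{Corollary:SlowDown:Eq:EstimateOfSlowDown}; calling the resulting upper bound $\overline{D}(\lambda)$, we obtain $|\mu(-\lambda)|\le\overline{D}(\lambda)$ on $(0,\sqrt{2\beta}]$, and since each inequality above is strict for $\lambda>0$ we get $|\mu(-\lambda^*)|<\overline{D}(\lambda^*)$.

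To finish, Theorem \ref{Theorem:VariationalFormulaWaveSpeed} gives $c^*_{\mathbb{R}}=\sqrt{2\beta}$ and $c^*_{\mathbb{T}_{\vec{d},\vec{\ell}}}=(\lambda^*+\beta)/|\mu(-\lambda^*)|$, so
\[
c^*_{\mathbb{T}_{\vec{d},\vec{\ell}}}\ =\ \frac{\lambda^*+\beta}{|\mu(-\lambda^*)|}\ >\ \frac{\lambda^*+\beta}{\overline{D}(\lambda^*)}\ \ge\ \inf_{\lambda\ge0}\frac{\lambda+\beta}{\overline{D}(\lambda)},
\]
the last step because enlarging the domain of an infimum only decreases it. Subtracting from $\sqrt{2\beta}$ yields the right (strict) inequality of \eqref{Corollary:SlowDown:Eq:EstimateOfSlowDown}; the left inequality $0\le c^*_{\mathbb{R}}-c^*_{\mathbb{T}_{\vec{d},\vec{\ell}}}$ is just the slow-down bound $c^*_{\mathbb{T}_{\vec{d},\vec{\ell}}}\le\sqrt{2\beta}$ already proved in Theorem \ref{Theorem:VariationalFormulaWaveSpeed}.

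The main obstacle is the bookkeeping in the middle step: the crude inequalities must be chained so that $e^{4\sqrt{2\lambda}\overline{\ell}}-1$ and $e^{2\sqrt{2\lambda}\underline{\ell}}-1$ appear in precisely those positions, which is what keeps $\overline{D}(\lambda)$ finite and of the displayed closed form as $\lambda\downarrow0$ (where both the numerator and denominator of the final fraction vanish like $\sqrt{\lambda}$), and one must check that every replacement genuinely enlarges $|\mu(-\lambda)|$ rather than shrinking it. Beyond Corollary \ref{Corollary:xiBoundedBelow1Plusc} and the variational formula, no new ingredient is needed.
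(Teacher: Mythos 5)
Your proof is correct and follows essentially the same route as the paper: start from the variational formula \eqref{Theorem:VariationalFormulaWaveSpeed:Eq:VariationalFormulaWaveSpeed} / \eqref{Eq:AbsloteValuemuOfMinusLambda}, use Corollary \ref{Corollary:xiBoundedBelow1Plusc} to lower-bound $\xi_\lambda-1$, use $\underline{\ell}\le\ell_0\le\overline{\ell}$ to overestimate, and finally replace the leftover $e^{2\sqrt{2\lambda}\,\overline{\ell}}$ factor by the constant $e^{2\sqrt{2\sqrt{2\beta}}\,\overline{\ell}}$ using $\lambda^*\le\sqrt{2\beta}$ before passing to the infimum. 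Your version is in fact a little cleaner than the paper's: in the displayed chain following Corollary \ref{Corollary:xiBoundedBelow1Plusc} the paper writes an ``$=$'' between $\ln\bigl(1+\frac{1-e^{-2\sqrt{2\lambda}\overline{\ell}}}{2\overline{d}^{-1}(e^{2\sqrt{2\lambda}\underline{\ell}}-1)/(e^{2\sqrt{2\lambda}\overline{\ell}}+1)}\bigr)$ and $\ln\bigl(1+\frac{\overline{d}}{2}\frac{e^{2\sqrt{2\lambda}\overline{\ell}}(e^{4\sqrt{2\lambda}\overline{\ell}}-1)}{e^{2\sqrt{2\lambda}\underline{\ell}}-1}\bigr)$, but these differ by a factor $e^{4\sqrt{2\lambda}\overline{\ell}}$ inside the fraction, so it should be a strict ``$<$''; your chain $1-e^{-2\sqrt{2\lambda}\overline{\ell}}\le e^{2\sqrt{2\lambda}\overline{\ell}}-1\le\tfrac12(e^{4\sqrt{2\lambda}\overline{\ell}}-1)$ together with $e^{2\overline{\ell}\sqrt{2\lambda}}+1\le 2e^{2\overline{\ell}\sqrt{2\lambda}}$ justifies that inequality explicitly. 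You also supply the short verification that the infimum is attained at some $\lambda^*>0$ under Assumption \ref{Assumption:ReactionRateBetaLarge}, which is what makes the final strict inequality legitimate and is left implicit in the paper.
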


\begin{proof}
By Corollary \ref{Corollary:xiBoundedBelow1Plusc}
we can estimate
$$\begin{array}{ll}
\mathbf{E}\left[\ln\left(1+\dfrac{1-e^{-2\sqrt{2\lambda}\ell_0}}{\xi_\lambda-1}\right)\right]
& \leq 
\ln\left(1+\dfrac{1-e^{-2\sqrt{2\lambda}\cdot\overline{\ell}}}
{2(\overline{d})^{-1}\dfrac{e^{2\sqrt{2\lambda}\underline{\ell}}-1}{e^{2\sqrt{2\lambda}\cdot\overline{\ell}}+1}}\right)
\\
\\
& =
\ln\left(1+\dfrac{\overline{d}}{2}\dfrac{e^{2\sqrt{2\lambda}\cdot\overline{\ell}}[e^{4\sqrt{2\lambda}\cdot\overline{\ell}}-1]}
{e^{2\sqrt{2\lambda}\underline{\ell}}-1}\right)  \ .
\end{array}$$

We see that in \eqref{Theorem:VariationalFormulaWaveSpeed:Eq:VariationalFormulaWaveSpeed}, the inf is taken at the point $\lambda=c^*$, and further by
\eqref{Eq:ExampleFKPPWaveSpeedTree-SlowDown}, we have $\lambda=c^*=c^*_{\mathbb{T}_{\vec{d}, \vec{\ell}}}<\sqrt{2\beta}$.
So we  further have
$$\begin{array}{ll}
\mathbf{E}\left[\ln\left(1+\dfrac{1-e^{-2\sqrt{2\lambda}\ell_0}}{\xi_\lambda-1}\right)\right]
 < 
\ln\left(1+\dfrac{\overline{d}e^{2\sqrt{2\sqrt{2\beta}}\cdot\overline{\ell}}}{2}
\cdot\dfrac{e^{4\sqrt{2\lambda}\cdot\overline{\ell}}-1}
{e^{2\sqrt{2\lambda}\underline{\ell}}-1}\right) \ .
\end{array}$$

Therefore by \eqref{Theorem:VariationalFormulaWaveSpeed:Eq:VariationalFormulaWaveSpeed} we see that
$$\begin{array}{ll}
c^*_{\mathbb{T}_{\vec{d}, \vec{\ell}}} & =\inf\limits_{\lambda\geq 0}\dfrac{\lambda+\beta}{\sqrt{2\lambda}+\dfrac{1}{\mathbf{E} \ell_0}\mathbf{E}\left[\ln\left(1+\dfrac{1-e^{-2\sqrt{2\lambda}\ell_0}}{\xi_\lambda-1}\right)\right]}
\\
\\
& >   \inf\limits_{\lambda\geq 0}\dfrac{\lambda+\beta}{\sqrt{2\lambda}+\dfrac{1}{\underline{\ell}}\ln\left(1+\dfrac{\overline{d}e^{2\sqrt{2\sqrt{2\beta}}\cdot\overline{\ell}}}{2}
\cdot\dfrac{e^{4\sqrt{2\lambda}\cdot\overline{\ell}}-1}
{e^{2\sqrt{2\lambda}\underline{\ell}}-1}\right)}
 \ ,
\end{array}$$
which gives the upper bound \eqref{Corollary:SlowDown:Eq:EstimateOfSlowDown} on the magnitude of the slow down of the wave speed on $\mathbb{T}_{\vec{d}, \vec{\ell}}$ compared to $\mathbb{R}$.
\end{proof}

When $\mathbb{T}_{\vec{d}, \vec{\ell}}$ is
deterministic with two identical $d$-regular trees attaching to the root, the asymptotic wave speed is more explicit. See Corollary \ref{Cor:ConstantCase} and Figure \ref{Fig:ConstantCase} below.
\begin{corollary}[Constant-$(d,\ell)$ tree]\label{Cor:ConstantCase}
Suppose there exist deterministic constants $d>2$ and $\ell\in(0,\infty)$ such that
$d_i=d$ and $\ell_i=\ell_0=\ell$ for all $i\geq 1$. Let $p=\frac{d-1}{d} \in (0,1)$. Then
\begin{equation}\label{betacConstantCase}
\beta_c=\frac{-\mu(0)}{\mu'(0)}=\frac{2p-1}{\ell}\ln \left(\frac{p}{1-p}\right)=\frac{d-2}{\ell\,d}\ln \left(d-1\right)  
\end{equation}
and for $\beta\in (\beta_c,\infty)$, the   asymptotic speed is given by \eqref{SpeedConstantCase}.
Furthermore,  $\lim\limits_{\beta\to\infty}\frac{c^*}{\sqrt{2\beta}}=1$, $\lim\limits_{d\to\infty}c^*=0$ and $\lim\limits_{d\to\infty}\lim\limits_{\beta\downarrow \beta_c}c^*=1$ for $\ell>0$.
\end{corollary}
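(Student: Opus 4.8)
The plan is to specialize the general variational formula \eqref{Theorem:VariationalFormulaWaveSpeed:Eq:VariationalFormulaWaveSpeed} to the constant-$(d,\ell)$ tree, where all randomness disappears and everything can be computed in closed form. First I would compute $\xi_\lambda$ explicitly. By Corollary \ref{Corollary:InvarDistX}, in the deterministic case $\tfrac{1}{\xi_\lambda}$ is the unique fixed point in $[0,1]$ of the M\"obius map $\Phi_{(\zeta,\gamma)}$ with $\gamma=e^{\ell\sqrt{2\lambda}}$ and $\zeta=2p-1$; solving $z=\tfrac{\zeta\gamma^2+z}{\gamma^2+\zeta z}$ amounts to the quadratic $\zeta z^2+(\gamma^2-1)z-\zeta\gamma^2=0$ (the solution reducing to $\zeta z + \ldots$ when $\zeta=0$, i.e.\ $d=2$), whose root in $[0,1]$ is
$$
\frac{1}{\xi_\lambda}=\frac{-(\gamma^2-1)+\sqrt{(\gamma^2-1)^2+4\zeta^2\gamma^2}}{2\zeta}.
$$
Substituting this into \eqref{Theorem:VariationalFormulaWaveSpeed:Eq:VariationalFormulaWaveSpeed} with $\mathbf{E}\ell_0=\ell$ and simplifying the term $\ln\bigl(1+\tfrac{1-\gamma^{-2}}{\xi_\lambda-1}\bigr)=\ln\bigl(\tfrac{\xi_\lambda-\gamma^{-2}}{\xi_\lambda-1}\bigr)$, I would rationalize so that the argument of the logarithm matches the expression $\tfrac{4p}{1+\gamma^2-\sqrt{(\gamma^2-1)^2+4(2p-1)^2\gamma^2}}$ appearing in \eqref{SpeedConstantCase}; this is an algebraic identity using $\zeta=2p-1$ and $p=(d-1)/d$. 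This establishes formula \eqref{SpeedConstantCase}.

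Next I would identify $\beta_c$. By Assumption \ref{Assumption:ReactionRateBetaLarge}, $\beta_c=\max\bigl(\eta_c,\tfrac{-\mu(0)}{\mu'(0)}\bigr)$. In the constant case the environment is deterministic and the embedded walk at the interface points is a homogeneous biased random walk with ratio $\rho=\tfrac{p^i_{-1}}{p^i_{+1}}=\tfrac{1-p}{p}=\tfrac{1}{d-1}<1$; by Remark \ref{Remark:ConditionMu-0LessZero} this gives $\mu(0)<0$, hence by Lemma \ref{Lemma:PropertiesMu}(7), $\tfrac{-\mu(0)}{\mu'(0)}>\eta_c$, so $\beta_c=\tfrac{-\mu(0)}{\mu'(0)}$. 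I would compute $\mu(0)$ from \eqref{Lemma:PropertiesMu:Eq:Mu-0}: $\mu(0)=\tfrac{1}{\ell}\ln P(T^{\ell}_0<\infty)=\tfrac{1}{\ell}\ln\tfrac{\sigma_1}{1+\sigma_1}$ with $\sigma_1=\sum_{k\ge1}\rho^k=\tfrac{\rho}{1-\rho}=\tfrac{1}{d-2}$, so $\mu(0)=\tfrac{1}{\ell}\ln\tfrac{1}{d-1}=-\tfrac{\ln(d-1)}{\ell}$. For $\mu'(0)$ I would use \eqref{Lemma:PropertiesMu:Eq:MuPrimeZero}, $\mu'(0)=E[T^{\ell}_0\mathbf{1}_{T^\ell_0<\infty}]$; computing this expected hitting-time sum for the biased walk (a standard gambler's-ruin–type computation, using that each excursion between consecutive barriers has finite mean exit time proportional to $\ell^2$) yields $\mu'(0)=\tfrac{\ell}{2p-1}$ after the dust settles — one can shortcut this by differentiating the closed form $\mu(-\lambda)=-\sqrt{2\lambda}+\tfrac{1}{\ell}\ln\tfrac{\xi_\lambda-1}{\xi_\lambda-e^{-2\ell\sqrt{2\lambda}}}$ in $\lambda$ and taking $\lambda\downarrow0$, using $\xi_\lambda\to\xi_0$ finite when $d>2$. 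Then $\beta_c=\tfrac{-\mu(0)}{\mu'(0)}=\tfrac{2p-1}{\ell}\ln\tfrac{p}{1-p}=\tfrac{d-2}{\ell d}\ln(d-1)$, which is \eqref{betacConstantCase}.

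Finally I would handle the three asymptotic limits. For $\lim_{\beta\to\infty}\tfrac{c^*}{\sqrt{2\beta}}=1$: from \eqref{Eq:VariationalFormulaWaveSpeedInTermsMu} the infimum is attained at some $\lambda^*=c^*\to\infty$; since $|\mu(-\lambda)|=\sqrt{2\lambda}+O(\lambda^0)$ as $\lambda\to\infty$ (the logarithmic correction term is bounded, because $\xi_\lambda\to\infty$ as $\gamma\to\infty$ forces $\ln(1+\tfrac{1-\gamma^{-2}}{\xi_\lambda-1})\to0$), dividing numerator and denominator by $\sqrt{2\lambda}$ shows $\tfrac{\lambda+\beta}{|\mu(-\lambda)|}\sim\tfrac{\sqrt{\lambda}}{\sqrt2}+\tfrac{\beta}{\sqrt{2\lambda}}$, minimized near $\lambda\sim\beta$, giving $c^*\sim\sqrt{2\beta}$. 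For $\lim_{d\to\infty}c^*=0$ (fixed $\beta>\beta_c$): as $d\to\infty$ we have $\beta_c\sim\tfrac{\ln d}{\ell}\to\infty$, but $c^*$ is defined only for $\beta>\beta_c$, so the statement should be read for $d$ large relative to $\beta$; alternatively, keeping $\beta$ just above $\beta_c$, one sees $\xi_\lambda\to1^+$ is forced (since $\zeta=2p-1\to1$ makes the fixed point degenerate), the denominator in \eqref{Theorem:VariationalFormulaWaveSpeed:Eq:VariationalFormulaWaveSpeed} blows up, and $c^*\to0$; I would make this precise via the bound in Corollary \ref{Corollary:xiBoundedBelow1Plusc} read in reverse, or directly from \eqref{SpeedConstantCase}. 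For $\lim_{d\to\infty}\lim_{\beta\downarrow\beta_c}c^*=1$: as $\beta\downarrow\beta_c=\tfrac{-\mu(0)}{\mu'(0)}$, the infimum in \eqref{Eq:VariationalFormulaWaveSpeedInTermsMu} migrates to $\lambda\downarrow0$, and $c^*\to\tfrac{\beta_c}{|\mu(0)|}=\tfrac{1}{\mu'(0)}=\tfrac{2p-1}{\ell}\cdot\tfrac{1}{\ell}$... — more carefully, $c^*\to\tfrac{0+\beta_c}{|\mu(0)|}=\tfrac{\beta_c}{-\mu(0)}=\tfrac{1}{\mu'(0)}=\tfrac{2p-1}{\ell}$, wait this needs the correct $\mu'(0)$; I expect after inserting $\mu'(0)=\tfrac{\ell}{2p-1}\cdot(\text{something})$ and letting $d\to\infty$ so $p\to1$, this limit equals $1$. \textbf{The main obstacle} I anticipate is precisely pinning down $\mu'(0)$ in closed form and then carefully exchanging the order of limits ($\beta\downarrow\beta_c$ before $d\to\infty$), since $\beta_c$ itself depends on $d$ and the regime of validity $\beta>\beta_c$ shrinks; getting the limit $=1$ rather than some other constant requires tracking the $p\to1$ behavior of both $\beta_c$ and $\mu'(0)$ to leading order simultaneously, and verifying that the infimizing $\lambda$ genuinely tends to $0$ uniformly enough.
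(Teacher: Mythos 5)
Your overall strategy is the same as the paper's: compute the fixed point $\xi_\lambda$ of the M\"obius map explicitly, plug it into the variational formula \eqref{Theorem:VariationalFormulaWaveSpeed:Eq:VariationalFormulaWaveSpeed} to get \eqref{SpeedConstantCase}, compute $\mu(0)$ and $\mu'(0)$ in closed form to identify $\beta_c=-\mu(0)/\mu'(0)$, and then read off the asymptotics from the closed formula. That part is fine, and your computation of $\xi_\lambda$ from the quadratic $\zeta z^2+(\gamma^2-1)z-\zeta\gamma^2=0$ is exactly the paper's.

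The main genuine gap is in the justification that $\beta_c=\tfrac{-\mu(0)}{\mu'(0)}$ rather than $\eta_c$. You argue ``$\mu(0)<0$ (from Remark \ref{Remark:ConditionMu-0LessZero}), hence by Lemma \ref{Lemma:PropertiesMu}(7), $\tfrac{-\mu(0)}{\mu'(0)}>\eta_c$.'' But Lemma \ref{Lemma:PropertiesMu}(7) needs $\mu(\eta_c-)\leq 0$, not $\mu(0)<0$; since $\mu$ is strictly increasing on $(0,\eta_c)$, $\mu(\eta_c-)$ could a priori be positive even though $\mu(0)<0$ (cases (a-1),(a-2),(b-1),(b-2) in Figure \ref{Fig:I-a-Graph}). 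The paper closes this by computing, in the constant case, $J^i_{\eta,+1}=p/\cos(\sqrt{2\eta}\,\ell)$ for $\eta\in(0,\pi^2/8\ell^2)$, reading off $\eta_c=\tfrac{1}{2\ell^2}\arccos^2(2\sqrt{p(1-p)})$ from $J_{\eta,+}J_{\eta,-}=\tfrac{1}{4}$, and then verifying $w_{-\eta_c}(\ell)=\tfrac{1}{2J_{\eta_c,+}}=\sqrt{(1-p)/p}\in(0,1)$, so that $\mu(\eta_c-)<0$ and the tree falls into Case (c-2). Without this explicit computation of $w_{-\eta_c}(\ell)$, your invocation of Lemma (7) does not go through.

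Two smaller points. First, in your argument for $\lim_{\beta\to\infty}c^*/\sqrt{2\beta}=1$ you state that $\xi_\lambda\to\infty$ as $\gamma\to\infty$; in fact from the explicit root one has $1/\xi_\lambda\to\zeta$, i.e.\ $\xi_\lambda\to 1/\zeta$, as $\lambda\to\infty$. Luckily the conclusion you need---that the logarithmic correction is bounded---is still true, since $\ln\bigl(1+\tfrac{1-\gamma^{-2}}{\xi_\lambda-1}\bigr)\to\ln\tfrac{1}{1-\zeta}$, but the stated reason is wrong. Second, for $\lim_{d\to\infty}\lim_{\beta\downarrow\beta_c}c^*=1$ you flag this (correctly) as the main obstacle and do not carry it out. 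Your sketch---that as $\beta\downarrow\beta_c$ the infimizer $\lambda$ tends to $0$ so $c^*\to\beta_c/|\mu(0)|=1/\mu'(0)$, then take $p\to1$---is the right idea and matches the first step of the paper's proof (which shows $\lambda_{\beta_c,p}/\beta_c\to 0$). But the paper then has to expand the denominator of \eqref{SpeedConstantCase} and carefully track the joint $d\to\infty$, $\lambda\downarrow 0$ asymptotics (using $1-p\sim 1/d$, $\beta_c\sim (\ln d)/\ell$, and $\lim_{d\to\infty}\tfrac{1}{d}e^{C\sqrt{\ln d}}=0$) to pin the limit down; this nontrivial step is missing from your proposal. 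So the proposal is correct in outline but incomplete at precisely the two places where the constant case requires a genuine calculation rather than a formal manipulation.
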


\begin{remark}\rm
The last assertion raises a curious point: if the reaction rate is maintained at the critical reaction rate $\beta_c$, the speed is bounded even if  the degree $d\to\infty$. On other hand, the LDP rate function  falls into case (c-2) in Figure \ref{Fig:I-a-Graph}.
\end{remark}

\begin{proof}
Recall $J^i_{\eta,+1}$ defined in \eqref{Eq:Def:J_eta}.
Basic stochastic calculus gives
$J^i_{\eta,+1} = \frac{p}{\cos(\sqrt{2\eta}\ell)}:=J_{\eta,+}$ for all $i\geq 1$ and $\eta\in\left(0,\frac{\pi^2}{8\ell^2}\right)$.
From this we get $\eta_c=\frac{1}{2\ell^2}\arccos^2\left(2\sqrt{p(1-p)}\right)$ and $w_{-\eta_c}(1)=\frac{1}{2 J_{\eta_c,+}}\in (0, 1)$.
Besides,
\begin{align}
\lim_{\lambda\downarrow 0}w_{\lambda}(\ell)=\frac{1-p}{p}\in(0,1) \quad\text{and}\quad
\lim_{\lambda\downarrow 0}\frac{d w_{\lambda}(\ell)}{d\lambda} = -\ell^2\frac{1-p}{p(2p-1)}\in (-\infty,0). \label{w0_v2}
\end{align}
So from $\mu(\eta)=\dfrac{\ln w_{-\eta}(\ell)}{\ell}$
and \eqref{w0_v2}  we have
$$\frac{-\mu(0)}{\mu'(0)}=\frac{-1}{\ell}\ln \left(\frac{1-p}{p}\right)(2p-1) \,>\,\eta_c,$$
giving \eqref{betacConstantCase}.

By solving \eqref{Corollary:InvarDistX:Eq:InvariantX} we obtain
    \begin{equation*}
        \xi_{\lambda}=\frac{2\zeta}{\sqrt{(\gamma^2-1)^2+4\zeta^2\gamma^2}+1-\gamma^2} =\frac{\sqrt{(\gamma^2-1)^2+4\zeta^2\gamma^2}+\gamma^2-1}{2\zeta \gamma^2},
    \end{equation*}
where $\zeta=2p-1=\frac{d-2}{d}$ and $\gamma:=e^{\ell\sqrt{2\lambda}}$. The formula of $c^*$ now follows from \eqref{Theorem:VariationalFormulaWaveSpeed:Eq:VariationalFormulaWaveSpeed}.

\medskip

Formula \eqref{SpeedConstantCase} allows further explicit calculations using calculus. 
View $\ell\in (0,\infty)$ as fixed always and
write $\Phi(\beta,p,\lambda)$ as the function after the infinimum.
For $(\beta,p)\in(0,\infty)\times (1/2,1)$, there is a unique positive number $\lambda_{\beta,p}$ at which infinmum on the right of \eqref{SpeedConstantCase} is obtained. That is,
\begin{equation}\label{SpeedConstantCase2}
c^*=\inf\limits_{\lambda\geq 0}\,\Phi(\beta,p,\lambda)=\Phi(\beta,p,\lambda_{\beta,p}).
\end{equation}

The function of two variables $c^*=c^*(\beta,p)$ is continuous on $(0,\infty)\times(1/2,1)$.
It can be checked that
for fixed $p\in(1/2,1)$, i.e.  fixed degree $d$, the mapping $\beta\mapsto c^*$ is increasing and $\lim\limits_{\beta\to\infty}\frac{\lambda_{\beta,p}}{\beta}=1$. From the latter we obtain  $\lim\limits_{\beta\to\infty}\frac{c^*}{\sqrt{2\beta}}=1$ from \eqref{SpeedConstantCase}.

We further choose $\beta$ to be the critical $\beta_c$ in \eqref{betacConstantCase} and consider the ``speed at critical", $c^*(\beta_c,p)$. 
As $p\to 1$, we have $\beta_c\to\infty$ and $\frac{\lambda_{\beta_c,p}}{\beta_c}\to 0$. 
 From the latter we obtain   $\lim_{p\to 1}c^*(\beta_c,p)=1$ from \eqref{SpeedConstantCase}. To see this, from \eqref{SpeedConstantCase} we have
\begin{align*}
  \lim_{p\to 1}c^*(\beta_c,p)
  =& \lim_{p\to 1} \dfrac{1}{\dfrac{1}{ \ell\,\beta_c}\ln\left(\dfrac{4p}{1+\gamma^2_c-\sqrt{(\gamma^2_c-1)^2+4(2p-1)^2\gamma^2_c}}\right)}
\end{align*}
where $\gamma_c=e^{\ell\sqrt{2\,\lambda_{\beta_c,p}}}$. Note that $1-p=\frac{1}{d}$ decays linearly in $d$ and $\beta_c$ given by \eqref{betacConstantCase} grows like $\frac{\ln d}{\ell}$. From these and the fact that $\lim\limits_{d\to\infty}\frac{1}{d}e^{C\sqrt{\ln d}}=0$ for all $C\in(0,\infty)$, we obtain $\lim\limits_{p\to 1}(1-p)\gamma_c^2=0$ and
 \begin{align*}
   \lim_{p\to 1}\frac{1}{c^*(\beta_c,p)}=& \,\frac{1}{\ell}\,\lim_{p\to 1}\frac{-\ln\left(1+\gamma^2_c-\sqrt{(\gamma^2_c-1)^2+4(2p-1)^2\gamma^2_c}\right)}{\beta_c}\\
   =& \,\lim_{d\to \infty}\frac{\ln\left(1+\gamma^2_c-\sqrt{(\gamma^2_c+1)^2-16p(1-p)\gamma^2_c}\right)}{-\ln d}\\
    =& \,\lim_{d\to \infty}\frac{\ln\left(\frac{16p(1-p)\gamma^2_c}{1+\gamma^2_c+\sqrt{(\gamma^2_c+1)^2-16p(1-p)\gamma^2_c}}\right)}{-\ln d}\\
    =& \,\lim_{d\to \infty}\frac{\ln\left(\frac{8\gamma^2_c}{(1+\gamma^2_c)d}\right)}{-\ln d}\\
   =& \,1.
\end{align*}
\end{proof}

\begin{figure}
\centering
\includegraphics[height=4cm, width=15cm]{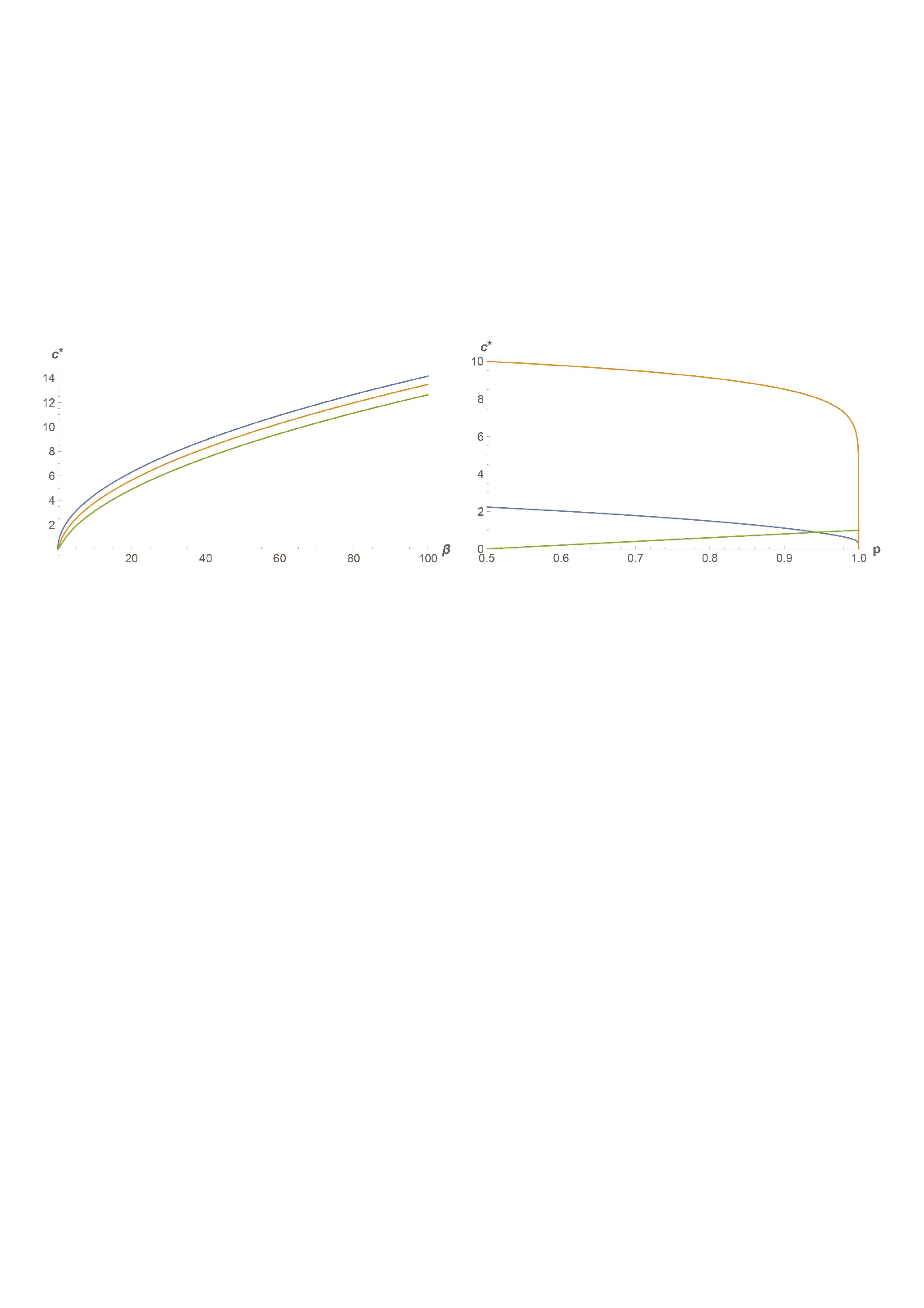}
\caption{[Left panel] Asymptotic speed $c^*$ versus reaction rate $\beta$ for the constant-$(d,\ell)$ tree. Top curve (blue) is for $\mathbb{R}$, i.e. $d=2$, so it is exactly $\sqrt{2\beta}$; Middle curve (orange) is for $(d,\ell)=(4,1)$; Bottom curve (green) is for $(d,\ell)=(10,1)$. Hence the speed decreases as $d$ increases.\;
[Right panel] $c^*$ versus $p=\frac{d-1}{d}$. Top curve (orange) is for $\beta=50$; Middle curve (blue) is for $\beta=2.5$; Bottom curve (green) is the ``critical curve" $p\mapsto c^*(\beta_c(p),p)$. 
}
\label{Fig:ConstantCase}
\end{figure}

\newpage

\bibliographystyle{plain}
\bibliography{wave-tree_bibliography}

\end{document}